\def\@settitle{\begin{center}%
  \baselineskip14\p@\relax
  \bfseries
  \uppercasenonmath\@title
  \@title
  \ifx\@subtitle\@empty\else
     \\[1ex]\uppercasenonmath\@subtitle
     \footnotesize\mdseries\@subtitle
  \fi
  \end{center}%
}
\def\subtitle#1{\gdef\@subtitle{#1}}
\def\@subtitle{}
\tikzset{
>=stealth',
  punktchain/.style={
    rectangle, 
    rounded corners, 
    draw=black, very thick,
    text width=10em, 
    minimum height=3em, 
    text centered, 
    on chain},
  line/.style={draw, thick, <-},
  element/.style={
    tape,
    top color=white,
    bottom color=blue!50!black!60!,
    minimum width=8em,
    draw=blue!40!black!90, very thick,
    text width=10em, 
    minimum height=3.5em, 
    text centered, 
    on chain},
  every join/.style={->, thick,shorten >=1pt},
  decoration={brace},
  tuborg/.style={decorate},
  tubnode/.style={midway, right=2pt},
}
\tikzset{
    >=stealth',
    punkt/.style={
           rectangle,
           rounded corners,
           draw=black, very thick,
           text width=6.5em,
           minimum height=2em,
           text centered},
    pil/.style={
           ->,
           thick,
           shorten <=2pt,
           shorten >=2pt,}
}
\newdimen\@myBoxHeight%
\newdimen\@myBoxDepth%
\newdimen\@myBoxWidth%
\newdimen\@myBoxSize%
\newcommand{\SquareBox}[2][]{%
    \settoheight{\@myBoxHeight}{#2}
    \settodepth{\@myBoxDepth}{#2}
    \settowidth{\@myBoxWidth}{#2}
    \pgfmathsetlength{\@myBoxSize}{max(\@myBoxWidth,(\@myBoxHeight+\@myBoxDepth))}%
    \tikz \node [shape=rectangle, shape aspect=1,draw=black,inner sep=2\pgflinewidth, minimum size=\@myBoxSize,#1] {#2};%
}%
\newtheorem{thm}{Theorem}
\newtheorem{cor}[thm]{Corollary}
\newtheorem{defi}[thm]{Definition}
\newtheorem{rem}[thm]{Remark}
\newtheorem{nota}[thm]{Notation}
\newtheorem{exa}[thm]{Example}
\newtheorem{princ}[thm]{Principle}
\newtheorem{ack}[thm]{Acknowledgement}
\newtheorem{obs}[thm]{Observation}
\newtheorem{tempo}[thm]{Template}
\newcommand\be{\begin{equation}}
\newcommand\ee{\end{equation}} 
\def\bdefi{\begin{defi}\rm}
\def\edefi{\end{defi}}
\def\bnota{\begin{nota}\rm}
\def\enota{\end{nota}}
\def\brem{\begin{rem}\rm}
\def\erem{\end{rem}}
\newbox\gnBoxA  
\newdimen\gnCornerHgt
\newdimen\gnArgHgt
\def\Godelnum #1{%
	\setbox\gnBoxA=\hbox{$#1$}%
	\gnArgHgt=\ht\gnBoxA%
	\ifnum \gnArgHgt<\gnCornerHgt
		\gnArgHgt=0pt%
	\else
		\advance \gnArgHgt by -\gnCornerHgt%
	\fi
	\raise\gnArgHgt\hbox{$\ulcorner$} \box\gnBoxA %
		\raise\gnArgHgt\hbox{$\urcorner$}}
\newcommand{\Sh}{\ensuremath{\protect{S_{\st{}}}}}
\newcommand{\forallst}{\forall^{\st{}}}
\newcommand{\existsst}{\exists^{\st{}}}
\newcommand{\tup}{\underline} 
\def\FIVE{\Pi_{1}^{1}\text{-\textsf{CA}}_{0}}
\def\NCR{\textup{\textsf{NCR}}}
\def\pw{\textup{\textsf{pw}}}
\def\ATR{\textup{\textsf{ATR}}}
\def\LEM{\textup{\textsf{LEM}}}
\def\IST{\textup{\textsf{IST}}}
\def\HIP{\textup{\textsf{HIP}}}
\def\STP{\textup{\textsf{STP}}}
\def\intern{\textup{\textsf{int}}}
\def\DNR{\textup{DNR}}
\def\H{\textup{\textsf{H}}}
\def\RCA{\textup{\textsf{RCA}}}
\def\RCAo{\textup{\textsf{RCA}}_{0}^{\omega}}
\def\ef{\textup{\textsf{ef}}}
\def\ns{\textup{\textsf{ns}}}
\def\WKL{\textup{\textsf{WKL}}}
\def\IVT{\textup{IVT}}
\def\IVT{\textup{\textsf{IVT}}}
\def\T{\textsf{\textup{T}}}
\def\TT{{\mathcal{T}}}
\def\bye{\end{document}}
\def\P{\textup{\textsf{P}}}
\def\N{{\mathbb  N}}
\def\Q{{\mathbb  Q}}
\def\R{{\mathbb  R}}
\def\I{{\textsf{\textup{I}}}}
\def\MUC{\textup{\textsf{MUC}}}
\def\MCT{\textup{\textsf{MCT}}}
\def\R{{\mathbb{R}}}
\def\({\textup{(}}
\def\){\textup{)}}
\def\st{\textup{st}}
\def\asa{\leftrightarrow}
\def\di{\rightarrow}
\def\eps{\varepsilon}
\def\M{\mathcal{M}}
\def\ACA{\textup{\textsf{ACA}}}
\def\paai{\Pi_{1}^{0}\textup{-\textsf{TRANS}}}
\def\Paai{\Pi_{1}^{1}\textup{-\textsf{TRANS}}}
\def\QFAC{\textup{\textsf{QF-AC}}}
\def\HBU{\textup{\textsf{HBU}}}
\def\DNR{\textup{\textsf{DNR}}}
\def\NUC{\textup{\textsf{NUC}}}
\def\KOE{\textup{\textsf{KOE}}}
\def\ECF{\textup{\textsf{ECF}}}
\def\SCF{\textup{\textsf{SCF}}}
\def\TJ{\textup{\textsf{TJ}}}
\def\ZFC{\textup{\textsf{ZFC}}}
\def\ZF{\textup{\textsf{ZF}}}
\def\DIF{\textup{\textsf{DIF}}}
\def\NSD{\textup{\textsf{NSD}}}
\def\RIE{\textup{\textsf{RIE}}}
\def\REI{\textup{\textsf{RIE}}}
\def\MU{\textup{\textsf{MU}}}
\def\HAC{\textup{\textsf{HAC}}}
\def\INT{\textup{\textsf{int}}}
\numberwithin{equation}{section}
\numberwithin{thm}{section}
\begin{document}
\title{To be or not to be constructive}
\subtitle{That is not the question}

\author{Sam Sanders}
\address{Munich Center for Mathematical Philosophy, LMU Munich, Germany \& Department of Mathematics, Ghent University}

\email{sasander@me.com} 
\begin{abstract}
In the early twentieth century, L.E.J.\ Brouwer pioneered a new philosophy of mathematics, called \emph{intuitionism}.  
Intuitionism was revolutionary in many respects but stands out -mathematically speaking- for its challenge of 
\emph{Hilbert's formalist philosophy} of mathematics and rejection of the \emph{law of excluded middle} from the `classical' logic used in mainstream mathematics.  
Out of intuitionism grew \emph{intuitionistic logic} and the associated \emph{Brouwer-Heyting-Kolmogorov} 
interpretation by which `there exists $x$'  intuitively means `an algorithm to compute $x$ is given'.    
A number of schools of constructive mathematics were developed, inspired by Brouwer's intuitionism and invariably based on intuitionistic logic, but with varying interpretations
of what constitutes an algorithm.  This paper deals with the \emph{dichotomy} between constructive and 
non-constructive mathematics, or rather the \emph{absence} of such an `excluded middle'. 
In particular, we challenge the `binary' view that mathematics is either constructive or not.   
To this end, we identify a part of \emph{classical} mathematics, namely \emph{classical Nonstandard Analysis}, and show it inhabits the twilight-zone between the constructive and non-constructive.  
Intuitively, the predicate `$x$ is standard' typical of Nonstandard Analysis can be interpreted as `$x$ is computable', giving rise to computable (and sometimes constructive) mathematics obtained directly from \emph{classical} Nonstandard Analysis.  
Our results formalise Osswald's longstanding conjecture that classical Nonstandard Analysis is \emph{locally constructive}.      
Finally, an alternative explanation of our results is provided by Brouwer's thesis that \emph{logic depends upon mathematics}.    
\end{abstract}


\maketitle
\thispagestyle{empty}


\vspace{-1cm}

\section{Introduction}\label{intro}
This volume is dedicated to the founder of \emph{intuitionism}, L.E.J.\ Brouwer, who pursued this revolutionary programme with great passion and against his time's received view of mathematics and its foundations (\cites{brouw,brouw2,eagle, gesticht,brouw22}).  We therefore find it fitting that our paper attempts to subvert (part of) \emph{our time's} received view of mathematics and its foundations.  As suggested by the title, we wish to challenge the binary distinction \emph{constructive\footnote{The noun `constructive' is often used as a synonym for `effective', while it refers to the foundational framework \emph{constructive mathematics} in logic and the foundations of mathematics (\cite{troeleke1}).  Context determines the meaning of `constructive' in this paper (usually the latter).} versus non-constructive} mathematics.  We shall assume basic familiarity with constructive mathematics and \emph{intuitionistic logic} with its \emph{Brouwer-Heyting-Kolmogorov} 
interpretation.    

\medskip

Surprising as this may be to the outsider, the quest for the (ultimate) foundations of mathematics was and is an \emph{ongoing and often highly emotional affair}.  
The \emph{Grundlagenstreit} between Hilbert and Brouwer is perhaps the textbook example (See e.g.\ \cite{gesticht}*{II.13}) of a fierce struggle between competing views on the foundations of mathematics, namely Hilbert's formalism and Brouwer's intuitionism.  Einstein was apparently disturbed by this controversy and exclaimed the following:
\begin{quote}
What is this frog and mouse battle among the mathematicians? (\cite{square}*{p.\ 133})
\end{quote}
More recently, Bishop mercilessly attacked \emph{Nonstandard Analysis} in his review \cite{kuddd} of Keisler's monograph \cite{keisler3}, even going as far as debasing Nonstandard Analysis\footnote{The naked noun \emph{Nonstandard Analysis} will always implicitly include the adjective \emph{classical}, i.e.\ based on classical logic.  We shall not directly deal with \emph{constructive Nonstandard Analysis}, i.e.\ based on intuitionistic logic, but do discuss its relationship with our results in Section \ref{palmke}.} to a \emph{debasement of meaning} in \cite{kluut}.  Bishop, as Brouwer, believed that to state the existence of an object, one has to provide a construction for it, while Nonstandard Analysis cheerfully includes ideal/non-constructive objects \emph{at the fundamental level}, the textbook example being infinitesimals.  Note that Brouwer's student, the intuitionist Arend Heyting, had a higher opinion of Nonstandard Analysis (\cite{heyting}).            

\medskip

A lot of ink has been spilt over the aforementioned struggles, and we do not wish to add to that literature.  By contrast, the previous paragraph is merely meant to establish the well-known juxtaposition of \emph{classical/mainstream/non-constructive} versus \emph{constructive} mathematics\footnote{A number of approaches to constructive mathematics exist (\cite{beeson1}*{III}, \cite{troeleke1}*{I.4}, \cite{brich}), and both Brouwer's \emph{intuitionism} and Bishop's \emph{Constructive Analysis} (\cite{bish1}) represent a school therein.}.  
The following quote by Bishop emphasises this `two poles' view for the specific case of Nonstandard Analysis, which Bishop believed to be the worst exponent of classical mathematics.  
\begin{quote}
[Constructive mathematics and Nonstandard Analysis] are at opposite poles.
Constructivism is an attempt to deepen the meaning of mathematics; non-standard analysis, an attempt to dilute it further. (\cite{bishl}*{p.\ 1-2})
\end{quote}    
To be absolutely clear, lest we be misunderstood, we only wish to point out the current state-of-affairs in contemporary mathematics:  On one hand, there is mainstream mathematics with its classical logic and other fundamentally non-constructive features, of which Nonstandard Analysis is the \emph{nec plus ultra} according to some; on the other hand, there is constructive mathematics with its intuitionistic logic and computational-content-by-design.  In short, there are two opposing camps (classical and constructive) in mathematics separated by a no-man's land, with the occasional volley exchanged as in e.g.\ \cites{boerken, frogman, trots,trots2}.  

\medskip 

Stimulated by Brouwer's revolutionary spirit, our goal is to subvert the above received view.  To this end, we will identify a field of classical mathematics 
which occupies the twilight-zone between constructive and classical mathematics.  
Perhaps ironically, this very field is \emph{Nonstandard Analysis}, so vilified by Bishop for its alleged \emph{fundamentally} non-constructive nature.  
We introduce a well-known axiomatic\footnote{Hrbacek (\cite{hrbacek2}) and Nelson (\cite{wownelly}) independently introduced an axiomatic approach to Nonstandard Analysis in the mid-seventies (See \cite{reeken} for more details).} approach to Nonstandard Analysis, Nelson's \emph{internal set theory}, in Section \ref{NSA}.  We discuss past claims regarding the constructive nature of Nonstandard Analysis in Section \ref{praxis}, in particular Osswald's notion of \emph{local constructivity}.    

\medskip

In Section \ref{main}, we establish that Nonstandard Analysis occupies the twilight-zone between constructive and classical mathematics.  As part of this endeavour, we show that a major part of \emph{classical} Nonstandard Analysis has computational content, much like constructive mathematics itself.  
Intuitively, the predicate `$x$ is standard' unique to Nonstandard Analysis can be interpreted as `$x$ is computable', giving rise to computable (and even constructive) mathematics obtained directly from \emph{classical} Nonstandard Analysis.  
Our results formalise Osswald's longstanding conjecture from Section \ref{loco} that classical Nonstandard Analysis is \emph{locally constructive}.      
By way of reversal, we establish in Section \ref{thereandback} that certain \emph{highly constructive} results extracted from Nonstandard Analysis in turn imply the nonstandard theorems from which they were obtained.  Advanced results regarding the constructive nature of Nonstandard Analysis may be found in Section \ref{grab}.     

\medskip

In Section \ref{titatovenaar}, we shall offer an alternative interpretation of our results based on Brouwer's view that \emph{logic is dependent on mathematics}.  
Indeed, Brouwer already explicitly stated in his dissertation that \emph{logic depends upon mathematics} as follows: 
\begin{quote}
While thus mathematics is independent of logic, logic does depend upon mathematics: in the first place \emph{intuitive logical reasoning} is that special kind of mathematical reasoning which remains if, considering mathematical structures, one restricts oneself to relations of whole and part; 
(\cite{brouwt}*{p.\ 127}; emphasis in Dutch original)
\end{quote}
This leads us to the following alternative interpretation: If one fundamentally changes mathematics, as one arguably does when introducing Nonstandard Analysis, it stands to reason that the associated logic will change along as the latter depends on the former, in Brouwer's view.
By way of an example, we shall observe in Section \ref{fraki} that when introducing the notion of `being standard' fundamental to Nonstandard Analysis, the law of excluded middle of classical mathematics moves from `the original sin of non-constructivity' to a \emph{computationally inert} principle which does not have any real non-constructive consequences anymore.  

\medskip

Thus, our results vindicate Brouwer's thesis that logic depends upon mathematics by showing that classical logic becomes `much more constructive' when shifting from `usual' mathematics to Nonstandard Analysis, as discussed in Section \ref{kiko}.  As also discussed in the latter, Nelson and Robinson have indeed claimed that Nonstandard Analysis constitutes a new kind of mathematics and logic.  We also discuss the connection between Nonstandard Analysis and Troelstra's views of intuitionism, and Brouwer's \emph{first act} of intuitionism.  

\medskip

Finally, we point out a veritable \emph{Catch22} connected to this paper.  On one hand, if we were to emphasise that large parts of Nonstandard Analysis give rise to computable mathematics (and vice versa), then certain readers will be inclined to dismiss Nonstandard Analysis as `nothing new', like Halmos was wont to:
\begin{quote}
it's a special tool, too special, and other tools can do everything
it does. It's all a matter of taste.  (\cite{halal}*{p.\ 204})
\end{quote}
On the other hand, if we were to emphasise the new mathematical objects (no longer involving Nonstandard Analysis) with strange properties one can obtain from Nonstandard Analysis, then certain readers will be inclined to dismiss Nonstandard Analysis as `weird' or `fundamentally different' from (mainstream) mathematics, as suggested by Bishop and Connes (See Section \ref{crackp}).  
To solve this conundrum, we shall walk a tightrope between new and known results, and hope all readers agree that neither of the aforementioned two views is correct. 

\section{An introduction to Nonstandard Analysis}\label{NSA}
We provide an informal introduction to Nonstandard Analysis in Section \ref{intronsa}.  Furthermore, we discuss Nelson's \emph{internal set theory}, 
an axiomatic approach to Nonstandard Analysis, in detail in Section \ref{nellyke}.  Fragments of internal set theory based on \emph{Peano} and \emph{Heyting arithmetic} are studied in Section \ref{frag}.  
\subsection{An informal introduction to Nonstandard Analysis}\label{intronsa}
In a nutshell, \emph{Nonstandard Analysis} is the best-known\footnote{We briefly discuss alternative formalisations of the infinitesimal calculus in Section \ref{palmke}.} formalisation of the intuitive \emph{infinitesimal calculus}.  The latter is used to date in large parts of physics and engineering, 
and was used historically in mathematics by e.g.\ Archimedes, Leibniz, Euler, and Newton until the advent of the Weierstra\ss~`epsilon-delta' framework, as detailed in \cite{nieteerlijk}.  

\medskip

Robinson first formulated the \emph{semantic} approach to Nonstandard Analysis (NSA for short) around 1960 using \emph{nonstandard models}, which are nowadays built using \emph{free ultrafilters} (\cite{robinson1, nsawork2}).  Hrbacek (\cite{hrbacek2}) and Nelson (\cite{wownelly}) independently introduced an axiomatic approach to NSA in the mid-seventies (See \cite{reeken}).      
We will work in axiomatic systems that can be viewed as subsystems of either approach. We will use the terminology and notation of Nelson's approach.
In all cases, there is a universe of \emph{standard} (usual/everyday) mathematical objects and a universe of \emph{nonstandard} (new/other-worldly) mathematical objects, including infinitesimals.  
Tao has formulated a \emph{pragmatic} view of NSA in \cite{taote,tao2} which summarises as: 
\begin{center}
\textbf{NSA is another tool for mathematics}.  
\end{center}
A sketch of Tao's view is as follows: given a \emph{hard} problem in (usual/standard) mathematics, 
pushing this problem into the nonstandard universe of NSA generally converts it to a \emph{different and easier} problem, a basic example being `epsilon-delta' definitions.  A \emph{nonstandard} solution can then be formulated and pushed back into the universe of standard/usual mathematics.  

\medskip

This `pushing back-and-forth' between the standard and nonstandard universe is enabled by \emph{Transfer} and \emph{Standard Part} (aka \emph{Standardisation}).   Robinson established the latter as properties of his nonstandard models, while Nelson adopts them as axioms of $\IST$.  
The following figure illustrates Tao's view and the practice of NSA, as also discussed in Section \ref{keiskers} to \ref{loco}.        
Note that \emph{Transfer} is also used sometimes in tandem with \emph{Standard Part}, and vice versa.  
~\\~\\~\\~\\
\begin{figure}[h!]
\begin{tikzpicture}
\begin{pgflowlevelscope}{\pgftransformscale{0.9}}
  \matrix (m) [matrix of math nodes,row sep=1em,column sep=1.5em,minimum width=0.75em]
  {
~ &  {\color{black}  \text{\emph{Transfer}}} &~  \\
   \text{standard universe/usual mathematics} &~& \color{black}\text{nonstandard universe} \\
      \text{\textbf{hard problem}} &~& \color{black}\text{\textbf{easy problem} with nonstandard solution} \\
            \text{\emph{epsilon-delta} definitions} &~& \color{black}\text{nonstandard definitions involving \emph{infinitesimals}} \\
  & \text{\color{black}\emph{Standard Part}}  &~  \\
    };
 \path[-stealth]
     (m-1-2)     edge [-> ] (m-2-3) 
     (m-1-2)     edge [<-] (m-2-1)
      (m-4-3)     edge [->] (m-5-2)
      (m-4-1)     edge [<-] (m-5-2);
\end{pgflowlevelscope}
\end{tikzpicture}
\vspace{1,4cm}
\caption{The practice of NSA}
\label{figsaregross}
\end{figure}
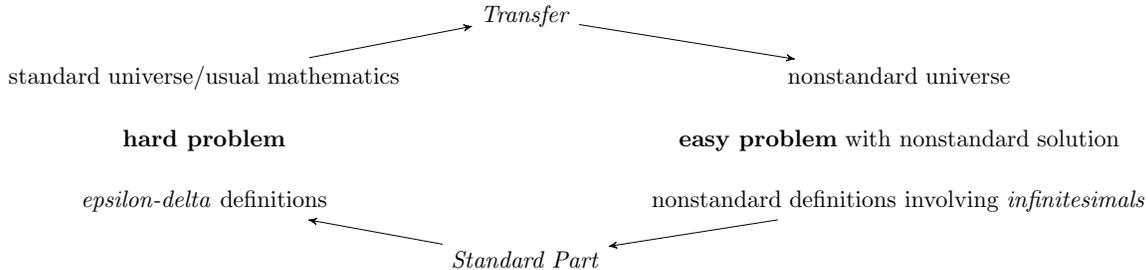~\\
We shall study a number of basic examples of the scheme from Figure \ref{figsaregross} in Section~\ref{main} for notions as continuity and Riemann integration.  
Furthermore, the following table lists some correspondences between the standard and nonstandard universe:  When pushed from the standard to the nonstandard universe, an object on the left corresponds to (or is included in) an object on the right side of the following table.\\
\begin{center}
\begin{tabular}{|c|c|}
\hline
\textbf{standard universe}  & \textbf{nonstandard universe} \\
  \hline \hline
continuous object & discrete object \\
\hline
infinite set & finite set\\
\hline
epsilon-delta definition & universal definition using infinitesimals \\
\hline
quantitative result & qualitative result\\\hline
\end{tabular}
\end{center}~\\
Finally, both \emph{Transfer} and \emph{Standard Part} are \emph{highly non-constructive} in each approach to NSA, i.e.\ these principles imply the existence of \emph{non-computable objects}.  
By contrast, the mathematics in the nonstandard universe is usually very basic, involving \emph{little more than discrete sums and products} of nonstandard length.  Various authors have made similar observations, which we discuss in Sections \ref{keiskers} to \ref{loco}.  
In particular, Osswald has characterised this observation as \textbf{NSA is locally constructive} or the \textbf{local constructivity of NSA} as discussed in Section \ref{loco}, and this view will be vindicated by our results in Section \ref{main}.

\subsection{An axiomatic approach to Nonstandard Analysis}\label{nellyke}
We shall introduce Nelson's \emph{internal set theory} $\IST$ in this section.  
\subsubsection{Internal Set Theory}
In Nelson's \emph{syntactic} (or `axiomatic') approach to Nonstandard Analysis (\cite{wownelly}), a new predicate `st($x$)', read as `$x$ is standard' is added to the language of \textsf{ZFC}, the usual foundation of mathematics\footnote{The acronym $\ZFC$ stands for \emph{Zermelo-Fraenkel set theory with the axiom of choice}.}.    
The notations $(\forall^{\st}x)$ and $(\exists^{\st}y)$ are short for $(\forall x)(\st(x)\di \dots)$ and $(\exists y)(\st(y)\wedge \dots)$.  A formula is called \emph{internal} if it does not involve `st', and \emph{external} otherwise.   

\medskip

The external axioms \emph{Idealisation}, \emph{Standardisation}, and \emph{Transfer} govern the new predicate `st';  we state these axioms\footnote{The superscript `fin' in \textsf{(I)} means that $x$ is finite, i.e.\ its number of elements is bounded by a natural number.} as follows:  
\begin{enumerate}
\item[\textsf{(I)}] $(\forall^{\st~\textup{fin}}x)(\exists y)(\forall z\in x)\varphi(z,y)\di (\exists y)(\forall^{\st}x)\varphi(x,y)$, for internal $\varphi$ with any (possibly nonstandard) parameters.  
\item[\textsf{(S)}] $(\forall^{\st} x)(\exists^{\st}y)(\forall^{\st}z)\big((z\in x\wedge \varphi(z))\asa z\in y\big)$, for any $\varphi$.
\item[\textsf{(T)}] $(\forall^{\st}t)\big[(\forall^{\st}x)\varphi(x, t)\di (\forall x)\varphi(x, t)\big]$, where $\varphi(x,t)$ is internal, and only has free variables $t, x$.  
\end{enumerate}
The system \textsf{IST} is the \emph{internal} system \textsf{ZFC} extended with the aforementioned \emph{external} axioms;  
Internal set theory $\IST$ is a conservative extension of \textsf{ZFC} for the internal language (\cite{wownelly}*{\S8}), i.e.\ these systems 
prove the same \emph{internal} sentences.  Note that Nelson's $\IST$ inherits non-constructive objects from $\ZFC$, like the \textbf{standard} Turing jump\footnote{That the Turing jump is standard follows from applying \emph{Transfer} to the internal statement `there is a set of natural numbers which solves the Halting problem' formalised inside $\ZFC$.}, and axioms like the \textbf{internal} \emph{law of excluded middle} $A\vee \neg A$.    

\medskip

For those familiar with Robinson's approach to Nonstandard Analysis, 
we point out one fundamental difference with Nelson's view of internal set theory:  In the former one studies \emph{extensions} of structures provided by nonstandard models, while in the latter one studies the original structures, in which certain objects happen to be standard, as identified by `st'.  Nelson formulates this view\footnote{As discussed in \cite{samBIG}, one does \emph{not} have to share Nelson's view to study $\IST$: the latter is a logical system while the former is a possible philosophical point of view regarding $\IST$.  However, like Nelson, we will (mostly) view the mathematics and this point of view as one package.} as follows:
\begin{quote}
\emph{All theorems of conventional mathematics remain valid}. No change in terminology is required. What is new in internal set theory is only an addition, not
a change. We choose to call certain sets standard (and we recall that in $\ZFC$ every mathematical object-a real number, a function, etc.-is a set), but the theorems of conventional mathematics apply to all sets, nonstandard as well as standard.  (\cite{wownelly}*{p.\ 1165}; emphasis in original)
\end{quote}
\begin{quote}
\emph{Every specific object of conventional mathematics is a standard set}. It remains unchanged in the new theory. For example, in internal
set theory there is only one real number system, the system $\R$ with which we are already familiar.  (\cite{wownelly}*{p.\ 1166}; emphasis in original)
\end{quote}
By way of an example, $\IST$ just inherits $\R$ from $\ZFC$ and calls some real numbers standard and some nonstandard.  All the `specific' numbers like $0, 1, \pi, e, \dots$ are standard, while there are 
also \emph{nonstandard} ones (in $\IST$).  By contrast, Robinson's approach starts with $\R$ and \emph{extends} this set to the much bigger $^{*}\R$ which is such that $\R\subset {^{*}\R}$ and which satisfies the axioms of the real numbers too; the objects in $^{*}\R\setminus \R$ are called `nonstandard'.    Thus, both approaches have a \emph{different} universe (or `world') of standard objects, which however only matters for advanced applications of NSA (See Section \ref{loebM}).  
In each approach, the intuitive picture as in Figure \ref{figsaregross} is valid, bearing in mind the right interpretation of the standard world.   

\medskip

It goes without saying that 
the step from $\ZFC$ to $\IST$ can be done for a large spectrum of logical systems weaker than $\ZFC$.  In Section \ref{frag}, we study this extension of the usual classical and constructive axiomatisation of 
arithmetic, called \emph{Peano Arithmetic} and \emph{Heyting Arithmetic}.  
In the next sections, we discuss the intuitive meaning of the external axioms.  
\subsubsection{Intuitive meaning of Idealisation}
First, the \emph{contraposition} of \textsf{I} implies that
\be\label{criv}
(\forall y)(\exists^{\st}x)\varphi(x,y) \di (\exists^{\st~\textup{fin}}x)\underline{(\forall y)(\exists z\in x)\varphi(z,y)},
\ee
for all internal $\varphi$, and where the underlined part is thus also \emph{internal}.  Hence, intuitively speaking, \emph{Idealisation} allows us to `pull a standard quantifiers like $(\exists^{\st}x)$ in \eqref{criv} through a normal quantifier $(\forall y)$'.  We will also refer to \eqref{criv} as \emph{Idealisation}, as the latter axiom is usually used in this contraposed form.    

\medskip

Note that the axioms $B\Sigma_{n}$ of Peano arithmetic (\cite{buss}*{II}) play a similar role:  The former allow one to `pull an unbounded number quantifier through a bounded number quantifier'.  In each case, one obtains a formula in a kind of `normal form' with a block of certain quantifiers (resp.\ external/unbounded) up front followed by another block of different quantifiers (resp.\ internal/bounded).  Example \ref{leffe} involving nonstandard continuity in $\IST$ is illustrative (See \cite{wownelly}*{\S5} for more examples).   
\begin{exa}\label{leffe}\rm
We say that $f$ is \emph{nonstandard continuous} on the set $X\subseteq \R$ if
\be\label{soareyouke}
(\forall^{\st}x\in X)(\forall y\in X)(x\approx y \di f(x)\approx f(y)),  
\ee
where $z\approx w$ is $(\forall^{\st} n\in \N)(|z-w|<\frac{1}{n})$.  Resolving `$\approx$' in \eqref{soareyouke}, we obtain  
\[\textstyle
(\forall^{\st}x\in X)(\forall y\in X)\big( (\forall^{\st}N\in\N) (|x- y|<\frac{1}{N}) \di (\forall^{\st}k \in\N)(|f(x)- f(y)|<\frac{1}{k})\big).  
\]
We may bring out the `$(\forall^{\st}k\in \N)$' and `$(\forall^{\st}N\in\N) $' quantifiers as follows:
\[\textstyle
(\forall^{\st}x\in X)(\forall^{\st}k\in \N)\underline{(\forall y\in X)(\exists^{\st}N\in\N)\big( |x- y|<\frac{1}{N} \di|f(x)- f(y)|<\frac{1}{k}\big)}.  
\]
Applying \textsf{I} as in \eqref{criv} to the underlined formula, we obtain a finite and standard set $z\subset \N$ such that $(\forall y\in X)(\exists N\in z)$ in the previous formula.  
Now let $N_{0}$ be the maximum of all numbers in $z$, and note that for $N=N_{0}$
\[\textstyle
(\forall^{\st}x\in X)(\forall^{\st}k\in \N)(\exists^{\st}N\in\N)(\forall y\in X)\big( |x- y|<\frac{1}{N} \di|f(x)- f(y)|<\frac{1}{k}\big).  
\]
The previous formula has all standard quantifiers up front and is very close to the `epsilon-delta' definition of continuity from mainstream mathematics.  
Hence, we observe the role of \textsf{I}: to connect the worlds of nonstandard mathematics (as in \eqref{soareyouke}) and mainstream mathematics.    
\end{exa}
\subsubsection{Intuitive meaning of Transfer}
The axiom \emph{Transfer} expresses that certain statements about \emph{standard} objects are also true for \emph{all} objects.    This property is essential in proving the equivalence between epsilon-delta statements and their nonstandard formulation.  The following example involving continuity is illustrative.
\begin{exa}\label{huntress}\rm
Recall Example \ref{leffe}, the definition of nonstandard continuity \eqref{soareyouke} and the final equation in particular.  Dropping the `st' for $N$ in the latter:
\[\textstyle
(\forall^{\st}x\in X)(\forall^{\st}k\in \N)(\exists N\in\N)(\forall y\in X)\big( |x- y|<\frac{1}{N} \di|f(x)- f(y)|<\frac{1}{k}\big).  
\]
Assuming $X$ and $f$ to be standard, we can apply \textsf{T} to the previous to obtain 
\be\textstyle\label{kikj}
(\forall x\in X)(\forall k\in \N)(\exists N\in\N)(\underline{\forall y\in X) |x- y|<\frac{1}{N} \di|f(x)- f(y)|<\frac{1}{k}}
\ee
Note that \eqref{kikj} is just the \emph{usual epsilon-delta definition of continuity}.  In turn, to prove that \eqref{kikj} implies nonstandard continuity as in \eqref{soareyouke}, fix \emph{standard} $X, f, k$ in \eqref{kikj} and apply the contraposition of \textsf{T} to `$(\exists N\in \N)\varphi(N)$' where $\varphi$ is the underlined formula in \eqref{kikj}.  
The resulting formula $(\exists^{\st} N\in \N)\varphi(N)$ immediately implies nonstandard continuity as in \eqref{soareyouke}.  
\end{exa}
By the previous example, nonstandard continuity \eqref{soareyouke} and epsilon-delta continuity \eqref{kikj} are equivalent for standard functions in $\IST$.  However, the former involves far less quantifier alternations and is close to the intuitive understanding of continuity as `no jumps in the graph of the function'. 
Hence, we observe the role of \textsf{T}: to connect the worlds of nonstandard mathematics (as in \eqref{soareyouke}) and mainstream mathematics (as in \eqref{kikj}), and we also have our first example of 
the practice of NSA from Figure \ref{figsaregross} involving nonstandard and epsilon-delta definitions.  
\subsubsection{Intuitive meaning of Standardisation}
The axiom \emph{Standardisation} (also called \emph{Standard Part}) is useful as follows: It is in general easy to build \emph{nonstandard and approximate} solutions to mathematical problems in $\IST$, but a \emph{standard} solution is needed as the latter also exists in `normal' mathematics (as it is suitable for \emph{Transfer}).   Intuitively, the axiom \textsf{S} tells us that from a \emph{nonstandard approximate solution}, we can \emph{always} find a \emph{standard one}.  Since we may (only) apply \emph{Transfer} to formulas involving the latter, we can then also prove the latter is an object of normal mathematics.  The following example is highly illustrative. 
\begin{exa}\rm
The \emph{intermediate value theorem} states that for every continuous function $f:[0,1]\di [0,1]$ such that $f(0) f(1)< 0$, there is $x\in [0,1]$ such that $f(x)=0$.  Assuming $f$ is standard, it is easy\footnote{Since $[0,1]$ is compact, we may assume that $f$ is uniformly continuous there.  Similar to Example \ref{huntress}, we may assume $f$ is nonstandard uniformly continuous as in $(\forall x, y\in [0,1])[x\approx y \di f(x)\approx f(y)]$.  Let $N$ be a nonstandard natural number and let $j\leq N$ be the least number such that $f(\frac{j}{N})f(\frac{j+1}{N})\leq 0$.  Then $f(j/N)\approx 0$ by nonstandard uniform continuity, and we are done. \label{Exake}} to find a nonstandard real $y$ in the unit interval such that $f(y)\approx 0$, i.e.\ $y$ is an intermediate value `up to infinitesimals'.  
The axiom \textsf{S} then tells\footnote{Let $y\in [0,1]$ be such that $f(y)\approx 0$ and consider the set of rationals $z=\{ q_{1}, q_{1}, q_{2}, \dots, q_{N}\}$ where $q_{i}$ is a rational such that $|y-q_{i}|<\frac{1}{i}$ and $q_{i}=\frac{j}{2^{i}}$ for some $j\leq i$, and $N$ is a nonstandard number.  Applying \textsf{S}, there is a standard set $w$ such that $(\forall^{\st}q)(q\in w\asa q\in z)$.  The \emph{standard} sequence provided by $w$ thus converges to a standard real $x\approx y$.} us that there is a \emph{standard} real $x$ such that $x\approx y$, and by the nonstandard continuity of $f$ (See previous example), we have $f(x)\approx 0$.  Now apply \textsf{T} to the latter\footnote{Recall that `$f(x)\approx 0$' is short for $(\forall^{\st}k\in \N)(|f(x)|<\frac{1}{k})$.} to obtain $f(x)=0$.  Hence, we have obtain the (internal) intermediate value theorem for standard functions, and \textsf{T} yields the full theorem.            
\end{exa}
Hence, we observe the role of \textsf{S}: to connect the worlds of nonstandard mathematics and mainstream (standard) mathematics by providing standard objects `close to' nonstandard ones.    
We shall observe another natural interpretation of \textsf{S} in Section \ref{metastable} in terms of Tao's notion \emph{metastability}.  

\medskip

In conclusion, the external axioms of $\IST$ provide a connection between nonstandard and mainstream mathematics:  They allow one to `jump back and forth' between the standard and nonstandard world as sketched in Figure \ref{figsaregross}.  This technique is useful as some problems (like switching limits and integrals) may be easier to solve in the discrete/finite world of nonstandard mathematics than in the continuous/infinite world of standard mathematics (or vice versa).  This observation lies at the heart of Nonstandard Analysis and is a first step towards understanding its power.       
Nelson formulated this observation nicely as follows: 
\begin{quote}
\emph{Part of the power of nonstandard analysis is due to the fact that a complicated internal notion is
frequently equivalent, on the standard sets, to a simple external notion.} (\cite{wownelly}*{p.\ 1169})
\end{quote}

\subsection{Fragments of Internal Set Theory}\label{frag} 
Fragments of $\IST$ have been studied before and we are interested in the systems $\P$ and $\H$ introduced in \cite{brie}.
 
\medskip 
 
In a nutshell, $\P$ and $\H$ are versions of $\IST$ based on the usual classical and intuitionistic axiomatisations of arithmetic, namely \emph{Peano and Heyting arithmetic}.  
We refer to \cite{kohlenbach3} for the exact definitions of our version of Peano and Heyting arithmetic, commonly abbreviated respectively as $\textsf{E-PA}^{\omega}$ and $\textsf{E-HA}^{\omega}$.  
In particular, the systems $\P$ and $\H$ are conservative\footnote{Like for $\ZFC$ and $\IST$, if the system $\P$ (resp.\ $\H$) proves an internal sentence, then this sentence is provable in $\textsf{E-PA}^{\omega}$ (resp.\ \textsf{E-HA}$^{\omega}$).} extensions of \emph{Peano arithmetic} $\textsf{E-PA}^{\omega}$ and \emph{Heyting arithmetic} $\textsf{E-HA}^{\omega}$, as also follows from Theorem \ref{TERM}.  We discuss the systems $\P$ and $\H$ in detail in Sections~\ref{graf1} and \ref{graf2}, and list the axioms in full detail in Section \ref{FULL}.  We discuss why $\P$ and $\H$ are important to our enterprise in Section \ref{fraki}.  Notations and definitions in $\P$ and $\H$ of common mathematical notions like natural and real numbers may be found in Section \ref{leng}.    

\subsubsection{The classical system $\P$}\label{graf1}
We discuss the fragment $\P$ of $\IST$ from \cite{brie}.   Similar to the way $\IST$ is an extension of $\ZFC$,
$\P$ is just the internal system $\textsf{E-PA}^{\omega}$ with the language extended with a new standardness predicate `\st' and with some special cases of the external axioms of $\IST$.  The technical details of this extension may be found in Section \ref{PAPA} while we now provide an intuitive motivation for the external axioms of $\P$, assuming basic familiarity with the finite type system of G\"odel's system $T$, also discussed in Section \ref{TITI}.  

\medskip

First of all, the system $\P$ does not include any fragment of \emph{Transfer}.  The motivation for this omission is as follows:  
The system $\ACA_{0}$ proves the existence of the non-computable \emph{Turing jump} (\cite{simpson2}*{III}) and very weak fragments of \textsf{T} already imply versions of $\ACA_{0}$.
In particular, the following axiom is the \emph{Transfer} axiom limited to universal number quantifiers:
  \be\tag{$\paai$}
(\forall^{\st}f^{1})\big[(\forall^{\st}n^{0})f(n)\ne0\di (\forall m)f(m)\ne0\big].
\ee
As proved in \cite{sambon}*{\S4.1} and
Section \ref{RMKE}, $\paai$ is essentially the nonstandard version of the Turing jump.  
Hence, no \emph{Transfer} is present in $\P$ as this axiom is fundamentally non-constructive, and would result in a non-conservative extension.  

\medskip

Secondly, the system $\P$ involves the full axiom \emph{Idealisation} in the language of $\P$ as follows:  For any internal formula in the language of $\P$:
\be\label{wellhung}
(\forall^{\st} x^{\sigma^{*}})(\exists y^{\tau} )(\forall z^{\sigma}\in x)\varphi(z,y)\di (\exists y^{\tau})(\forall^{\st} x^{\sigma})\varphi(x,y), 
\ee
As it turns out, the axiom \textsf{I} does not yield any `non-constructive' consequences, which also follows from Theorem \ref{TERM} below.  
As for \eqref{criv}, we will also refer to the contraposition of \eqref{wellhung} as \emph{Idealisation}, as the latter is used mostly in the contra-posed form similar to \eqref{criv} (but with finite sequences rather than finite sets).  

\medskip

Thirdly, the system $\P$ involves a weakening of \emph{Standardisation}, which is motivated as follows:  The axiom \textsf{S} may be equivalently formulated as follows:
\be\tag{$\textsf{S}'$}
(\forall^{\st}x)(\exists^{\st}y)\Phi(x, y)\di \big(\exists^{\st}F\big)(\forall^{\st}x)\Phi(x,F(x)),
\ee
for any formula $\Phi$ (possible involving nonstandard parameters) in the language $\IST$, i.e.\ we observe that \textsf{S} implies\footnote{We point out that the relationship between $\textsf{S}$ and the axiom of choice is non-trivial: in $\ZF + \I + \textsf{S} +\T$ one can only prove that every filter can be extended to an ultrafilter, but not the full axiom of choice (See \cite{hrbacekJLA}).  On the other hand, the system $\IST\setminus \textsf{S}$ does prove $\textsf{S}\asa \textsf{S}'$.} a fragment of the axiom of choice (relative to `st').    
In light of the possible non-constructive content of the latter (See e.g.\ \cite{dias}), it is no surprise that \textsf{S} has to be weakened.  
In particular, $\P$ includes the following version of \textsf{S}, called the \emph{Herbrandised Axiom of Choice}:  
\be\label{HACINT2}\tag{$\HAC_{\INT}$}
(\forall^{\st}x^{\rho})(\exists^{\st}y^{\tau})\varphi(x, y)\di \big(\exists^{\st}G^{\rho\di \tau^{*}}\big)(\forall^{\st}x^{\rho})(\exists y^{\tau}\in G(x))\varphi(x,y),
\ee
where $\varphi$ is any internal formula in the language of $\P$.  Note that $G$ does not output a witness for $y$, but a \emph{finite list} of potential witnesses to $y$.  
This is quite similar to \emph{Herbrand's theorem} (\cite{buss}*{I.2.5}), hence the name of $\HAC_{\INT}$.  

\medskip

Finally, we list two basic but important axioms of $\P$ from Definition \ref{debs} below.  These axioms are inspired by Nelson's claim about $\IST$ as follows:
\begin{quote}
\emph{Every specific object of conventional mathematics is a standard set}. It remains unchanged in the new theory $[\IST]$. (\cite{wownelly}*{p.\ 1166})
\end{quote}
Specific objects of the system $\P$ obviously include the constants  $0, 1, \times, +$, and anything built from those.  Thus, the system $\P$ includes the following three axioms; we refer to Definition \ref{debs} for the exact technical details.     
\begin{enumerate}  
\renewcommand{\theenumi}{\roman{enumi}} 
\item All constants in the language of $\textsf{E-PA}^{\omega}$ are standard.
\item A standard functional applied to a standard input yields a standard output. 
\item If two objects are equal and one is standard, so is the other one.  \label{mikeh}
\end{enumerate}
As a result, the system $\P$ proves that any term of $\textsf{E-PA}^{\omega}$ is standard, which will turn out to be essential in Section \ref{thereandback}.  
Note that `equality' as in item \eqref{mikeh} only applies to `actual' equality \emph{and not equality on the reals} in $\P$, as discussed in Remark \ref{equ}. 

\subsubsection{The constructive system $\H$}\label{graf2}
We discuss the fragment $\H$ of $\IST$ from \cite{brie}.   Similar to the way $\IST$ is an extension of $\ZFC$,
$\H$ is just the internal system $\textsf{E-HA}^{\omega}$ with the language extended with a new standardness predicate `\st' and with some special cases of the external and internal axioms of $\IST$.  

\medskip

The technical details of this extension may be found in Section \ref{TATA}; we now provide an intuitive motivation for the external axioms of $\H$, assuming basic familiarity with the finite type system of G\"odel's system $T$ (See Section \ref{TITI}).  Note that $\textsf{E-HA}^{\omega}$ and $\H$ are based on \emph{intuitionistic logic}.  

\medskip

First of all, the system $\H$ does not involve \emph{Transfer} for the same reasons $\P$ does not.  By contrast, the axioms $\HAC_{\INT}$ and \emph{Idealisation} as in \eqref{wellhung} (and its contraposition) are included in $\H$, with the restrictions on $\varphi$ lifted even.  

\medskip

Secondly, the system $\H$ includes some `non-constructive' axioms relativised to `\st'.  We just mention the names of these axioms and refer to Section \ref{TATA} for a full description.  
The system $\H$ involves nonstandard versions of the following axioms: \emph{Markov's principle} (See e.g.\ \cite{beeson1}*{p.\ 47}) and the \emph{independence of premises} principle (See e.g.\ \cite{kohlenbach3}*{\S5}).  \emph{Nonetheless}, $\H$ proves the same internal sentence as $\textsf{E-HA}^{\omega}$ by Theorem \ref{TERM}, i.e.\ the nonstandard versions are not really non-constructive.  

\medskip 

Finally, $\H$ also includes the basic axioms from Definition \ref{debs} as listed at the end of Section \ref{graf1}.  
In particular, $\H$ proves that any term of $\textsf{E-HA}^{\omega}$ is standard.  

\section{The (non-)constructive nature of Nonstandard Analysis}\label{praxis}
\subsection{Introduction}\label{lintro}
A number of informal claims have over the years been made about the constructive nature of Nonstandard Analysis.  These claims range from short quotes, three of which listed below, to more detailed conjectures.  Regarding the latter, we discuss in Sections \ref{keiskers} to \ref{loco} observations by Keisler, Wattenberg, and Osswald on the constructive nature of Nonstandard Analysis.  
Note that `constructive' is often used as a synonym of `effective' in this context.      
\begin{quote}
{It has often been held that nonstandard analysis is highly non-constructive, thus somewhat suspect, depending as it does upon the ultrapower construction to produce a model \textup{[\dots]} On the other hand, nonstandard \emph{praxis} is remarkably constructive; having the extended number set we can proceed with explicit calculations.} (Emphasis in original: \cite{NORSNSA}*{p.\ 31})
\end{quote}
\medskip
\begin{quote}
On the other hand, nonstandard arguments often have in practice a
very constructive flavor, much more than do the corresponding standard proofs. Indeed, those who use nonstandard arguments often say of
their proofs that they are ``constructive modulo an ultrafilter." This is
especially true in measure theory. (\cite{rosse}*{p.\ 230})
\end{quote}
\medskip
\begin{quote}
The aim of this section is to describe a nonstandard map ``explicitly''. There is only one inconstructive step in the proof, namely the choice of a so-called ultrafilter. [\dots] Moreover, the fact that the approach is ``almost constructive'' has the advantage that in special cases one can better see what happens: Up to the ultrafilter one can ``calculate'' the nonstandard embedding $*$. (\cite{fath}*{p.\ 44})
\end{quote}
By contrast, Connes and Bishop have claimed that Nonstandard Analysis is somehow \emph{fundamentally non-constructive}; 
their views are discussed briefly in Section \ref{crackp} for the sake of completeness, while a detailed study may be found in \cite{samsynt}.

\medskip

Finally, the attentive reader has noted that we choose not to discuss the claims of the `French school' of Nonstandard Analysis, in particular Reeb and Harthong's \emph{Intuitionnisme 84} (\cite{reeb3}), regarding the constructive nature of Nonstandard Analysis.  
We motivate this choice by the observation that the associated literature is difficult to access (even in the digital age) and is written in large part in rather academic French, which (no pun intended) has long ceased to be the \emph{lingua franca}.  

\subsection{Keisler's lifting method}\label{keiskers}
We discuss Keisler \emph{lifting method} which is a template for the practice of Nonstandard
Analysis involving the hyperreal line, and a special case of the scheme in Figure \ref{figsaregross}.  
Keisler formulates the lifting method as:
\begin{quote}
{The following strategy, sometimes called the \emph{lifting method}, has
been used to prove results [\dots] on the ordinary real line. }
\begin{enumerate}
\item[Step 1] \textrm{Lift the given `real' objects up to internal
approximations on the hyperfinite grid.}
\item[Step 2] \textrm{Make a series of hyperfinite computations to construct
some new internal object on the hyperfinite grid.}
\item[Step 3] \textrm{Come back down to the real line by taking standard
parts of the results of the computations.}
\end{enumerate}
The hyperfinite computations in Step 2 will typically replace more
problematic infinite computations on the real line.  (\cite{kieken}*{p.\ 234})
\end{quote}
Keisler's description of the lifting method involves the words \emph{constructive} and \emph{computations}, which are elaborated upon
as follows by Keisler:
\begin{quote}
[\dots] hyperreal proofs seem to
be more `constructive' than classical proofs. For example, the solution
of a stochastic differential equation given by the hyperreal proof is
obtained by solving a hyperfinite difference equation by a simple 
induction and taking the standard part. The hyperreal proof is not constructive
in the usual sense, because in $\ZFC$ the axiom of choice is needed even
to get the existence of a hyperreal line. What often happens is that a proof
within \textsf{RZ} or $\IST$ of a statement of the form $(\exists x)\phi(x)$ will produce an $x$
which is definable from $H$, where $H$ is an arbitrary infinite hypernatural
number. Thus instead of a pure existence proof, one obtains an explicit
solution except for the dependence on $H$. The extra information one
gets from the explicit construction of the solution from $H$ makes the proof
easier to understand and may lead to additional results.  (\cite{kieken}*{p.\ 235})
\end{quote}
We conclude that Keisler's lifting method reflects the ideas behind the scheme in Figure \ref{figsaregross}.  
Furthermore, we stress that one can define the notion of (non-)standard real number in the systems $\P$ and $\H$ from Section \ref{frag} and develop Nonstandard Analysis there  including the Stone-Weierstra\ss~theorem as in \cite{sambon}*{\S3-4}.  In particular, most of \textsf{ZFC} is not needed to do (large parts of) Nonstandard Analysis, though one could (wrongly so) get the opposite   
impression from Keisler's previous quote.  

\subsection{Wattenberg's algorithms}\label{wat}
Almost two decades ago, Wattenberg published the paper \emph{Nonstandard Analysis and Constructivism?}~in which he speculates on a connection between Nonstandard Analysis and constructive mathematics (\cite{watje}).  
\begin{quote}
This is a speculative paper. For some time the author has been struck by an apparent affinity between two rather unlikely areas of mathematics - nonstandard analysis and constructivism. [\dots] 
The purpose of this paper is to investigate these ideas by examining several examples. (\cite{watje}*{p.\ 303})
\end{quote}
On one hand, with only slight modification, some of Wattenberg's theorems in Nonstandard Analysis are seen to yield effective and constructive theorems using the systems $\P$ and $\H$ from Section \ref{frag}.  On the other hand, some of Wattenberg's (explicit and implicit) claims regarding the constructive status of the axioms \emph{Transfer} and \emph{Standard Part} from Nonstandard Analysis can be shown to be incorrect.  These results will be explored in \cite{samwatje}, but we can make preliminary observations.  
    
\medskip

Wattenberg digs deeper than the quotes in Section \ref{lintro} by making the following important observation regarding the praxis of Nonstandard Analysis.
\begin{quote}
Despite an essential nonconstructive kernel, many nonstandard arguments are constructive until the final step, a step that frequently involves the standard part map. (\cite{watje}*{p.\ 303})
\end{quote}    
Wattenberg's observation echoes the scheme in Figure \ref{figsaregross} from Section \ref{intronsa}, namely that the axiom \emph{Standardisation} (aka \emph{Standard Part}) is used to push nonstandard solutions down to the world of standard/usual mathematics at the end of a nonstandard proof.  However, in either approach to Nonstandard Analysis this axiom is highly non-constructive, and even very weak instances are such:  Consider \emph{Standardisation} as in $(\textsf{S}')$ from Section \ref{graf1} weakened  as follows: 
\be\tag{$\textsf{S}''$}
(\forall f^{1})\big[(\forall^{\st}x^{0})(\exists^{\st}y^{0})(f(x)=_{0}y)\di \big(\exists^{\st}F\big)(\forall^{\st}x)(f(x)=_{0}F(x))\big].
\ee
It is proved in Section \ref{techni} that $\P+\textsf{S}''$ proves the non-constructive \emph{weak K\"onig's lemma}.  
Furthermore, $(\textsf{S}'')$ is translated to the \emph{special fan functional} (First introduced in \cite{samGH}; see Section \ref{RMKE2}) during term extraction, an object which is extremely hard\footnote{In particular, the special fan functional $\Theta$ from \cite{samGH} is not computable (in the sense of Kleene's S1-S9) from the Suslin functional, the functional version of $\FIVE$, as proved in \cite{dagsam}.} to compute, as studied in \cite{dagsam}.  

\medskip

In a nutshell, Wattenberg suggests that the mathematics in the nonstandard universe in Figure \ref{figsaregross} from Section \ref{intronsa} is rich in constructive content, but that stepping from the nonstandard to the standard universe using \emph{Standardisation} is fundamentally non-constructive.  Thus, the latter axiom should be avoided if one is interested in obtaining the constructive content of Nonstandard Analysis.  Note however that Wattenberg (explicitly and implicitly) makes use of \emph{Transfer} in \cite{watje}, which is \emph{also} highly non-constructive, as established in Section \ref{RMKE} and \cite{sambon}*{\S4}.  In other words, the picture Wattenberg tries to paint is somewhat imperfect, which shall be remedied in the next section.  
    
\subsection{Osswald's local constructivity}\label{loco}
We discuss the notion \emph{local constructivity}, originally formulated by Osswald  (See e.g.\ \cite{nsawork2}*{\S7}, \cite{Oss3}*{\S1-2}, or \cite{Oss2}*{\S17.5}).  
The following heuristic principle describes the notion of \emph{local constructivity}.  
\begin{princ}
\label{locoloco} A mathematical proof is \textbf{locally constructive} if the
core, the essential part, of the proof is constructive in nature.
\end{princ}
Intuitively, a proof $P$ of a theorem $T$ is \emph{locally constructive} if
we can omit a small number of non-constructive initial and/or final steps in 
$P$ and obtain a proof $Q$ of a constructive theorem $T^{\prime }$
very similar to $T$.  In other words, a locally constructive proof has the form $P=(P',P'', P''')$, where $P', P'''$ are inessential and may be non-constructive, and the `main part' $P''$ is constructive.  
As it happens, isolating the constructive part $P''$ can lead to new results, as in the paper\footnote{As a tentative example, the anonymous referee of \cite{Oss3} wrote in his report that techniques in the paper produce `results, substantially extending those in the literature'.\label{curxxxxxx}} \cite{Oss3} and Section~\ref{main}.

\medskip

Osswald has conjectured the \textbf{local constructivity} of (proofs in) classical \textbf{Nonstandard Analysis}.  This conjecture is based on the following three observations regarding the praxis of Nonstandard Analysis:
\begin{enumerate}
\item The first step in a proof in Nonstandard Analysis often involves \emph{Transfer} to enter the nonstandard universe, e.g.\ to convert epsilon-delta definitions 
to nonstandard ones, like for epsilon-delta continuity as in Example \ref{huntress}.  However, \emph{Transfer} is non-constructive as established in Section \ref{RMKE}.  
\item The {mathematical practice} of Nonstandard Analysis
\textbf{after entering the nonstandard world} in large part consists of the manipulation of (nonstandard) universal formulas not involving existential quantifiers. 
This manipulation amounts to mere computation, in many cases nothing more than hyperdiscrete sums and products.  
\item The final step in a proof of Nonstandard Analysis often involves \emph{Standardisation} to push nonstandard objects back into the standard universe, as also observed by Keisler and Wattenberg in the previous sections.  However, this axiom is also non-constructive.  
\end{enumerate}
These three observations suggest the following:  If we take a proof $P=(P',P'', P''')$ in Nonstandard Analysis where $P'$ (resp.\ $P'''$) collects the initial (resp.\ final) applications of \emph{Transfer} (resp.\ of \emph{Standardisation}), then $P''$ is a proof rich in constructive content.  The proof $P$ is thus \textbf{locally constructive} in the sense that $P''$ contains the core argument, and is constructive in a certain as-yet-undefined sense (which will be formalised in Section \ref{main}).   Furthermore, to avoid the use of \emph{Transfer}, we should avoid epsilon-definitions and work instead directly with the nonstandard definitions involving infinitesimals, as also discussed in Section \ref{palmke}.     

\medskip

In conclusion, Osswald has conjectured that \textbf{Nonstandard Analysis is locally constructive}, suggesting that what remains of Nonstandard Analysis after stripping away \emph{Transfer} and \emph{Standardisation} is constructive.  We will formalise this observation by introducing` \emph{pure} Nonstandard Analysis' in Definition~\ref{pure}.  

\subsection{The Bishop-Connes critique of Nonstandard Analysis}\label{crackp}
For completeness, we briefly discuss the critique of Nonstandard Analysis by Bishop and Connes, relating to the Bishop quote in Section \ref{intro}.  
Their critique may be summarised as:
\begin{center}
\emph{The presence of ideal objects \(in particular infinitesimals\) in Nonstandard Analysis yields the absence of computational content.}
\end{center}
In particular, for rather different reasons and in different contexts, Bishop and Connes equate `meaningful mathematics' and `mathematics with computational content', and therefore claim {Nonstandard Analysis} is devoid of meaning as it lacks -in their view- any and all computational content.
A detailed study of this topic by the author, debunking the Bishop-Connes critique, may be found in \cite{samsynt}.  

\medskip

The often harsh criticism of Nonstandard Analysis by Bishop and Connes is a matter of the historical record and discussed in remarkable detail in \cites{dahaus, art,kaka, kano2, gaanwekatten,tallmann, daupje}.  Furthermore, the \emph{arguments by Connes and Bishop for} this critique have been dissected in surprising detail \emph{and found wanting} in the aforementioned references, but establishing the \emph{opposite} of the Bishop-Connes critique, namely that Nonstandard Analysis is \emph{rich in computational content}, first took place in \cite{sambon,samsynt}.  

\medskip
As noted in Section \ref{intro}, we are not interested in adding to this literature, but we shall examine the motivations of Connes and Bishop for their critique, as this will provide more insight into Nonstandard Analysis.  
To this end, recall that Bishop considers his mathematics to be concerned with \emph{constructive objects}, i.e.\ those described by algorithms on the integers.  The following quote is telltale.  
\begin{quote}  
Everything attaches itself
to number, and every mathematical statement ultimately expresses the
fact that if we perform certain computations within the set of positive
integers, we shall get certain results. (\cite{bish1}*{\S1.1})
\end{quote}
This quote suggests a \emph{fundamental} ontological divide between Bishop's mathematics and Nonstandard Analysis, as the latter \emph{by design} is based on ideal objects (like infinitesimals) with prima facie \emph{no algorithmic description at all}.   
In this light, it is not a stretch of the imagination to classify Nonstandard Analysis as \emph{fundamentally non-constructive}, i.e.\ antipodal to Bishop's mathematics, or to paraphrase Bishop: \emph{an attempt to dilute meaning/computational content further}.  

\medskip

Furthermore, this `non-constructive first impression' is confirmed by the fact that the `usual' development of Nonstandard Analysis involves \emph{quite non-constructive}\footnote{The usual development of Robinson's Nonstandard Analysis proceeds via the construction of a nonstandard model using a free ultrafilter.  The existence of the latter is only slightly weaker than the axiom of choice of $\ZFC$ (\cite{nsawork2}).} axioms.  The latter is formulated by Katz and Katz as:
\begin{quote}
[\dots] the hyperreal approach incorporates an element of non-constructivity at the basic level of the very number system itself. (\cite{kaka}*{\S3.3})
\end{quote}
Finally, the aforementioned `first impression' does not seem to disappear if one considers more basic mathematics, e.g.\ arithmetic rather than set theory.
Indeed, \emph{Tennenbaum's theorem} \cite[\S11.3]{kaye} `literally' states that any nonstandard model of Peano Arithmetic is not computable.  \emph{What is meant} is that for a nonstandard model $\M$ of Peano Arithmetic, the operations $+_{\M}$ and $\times_{\M}$ cannot be computably defined in terms of the operations $+_{\N}$ and $\times_{\N}$ of the standard model $\N$ of Peano Arithmetic.  
In light of the above, Nonstandard Analysis \emph{seems} fundamentally non-constructive even at the level of arithmetic.   

\medskip

Now, the Fields medallist Alain Connes has formulated similar negative criticism of classical Nonstandard Analysis in print on at least seven occasions.  
The first table in \cite{kano2}*{\S 3.1} runs a tally for the period 1995-2007.  Connes judgements range from \emph{inadequate} and \emph{disappointing}, to \emph{a chimera} and \emph{irremediable defect}.  Regarding the effective content of Nonstandard Analysis, the following quote is also telltale.    
\begin{quote}
Thus a non-standard number gives us canonically a non-measurable subset of $[0,1]$. This is the end of the rope for being `explicit' since (from another side of logics) one knows that it is just impossible to construct explicitely a non-measurable subset of $[0,1]$! 
(Verbatim copy of the text in \cite{conman3})
\end{quote}
In contrast to Bishop, Connes does not take a foundational position but has more pragmatic arguments in mind:  According to Connes, Nonstandard Analysis is fundamentally non-constructive and 
thus useless for physics, as follows:
\begin{quote}
 The point is that as soon as you have a non-standard number, you get a non-measurable set. And in Choquet's circle, having well studied the Polish school, we knew that every set you can name is measurable. So it seemed utterly doomed to failure to try to use non-standard analysis to do physics.  (\cite{conman}*{p.\ 26})
\end{quote}  
Indeed, a major aspect of physics is the testing of hypotheses against experimental data, nowadays done on computers.  But how can this testing be done if the mathematical formalism at hand is fundamentally non-constructive, as Connes claims?  

\medskip

In Section \ref{fraki}, we establish the opposite of the Bishop-Connes critique: rather than being `fundamentally non-constructive', infinitesimals and other nonstandard objects provide an \emph{elegant shorthand} for computational/constructive content.  

\section{Jenseits von Konstruktiv und Klassisch}\label{main}
In this section, we shall establish that Nonstandard Analysis occupies the twilight zone between the constructive and non-constructive.   
Perhaps more accurately, as also suggested by the title\footnote{We leave it to the reader to decide how our title corresponds to Nietzsche's famous book.} of this section, Nonstandard Analysis will be shown to \emph{transcend} the latter distinction.  

\medskip

To this end, we explore the constructive status of $\H$ extended with classical logic in Section \ref{fraki} and shall discover that these extensions can neither rightfully be called non-constructive nor constructive.   
What is more, the observations from Section \ref{fraki} pertaining to logic allow us to uncover the vast computational content of classical Nonstandard Analysis.  
We shall consider three examples in Section \ref{hum} to \ref{RMKE2} and formulate a general template in Section~\ref{henk} for obtaining computation from \emph{pure} Nonstandard Analysis, as defined in Definition \ref{pure}.    
These results will establish the \textbf{local constructivity} of Nonstandard Analysis as formulated in Section \ref{loco}.  
As a crowning achievement, we show in Section \ref{thereandback} that theorems of Nonstandard Analysis are `meta-equivalent' to theorems rich in computational content.  

\subsection{The twilight zone of mathematics}\label{fraki}
In this section, we study some computational and constructive aspects of the systems $\H$ and $\P$.  During this study, it will turn out that the latter occupies the twilight zone between the constructive and non-constructive, as suggested in Section \ref{intro}.    

\medskip

We first consider the system $\H$, the conservative extension of the constructive system \textsf{E-HA}$^{\omega}$ from the previous section.  
Now, if $\LEM$ consists of $A\vee \neg A$ for all internal $A$, then $\textsf{E-HA}^{\omega}+\LEM$ is just $\textsf{E-PA}^{\omega}$.  In other words, the addition of $\LEM$ turns $\textsf{E-HA}^{\omega}$ into a \emph{classical} system, losing the constructive (BHK) interpretation of the quantifiers.   
Now consider $\H+\LEM$ and recall the following theorem, which is called the \emph{main theorem on program extraction} in \cite{brie}*{Theorem 5.9}.  
\begin{thm}[Term extraction I]\label{TERM}
Let $\varphi$ be internal, i.e.\ not involving `$\st$', and let $\Delta_{\INT}$ be a collection of internal formulas. 
If $\H+\Delta_{\INT}\vdash(\forall^{\st}x)(\exists^{\st}y)\varphi(x,y)$, then we can extract a term $t$ from this proof with $\textsf{\textup{E-HA}}^{\omega}+\Delta_{\INT}\vdash(\forall x)(\exists y\in t(x))\varphi(x,y)$.  
The term $t$ is such that $t(x)$ is a finite list and does not depend on $\Delta_{\INT}$.    
\end{thm}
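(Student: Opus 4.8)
The plan is to prove Theorem~\ref{TERM} by means of a \emph{functional interpretation} of $\H$ into $\textsf{E-HA}^{\omega}$: a Herbrandized, Dialectica-style interpretation adapted to the language with the standardness predicate `$\st$'. Since $\H$ is based on intuitionistic logic, one interprets it directly (no negative translation, in contrast to the companion statement for the classical system $\P$). To every formula $\Phi$ of $\H$ one associates, by recursion on its build-up, an interpretation
\[
\Phi_{\st}\;\equiv\;(\exists^{\st}\underline{x})(\forall^{\st}\underline{y})\,\varphi_{\st}(\underline{x},\underline{y}),
\]
with $\underline{x},\underline{y}$ finite tuples of variables of suitable finite types and $\varphi_{\st}$ \emph{internal}: for internal atomic $\varphi$ one sets $\varphi_{\st}\equiv\varphi$ with both tuples empty, there is a dedicated clause for `$\st(\cdot)$', and the clauses for $\wedge,\vee,\to,\forall,\exists$ are as in ordinary Dialectica except that a \emph{standard} existential is witnessed not by a single object but by a \emph{finite list} of candidate objects (exactly as in $\HAC_{\INT}$), so that the clauses for $\exists^{\st}$ and $\forall^{\st}$ traffic in finite sequences. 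This Herbrandization is precisely what lets the interpretation go through over $\textsf{E-HA}^{\omega}$ without any decidability hypothesis on internal formulas — at the price of yielding only a finite list of potential witnesses rather than a single one, which is why $t(x)$ in the statement is a finite list.

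The heart of the argument is the \textbf{Soundness Theorem}: if $\H+\Delta_{\INT}\vdash\Phi$, then from the derivation one extracts closed terms $\underline{t}$ of $\textsf{E-HA}^{\omega}$ with $\textsf{E-HA}^{\omega}+\Delta_{\INT}\vdash(\forall\underline{y})\,\varphi_{\st}(\underline{t},\underline{y})$, proved by induction on the length of the $\H$-derivation. The logical rules of intuitionistic predicate logic with equality are treated as in the standard Dialectica soundness proof: modus ponens by composing and substituting the hypothesis terms, and the structural rules (contraction, $\vee$-elimination) by merging the relevant finite lists of candidate witnesses — concatenation, with componentwise maxima where a single numerical bound is wanted. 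The arithmetical axioms of $\textsf{E-HA}^{\omega}$, in particular induction, are witnessed by terms built with primitive recursion in G\"odel's system~$T$, which is exactly why the ambient finite-type apparatus is indispensable. The genuinely new part is verifying the \emph{nonstandard} axioms of $\H$: Idealisation~\eqref{wellhung} and its contraposition, $\HAC_{\INT}$, the basic standardness axioms of Definition~\ref{debs} (all constants standard; a standard functional on a standard argument is standard; equality transports standardness), and the relativised Markov's principle and independence-of-premises scheme. Each of these turns out to be interpreted by identity or otherwise trivial witnessing terms — indeed, these are exactly the `extra' principles the interpretation validates — so the work there is checking the computations rather than inventing the terms; but assembling all these cases correctly, together with getting the clauses for `$\st$' and for `$\to$' exactly right so that everything meshes, is the main obstacle.

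Granting Soundness, the theorem follows by unwinding the interpretation of $\Phi\equiv(\forall^{\st}x)(\exists^{\st}y)\varphi(x,y)$ with $\varphi$ internal. Since $\varphi_{\st}\equiv\varphi$, one gets $[(\exists^{\st}y)\varphi(x,y)]\equiv(\exists^{\st}s)(\exists y\in s)\varphi(x,y)$ for a finite list $s$, and hence $\Phi_{\st}\equiv(\exists^{\st}T)(\forall^{\st}x)(\exists s\in T(x))(\exists y\in s)\varphi(x,y)$, which — concatenating the finitely many lists comprising $T(x)$ into one list $t(x)$ — is equivalent to $(\exists^{\st}t)(\forall^{\st}x)(\exists y\in t(x))\varphi(x,y)$. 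Soundness now produces a closed term $t$ of $\textsf{E-HA}^{\omega}$ with $\textsf{E-HA}^{\omega}+\Delta_{\INT}\vdash(\forall x)(\exists y\in t(x))\varphi(x,y)$ (the `$\st$' annotations simply disappear, as $\textsf{E-HA}^{\omega}$ has no such predicate), with $t(x)$ a finite list, which is the asserted conclusion. Lastly, $t$ does not depend on $\Delta_{\INT}$ for a structural reason: every formula of $\Delta_{\INT}$ is internal, hence interpreted by itself with \emph{empty} witnessing tuples, so the members of $\Delta_{\INT}$ never contribute realizers in the induction and appear only in the side hypothesis `$\textsf{E-HA}^{\omega}+\Delta_{\INT}\vdash\cdots$', never inside the extracted term.
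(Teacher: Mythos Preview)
Your proposal is correct and matches the approach the paper relies on. The paper itself does not give a self-contained proof of this theorem: it cites \cite{brie}*{Theorem 5.9} (the ``main theorem on program extraction'') and, in Appendix~\ref{TATA}, restates the result with the one-line justification ``Immediate by \cite{brie}*{Theorem 5.9}.'' The underlying method in that cited reference is exactly the Herbrandized nonstandard Dialectica interpretation you describe, which the paper calls $D_{\st}$ (see the remarks preceding Theorem~\ref{nogwelconsenal} and after Theorem~\ref{consresult}); your soundness-by-induction sketch, the treatment of the external axioms of $\H$ by essentially trivial realizers, and your explanation of why internal $\Delta_{\INT}$ contributes no realizers are all faithful to that development.
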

Note that the \emph{conclusion} of the theorem, namely `$\textsf{E-HA}^{\omega}+\Delta_{\INT}\vdash(\forall x)(\exists y\in t(x))\varphi(x,y)$', \emph{does not involve Nonstandard Analysis} anymore. 
The term $t$ from the previous theorem is part of G\"odel's system $T$ from Section \ref{TITI}, i.e.\ essentially a computer program formulated in e.g.\ Martin-L\"of type theory or Agda (\cites{loefafsteken, agda}).  

\medskip

What is more important is the following observation (stemming from the proof of \cite{brie}*{Theorem 5.5}):  Taking $\Delta_{\INT}$ to be empty, we observe that constructive information (the term $t$) can be obtained from proofs in $\H$.  This is a nice result, but hardly surprising in light of the existing semantic approach to \emph{constructive} Nonstandard Analysis (See Section \ref{palmke}) and the realisability interpretation of constructive mathematics.   What is surprising is the following:  Taking $\Delta_{\INT}$ to be $\LEM$ in Theorem~\ref{TERM}, we observe that proofs in $\H+\LEM$ or $\H$ provide \emph{the same kind of constructive information}, namely the term $t$.  Of course, the final system (namely $\textsf{E-HA}^{\omega}+\LEM$) is non-constructive in the former case, but the fact remains that the \emph{original sin of non-constructivity} $\LEM$ \emph{does not influence} the constructive information (in the form of the term $t$) extracted from $\H$, as is clear from Theorem \ref{TERM}.  

\medskip

Hence, while $\textsf{E-HA}^{\omega}$ loses its constructive nature when adding $\LEM$, the system $\H$ retains its `term extraction property' by Theorem \ref{TERM} when adding $\LEM$.  
To be absolutely clear, we do \textbf{not} claim that $\H+\LEM$ is a constructive system, but observe that it has the \emph{same term extraction property} as the constructive system $\H$.   
For this reason, $\H+\LEM$ is neither constructive (due to the presence $\LEM$) nor can it be called non-constructive (due to the presence of the \textbf{same} term extraction property as the \emph{constructive} system $\H$).  
In conclusion, $\H+\LEM$ is an example of a system which inhabits the \emph{twilight zone} between the constructive and non-constructive.  

\medskip

An alternative view based on Brouwer's dictum \emph{logic depends upon mathematics} is that extending $\textsf{E-HA}^{\omega}$ to the nonstandard system $\H$ \emph{fundamentally} changes the mathematics.   The logic, dependent as it is on mathematics according to Brouwer, changes along too; an example of this change-in-logic is as follows: $\LEM$ changes from \emph{the original sin of non-constructivity} in the context of $\textsf{E-HA}^{\omega}$ to a \emph{computationally inert} statement in the context of $\H$ by Theorem \ref{TERM}.  We shall discuss this view in more detail in Section \ref{kiko}.    

\medskip

As a logical next step, we discuss $\P$, which is essentially $\H+\LEM_{\ns}$ and where the latter is the law of excluded middle $\Phi \vee \neg\Phi$ for \emph{any} formula (in the language of $\H$).  Note that $\LEM_{\ns}$ is much more general than $\LEM$.  \emph{Nonetheless}, we have the following theorem, not proved explicitly in \cite{brie}, and first formulated in \cite{sambon}.
\begin{thm}[Term extraction II]\label{TERM2}
Let $\varphi$ be internal, i.e.\ not involving `$\st$', and let $\Delta_{\INT}$ be a collection of internal formulas. 
If $\P+\Delta_{\INT}\vdash(\forall^{\st}x)(\exists^{\st}y)\varphi(x,y)$, then we can extract a term $t$ from this proof with $\textsf{\textup{E-PA}}^{\omega}+\Delta_{\INT}\vdash(\forall x)(\exists y\in t(x))\varphi(x,y)$.  
The term $t$ is such that $t(x)$ is a finite list and does not depend on $\Delta_{\INT}$.    
\end{thm}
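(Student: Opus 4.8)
The plan is to reduce Theorem~\ref{TERM2} (for $\P$) to Theorem~\ref{TERM} (for $\H$) by the standard device of relativising classical proofs to intuitionistic ones via a negative (double-negation) translation, suitably adapted to the nonstandard setting. The key observation is that $\P$ is, up to the extra nonstandard axioms, just $\H$ with $\LEM_{\ns}$ added; so a proof in $\P+\Delta_{\INT}$ of $(\forall^{\st}x)(\exists^{\st}y)\varphi(x,y)$ is a proof in $\H+\Delta_{\INT}+\LEM_{\ns}$, and we want to massage it into a proof in $\H$ of some statement of the same syntactic shape $(\forall^{\st}x)(\exists^{\st}y)\psi(x,y)$ with $\psi$ internal, so that Theorem~\ref{TERM} applies and yields the term $t$.

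First I would recall/verify that the external axioms of $\P$ — full Idealisation~\eqref{wellhung}, $\HAC_{\INT}$, and the basic axioms of Definition~\ref{debs} — are either themselves provable in $\H$ or are invariant (in the appropriate sense) under the negative translation, so that the translated proof still goes through over $\H$; this is the point where one leans on the machinery of \cite{brie}, where exactly such a translation (a nonstandard variant of the Shoenfield or Gödel–Gentzen translation, interacting correctly with the `$\st$' predicate) is set up. Second, since $\varphi$ is internal, its negative translation $\varphi^{N}$ is provably equivalent to $\varphi$ in $\textsf{E-HA}^{\omega}$ together with (at most) the Markov-type and independence-of-premises principles that are already built into $\H$; and crucially the outer quantifier block $(\forall^{\st}x)(\exists^{\st}y)$ is of the low logical complexity for which the translation introduces no essential damage — a $\Sigma^{\st}$-statement over an internal matrix translates, modulo the principles available in $\H$, back to a statement of the same form. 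Third, I would push $\Delta_{\INT}$ through: each formula in $\Delta_{\INT}$ is internal, hence its negative translation is $\Delta_{\INT}$-provably (indeed $\textsf{E-PA}^{\omega}$-provably) equivalent to itself, so the translated collection can be replaced by $\Delta_{\INT}$ on the $\H$ side, and one then invokes Theorem~\ref{TERM} with this same $\Delta_{\INT}$ to extract $t$ with $\textsf{E-HA}^{\omega}+\Delta_{\INT}\vdash(\forall x)(\exists y\in t(x))\varphi(x,y)$. Finally, since $\textsf{E-HA}^{\omega}+\LEM=\textsf{E-PA}^{\omega}$ and $\varphi$ is internal (so classical reasoning about it is harmless), the conclusion upgrades to $\textsf{E-PA}^{\omega}+\Delta_{\INT}\vdash(\forall x)(\exists y\in t(x))\varphi(x,y)$, and the finite-list, $\Delta_{\INT}$-independence properties of $t$ are inherited verbatim from Theorem~\ref{TERM}.

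The main obstacle I anticipate is \emph{not} the classical-to-intuitionistic translation per se — that is textbook — but checking that it commutes correctly with the nonstandard apparatus: one must confirm that the negative translation of $\HAC_{\INT}$ and of full Idealisation is still derivable in $\H$ (Idealisation for internal $\varphi$ behaves well, but one should make sure the translation does not inflate $\varphi$ out of the internal fragment), and that the translation does not disturb the `$\st$' annotations in a way that breaks the hypothesis shape $(\forall^{\st}x)(\exists^{\st}y)\varphi$. In other words, the real content is a syntactic bookkeeping lemma stating that $\P$ is interpretable in $\H$ via a negative translation that fixes internal formulas and preserves the class of $\Sigma^{\st}$-over-internal sentences up to $\H$-provable equivalence; once that is in place, Theorem~\ref{TERM2} follows from Theorem~\ref{TERM} with essentially no further work.
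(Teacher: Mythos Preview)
Your approach is correct in outline and correctly identifies the crux (the interaction of the negative translation with the nonstandard axioms and with the $(\forall^{\st}x)(\exists^{\st}y)$ quantifier shape), but the paper proceeds differently. Rather than factoring through $\H$ and Theorem~\ref{TERM}, the paper invokes a direct soundness theorem for $\P$ already established in \cite{brie}*{Theorem~7.7} (stated in the appendix as Theorem~\ref{consresult}): for any $\Phi$, if $\P+\Delta_{\INT}\vdash\Phi$ then $\textsf{E-PA}^{\omega*}+\Delta_{\INT}$ proves the Herbrandised version of $\Phi^{S_{\st}}$, where $S_{\st}$ is the Shoenfield-style interpretation of \cite{brie}*{Def.~7.1}. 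The only remaining work is then a purely syntactic verification that normal forms are fixed points of $S_{\st}$, i.e.\ that $[(\forall^{\st}x)(\exists^{\st}y)\varphi]^{S_{\st}}\equiv(\forall^{\st}x)(\exists^{\st}y)\varphi$ for internal $\varphi$; the paper carries this out clause by clause in the proof of Theorem~\ref{consresultcor11}.

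The two routes are closely related under the hood: $S_{\st}$ is defined in \cite{brie} as the Kuroda negative translation followed by the $D_{\st}$ interpretation that underlies Theorem~\ref{TERM}, so your proposal is effectively an unfolding of the machinery the paper cites as a black box. What the paper's route buys is economy: one does not need to re-verify that the negative translations of $\HAC_{\INT}$ and Idealisation land back in $\H$, nor that the translated normal form can be massaged back to normal-form shape using the Markov- and independence-of-premises-type axioms of $\H$ --- all of that is absorbed into the single citation of \cite{brie}*{Theorem~7.7}, leaving only the invariance check. What your route buys is transparency about \emph{why} the classical $\P$ inherits the term-extraction property of the intuitionistic $\H$: precisely because the negative translation kills $\LEM_{\ns}$ while preserving (up to $\H$-provable equivalence) both the internal axioms $\Delta_{\INT}$ and the $\Sigma^{\st}$-over-internal shape of the conclusion.
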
  
As for $\H+\LEM$, the system $\P$ is neither constructive due to the presence $\LEM$ \emph{and} $\LEM_{\ns}$, but it has the \textbf{same} term extraction property as the constructive system $\H$ by Theorem \ref{TERM2}, i.e.\ calling $\P$ non-constructive does not make sense in light of (and does not do justice to) Theorem \ref{TERM} for $\Delta_{\INT}=\emptyset$.  

\medskip

In short, the systems $\H+\LEM$ and $\P$ share essential properties with classical mathematics (in the form of $\LEM$ and $\LEM_{\ns}$) and with constructive mathematics (the term extraction property of $\H$ from Theorem \ref{TERM}).  For this reason, these systems can be said to occupy the \emph{twilight zone} between the constructive and non-constructive: They are neither one nor the other.  More evidence for the latter claim may be found in Section \ref{wag1} where $\P$ is shown to satisfy a natural nonstandard version of the \emph{existence property} and related `hallmark' properties of intuitionistic systems.  
We show in Section~\ref{RMKE} that adding \emph{Transfer} to $\P$ results in an unambiguously \emph{non-constructive} system; the axiom \emph{Transfer} will be shown to be the `real' law of excluded middle of Nonstandard Analysis in a very concrete way.       

\medskip

Furthermore, on a more pragmatic note, the above results betray that we can gain access to the computational/constructive content of \emph{classical} Nonstandard Analysis via the term extraction property formulated in Theorem~\ref{TERM2}.  
It is then a natural question \emph{how wide the scope of the latter is}.  The rest of this section is dedicated to showing that this scope encompasses \emph{pure Nonstandard Analysis}.  
The latter is Nonstandard Analysis restricted to \emph{nonstandard axioms} (\emph{Transfer} and \emph{Standardisation}) and \emph{nonstandard definitions} (for continuity, compactness, integration, convergence, et cetera).  This will establish the \textbf{local constructivity of Nonstandard Analysis} as formulated in Section \ref{loco}.  

\medskip
     
The above discussion deals with \emph{logic} and we now discuss a first \emph{mathematical} example.   
In particular, we now consider an elementary application of Theorem \ref{TERM2} to \emph{nonstandard continuity} as in Example \ref{leffe}.  We shall refer to a formula of the form $(\forall^{\st}x)(\exists^{\st}y)\varphi(x,y)$ with $\varphi$ internal as a \emph{normal form}.  
\begin{exa}[Nonstandard and constructive continuity]\label{krel}\rm
Suppose $f:\R\di \R$ is nonstandard continuous, provable in $\P$.  In other words, similar to Example \ref{leffe}, the following is provable in our system $\P$:
\be\label{frik}
(\forall^{\st}x\in \R)(\forall y\in \R)(x\approx y \di f(x)\approx f(y)).
\ee    
Since $\P$ includes \emph{Idealisation} (essentially) as in $\IST$, $\P$ also proves the following:
\begin{align}\label{exagoe}\textstyle
(\forall^{\st}x\in \R)(\forall^{\st}k\in \N)&(\exists^{\st}N\in\N)\\
&\textstyle\underline{(\forall y\in \R)\big( |x- y|<\frac{1}{N} \di|f(x)- f(y)|<\frac{1}{k}\big)},\notag
\end{align}
in exactly the same way as proved in Example \ref{leffe}.  Since the underlined formula in \eqref{exagoe} is internal, \eqref{exagoe} is a normal form and we note that Theorem \ref{TERM2} applies to `$\P\vdash \eqref{exagoe}$'.  Applying the latter, 
we obtain a term $t$ such that $\textsf{E-PA}^{\omega}$ proves:
\[\textstyle
(\forall  x\in\R, k\in \N)(\exists N\in t(x, k)){(\forall y\in \R)\big( |x- y|<\frac{1}{N} \di|f(x)- f(y)|<\frac{1}{k}\big)}.
\]
Since $t(x, k)$ is a finite list of natural numbers, define $s(x, k)$ as the maximum of $t(x, k)(i)$ for $i<|t(x, k)|$ where $|t(x,k)|$ is the length of the finite list $t(x,k)$.  
The term $s$ is called a \emph{modulus} of continuity of $f$ as it satisfies:
\be\label{yeah}\textstyle
(\forall  x\in\R, k\in \N){(\forall y\in \R)\big( |x- y|<\frac{1}{s(x, k)} \di|f(x)- f(y)|<\frac{1}{k}\big)}.
\ee
Similarly, from the proof in $\P$ that $f$ is nonstandard \emph{uniform} continuous, we may extract a modulus of \emph{uniform} continuity (See Section \ref{hum}).  
This observation is \emph{important}: moduli are an essential part of Bishop's \emph{Constructive Analysis} (See e.g.\ \cite{bish1}*{p.\ 34}), so we just proved that such constructive information \emph{is implicit in the nonstandard notion of continuity}!  

\medskip

For the `reverse direction', the basic axioms (See Definition \ref{debs}) of $\P$ state that all constants in the language of $\textsf{E-PA}^{\omega}$ are standard.  
Hence, if \textsf{E-PA}$^{\omega}$ proves \eqref{yeah} for some term $s$, then $\P$ proves \eqref{yeah} \textbf{and} that $s$ is standard.  Hence, $\P$ proves  
\be\label{yeah2}\textstyle
(\forall^{\st}  x\in\R, k\in \N)(\exists^{\st}N\in \N){(\forall y\in \R)\big( |x- y|<\frac{1}{N} \di|f(x)- f(y)|<\frac{1}{k}\big)}, 
\ee
as we may take $N=s(x, k)$.  However, \eqref{yeah2} clearly implies \eqref{frik}, i.e.\ nonstandard continuity also follows from `constructive' continuity with a modulus!  Furthermore, one establishes in exactly the same way that $\P\vdash[ \eqref{yeah}\di \eqref{frik}]$ for any term $t$.   
\end{exa}
By the previous example, the nonstandard notion of (uniform) continuity amounts to the same as the `constructive' definition involving moduli.  
As discussed in Section~\ref{henk}, other nonstandard definitions (of integration, differentiability, compactness, convergence, et cetera) are also `meta-equivalent' to their constructive counterparts.  
Thus,  rather than being `fundamentally non-constructive' as claimed by Bishop and Connes in Section \ref{crackp}, infinitesimals (and other nonstandard objects) provide an \emph{elegant shorthand} for computational content, like moduli from constructive mathematics.  Furthermore, the types involved in nonstandard statements are generally lower (than the associated statement not involving Nonstandard Analysis), which should appeal to the practitioners of \emph{Reverse Mathematics} where higher-type objects are represented via so-called codes in second-order arithmetic.  

\medskip

Finally, Theorem \ref{TERM2} allows us to extract computational content (in the form of the term $t$) from (classical) Nonstandard Analysis in the spirit of Kohlenbach's \emph{proof mining} program.  
We refer to \cite{kohlenbach3} for the latter, while we shall apply Theorem \ref{TERM2} in the next sections to establish the local constructivity of Nonstandard Analysis.

\subsection{Two examples of local constructivity}\label{hum}
In this section, we study two basic examples of theorems proved using Nonstandard Analysis, namely the \emph{intermediate value theorem} and \emph{continuous functions on the unit interval are Riemann integrable}.  We shall apply the heuristic provided by \emph{local constructivity}, i.e.\ only consider the parts of the proof in the nonstandard universe not involving \emph{Transfer} and \emph{Standard Part}, 
and apply Theorem~\ref{TERM2} to 
obtain the effective content.  
\subsubsection{Intermediate value theorem}\label{IVTNRM}
In this section, we discuss the computational content of a nonstandard proof of the \emph{intermediate value theorem}; we shall make use of the \emph{uniform} notion of nonstandard continuity, defined as follows.  
\bdefi\label{Kont}
The function $f$ is \emph{nonstandard uniformly continuous} on $[0,1]$ if
\be\label{soareyou4}
(\forall x, y\in [0,1])[x\approx y \di f(x)\approx f(y)].
\ee
\edefi
We first prove the \emph{intermediate value theorem} in $\IST$; all notions have their usual (epsilon-delta/internal) meaning, unless explicitly stated otherwise.  
\begin{thm}[$\IVT$]\label{soemple}
For every continuous function $f:[0,1]\di \R$ such that $f(0) f(1)< 0$, there is $x\in [0,1]$ such that $f(x)=0$.
\end{thm}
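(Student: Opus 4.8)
The plan is to prove the Intermediate Value Theorem inside $\IST$ by the standard nonstandard template sketched in Figure~\ref{figsaregross}, i.e.\ by lifting the problem to the hyperfinite grid, performing an elementary hyperdiscrete search for a sign change, and then coming back down via \textsf{S} and \textsf{T}. Concretely, fix a continuous $f\colon[0,1]\to\R$ with $f(0)f(1)<0$. By \textsf{T} it suffices to prove the theorem when $f$ is standard, since the statement is internal and has only the parameter $f$; so assume $f$ standard. The first real step is to pass from epsilon-delta continuity to nonstandard uniform continuity on $[0,1]$: as in Example~\ref{huntress}, applying \textsf{T} (and using that $[0,1]$ is compact, so $f$ is uniformly continuous in the usual sense) one shows $f$ satisfies \eqref{soareyou4}, the nonstandard uniform continuity from Definition~\ref{Kont}.

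Next comes the hyperdiscrete core, exactly as outlined in footnote~\ref{Exake}. Let $N$ be a nonstandard natural number and consider the grid points $\tfrac{j}{N}$ for $j\le N$. Since $f(0)f(1)<0$, the internal least-number principle applied to the internal predicate $f(\tfrac{j}{N})f(\tfrac{j+1}{N})\le 0$ yields a least $j_{0}\le N$ with $f(\tfrac{j_{0}}{N})f(\tfrac{j_{0}+1}{N})\le 0$. Put $y:=\tfrac{j_{0}}{N}\in[0,1]$; then $y\approx\tfrac{j_{0}+1}{N}$, so by \eqref{soareyou4} we get $f(y)\approx f(\tfrac{j_{0}+1}{N})$, and since the two values $f(y)$ and $f(\tfrac{j_{0}+1}{N})$ have product $\le 0$ and are infinitely close, each of them is infinitesimal; in particular $f(y)\approx 0$. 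Thus we have produced a nonstandard approximate zero.

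The final step is the descent. Apply \textsf{S} to obtain a standard real $x\approx y$ (concretely, standardise a hyperfinite set of rational approximations to $y$, as in the footnote to the worked IVT example preceding the statement), so that $x\in[0,1]$ since $0\le y\le 1$ implies $0\le x\le 1$ by transfer of the order. By nonstandard continuity of $f$ at the standard point $x$ (which follows from nonstandard uniform continuity), $x\approx y$ gives $f(x)\approx f(y)\approx 0$. Now $f(x)\approx 0$ unfolds to $(\forall^{\st}k\in\N)(|f(x)|<\tfrac1k)$; since $f$ and $x$ are standard, $|f(x)|$ is standard, and \textsf{T} applied to this internal statement about the standard real $|f(x)|$ yields $(\forall k\in\N)(|f(x)|<\tfrac1k)$, hence $f(x)=0$. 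This proves the internal statement for standard $f$, and a final application of \textsf{T} delivers the full theorem.

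The main obstacle is not any single deep step but rather the careful bookkeeping of where internal versus external reasoning is used: one must check that nonstandard uniform continuity genuinely follows by \textsf{T} from the compactness-plus-continuity hypothesis, that the least-number search is a legitimately \emph{internal} induction (so that $j_{0}$ exists), that \textsf{S} can be applied to produce a standard $x$ infinitely close to $y$ (this is where the axiom of choice flavour of \textsf{S} is genuinely needed), and finally that the last `$f(x)=0$' conclusion is obtained by \textsf{T} applied to a standard object. None of these is hard in isolation, but the proof only works because the non-constructive moves are confined to the initial and final transfer/standardisation steps — which is precisely the point the surrounding discussion of local constructivity wishes to illustrate.
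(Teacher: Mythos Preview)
Your proof is correct and follows essentially the same route as the paper: reduce to standard $f$ via \textsf{T}, pass to nonstandard uniform continuity on $[0,1]$, perform the hyperdiscrete least-number search on the grid $\{j/N:j\le N\}$ to find an approximate zero, then use \textsf{S} to get a standard $x$ infinitely close to it and \textsf{T} to conclude $f(x)=0$. The only minor divergence is that you obtain nonstandard uniform continuity via \textsf{T} applied to classical uniform continuity (using ordinary compactness of $[0,1]$), whereas the paper's preferred route is via \textsf{S}: first derive nonstandard compactness $(\forall x\in[0,1])(\exists^{\st}y\in[0,1])(x\approx y)$ by standardising rational approximations, and then combine this with nonstandard pointwise continuity to get nonstandard uniform continuity directly --- but the paper explicitly notes that your \textsf{T}-based route works as well.
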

\begin{proof}
Clearly $\IVT$ is internal, and we may thus assume that $f$ is standard, as applying \emph{Transfer} to this restriction yields full $\IVT$.   
By Example~\ref{huntress}, $f$ is nonstandard continuous as in \eqref{soareyouke} for $X=[0,1]$ by \emph{Transfer}.  
The latter can also be used to show that $f$ is nonstandard \emph{uniform} continuous as in \eqref{frik}, but this follows easiest from $(\forall x\in [0,1])(\exists^{\st}y\in [0,1])(x\approx y)$, i.e.\ the nonstandard compactness of the unit interval, which follows in turn from applying \emph{Standard Part} to the trivial formula $(\forall^{\st}k\in \N)(\exists^{\st} q\in \Q)(|x-q|<\frac{1}{2^{k}})$ and noting that the resulting sequence converges (to a standard real).  
Now let $N$ be a nonstandard natural number and let $j\leq N$ be the least number such that $f(\frac{j}{N})f(\frac{j+1}{N})\leq 0$ (whose existence is guaranteed by $f(0)f(1)<0$ and induction).  Then $f(j/N)\approx 0$ by nonstandard uniform continuity.  By nonstandard compactness there is standard $y\in [0,1]$ such that $y\approx j/N$.  Then $f(y)\approx 0$ by nonstandard uniform continuity, and the former is short for $(\forall^{\st}k^{0})(|f(x)|<\frac{1}{k})$.  Finally, applying \emph{Transfer} yields $f(x)=0$.
\end{proof}
Local constructivity as in Section \ref{loco} dictates that we omit all instances of \emph{Transfer} and \emph{Standard Part} to obtain the constructive core of the nonstandard proof of $\IVT$.  
Note that the former axioms were used to obtain nonstandard \emph{uniform} continuity.  
Thus, we obtain the following version, which we prove in $\P$.  
\begin{thm}[$\IVT_{\ns}$]
For every standard and nonstandard uniformly continuous $f:[0,1]\di \R$ such that $f(0) f(1)< 0$, there is $x\in [0,1]$ such that $f(x)\approx 0$.
\end{thm}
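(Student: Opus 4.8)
The plan is to isolate the constructive core of the proof of Theorem~\ref{soemple}, namely the segment that uses neither \emph{Transfer} nor \emph{Standard Part}, and to check that it already goes through in $\P$. First I would produce a nonstandard $N\in\N$. Applying \emph{Idealisation} \textsf{I} (available in $\P$ as in \eqref{wellhung}) to the internal formula $z<y$ with $z,y$ ranging over $\N$, the premise $(\forall^{\st~\textup{fin}}x)(\exists y)(\forall z\in x)(z<y)$ holds trivially (take $y$ one more than the maximum of the finite sequence $x$), so $\P\vdash(\exists y)(\forall^{\st}x\in\N)(x<y)$; I fix such an $N$, which is then nonstandard and satisfies $\tfrac1N\approx 0$ since $\tfrac1N<\tfrac1k$ for every standard $k$.

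Next I would work internally, i.e.\ inside $\textsf{E-PA}^{\omega}$, to locate an index $j<N$ with $f(\tfrac jN)f(\tfrac{j+1}N)\le 0$. Its existence is forced by $f(0)f(1)<0$: were $f(\tfrac jN)f(\tfrac{j+1}N)>0$ for every $j<N$, then an induction on $j\le N$ — base case using $f(0)\ne 0$ (a consequence of $f(0)f(1)<0$), induction step multiplying $f(0)f(\tfrac jN)>0$ by $f(\tfrac jN)f(\tfrac{j+1}N)>0$ and cancelling $f(\tfrac jN)^2>0$ — would yield $f(0)f(\tfrac jN)>0$ for all $j\le N$, hence $f(0)f(1)>0$, a contradiction. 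The least-number principle, internal and provable in $\textsf{E-PA}^{\omega}$, then produces the least such $j$. Since $j+1\le N$ we have $\tfrac jN,\tfrac{j+1}N\in[0,1]$, and $\big|\tfrac jN-\tfrac{j+1}N\big|=\tfrac1N\approx 0$, so $\tfrac jN\approx\tfrac{j+1}N$.

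Now I would invoke the hypothesis that $f$ is nonstandard uniformly continuous on $[0,1]$ in the form of Definition~\ref{Kont}, which places no standardness restriction on its arguments, applied to $\tfrac jN\approx\tfrac{j+1}N$: this gives $f(\tfrac jN)\approx f(\tfrac{j+1}N)$. Combined with $f(\tfrac jN)f(\tfrac{j+1}N)\le 0$ this forces $f(\tfrac jN)\approx 0$: the two values lie on opposite sides of $0$, so $|f(\tfrac jN)|\le |f(\tfrac jN)|+|f(\tfrac{j+1}N)|=|f(\tfrac jN)-f(\tfrac{j+1}N)|\approx 0$. Taking $x:=\tfrac jN\in[0,1]$ we obtain $f(x)\approx 0$, as required. (Note that the standardness of $f$ assumed in the hypothesis is not actually used in this direction; it matters only for the reverse implication discussed in the later sections.)

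The computations above are routine; the one delicate point, which I expect to be the main obstacle, is that the existence of the sign-change index $j$ must be secured by a genuinely \emph{internal} argument — as done here via induction and the least-number principle — rather than by a sequential-compactness or supremum argument, which would covertly reintroduce \emph{Standard Part} and defeat the point of the exercise. Keeping the proof free of \emph{Transfer} and \emph{Standard Part} throughout (and, to a much lesser extent, checking the elementary sign bookkeeping in the step from $f(\tfrac jN)f(\tfrac{j+1}N)\le 0$ and $f(\tfrac jN)\approx f(\tfrac{j+1}N)$ to $f(\tfrac jN)\approx 0$) is the main thing to get right.
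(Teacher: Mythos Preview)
Your proof is correct and follows essentially the same approach as the paper: obtain a nonstandard $N$, use an internal induction to locate a sign-change index $j\le N$, and conclude $f(j/N)\approx 0$ from nonstandard uniform continuity. You are more explicit than the paper on two points (deriving the existence of a nonstandard $N$ from \emph{Idealisation}, and spelling out the inequality $|a|+|b|=|a-b|$ when $ab\le 0$), and your induction invariant $f(0)f(j/N)>0$ is a minor reformulation of the paper's case split on the sign of $f(0)$; your closing remark that the standardness of $f$ is not used in this direction is also accurate.
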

\begin{proof}
Let $f$ be as in the theorem, fix $N^{0}>0$ and suppose for all $i\leq N$ that $f(\frac{i}{N})f(\frac{i+1}{N})> 0$.  If $f(0)>0$, let $\varphi(i)$ be the formula $i\leq N\di f(i/N)>0$.  Then $\varphi(0)$ and $\varphi(i)\di \varphi(i+1)$ for all $i\in \N$, both by assumption.  Induction now yields that $f(1)>0$, a contradiction.     
Thus, let $N$ be a nonstandard natural number and let $j\leq N$ be the least number such that $f(\frac{j}{N})f(\frac{j+1}{N})\leq 0$.  
Then $f(j/N)\approx 0$ by nonstandard uniform continuity.  The same approach works for $f(0)<0$.
\end{proof}
Applying \emph{term extraction} to $\IVT_{\ns}$ as in Theorem \ref{TERM2}, we obtain the following theorem, which includes a \emph{constructive} `approximate' version of $\IVT$ (See \cite{beeson1}*{I.7}).  
\begin{thm}\label{TE1}
From the proof `$\P\vdash \IVT_{\ns}$', we obtain a term $t$ such that $\textsf{\textup{E-PA}}^{\omega}$ proves 
that for all $f,g$ where $g$ is the modulus of uniform continuity of $f$, we have $(\forall k\in \N)(|f(t(f,g,k))|<_{\R}\frac{1}{k})$.
\end{thm}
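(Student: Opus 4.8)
The plan is to recast the given proof of $\IVT_{\ns}$ in $\P$ as a derivation of a \emph{normal form} in the sense of Example~\ref{krel}, apply Theorem~\ref{TERM2} with $\Delta_{\INT}=\emptyset$ to pull out a term of G\"odel's system $T$, and finally post-process the finite list of candidate approximate roots this yields into a single real number by primitive recursion.

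The first step is to eliminate the external quantifiers from the hypothesis and the conclusion of $\IVT_{\ns}$. For the hypothesis: in $\P$, for \emph{standard} $g$, the \emph{internal} statement that $g$ is a modulus of uniform continuity of $f$ on $[0,1]$, namely $(\forall k)(\forall x,y\in[0,1])\big(|x-y|<\tfrac1{g(k)}\di|f(x)-f(y)|<\tfrac1k\big)$, implies that $f$ is nonstandard uniformly continuous as in \eqref{soareyou4}: if $x\approx y$ then, $g(k)$ being standard for each standard $k$, we get $|x-y|<\tfrac1{g(k)}$ and hence $|f(x)-f(y)|<\tfrac1k$, so $f(x)\approx f(y)$. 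For the conclusion: fix standard $f,g,k$ with $g$ as above and $f(0)f(1)<0$; the given proof of $\IVT_{\ns}$ produces $x_{0}\in[0,1]$ with $f(x_{0})\approx0$, and the rounded point $q:=\lfloor x_{0}g(2k)\rfloor/g(2k)$ is a standard rational in $[0,1]$ (an integer $\leq g(2k)$ divided by the standard number $g(2k)$) satisfying $|x_{0}-q|<\tfrac1{g(2k)}$, whence $|f(x_{0})-f(q)|<\tfrac1{2k}$ and therefore $|f(q)|<\tfrac1k$. Folding $f,g,k$ into a single tuple variable $z$ and letting $\varphi(z,q)$ be the internal formula ``if $g$ is a modulus of uniform continuity of $f$ on $[0,1]$ and $f(0)f(1)<0$, then $q$ is a rational in $[0,1]$ with $|f(q)|<\tfrac1k$'', the above (with $q:=0$ when the antecedent fails) gives
\[
\P\vdash(\forall^{\st}z)(\exists^{\st}q)\varphi(z,q),
\]
which is a normal form to which Theorem~\ref{TERM2} applies.

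Applying Theorem~\ref{TERM2} with $\Delta_{\INT}=\emptyset$ yields a term $t_{0}$ of G\"odel's system $T$ with $\textsf{E-PA}^{\omega}\vdash(\forall z)(\exists q\in t_{0}(z))\varphi(z,q)$; unfolding, whenever $g$ is a modulus of uniform continuity of $f$ on $[0,1]$ and $f(0)f(1)<0$, the finite list $t_{0}(f,g,k)$ of rationals in $[0,1]$ contains some $q$ with $|f(q)|<\tfrac1k$. To single one out, evaluate the list at $2k$, say $t_{0}(f,g,2k)=(q_{1},\dots,q_{n})$, which then contains some $q_{i}$ with $|f(q_{i})|<\tfrac1{2k}$. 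Using the elementary fact, witnessed by a primitive recursive term provably in $\textsf{E-PA}^{\omega}$, that for rationals $a<b$ and any real $z$ one can decide whether $z<b$ or $z>a$ (compute a rational approximation of $z$ to precision less than $\tfrac{b-a}{2}$ and compare rationals), decide for each $i\leq n$ whether $|f(q_{i})|<\tfrac1k$ or $|f(q_{i})|>\tfrac1{2k}$; the index of a genuine $q_{i}$ with $|f(q_{i})|<\tfrac1{2k}$ cannot land in the second case, so the bounded search for the least $i\leq n$ landing in the first case succeeds, and we set $t(f,g,k):=q_{i}$ for that $i$. Then $t$ is a term of G\"odel's system $T$ and $\textsf{E-PA}^{\omega}$ proves $(\forall k\in\N)(|f(t(f,g,k))|<_{\R}\tfrac1k)$ whenever $g$ is the modulus of uniform continuity of $f$ on $[0,1]$ (and $f(0)f(1)<0$), as claimed.

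I expect the main obstacle to be the first step: turning $\IVT_{\ns}$ into a bona fide normal form $(\forall^{\st}z)(\exists^{\st}q)\varphi(z,q)$ with $\varphi$ internal. This requires treating the external \emph{hypothesis} of $\IVT_{\ns}$ by replacing ``nonstandard uniformly continuous'' with the internal ``has modulus $g$'' (legitimate since $g$ ranges over standard objects), and the external \emph{conclusion} ``$\exists x\,(f(x)\approx0)$'' by rounding the nonstandard witness against the standard modulus so that the rounded point is again standard. A secondary, purely internal, point is that term extraction supplies only a finite list of candidate approximate roots, so one must descend to a single root inside system $T$; sharpening the requested accuracy from $\tfrac1k$ to $\tfrac1{2k}$ and using the primitive recursive approximate comparison of reals does exactly this.
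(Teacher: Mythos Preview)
Your proposal is correct and follows essentially the same approach as the paper: bring $\IVT_{\ns}$ into a normal form $(\forall^{\st}f,g,k)(\exists^{\st}q)\varphi(f,g,k,q)$ with internal $\varphi$ (whose antecedent is ``$g$ is a modulus of uniform continuity of $f$ and $f(0)f(1)<0$''), apply Theorem~\ref{TERM2} to extract a term producing a finite list of candidate approximate roots, and then use approximate comparison of reals (the paper calls this the \emph{law of comparison}) with a sharpened accuracy to select a single witness by bounded search.

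The only noteworthy difference is cosmetic: to obtain a \emph{standard} witness $q$ from the nonstandard $x_{0}$ with $f(x_{0})\approx 0$, the paper first invokes \emph{Idealisation} to rewrite the conclusion as $(\forall^{\st}k)(\exists x)(|f(x)|<\tfrac1k)$ and then uses nonstandard uniform continuity to upgrade $x$ to a standard $q$, whereas you round $x_{0}$ directly against the standard grid of mesh $\tfrac1{g(2k)}$ and observe that the result is standard because it is a natural number bounded by the standard $g(2k)$ divided by $g(2k)$. Both routes land on the same normal form; yours is slightly more direct. The final selection step is identical up to the inessential choice of $2k$ versus the paper's $2^{k}$.
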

\begin{proof}
We may apply Theorem \ref{TERM2} to normal forms only.  Now, the nonstandard uniform continuity of $f$ on $[0,1]$ has the following normal form:
\be\label{the}\textstyle
(\forall^{\st}k\in \N)(\exists^{\st}N\in\N)\underline{(\forall x,y\in [0,1])\big( |x- y|<\frac{1}{N} \di|f(x)- f(y)|<\frac{1}{k}\big)},
\ee
which is proved in $\P$ exactly as for nonstandard continuity in Example \ref{leffe}; the underlined formula is abbreviated $A(f, N, k )$.  The conclusion of $\IVT_{\ns}$ is not a normal form, but 
is \emph{equivalent}\footnote{Apply \emph{Idealisation} to $(\forall^{\st} z^{0^{*}})(\exists x\in [0,1])(\forall k \in z)(|f(x)|<\frac{1}{k})$ to see this.} to $(\forall^{\st} k\in \N)(\exists x\in [0,1])(|f(x)|<\frac{1}{k})$ by \emph{Idealisation}.  By nonstandard uniform continuity, we also have $(\forall^{\st} k\in \N)(\exists^{\st} q \in [0,1])(|f(q)|<\frac{1}{k})$, which \emph{is} a normal form.  
Hence, taking `$f\in D$' to mean that $f:\R\di \R$ satisfies $f(0)f(1)<0$, $\IVT_{\ns}$ becomes 
\be\label{squint}\textstyle
(\forall^{\st}f\in D)\big[(\forall^{\st}l^{0})(\exists^{\st}N^{0})A(f, N, l)  \di (\forall^{\st} k^{0})(\exists^{\st} q \in [0,1])(|f(q)|<\frac{1}{k})\big].
\ee
Since standard functions output standard values for standard inputs, we obtain 
\be\label{strong}\textstyle
(\forall^{\st}f\in D, g)\big[(\forall^{\st}l^{0})A(f, g(l), l)  \di (\forall^{\st} k^{0})(\exists^{\st} q \in [0,1])(|f(q)|<\frac{1}{k})\big],
\ee
and dropping the remaining `st' in the antecedent, we get
\be\label{weak}\textstyle
(\forall^{\st}f\in D, g)\big[(\forall l^{0})A(f, g(l), l)  \di (\forall^{\st} k^{0})(\exists^{\st} q \in [0,1])(|f(q)|<\frac{1}{k})\big],
\ee
which becomes the following normal form:
\be\label{norman}\textstyle
(\forall^{\st}f\in D, g,k )(\exists^{\st} q \in [0,1])\big[(\forall l^{0})A(f, g(l), l)  \di (|f(q)|<\frac{1}{k})\big].
\ee
Applying Theorem \ref{TERM2} to `$\P\vdash \eqref{norman}$', we obtain a term $s$ such that
\be\label{nosq2}\textstyle
(\forall f\in D, g,k )(\exists  q \in s(f, g, k))\big[(\forall l^{0})A(f, g(l), l)  \di (|f(q)|<\frac{1}{k})\big].
\ee
To obtain the term $t$ from the theorem, note that we can decide if $|f(q)|>\frac{1}{2^{k}}$ or $|f(q)|<\frac{1}{k}$ for any $q$, using the `law of comparison' (\cite{beeson1}*{\S1.3}) of constructive mathematics.  Indeed, this law states that (under the BHK-interpretation):
\be\label{kilkil}
(\forall x, y, z\in \R)(x<y \di x<z \vee z<y).
\ee
Thus, there is an effective procedure to decide which disjunct in \eqref{kilkil} holds.  
Hence, define $t(f, g, k)$ as the first element $q\in s(f,g,2^{k})$ such that $|f(q)|<\frac{1}{k}$.  We obtain  
\be\label{nosq}\textstyle
(\forall f\in D, g )\big[(\forall l^{0})A(f, g(l), l)  \di (\forall k^{0})(|f(t(f,g, k))|<\frac{1}{k})\big], 
\ee
from \eqref{nosq2} by pushing the quantifier pertaining to $k$ inside again.  
\end{proof}
Approximate versions of $\IVT$ are of course well-known, but the previous is not intended to be new, but merely to provide an example of local constructivity as in Section \ref{loco}:  After stripping away \emph{Transfer} and \emph{Standard Part} in the proof of Theorem \ref{soemple}, we are left with a proof of $\IVT_{\ns}$ in $\P$, which converts into a constructive version of $\IVT$ after term extraction by Theorem \ref{TE1}.  

\medskip

Furthermore, we point out the (apparently innocent) use of proof-by-contradiction in the proof of $\IVT_{\ns}$.  Also, Theorem \ref{TE1} is merely \emph{one of many} possible results obtainable by term extraction.  Indeed, while the conclusion of the former theorem has effective content, 
we can as well obtain \emph{relative computability results} from the proof of $\IVT$ in Theorem \ref{soemple}, e.g.\ by not removing the use of \emph{Transfer}.  We shall explore this avenue further in Section \ref{kujio}.    
  
\medskip

Finally, it is also important to note that we wrote out the proof of Theorem \ref{TE1} in full detail, while one readily skips from \eqref{squint} to \eqref{nosq} with some practice.  
To the latter end, we conclude this section with the following remark on normal forms; we show that moduli (like in \eqref{norman}) come about when converting an implication between two normal forms into an normal form in $\P$ (See Theorem \ref{nogwelconsenal} for $\H$).  
\begin{rem}[Normal forms and implication]\label{doeisnormaal}\rm
As discussed in Example \ref{krel}, the nonstandard definition of continuity can be brought into a normal form \eqref{exagoe}, while the nonstandard definition of \emph{uniform} continuity has the normal form \eqref{the}.  These observations are important as normal forms have computational content thanks to Theorem \ref{TERM2}.  The proof of Theorem \ref{TE1} suggests that \emph{normal forms are closed under implication}; indeed, the normal form \eqref{norman} is derived from an \emph{implication} between two normal forms.  We now establish the general case as follows.    
Let $\varphi, \psi$ be internal and consider the following implication between normal forms:
\be\label{nora}
(\forall^{\st}x)(\exists^{\st}y)\varphi(x, y)\di (\forall^{\st}z)(\exists^{\st}w)\psi(z, w).  
\ee
Since standard functionals have standard output for standard input, \eqref{nora} implies
\be\label{nora2}
(\forall^{\st}\zeta)\big[(\forall^{\st}x)\varphi(x, \zeta(x))\di (\forall^{\st}z)(\exists^{\st}w)\psi(z, w)\big].  
\ee
Bringing all standard quantifiers outside, we obtain the following normal form:
\be\label{nora3}
(\forall^{\st}\zeta, z)(\exists^{\st} w, x)\big[\varphi(x, \zeta(x))\di \psi(z, w)\big],
\ee
as the formula in square brackets is internal.  Now, \eqref{nora3} is equivalent to \eqref{nora2}, but one usually (like in the proof of Theorem \ref{TE1}) weakens the latter as follows:  
\be\label{nora4}
(\forall^{\st}\zeta, z)(\exists^{\st} w)\big[(\forall x)\varphi(x, \zeta(x))\di \psi(z, w)\big],
\ee
as \eqref{nora4} is closer to the usual mathematical definitions.  For instance, if the antecedent of \eqref{nora} is (the normal form of) uniform continuity, then the antecedent of \eqref{nora4} is the constructive definition of uniform-continuity-with-a-modulus, while this is not the case for \eqref{nora3}.  
We shall shorten the remaining proofs by just providing normal forms and jumping straight from \eqref{nora} to \eqref{nora4} whenever possible.  
\end{rem}  

\subsubsection{Riemann integration}\label{riekenaan}
In this section, we discuss the computational content of a nonstandard proof that continuous functions are Riemann integrable; we shall make use of the usual definitions of Riemann integration as follows.  
\bdefi[Riemann Integration]\label{kunko}~
\begin{enumerate}
\item A \emph{partition} of $[0,1]$ is an increasing sequence $\pi=(0, t_{0}, x_{1},t_{1},  \dots,x_{M-1}, t_{M-1}, 1)$.  We write `$\pi \in P([0,1]) $' to denote that $\pi$ is such a partition.
\item For $\pi\in P([0,1])$, $\|\pi\|$ is the \emph{mesh}, i.e.\ the largest distance between two adjacent partition points $x_{i}$ and $x_{i+1}$. 
\item For $\pi\in P([0,1])$ and $f:\R\di \R$, the real $S_{\pi}(f):=\sum_{i=0}^{M-1}f(t_{i}) (x_{i+1}-x_{i}) $ is the \emph{Riemann sum} of $f$ and $\pi$.  
\item A function $f$ is \emph{nonstandard integrable} on $[0,1]$ if
\be\label{soareyou5}
(\forall \pi, \pi' \in P([0,1]))\big[\|\pi\|,\| \pi'\|\approx 0  \di S_{\pi}(f)\approx S_{\pi'}(f)  \big].
\ee
\item A function $f$ is \emph{integrable} on $[0,1]$ if
\be\label{soareyou6}\textstyle
(\forall k^{0})(\exists N^{0})(\forall \pi, \rho \in P([0,1]))\big[\|\pi\|,\| \rho\|<\frac{1}{N}  \di |S_{\pi}(f)- S_{\rho}(f)|<\frac{1}{k}  \big].
\ee
A modulus of (Riemann) integration $\omega^{1}$ provides $N=\omega(k)$ as in \eqref{soareyou6}.  
\end{enumerate}
\edefi
Let $\RIE$ be the (internal) statement that a function is Riemann integrable on the unit interval if it is (pointwise) continuous there.  
\begin{thm}\label{henry}
The system $\IST$ proves $\RIE$.
\end{thm}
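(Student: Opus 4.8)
The plan is to mirror the proof of the intermediate value theorem in Theorem~\ref{soemple}: since $\RIE$ is internal, it suffices by \emph{Transfer} to prove it for \emph{standard} $f$, after which the real work is done entirely with the nonstandard notions of continuity and integrability, and a second application of \emph{Transfer} at the end descends to the internal statement \eqref{soareyou6}, whence $\RIE$ in full.

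First I would fix a standard, (pointwise) continuous $f:[0,1]\di\R$. By Example~\ref{huntress} (applied to the standard set $X=[0,1]$ and standard $f$), $f$ is nonstandard continuous, i.e.\ $(\forall^{\st}x\in[0,1])(\forall y\in[0,1])(x\approx y\di f(x)\approx f(y))$. Exactly as in the proof of Theorem~\ref{soemple}, applying \emph{Standard Part} to the trivial formula $(\forall^{\st}k\in\N)(\exists^{\st}q\in\Q)(|x-q|<\tfrac{1}{2^{k}})$ and noting that the resulting sequence converges to a standard real yields the nonstandard compactness of the unit interval, $(\forall x\in[0,1])(\exists^{\st}y\in[0,1])(x\approx y)$; combining this with nonstandard continuity at standard points upgrades $f$ to nonstandard \emph{uniform} continuity as in \eqref{soareyou4}. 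As in Example~\ref{leffe}, the latter can be put in the equivalent normal form $(\forall^{\st}k\in\N)(\exists^{\st}N\in\N)(\forall x,y\in[0,1])\big(|x-y|<\tfrac1N\di|f(x)-f(y)|<\tfrac1k\big)$.

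The main step is to prove that $f$ is nonstandard integrable as in \eqref{soareyou5}. Fix $\pi,\pi'\in P([0,1])$ with $\|\pi\|,\|\pi'\|\approx 0$; I would show $|S_{\pi}(f)-S_{\pi'}(f)|\le\tfrac2k$ for every \emph{standard} $k$, which gives $S_{\pi}(f)\approx S_{\pi'}(f)$. Indeed, for standard $k$ pick the standard $N$ provided by the normal form above; since $N$ is standard and $\|\pi\|,\|\pi'\|\approx 0$ we have $\|\pi\|,\|\pi'\|<\tfrac1N$. Let $\rho$ be the common refinement of the partition points of $\pi$ and $\pi'$; by a routine (internal) rearrangement one writes both $S_{\pi}(f)$ and $S_{\pi'}(f)$ as sums over the subintervals $J$ of $\rho$, the $J$-term of the first being $f(t_{\pi,J})|J|$, where $t_{\pi,J}$ is the tag of the unique $\pi$-interval containing $J$, and similarly $f(t_{\pi',J})|J|$ for $\pi'$. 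Since $t_{\pi,J}$ (resp.\ $t_{\pi',J}$) and any point of $J$ lie in a single subinterval of $\pi$ (resp.\ $\pi'$) of width $<\tfrac1N$, the triangle inequality gives $|f(t_{\pi,J})-f(t_{\pi',J})|<\tfrac2k$, whence $|S_{\pi}(f)-S_{\pi'}(f)|\le\sum_{J}|f(t_{\pi,J})-f(t_{\pi',J})|\,|J|<\tfrac2k\sum_{J}|J|=\tfrac2k$. I expect this to be the main obstacle — not logically deep, but one must be careful that the per-subinterval error is bounded by a \emph{single} small quantity ($\tfrac1k$, for each standard $k$), so that summing it over a refinement of \emph{nonstandard} length still produces something infinitesimal; phrasing the estimate "for each standard $k$" as above is precisely what sidesteps the spurious "sum of infinitely many infinitesimals" worry.

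Finally I would descend from \eqref{soareyou5} to the internal statement \eqref{soareyou6}. Resolving the occurrences of $\approx$ and pulling the $(\forall^{\st}k)$ of the consequent to the front, \eqref{soareyou5} reads $(\forall^{\st}k)(\forall\pi,\pi')\big[(\forall^{\st}N)(\|\pi\|,\|\pi'\|<\tfrac1N)\di|S_{\pi}(f)-S_{\pi'}(f)|<\tfrac1k\big]$; rewriting the implication classically and absorbing the ($N$-free) conclusion under the quantifier turns the matrix into $(\exists^{\st}N)\psi(N,\pi,\pi',k)$ with $\psi$ internal, so for fixed standard $k$ an application of \emph{Idealisation} in the form \eqref{criv}, followed by taking $N_{0}$ to be the maximum of the resulting standard finite set, produces the normal form $(\forall^{\st}k)(\exists^{\st}N)(\forall\pi,\pi'\in P([0,1]))\big[\|\pi\|,\|\pi'\|<\tfrac1N\di|S_{\pi}(f)-S_{\pi'}(f)|<\tfrac1k\big]$, i.e.\ \eqref{soareyou6} with every quantifier relativised to `$\st$'. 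As $f$ is standard, \emph{Transfer} strips the `$\st$' and yields \eqref{soareyou6}, so the standard continuous $f$ is Riemann integrable; a last application of \emph{Transfer} to the restriction "$f$ is standard" delivers $\RIE$ in full.
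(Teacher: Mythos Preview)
Your proof is correct. The overall architecture---reduce to standard $f$ by \emph{Transfer}, upgrade to nonstandard uniform continuity via nonstandard compactness, prove nonstandard integrability, then descend to \eqref{soareyou6} by \emph{Idealisation} and \emph{Transfer}---matches the paper's, and your descent step is in fact more explicit than the paper's, which merely remarks that the equivalence between \eqref{soareyou5} and \eqref{soareyou6} ``is essentially identical'' to the continuity case.

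The genuine difference is in the core step ``nonstandard uniform continuity $\Rightarrow$ nonstandard integrability''. You use the direct common-refinement argument: index both Riemann sums by the subintervals $J$ of the refinement, bound $|f(t_{\pi,J})-f(t_{\pi',J})|$ by $\tfrac{2}{k}$ via a point of $J$, and sum. The paper instead first replaces each partition $\pi$ by a partition $\pi'$ of nearby \emph{rationals} of the form $i/2^{M+1}$, shows $S_\pi(f)\approx S_{\pi'}(f)$ by the computation \eqref{hork}, and only then forms refinements of the rational partitions, chaining $S_\pi\approx S_{\pi'}\approx S_{\pi''}\approx S_{\rho''}\approx S_{\rho'}\approx S_\rho$. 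Your route is shorter and entirely adequate in $\IST$, where classical logic makes the common refinement unproblematic; the paper's detour through rationals is motivated by what comes next (the term-extraction in Theorem~\ref{varou} and the constructive `law of comparison' \eqref{kilkil}), where one wants the refinement to be \emph{effectively} definable. For the present theorem your argument is the cleaner one; the paper's buys compatibility with the later constructive analysis.
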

\begin{proof}
In a nutshell, the proof in \cite{loeb1}*{p.\ 57} goes through with minimal modification (from the Robinsonian framework). 
In the latter proof, \emph{Transfer} and \emph{Standardisation} (although the latter can be avoided) are used to derive the nonstandard definitions of \emph{uniform} continuity and integration from  the epsilon-delta definitions of pointwise continuity and Riemann integration.  
We have proved the equivalence between epsilon-delta and nonstandard continuity in Example \ref{huntress}, and the case for Riemann integration is essentially identical.  
It thus remains to prove that nonstandard uniform continuity implies nonstandard integrability.  

\medskip
\noindent
Thus, let $\pi'=(0, t_{0}', x'_{1},t_{1}',  \dots,x'_{M-1}, t_{M-1}', 1)$ be  $\pi=(0, t_{0}, x_{1},t_{1},  \dots,x_{M-1}, t_{M-1}, 1)$ with 
all reals replaced\footnote{Note that this can be done effectively by using the `law of comparison' (\cite{beeson1}*{\S1.3}) as in \eqref{kilkil}.} with their rational approximations of the form $\frac{i}{2^{M+1}}$.  By nonstandard uniform continuity, we have $\max_{i\leq M-1}|f(t_{i}')-f(t_{i})|=\zeta_{0}\approx0$, and define $x_{i+1}'-x_{i}'=x_{i+1}-x_{i}+\eps_{i}$ where $0\approx| \eps_{i}|\leq \frac{1}{2^{M+1}} $ by definition.  Applying \eqref{the} for $k=1$ and suitable standard $x\in [0,1]$, we see that $f(y)\leq M$ for any $y\in [0,1]$.  Thus $|\sum_{i=0}^{M-1}f(t_{i}') \eps_{i}|\leq  \frac{M^{2}}{2^{M+1}}\approx 0$ and consider the following:
\begin{align}\textstyle
|S_{\pi}(f)-S_{\pi'}(f)| &\textstyle=|\sum_{i=0}^{M-1}f(t_{i}) (x_{i+1}-x_{i}) -\sum_{i=0}^{M-1}f(t_{i}') (x_{i+1}'-x_{i}') |\notag\\
&\textstyle=|\sum_{i=0}^{M-1}(f(t_{i}')-f(t_{i}))\cdot (x_{i+1}-x_{i})+ \sum_{i=0}^{M-1}f(t_{i}') \eps_{i} |\notag \\
&\textstyle\leq \sum_{i=0}^{M-1}|f(t_{i}')-f(t_{i})| \cdot|x_{i+1}-x_{i}|+ |\sum_{i=0}^{M-1}f(t_{i}') \eps_{i} |\notag \\
&\textstyle \lessapprox   \sum_{i=0}^{M-1}\zeta_{0} \cdot|x_{i+1}-x_{i}|\notag\\
&\textstyle=\zeta_{0} \sum_{i=0}^{M-1}\cdot|x_{i+1}-x_{i}|=\zeta_{0}\approx 0. \label{hork}
\end{align}
For two partitions $\pi, \rho\in P([0,1])$, consider the associated `approximate' partitions $\pi'$ and $\rho'$.  
Since the latter only consist of rationals, it is easy to define `refinements' $\pi'', \rho''$ of equal length and which contain all points with even index from both $\pi'$ and $\rho'$; the points of odd index have to be repeated as necessary.  We then have 
\[
S_{\pi}(f)\approx S_{\pi'}(f)\approx S_{\pi''}(f)\approx S_{\rho''}(f)\approx S_{\rho'}(f)\approx S_{\rho}(f),   
\]
where only the third `$\approx$' requires a proof, which is analogous to \eqref{hork}.  
\end{proof}
Local constructivity as in Section \ref{loco} dictates that we omit all instances of \emph{Transfer} and \emph{Standard Part} to obtain the constructive core of the nonstandard proof of $\RIE$.  
Note that the former axioms were used to obtain nonstandard \emph{uniform} continuity and let $\textsf{RIE}_{\ns}$ be the statement that \emph{every function $f$ which is nonstandard uniformly continuous on the unit interval is nonstandard integrable}.  

\medskip

Let $\RIE_{\ef}(t)$ be the statement that \emph{$t(g)$ is a modulus of integration if $g$ is a modulus of uniform continuity of $f$ on the unit interval}.  The latter theorem may be called `constructive' as we copied it from Bishop's constructive analysis (See \cite{bridges1}*{p.\ 53}).  The nonstandard version apparently yields the constructive version.  
\begin{thm}\label{varou}
From a proof of $\RIE_{\ns}$ in $\P $, a term $t$ can be extracted such that $\textup{\textsf{E-PA}}^{\omega} $ proves $\RIE_{{\ef}}(t)$.  
\end{thm}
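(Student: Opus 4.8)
The plan is to mimic the proof of Theorem~\ref{TE1} verbatim: express $\RIE_{\ns}$ as an implication between normal forms, collapse that implication to a single normal form by the device of Remark~\ref{doeisnormaal}, apply Theorem~\ref{TERM2} to the resulting $\P$-proof, and post-process the finite list of witnesses into an honest modulus. First I would record the two relevant normal forms. Nonstandard uniform continuity of $f$ on $[0,1]$ has the normal form \eqref{the}, say $(\forall^{\st}l)(\exists^{\st}N)A(f,N,l)$ with $A$ internal (the matrix of \eqref{the}), proved in $\P$ exactly as for continuity in Example~\ref{leffe}. For nonstandard integrability \eqref{soareyou5} I would resolve both occurrences of `$\approx$', pull `$(\forall^{\st}k)$' to the front of the consequent, trade the internal implication whose antecedent is `$(\forall^{\st}m)(\|\pi\|,\|\pi'\|<\tfrac1m)$' for `$(\exists^{\st}m)(\dots)$' (classically, using $\LEM_{\ns}$, exactly as the conclusion of $\IVT_{\ns}$ is rewritten in the proof of Theorem~\ref{TE1}), and then apply the contraposition of \emph{Idealisation} as in \eqref{criv}/Example~\ref{leffe} to push `$(\exists^{\st}m)$' past the internal quantifier `$(\forall \pi,\pi')$' — taking the maximum of the resulting finite standard list and using that the matrix is monotone in $m$ (enlarging $m$ shrinks the mesh bound $\tfrac1m$, hence only weakens the hypothesis of the implication). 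The upshot is a normal form $(\forall^{\st}k)(\exists^{\st}m)B(f,m,k)$ where $B(f,m,k)$ is precisely the matrix of \eqref{soareyou6} with $N=m$.

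Next, $\RIE_{\ns}$ reads $(\forall^{\st}f)\big[(\forall^{\st}l)(\exists^{\st}N)A(f,N,l)\di(\forall^{\st}k)(\exists^{\st}m)B(f,m,k)\big]$. Applying the passage \eqref{nora}$\to$\eqref{nora4} from Remark~\ref{doeisnormaal} (introduce the modulus-of-uniform-continuity functional $g$, bring standard quantifiers outside, and weaken by dropping the `st' on the antecedent quantifier), this becomes the normal form
\[
(\forall^{\st}f,g,k)(\exists^{\st}m)\big[(\forall l)A(f,g(l),l)\di B(f,m,k)\big].
\]
Since $\P\vdash\RIE_{\ns}$, Theorem~\ref{TERM2} yields a term $s$ of G\"odel's $T$ with $\textsf{E-PA}^{\omega}\vdash(\forall f,g,k)(\exists m\in s(f,g,k))\big[(\forall l)A(f,g(l),l)\di B(f,m,k)\big]$. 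Because $B(f,\cdot,k)$ is monotone in its second argument, I would define $t$ by letting $t(g)(k)$ be the maximum entry of the finite list $s(f,g,k)$: then whenever $g$ is a modulus of uniform continuity of $f$, i.e.\ $(\forall l)A(f,g(l),l)$, we obtain $B(f,t(g)(k),k)$ for all $k$, which on unpacking $B$ (and comparing with \eqref{soareyou6}) is exactly the assertion that $t(g)$ is a modulus of Riemann integration of $f$, that is $\RIE_{\ef}(t)$. The internal combinatorics never has to be redone here — the estimate \eqref{hork} and the refinement-of-partitions step of the proof of Theorem~\ref{henry} are what \emph{guarantee} $\P\vdash\RIE_{\ns}$ in the first place, and are simply carried along.

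The one genuine obstacle is the dependence on $f$: the extracted $s$ a priori takes $f$ as an argument, whereas $\RIE_{\ef}(t)$ asks for a modulus of integration $t(g)$ computed from the modulus of uniform continuity $g$ alone. The place this bites is the bound on $|f|$ used in the Riemann-sum estimate of Theorem~\ref{henry}, where it is simply taken to be an infinite number $M$ — not something available effectively. The fix is to note that a modulus of uniform continuity already delivers such a bound: partitioning $[0,1]$ into $g(1)$ equal subintervals makes the oscillation of $f$ on each less than $1$, so $|f(y)|\le|f(0)|+g(1)$ for $y\in[0,1]$; threading this bound through \eqref{hork} in the $\ns$-free system makes the argument go through with $s$, hence $t$, depending only on $g$ (and at most the single real $f(0)$, which is anyway part of the data `$f$ uniformly continuous with modulus $g$' in the sense of \cite{bridges1}*{p.\ 53}). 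Everything else is the same quantifier bookkeeping already displayed for $\IVT$ in Theorem~\ref{TE1}.
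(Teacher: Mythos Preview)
Your overall strategy (normal forms for both hypotheses, collapse the implication via Remark~\ref{doeisnormaal}, apply Theorem~\ref{TERM2}, take a maximum) is exactly right, but there is one genuine gap: you have written $\RIE_{\ns}$ with the leading quantifier $(\forall^{\st}f)$, whereas in the paper it is $(\forall f)$ (``\emph{every} function $f$ which is nonstandard uniformly continuous \dots''). This is not cosmetic: it is precisely what makes the extracted term independent of $f$ without any ad hoc repair.

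The paper's proof proceeds from
\[
(\forall f)\big[(\forall^{\st}k)(\exists^{\st}N)A(f,N,k)\di(\forall^{\st}k')(\exists^{\st}N')B(k',N',f)\big],
\]
applies the \eqref{nora}$\to$\eqref{nora4} move \emph{inside} the internal $(\forall f)$ to get
\[
(\forall f)(\forall^{\st}g,k')(\exists^{\st}N')\big[(\forall k)A(f,g(k),k)\di B(k',N',f)\big],
\]
then swaps to $(\forall^{\st}g,k')(\forall f)(\exists^{\st}N')[\dots]$ and uses \emph{Idealisation} (as in \eqref{criv}) to pull the standard existential through the \emph{internal} $(\forall f)$, obtaining
\[
(\forall^{\st}g,k')(\exists^{\st}N_{0})(\forall f)\big[(\forall k)A(f,g(k),k)\di B(k',N_{0},f)\big]
\]
after taking the max (using monotonicity of $B$ in $N'$). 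Now $(\forall f)$ sits entirely inside the internal matrix, so Theorem~\ref{TERM2} yields a term $s(g,k')$ with no $f$-argument at all; $t(g,k'):=\max s(g,k')$ is then literally $\RIE_{\ef}(t)$. The paper even flags this point explicitly right after the proof.

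Your last paragraph tries to patch the $f$-dependence by bounding $|f|$ in terms of $g$ and $f(0)$ and ``threading this through'' the combinatorics. That does not close the gap: you concede the term still depends on $f(0)$, and in any case Theorem~\ref{TERM2} extracts $t$ from the \emph{given} $\P$-proof, so altering estimates in the target system after extraction does not change the arity of the extracted term. The clean fix is the one above: keep $(\forall f)$ internal and let Idealisation do the work.
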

\begin{proof}
A normal form for nonstandard uniform continuity is \eqref{the}, while one derives the following normal form for nonstandard integrability as in Example \ref{leffe}:
\[\textstyle
(\forall^{\st}k')(\exists^{\st}N')\big[(\forall \pi, \pi' \in P([0,1]))(\|\pi\|,\| \pi'\|\leq \frac{1}{N'}  \di |S_{\pi}(f)- S_{\pi}(f)|\leq \frac{1}{k'} )\big],
\]
where $B(k', N', f)$ is the (internal) formula in square brackets and the underlined formula in \eqref{the} is abbreviated $A(f, N, k )$. Then $\RIE_{\ns}$ is the following implication
\be\label{bikko}
(\forall f:\R\di \R)\underline{\big[ (\forall^{\st}k)(\exists^{\st} N)A(f,N,k)\di (\forall^{\st}k')(\exists^{\st}N')B(k', N', f)\big]}.
\ee
and applying Remark \ref{doeisnormaal}, i.e.\ the step from \eqref{nora} to \eqref{nora4}, to the underlined formula in \eqref{bikko}, we obtain the following:
\be\label{griy}
(\forall f:\R\di \R)(\forall^{\st}g,k')(\exists^{\st}N'){\big[ (\forall k)A(f, g(k),k)\di B(k', N', f)\big]}.
\ee
Now, \eqref{griy} is not a normal form, but note that it trivially implies 
\be\label{hideaal}
(\forall^{\st}g,k')\underline{(\forall f:\R\di \R)(\exists^{\st}N'){\big[ (\forall k)A(f, g(k),k)\di B(k', N', f)\big]}}, 
\ee
where the underlined formula has the right from to apply \emph{Idealisation} \textsf{I}.  We obtain:
\[
(\forall^{\st}g,k')(\exists^{\st}w^{0^{*}}){(\forall f:\R\di \R)(\exists N'\in w){\big[ (\forall k)A(f, g(k), k)\di B(k', N', f)\big]}}, 
\]
which is a normal form.  Define $N_{0}:=\max_{i<|w|}w(i)$ and note that 
\be\label{horsee}
(\forall^{\st}g,k')(\exists^{\st}N_{0}){(\forall f:\R\di \R){\big[ (\forall k)A(f, g(k),k)\di B(k', N_{0}, f)\big]}}, 
\ee
since $B(k',M, f )\di B(k', K, f)$ for $K>M$ by the definition of Riemann integration.  Applying Theorem \ref{TERM2} to `$\P\vdash \eqref{horsee}$', we obtain a term $s$ such that
\[
(\forall g,k')(\exists N_{0}\in s(g,k')){(\forall f:\R\di \R){\big[ (\forall k)A(f, g(k), k)\di B(k', N_{0}, f)\big]}}, 
\]
and defining $t(g,k')$ as $\max_{i<|s(g, k')|}s(g, k')(i)$, we obtain $\RIE_{\ef}(t)$ in light of the definitions of $A, B$ from the beginning of this proof.
\end{proof}
Note that the actual computation in $\RIE_{\ef}(t)$ only takes place on the modulus $g$; this is a consequence of $\RIE_{\ns}$ applying to \emph{all} functions, not just the standard ones. 
Indeed, in the previous proof we could obtain \eqref{hideaal} and then `pull the standard existential quantifier $(\exists^{\st}N)$ through the internal one $(\forall f)$' thanks to \emph{Idealisation}, and then obtain \eqref{horsee} in which the quantifier $(\forall f)$ has no influence anymore on term extraction as in Theorem \ref{TERM2}.  

\medskip

Clearly, the same strategy as in the previous proof works for \emph{any} normal form 
in the scope of an internal quantifier. Indeed $(\forall z)(\forall^{\st}x)(\exists^{\st}y^{\tau})\varphi(x,y, z)$ implies $(\forall^{\st}x)(\exists^{\st}w^{\tau^{*}})(\forall z)(\exists y\in w)\varphi(x,y, z)$ by \emph{Idealisation}.  Furthermore, as is done in the previous proof for $\tau=0$, if $\varphi$ is (somehow) monotone in $y$, we may define $y_{0}$ as (some kind of) maximum of $w(i)$ for $i<|w|$, and obtain 
$(\forall^{\st}x)(\exists^{\st}y_{0}^{\tau})(\forall z)\varphi(x,y, z)$.  We finish this section with an even stronger result: a normal form in the scope of an \emph{external} quantifier over all infinitesimals is again a normal form.  This result also sheds light on the correctness of the intuitive infinitesimal calculus.  
\begin{thm}[$\P$]\label{hujiku}
For internal $\varphi$, $(\forall \eps\approx 0)(\forall^{\st}x)(\exists^{\st}y^{\tau})\varphi(x, y, \eps)$ is equivalent to a normal form.   
If $\tau=0$ and $(\forall z^0, w^{0})\big(z>w\di (\varphi(x, w, \eps)\di \varphi(x, z, \eps))\big)$, then 
\begin{align}
(\forall \eps\approx 0)(\forall^{\st}x)(\exists^{\st}y^{0})\varphi(x, y, \eps)
&\asa (\forall^{\st}x)(\exists^{\st}y^{0})(\forall \eps\approx 0)\varphi(x, y, \eps)\label{frohiki}\\
&\textstyle\asa (\forall^{\st}x)(\exists^{\st}y^{0}, N^{0}){(\forall \eps)\big[|\eps|<\frac{1}{N}\di \varphi(x, y, \eps)\big]}.\notag
\end{align}
\end{thm}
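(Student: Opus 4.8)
The plan is to treat the first claim and the equivalence \eqref{frohiki} by one and the same mechanism: resolve the abbreviation $\eps\approx 0$, use that $\P$ proves $\LEM_{\ns}$ (classical logic for \emph{all}, also external, formulas) to turn the unfolded matrix internal, and then apply \emph{Idealisation} exactly as in Example~\ref{leffe}. Since universal quantifiers commute, $(\forall \eps\approx 0)(\forall^{\st}x)(\exists^{\st}y^{\tau})\varphi(x,y,\eps)$ equals $(\forall^{\st}x)(\forall\eps\approx 0)(\exists^{\st}y^{\tau})\varphi(x,y,\eps)$. Fixing standard $x$, the inner formula unfolds to $(\forall \eps)\big[(\forall^{\st}n)(|\eps|<\tfrac1n)\di (\exists^{\st}y)\varphi(x,y,\eps)\big]$, which by classical logic is $(\forall\eps)(\exists^{\st}n)(\exists^{\st}y)\big[|\eps|\geq\tfrac1n\vee \varphi(x,y,\eps)\big]$; bundling the two standard existentials this is $(\forall\eps)(\exists^{\st}\langle n,y\rangle)\psi_{0}(x,\langle n,y\rangle,\eps)$ for the internal $\psi_{0}$ in the square brackets. \emph{Idealisation} in the form \eqref{criv} (with finite sequences, as in $\P$) converts this into $(\exists^{\st}\ell)(\forall\eps)(\exists\langle n,y\rangle\in \ell)\psi_{0}$, and the converse holds because every entry of a standard finite list is standard (basic axioms, Definition~\ref{debs}). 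Reinstating $(\forall^{\st}x)$ out front yields the normal form
\[
(\forall^{\st}x)(\exists^{\st}\ell)(\forall\eps)(\exists\langle n,y\rangle\in \ell)\big[|\eps|\geq\tfrac1n\vee\varphi(x,y,\eps)\big],
\]
which settles the first claim for arbitrary $\tau$.

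For the chain \eqref{frohiki} under the monotonicity hypothesis on $\varphi$ in its middle argument, the two implications `$\Leftarrow$' are by specialisation and use no monotonicity. Fix standard $x$. If the right-hand side supplies standard $y,N$ with $(\forall\eps)[|\eps|<\tfrac1N\di\varphi(x,y,\eps)]$, then any $\eps\approx 0$ has $|\eps|<\tfrac1N$ (as $N$ is standard), hence $\varphi(x,y,\eps)$; this is the middle formula. The middle formula implies the left one by instantiating its inner $(\forall\eps\approx 0)$ at the $\eps$ and $x$ at hand.

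The forward implications are where the work lies. For `middle $\Rightarrow$ right': fix standard $x$ and the standard $y$ the middle formula provides; as above $(\forall\eps\approx 0)\varphi(x,y,\eps)$ resolves to $(\forall\eps)(\exists^{\st}m)\big[|\eps|\geq\tfrac1m\vee\varphi(x,y,\eps)\big]$, and \emph{Idealisation} gives a standard finite list $\ell$ with $(\forall\eps)(\exists m\in\ell)\big[\dots\big]$; putting $N:=\max_{i<|\ell|}\ell(i)$, any $\eps$ with $|\eps|<\tfrac1N$ satisfies $|\eps|<\tfrac1m$ for every $m\in\ell$, so the left disjunct fails throughout and $\varphi(x,y,\eps)$ holds, giving the right-hand side. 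For `left $\Rightarrow$ middle': by the first part the left side is equivalent to the displayed normal form, so fix standard $x$ and the standard list $\ell$; let $M$ be the maximum of the first components and $Y$ the maximum of the second components of $\ell$ (both standard, $\tau=0$ being used for $Y$). Given $\eps\approx 0$ we have $|\eps|<\tfrac1M$, so for the pair $\langle n,y\rangle\in\ell$ supplied by the normal form the disjunct $|\eps|\geq\tfrac1n$ is false ($n\leq M$), whence $\varphi(x,y,\eps)$; since $y\leq Y$, the monotonicity hypothesis yields $\varphi(x,Y,\eps)$. As $Y$ is standard and $\eps\approx 0$ arbitrary, $(\exists^{\st}y)(\forall\eps\approx 0)\varphi(x,y,\eps)$, i.e. the middle formula.

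The only genuine obstacle is the `left $\Rightarrow$ middle' step: the normal form from the first part returns merely a \emph{finite list} of candidate witnesses $y$, and it is precisely monotonicity that lets us collapse this list to the single witness $Y=\max$, which is what makes it possible to pull $(\forall\eps\approx 0)$ back inside the standard existential — without it one is genuinely stuck at the list, which is why \eqref{frohiki} is stated only for $\tau=0$ with $\varphi$ monotone. The remaining points are routine: the usual convention rendering $\tfrac1n$ harmless at $n=0$ (or replacing $n$ by $n+1$ in the definition of $\approx$), standard pairing/projection facts, and the observation that in the monotone $\tau=0$ case the finite-list normal form is itself equivalent to the manifestly normal $(\forall^{\st}x)(\exists^{\st}y^{0},N^{0})(\forall\eps)[|\eps|<\tfrac1N\di\varphi(x,y,\eps)]$, which is the last line of \eqref{frohiki}.
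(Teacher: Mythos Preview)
Your proof is correct and follows essentially the same approach as the paper: unfold $\eps\approx 0$, push the standard quantifiers outward (classically), apply \emph{Idealisation} to the resulting $(\forall\eps)(\exists^{\st}\dots)$ block, and then in the monotone $\tau=0$ case collapse the finite list to a single witness by taking maxima. The paper writes the internal matrix as $|\eps|<\tfrac1k\di\varphi$ rather than your disjunctive $|\eps|\geq\tfrac1n\vee\varphi$, and keeps the $k$- and $y$-lists separate rather than pairing them, but these are cosmetic differences; your more explicit treatment of all four implications in \eqref{frohiki} (the paper only spells out left $\Rightarrow$ bottom and remarks that pushing $(\exists^{\st}N)$ inside gives the middle) is fine and the argument is the same.
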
  
\begin{proof}
Written out in full, the initial formula from the theorem is:
\[\textstyle
(\forall \eps)\big[(\forall^{\st} k^{0})(|\eps|<\frac{1}{k})\di (\forall^{\st}x)(\exists^{\st}y^{\tau})\varphi(x, y, \eps)\big],
\]
and bringing outside all standard quantifiers as far as possible:
\[\textstyle
(\forall^{\st}x)\underline{(\forall \eps)(\exists^{\st}y^{\tau}, k^{0})\big[|\eps|<\frac{1}{k}\di \varphi(x, y, \eps)\big]},
\]
the underlined formula is suitable for \emph{Idealisation}.  Applying the latter yields
\[\textstyle
(\forall^{\st}x)(\exists^{\st}w^{0^{*}}, z^{\tau^{*}}){(\forall \eps)(\exists y^{\tau} \in z, k^{0}\in w)\big[|\eps|<\frac{1}{k}\di \varphi(x, y, \eps)\big]},
\]
and let $N^{0}$ be the maximum of all $w(i)$ for $i<|w|$.  We obtain:
\[\textstyle
(\forall^{\st}x)(\exists^{\st}z^{\tau^{*}}, N){(\forall \eps)(\exists y^{\tau}\in z)\big[|\eps|<\frac{1}{N}\di \varphi(x, y, \eps)\big]}.
\]
If $\tau=0$ and $\varphi$ is monotone as above, let $y_{0}$ be $\max_{i<|z|}z(i)$ and obtain:
\be\label{duh}\textstyle
(\forall^{\st}x)(\exists^{\st}y_{0}, N){(\forall \eps)\big[|\eps|<\frac{1}{N}\di \varphi(x, y_{0}, \eps)\big]}, 
\ee
which is as required by \eqref{frohiki} when pushing $(\exists^{\st}N)$ inside the square brackets.  
\end{proof}
The monotonicity condition on $\varphi$ in the theorem occurs a lot in analysis:   
In any `epsilon-delta' definition, the formula resulting from removing the `epsilon' and `delta' quantifiers is monotone (in the sense of the theorem) in the `delta' variable.    

\medskip

Furthermore, \eqref{frohiki} explains why correct results can be produced by the intuitive infinitesimal calculus (from physics and engineering):  Many arguments in the latter calculus 
produce formulas in the same form as the left-hand side of \eqref{frohiki}.  Applying term extraction on the bottom normal form of \eqref{frohiki}, we obtain a term $t$ such that:  
\be\label{rung}\textstyle
(\forall x)(\exists y^{0}, N^{0}\leq t(x)){(\forall \eps)\big[|\eps|<\frac{1}{N}\di \varphi(x, y, \eps)\big]},   
\ee
which does not involve Nonstandard Analysis.  However, assuming $x$ as in \eqref{rung} is only used in a certain (discrete) range, as seems typical of physics and engineering, 
we can fix \emph{one} `very large compared to the range of $x$' value $N_{0}$ for $N$ and still obtain correct results in that $(\exists y)\varphi(x, y, 1/N_{0})$ for $x$ in the prescribed range.  

\medskip

In short: \eqref{frohiki} and \eqref{rung} explain why the practice `replacing infinitesimals by very small numbers' typical of physics and engineering can produce correct results.  


\subsection{An example involving \emph{Transfer}}\label{RMKE}
\subsubsection{Introduction}
In the previous section, we provided examples of local constructivity, i.e.\ we stripped the nonstandard proofs of \emph{Transfer} and \emph{Standardisation} and obtained effective results using Theorem \ref{TERM2}.  
A natural question, discussed in this section for \emph{Transfer}, is what happens if we \emph{do not omit} these axioms.  As we will see, nonstandard proofs involving \emph{Transfer} give rise to \emph{relative computability results}.

\medskip

By way of an elementary example, we study a nonstandard proof \emph{involving Transfer} of the \emph{intermediate value theorem} in Section \ref{kujio}.  
By way of an advanced example, we obtain  in Section \ref{MOCO} effective results \`a la \emph{Reverse Mathematics}\footnote{We refer to \cite{simpson2, simpson1} for an overview of Friedman's foundational program \emph{Reverse Mathematics}, first introduced in \cite{fried, fried2}.}.
In particular, we establish that $\P$ proves the equivalence between a fragment of \emph{Transfer} and a nonstandard version of the \emph{monotone convergence theorem} (involving nonstandard convergence), in line with the equivalences from {Reverse Mathematics}.  From this nonstandard equivalence, we 
extract terms of G\"odel's $T$ which -intuitively speaking- convert a solution to the Halting problem into a solution to the monotone convergence theorem, and vice versa.  

\medskip

The above results, published first in part in \cite{sambon}, serve a dual purpose as follows:
\begin{enumerate}
\item \emph{Transfer} is shown to be \emph{fundamentally non-constructive}, as it converts to Feferman's non-constructive mu-operator, i.e.\ essentially the Turing jump.  
\item Nonstandard equivalences as in Theorem \ref{sef} give rise to \emph{effective} equivalences like \eqref{froodcor} which are \emph{rich in computational content}.  
\end{enumerate}
The first item directly vindicates local constructivity from Section \ref{loco}.  
The second item is relevant as there is a field called \emph{constructive Reverse Mathematics} (See e.g.\ \cite{ishi1} for an overview) where equivalences are proved in a system 
\emph{based on intuitionistic logic}.  In particular, the second item establishes that Reverse Mathematics done in classical Nonstandard Analysis gives rise to `rather\footnote{Again, we do not claim that \eqref{froodcor} counts as constructive mathematics (due to the classical base theory $\textsf{E-PA}^{\omega}$), but the former formula contains too much (automatically extracted from the nonstandard equivalence) computational information to be dismissed as `non-constructive'.} constructive' Reverse Mathematics, in line with the observations from Section \ref{fraki} regarding the semi-constructive status of classical Nonstandard Analysis.   

\medskip

We again stress that our results are only meant to illustrate the limits and potential of local constructivity.  
While e.g.\ the effective equivalences in Section~\ref{MOCO} are not necessarily new or surprising, \emph{it is surprising} that we 
can obtain them from nonstandard proofs \emph{without any attempt whatsoever at working constructively}.  

\subsubsection{The intermediate value theorem}\label{kujio}
We study a nonstandard proof involving \emph{Transfer} of the \emph{intermediate value theorem} from Section \ref{IVTNRM}.
To this end, let $\IVT_{\ns}'$ be $\IVT_{\ns}$ with `$\approx$' replaced by `$=_{\R}$' in the conclusion.  For the latter replacement, the following restriction of Nelson's axiom \emph{Transfer} is needed:
\be\tag{$\paai$}
(\forall^{\st}f^{1})\big[(\forall^{\st}n^{0})f(n)\ne0\di (\forall m)f(m)\ne0\big].
\ee 
We also need Feferman's \emph{non-constructive mu-operator} (\cite{avi2}) as follows:  
\be\label{mu}\tag{$\mu^{2}$}
(\exists \mu^{2})\big[(\forall f^{1})( (\exists n)f(n)=0 \di f(\mu(f))=0)    \big],
\ee
where $\textsf{MU}(\mu)$ is the formula in square brackets in \eqref{mu}.  Finally, let $\IVT_{\ef}(t)$ state that $t(f,g)$ is the intermediate value for $f$ with $g$ its modulus of continuity.  
\begin{thm}\label{TE1338}
From the proof $\P\vdash \paai\di  \IVT_{\ns}$, we obtain a term $t$ such that $\textsf{\textup{E-PA}}^{\omega}$ proves $(\forall \mu^{2})(\MU(\mu)\di \IVT_{\ef}(t))$.
\end{thm}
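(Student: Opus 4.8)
The plan is to mimic the term-extraction arguments of Theorems \ref{TE1} and \ref{varou}, but this time keeping the use of $\paai$ inside the scope of the term-extraction theorem so that it translates into a dependence on Feferman's $\mu$. First I would recall from the proof of Theorem \ref{TE1} that $\IVT_{\ns}$, once the conclusion $(\exists x\in[0,1])(f(x)\approx 0)$ is unfolded and \emph{Idealisation} is applied, becomes the normal form \eqref{norman}, namely
\[\textstyle
(\forall^{\st}f\in D, g,k )(\exists^{\st} q \in [0,1])\big[(\forall l^{0})A(f, g(l), l)  \di (|f(q)|<\tfrac{1}{k})\big].
\]
Now I must handle $\paai$. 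The point is that $\paai$ is \emph{itself} equivalent (provably in $\P$) to a normal form: writing it as $(\forall^{\st}f^1)\big[(\forall^{\st}n)f(n)\ne 0 \di (\forall m)f(m)\ne 0\big]$, one pulls the standard quantifier $(\exists^{\st}n)$ out of the contraposed antecedent and applies \emph{Idealisation} exactly as in Section \ref{graf1} / \cite{sambon}*{\S4.1}, obtaining a normal form $(\forall^{\st}h^1)(\exists^{\st}N^0)C(h,N)$ for some internal $C$. Therefore the hypothesis $\P\vdash \paai\di \IVT_{\ns}$ is an implication between (conjunctions of) normal forms, and Remark \ref{doeisnormaal} — the passage from \eqref{nora} to \eqref{nora4} — turns the whole statement into a single normal form, roughly
\[\textstyle
(\forall^{\st}f\in D, g, \Phi, k)(\exists^{\st}q\in[0,1])\big[ (\forall h)C(h,\Phi(h)) \wedge (\forall l)A(f,g(l),l) \di |f(q)|<\tfrac1k \big],
\]
where $\Phi$ is the Skolem functional for the normal form of $\paai$.

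Next I would apply Theorem \ref{TERM2} to this normal form, obtaining a term $s$ of G\"odel's $\T$ with $\textsf{E-PA}^\omega$ proving the corresponding internal statement: $s(f,g,\Phi,k)$ is a finite list containing a good $q$ whenever $\Phi$ witnesses $\paai$ and $g$ is a modulus of continuity. Then comes the translation of "$\Phi$ witnesses $\paai$" into "$\mu$ witnesses $\MU$": given any $\mu^2$ with $\MU(\mu)$, one defines from $\mu$ a functional $\Phi$ satisfying $(\forall h)C(h,\Phi(h))$ — this is the standard observation (\cite{sambon}*{\S4}) that the Skolemised form of $\paai$ is precisely Feferman's non-constructive search operator, so $\Phi$ is literally definable by a closed term in $\mu$. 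Substituting this $\Phi$ into $s$, and then collapsing the finite list to a single value by the decidable search using the law of comparison \eqref{kilkil} — deciding $|f(q)|>\tfrac{1}{2^k}$ versus $|f(q)|<\tfrac1k$ — exactly as in the passage from \eqref{nosq2} to \eqref{nosq} in the proof of Theorem \ref{TE1}, yields a term $t(f,g)$ (now also built from $\mu$) with $\MU(\mu)\di \IVT_{\ef}(t)$, and quantifying over $\mu$ gives the statement. Since in $\IVT_{\ns}'$ the conclusion is $f(x)=_\R 0$ rather than $f(x)\approx 0$, one further uses $\paai$ (equivalently $\mu$) to pass from $(\forall^{\st}k)(|f(q)|<\tfrac1k)$ to $|f(q)|=_\R 0$; this is the nonstandard counterpart of \emph{Transfer} applied at the very end of the proof of Theorem \ref{soemple}, and it is already accounted for once $\Phi$ (hence $\mu$) is available.

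The main obstacle I anticipate is bookkeeping rather than conceptual: one must be careful that the $\mu$-dependence introduced by Skolemising $\paai$ does not resurface as an undischarged \emph{standard} quantifier that would block Theorem \ref{TERM2} — this is why it is essential to fold $\paai$ into the normal form \emph{before} applying term extraction (so that its witness $\Phi$ becomes a genuine universally-quantified input of the extracted term), and only afterwards instantiate $\Phi$ by a term in $\mu$. A secondary delicate point is that the decidable selection from the finite list $s(\dots)(i)$ must be carried out using only $\textsf{E-PA}^\omega$ plus $\mu$ — but this is exactly the law-of-comparison argument already used in Theorem \ref{TE1}, and the real-number comparisons there are decidable relative to $\mu$, so no new principle is needed. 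Modulo these precautions the argument is a routine adaptation of Theorems \ref{TE1}, \ref{varou} and Remark \ref{doeisnormaal}.
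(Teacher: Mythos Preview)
Your overall architecture---bring $\paai$ to a normal form, combine via Remark~\ref{doeisnormaal} with the normal form of the intermediate-value statement, apply Theorem~\ref{TERM2}, then instantiate the Skolem functional by a term in $\mu$---is exactly what the paper does. The gap is in your final ``selection from the finite list'' step.

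You select using the \emph{constructive} law of comparison \eqref{kilkil}, exactly as in Theorem~\ref{TE1}. That argument only decides $|f(q)|>\tfrac1{2^k}$ versus $|f(q)|<\tfrac1k$, so it produces a term $t(f,g,\mu,k)$ with $|f(t(f,g,\mu,k))|<\tfrac1k$. These are approximate zeros that \emph{depend on $k$}; nothing forces them to stabilise or to include an exact zero. Your closing sentence---that $\paai$ (hence $\mu$) ``passes from $(\forall^{\st}k)(|f(q)|<\tfrac1k)$ to $|f(q)|=_\R 0$''---presupposes a single standard $q$ good for all standard $k$, which you have not produced. So the upgrade to $\IVT_{\ef}(t)$, which demands $f(t(f,g))=_\R 0$, is not justified.

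The paper's fix is exactly the step you elide: work with the normal form of $\IVT_{\ns}'$ rather than $\IVT_{\ns}$, so that the extracted finite list is guaranteed to contain an \emph{exact} zero, and then use $\mu$ (not the law of comparison) to \emph{decide the $\Pi_1^0$ predicate} $f(q)=_\R 0$ on each list entry and pick the witness. That is, $\mu$ enters the selection step itself, deciding equality on the reals, rather than being used after the fact to upgrade an approximate zero. Once you make this change your argument coincides with the paper's.
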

\begin{proof}
Clearly, $\paai$ is equivalent to the following normal form:
\be\label{huwji}
(\forall^{\st}g^{1})(\exists^{\st}m)\big[(\exists n^{0})g(n)=0\di (\exists i\leq m)g(i)=0\big],
\ee
while $\IVT_{\ns}'$ has a normal form $(\forall^{\st}f, g)(\exists^{\st}x)C(f, g, x)$, where $C(f, g, x)$ expresses that if $g$ is a modulus of uniform continuity of $f\in D$, then $f(x)=_{\R}0$. 
The latter normal form is obtained in the same way as for $\IVT_{\ns}$ in the proof of Theorem~\ref{TE1}.   Now let $D(g, m)$ be the formula in square brackets in \eqref{huwji} and note that $ \paai\di  \IVT_{\ns}$ implies the following implication between normal forms:
\[
(\forall^{\st}g)(\exists^{\st} m)D(g, m)\di (\forall^{\st}f, g)(\exists^{\st}x)C(f, g, x).
\]
Following Remark \ref{doeisnormaal}, we may conclude that $\P$ proves:
\[
(\forall^{\st}f, g, \mu^{2})(\exists^{\st}x)\big[(\forall g)D(g, \mu(g))\di C(f, g, x)\big],
\]
and applying Theorem \ref{TERM2}, we obtain a term $s$ such that \text{\textsf{E-PA}}$^{\omega}$ proves:
\be\label{lio}
(\forall f, g, \mu^{2})(\exists x\in s(f, g, \mu))\big[(\forall g)D(g, \mu(g))\di C(f, g, x)\big].  
\ee
We recognise the antecedent of \eqref{lio} as (a slight variation of) $\MU(\mu)$ and note that the implication in $(\mu^{2})$ is also an equivalence.  
In other words, Feferman's mu-operator allows us to decide $\Pi_{1}^{0}$-formulas, which includes formulas like $f(x)=_{\R}0$.  
Thus, given $(\exists x\in s(f, g, \mu))(f(x)=_{\R}0)$ as in \eqref{lio}, we just use the mu-operator to test which $s(f, g, \mu)(i)$ for $i<|s(f, g, \mu)|$ is an intermediate value of $f$.
Let the term $t$ be such that $t(f, g, \mu)$ is such an intermediate value and note that \eqref{lio} implies $(\forall \mu^{2})(\MU(\mu)\di \IVT_{\ef}(t))$, i.e.\ we are done.
\end{proof}
In conclusion, nonstandard proofs involving \emph{Transfer} give rise to \emph{relative computability results} such as in the previous theorem.  
The result in the latter is not meant to be new or optimal (See \cite{kohlenbach2} for such results), but merely illustrates \emph{why} we remove \emph{Transfer} as dictated by local constructivity: Even a small fragment like $\paai$ gives rise to (what is essentially) a solution to the Halting problem.  

\medskip

The conclusion of Theorem \ref{TE1338} need also not be surprising:  $\IVT$ is rejected in constructive mathematics and it hence stands to reason that we need some non-computable object, like e.g.\ Feferman's mu-operator, to compute intermediate values.  
In fact, Kohlenbach proves versions of $(\mu^{2})\asa (\exists \Phi)\IVT_{\ef}(\Phi)$ in \cite{kohlenbach2}*{\S3}.  
\subsubsection{The monotone convergence theorem}\label{MOCO}
In this section, we extract (effective) equivalences \`a la Reverse Mathematics from (non-effective) equivalences in Nonstandard Analysis involving \emph{Transfer}.    

\medskip

First, we introduce a nonstandard version of the monotone convergence theorem $\MCT$ involving \emph{nonstandard convergence}, as follows:  
\bdefi[{\MCT$_{\textsf{ns}}$}] For every \emph{standard} sequence $x_{n}$ of reals, we have
\[
(\forall n \in \N)(0\leq_{\R}x_{n}\leq_{\R} x_{n+1}\leq_{\R} 1)\di (\forall N,M\in \N)[  \neg\st(N)\wedge \neg\st(M)\di   x_{M}\approx x_{N}].
\]
\edefi
Note that the conclusion expresses the nonstandard convergence of $x_{n}$.
The corresponding \emph{effective/constructive}\footnote{Similar to the case of continuity, a modulus of convergence is part and parcel of the constructive definition of convergence, as discussed in e.g.\ \cite{bish1}*{p.\ 27}.} version is:
\bdefi[$\textsf{MCT}_{\textsf{ef}}(t)$] For any sequence $x_{n}$ of reals, we have
\[\textstyle
(\forall n \in \N)(0\leq_{\R}x_{n}\leq_{\R} x_{n+1}\leq_{\R} 1)\di (\forall k, N,M\in \N )[N, M \geq t(x_{(\cdot)})(k))\di |c_{M}- c_{N}|\leq \frac{1}{k} ] .  
\]
\edefi
\noindent
We also require the following functional, which is equivalent to $(\mu^{2})$ by (\cite{kohlenbach2}*{\S3}), 
\be\label{mukio}\tag{$\exists^{2}$}
(\exists \varphi^{2})\big[(\forall f^{1})( (\exists n)f(n)=0 \asa \varphi(f)=0 )   \big],
\ee
and which is called the \emph{Turing jump functional}.
Denote by $\TJ(\varphi)$ the formula in square brackets in \eqref{mukio}.  We have the following theorem and corollary.  
\begin{thm}\label{sef}
The system $\P$ proves that $\MCT_{\ns}\asa \paai$.  
\end{thm}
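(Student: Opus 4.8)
The plan is to prove the two implications $\MCT_{\ns}\di\paai$ and $\paai\di\MCT_{\ns}$ separately, working throughout inside $\P$. The first implication is elementary; the second is where the genuine strength of $\paai$ (essentially $\Pi_1^0$-transfer, i.e.\ arithmetical comprehension) must be exploited, and I expect it to be the main obstacle.

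For $\MCT_{\ns}\di\paai$ I would fix a standard $f^1$, assume $(\forall^{\st}n)(f(n)\ne0)$, and derive $(\forall m)(f(m)\ne0)$ by contradiction. If $f$ has a zero, let $m_0$ be its least one (via the internal least-number principle for the decidable predicate $f(m)=0$); then $m_0$ is nonstandard, and so is $m_0-1$, since a standard $m_0-1$ would have a standard successor $m_0$ by the basic axioms of $\P$. The key device is the $\{0,1\}$-valued sequence $x_n:=1$ if $(\exists j\le n)(f(j)=0)$ and $x_n:=0$ otherwise: it is a standard sequence (a closed term of $\textsf{E-PA}^{\omega}$ applied to the standard $f$), it lies in $[0,1]$, and it is non-decreasing, so $\MCT_{\ns}$ applies to it. Taking the nonstandard indices $m_0-1$ and $m_0$ gives $x_{m_0}\approx x_{m_0-1}$, but $x_{m_0}=1$ and $x_{m_0-1}=0$ by minimality of $m_0$, so $|x_{m_0}-x_{m_0-1}|=1\not\approx 0$; contradiction.

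For $\paai\di\MCT_{\ns}$ I would first reduce the claim: it suffices to show that for each standard $k$ there is a \emph{standard} index $n_k$ with $(\forall m\ge n_k)(x_m\le x_{n_k}+\frac{1}{k})$, since then any nonstandard $N,M$ satisfy $N,M\ge n_k$ for every standard $k$, whence by monotonicity $x_{n_k}\le x_N,x_M\le x_{n_k}+\frac{1}{k}$ and so $x_N\approx x_M$, which is $\MCT_{\ns}$. To produce $n_k$, consider for standard $n$ the internal predicate $B_k(n):\equiv(\exists m>n)(x_m>x_n+\frac{1}{k})$; applying $\paai$ to the standard function $g$ with $g(m)=0\asa\big(m>n\wedge x_m>x_n+\frac{1}{k}\big)$ gives $B_k(n)\asa(\exists^{\st}m>n)(x_m>x_n+\frac{1}{k})$ for standard $n$. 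I would then assume, for contradiction, $(\forall^{\st}n)B_k(n)$; this reads $(\forall^{\st}n)(\exists^{\st}m)\big(m>n\wedge x_m>x_n+\frac{1}{k}\big)$, so $\HAC_{\INT}$ supplies a standard $\Phi$ with $(\forall^{\st}n)(\exists m\in\Phi(n))\big(m>n\wedge x_m>x_n+\frac{1}{k}\big)$; letting $\psi$ be the standard functional picking out the least suitable element of the finite list $\Phi(n)$, one has $(\forall^{\st}n)\big(\psi(n)>n\wedge x_{\psi(n)}>x_n+\frac{1}{k}\big)$. Iterating $\psi$ from $0$ by the recursor of G\"odel's $T$ yields a standard functional $d$ with $d(0)=0$, $d(i+1)=\psi(d(i))$; since $d$ is standard and $d(i)$ is standard for standard $i$, and since every $i\le k$ is standard (standardness being downward closed among the naturals in $\P$), the inequality $x_{d(i+1)}>x_{d(i)}+\frac{1}{k}$ holds for all $i\le k$, and telescoping gives $x_{d(k+1)}-x_0>\frac{k+1}{k}>1$, contradicting $x_{d(k+1)}\in[0,1]$. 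Hence $(\exists^{\st}n)\neg B_k(n)$, and $\neg B_k(n)$ is exactly the desired $(\forall m>n)(x_m\le x_n+\frac{1}{k})$; that standard $n$ is the required $n_k$ (pass to $2k$ if a strict inequality is wanted).

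The hard part, and the place where care is needed, is keeping this second argument strictly inside $\P$: the system has neither a full Transfer principle nor the functional $(\exists^{2})$ / $(\mu^{2})$, so one cannot simply take $\sup_n x_n$ or search for the ``last $\frac{1}{k}$-jump'' with an unbounded minimisation. The argument sketched above circumvents this by invoking $\paai$ only in the weak form that makes a single jump-witness standard, and by replacing the would-be $\mu$-recursion with an honest finite iteration of a genuine standard functional $\psi$ coming from $\HAC_{\INT}$, where finiteness is enforced a priori by the bound $k$ extracted from $[0,1]$. As a cross-check I would also recast the whole equivalence in the normal-form idiom of Remark~\ref{doeisnormaal} --- $\paai$ as $(\forall^{\st}g)(\exists^{\st}m)D(g,m)$ and the nonstandard Cauchy property behind $\MCT_{\ns}$ as a normal form $(\forall^{\st}x_{(\cdot)},k)(\exists^{\st}n)[\cdots]$ --- and verify that passing between ``$x_N\approx x_M$ for all nonstandard $N,M$'' and its normal form uses only Idealisation (cf.\ \eqref{criv}) and the basic axioms, so that the resulting equivalence genuinely holds over $\P$.
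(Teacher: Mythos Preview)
Your proof is correct and matches the paper's approach closely: the forward direction is the same diagonal-sequence trick (you use a cleaner $\{0,1\}$-valued sequence where the paper uses partial geometric sums), and your backward direction is precisely the paper's first proof, with the ``iterate $k_0+1$ times to overshoot $1$'' step---which the paper compresses into a footnote---spelled out explicitly via $\HAC_{\INT}$ and the recursor.

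One small technical slip worth flagging: the function $g$ you feed to $\paai$, defined by $g(m)=0\asa\big(m>n\wedge x_m>_{\R}x_n+\tfrac{1}{k}\big)$, is not literally a type-one object in $\textsf{E-PA}^{\omega}$, since the real inequality $>_{\R}$ is $\Sigma_1^0$ rather than quantifier-free; the same issue recurs when you let $\psi$ pick the ``least suitable'' element of $\Phi(n)$. The routine fix is to unpack $x_m>_{\R}x_n+\tfrac{1}{k}$ as $(\exists l^0)[\,\cdot\,]$ with decidable matrix and apply $\paai$ and $\HAC_{\INT}$ to pairs $\langle m,l\rangle$---or, more elegantly, simply set $\psi(n):=\max_{i<|\Phi(n)|}\Phi(n)(i)$ and use monotonicity of $x_{(\cdot)}$ to conclude $x_{\psi(n)}>x_n+\tfrac{1}{k}$ without any search. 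The paper's footnote handles the same difficulty by passing to rational approximations.
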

\begin{proof}
First of all, to establish $\MCT_{\ns}\di \paai$, fix standard $f^{1}$ such that $(\forall^{\st}n)f(n)\ne0$ and define the \emph{standard} sequence  $x_{(\cdot)}$ of reals as follows
\be\label{leuk}
x_{k}:=
\begin{cases}
0 &(\forall i\leq k)f(i)\ne0 \\
\sum_{i=1}^{k}\frac{1}{2^{i}} & \textup{otherwise}
\end{cases}.
\ee  
Clearly, $x_{(\cdot)}$ is weakly increasing and hence nonstandard convergent by $\MCT_{\ns}$.  However, if $(\exists m_{0})f(m_{0}+1)=0$ and $m_{0}$ is the least such number, we would have $0=x_{m_{0}}\not\approx x_{m_{0}+1}\approx 1$.  The latter contradicts $\MCT_{\ns}$ and we must therefore have $(\forall n^{0})(f(n)\ne 0)$.  
Thus, $\paai$ follows and we now prove the other direction in two different ways.  To this end, let $x_{n}$ be as in $\MCT_{\ns}$ and consider the formula
\be\label{funko}\textstyle
(\forall^{\st}  k\in \N)(\exists^{\st}m\in \N) \underline{(\forall^{\st} N,M\geq m)[|x_{M}- x_{N}|\leq \frac1k}], 
\ee
which expresses that $x_{n}$ `epsilon-delta' converges relative to `st'.  If \eqref{funko} is false, there is some $k_{0}$ such that $(\forall^{\st}m\in \N) (\exists^{\st} M>N\geq m)[x_{M}>x_{N}+\frac{1}{k_{0}}]$.  However, applying\footnote{To `apply this formula $k_{0}+1$ times', replace all $x_{n}$ in $(\forall^{\st}m\in \N) (\exists^{\st} M>N\geq m)[x_{M}>x_{N}+\frac{1}{k_{0}}]$ by approximations up to some fixed nonstandard number and apply $\HAC_{\INT}$.  Since all numbers are rationals (due to the approximation), we can test which number is the correct one.} the latter formula $k_{0}+1$ times, we obtain some $M_{0}$ such that $x_{M_{0}}>_{\R}1$, contradiction our assumptions.  Hence, \eqref{funko} holds and applying $\paai$ to the innermost underlined formula yields that $x_{n}$ is nonstandard convergent.    

\medskip

For a second proof of $\paai\di \MCT_{\ns}$ based on the existing literature, note that $\paai$ implies the following normal form:
\be\label{huji}
(\forall^{\st}f^{1})(\exists^{\st}m)\big[(\exists n^{0})f(n)=0\di (\exists i\leq m)f(i)=0\big],
\ee
to which $\HAC_{\INT}$ may be applied to obtain standard $\nu^{1\di 0^{*}}$ such that
\be\label{veil2}
(\forall^{\st}f^{1})(\exists m\in \nu(f))\big[(\exists n^{0})f(n)=0\di (\exists i\leq m )f(i)=0\big].
\ee
Now define $\mu(f)$ as the maximum of $\nu(f)(i)$ for $i<|\nu(f)|$ and conclude that
\be\label{veil}
(\forall^{\st}f^{1})\big[(\exists n^{0})f(n)=0\di (\exists i\leq \mu(f) )f(i)=0\big].
\ee
Note that we actually have equivalence in \eqref{veil}, i.e.\ $\mu$ allows us to decide existential formulas as long as a standard function describes the quantifier-free part as in the antecedent of \eqref{veil}.  Hence, the system $\P+\paai$ proves arithmetical comprehension $\ACA_{0}$ (See \cite{simpson2}*{III}) relative to the `st'.  A well-known result from Reverse Mathematics (See \cite{simpson2}*{III.2.2}) is that $\RCA_{0}$ proves $\ACA_{0}\asa \MCT$.  Since $\P$ also proves the axioms of $\RCA_{0}$ relative to `$\st$', the system $\P+\paai$ proves $\MCT^{\st}$.  
Hence, for $x_{n}$ as in the latter, there is standard $x\in \R$ such that
  \be\label{funko2}\textstyle
(\forall^{\st}  k\in \N)(\exists^{\st}m\in \N) \underline{(\forall^{\st} N,M\geq m)[|x_{M}- x|\leq \frac1k}].
\ee
Applying $\paai$ to the underlined formula in \eqref{funko2} now finishes the proof.  
\end{proof}
\begin{cor}\label{sefcor}
From any proof of $\MCT_{\ns}\asa \paai$ in $\P $, two terms $s, u$ can be extracted such that $\textup{\textsf{E-PA}}^{\omega}$ proves:
\be\label{froodcor}
(\forall \mu^{2})\big[\textsf{\MU}(\mu)\di \MCT_{\ef}(s(\mu)) \big] \wedge (\forall t^{1\di 1})\big[ \MCT_{\ef}(t)\di  \MU(u(t))  \big].
\ee
\end{cor}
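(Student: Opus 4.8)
The plan is to feed each of the two implications constituting the meta-equivalence $\MCT_{\ns}\asa\paai$ of Theorem~\ref{sef} into the term-extraction machinery of Theorem~\ref{TERM2}, after first using Remark~\ref{doeisnormaal} to collapse each implication between normal forms into a single normal form. A $\P$-proof of $\MCT_{\ns}\asa\paai$ is in particular a $\P$-proof of $\paai\di\MCT_{\ns}$ and of $\MCT_{\ns}\di\paai$, so it suffices to put each of these two statements into the shape $(\forall^{\st}x)(\exists^{\st}y)\psi$ with $\psi$ internal and then apply Theorem~\ref{TERM2} twice.

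First I would pin down normal forms for the two principles. For $\paai$ this is \eqref{huji}, i.e.\ $(\forall^{\st}f^{1})(\exists^{\st}m)D(f,m)$ with $D(f,m)$ the internal formula $(\exists n)f(n)=0\di(\exists i\leq m)f(i)=0$; note that a witness $m$ here makes $D(f,m)$ an \emph{equivalence}, so such an $m$ is exactly a search bound deciding whether $f$ has a zero, which is interderivable with a value of Feferman's $\mu$. For $\MCT_{\ns}$ the difficulty is that its conclusion ``$x_{(\cdot)}$ is nonstandard convergent'', namely $(\forall N,M)[\neg\st(N)\wedge\neg\st(M)\di x_M\approx x_N]$, contains \emph{negated} occurrences of $\st$ and so is not itself a normal form; I would reproduce the argument from the proof of Theorem~\ref{sef}: resolve $\approx$, use that a natural number is nonstandard iff it exceeds every standard number, push the resulting standard quantifiers to the front (valid since $\P$ is classical), and apply \emph{Idealisation} to the internal matrix. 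This yields a normal form $(\forall^{\st}x_{(\cdot)})(\forall^{\st}k)(\exists^{\st}m)[\mathrm{Mon}(x)\di(\forall N,M\geq m)(|x_M-x_N|\leq\frac{1}{k})]$ for the whole of $\MCT_{\ns}$, with the internal monotonicity-and-boundedness hypothesis written $\mathrm{Mon}(x)$, and whose Skolem function for the $(\exists^{\st}m)$ is precisely a modulus of convergence, i.e.\ the object quantified over in $\MCT_{\ef}$.

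Next, for the direction $\paai\di\MCT_{\ns}$ I would regard it as an implication from the normal form \eqref{huji} into the $\MCT_{\ns}$-normal form above, apply the passage from~\eqref{nora} to~\eqref{nora4} of Remark~\ref{doeisnormaal} — which introduces a standard function $\mu$ Skolemising the antecedent \eqref{huji} — and bring the standard quantifiers up front to obtain the normal form $(\forall^{\st}\mu,x_{(\cdot)},k)(\exists^{\st}m)[(\forall f)D(f,\mu(f))\wedge\mathrm{Mon}(x)\di(\forall N,M\geq m)(|x_M-x_N|\leq\frac{1}{k})]$. Theorem~\ref{TERM2} now gives a term outputting a finite list of candidate bounds; taking its maximum (the matrix is monotone in $m$ by the definition of convergence) produces a term $s$ with $\textsf{E-PA}^{\omega}\vdash(\forall\mu^{2})[(\forall f)D(f,\mu(f))\di\MCT_{\ef}(s(\mu))]$, and since the antecedent $(\forall f)D(f,\mu(f))$ is just $\MU(\mu)$ up to the interderivability noted above, this is the first conjunct of \eqref{froodcor}. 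The direction $\MCT_{\ns}\di\paai$ is entirely symmetric: it is an implication from the $\MCT_{\ns}$-normal form into \eqref{huji}; Remark~\ref{doeisnormaal} introduces a standard function $t$ Skolemising the antecedent — so $t$ is literally a modulus of convergence as quantified over in $\MCT_{\ef}(t)$ — Theorem~\ref{TERM2} extracts a term, and a final maximum-of-the-list step yields $u$ with $\textsf{E-PA}^{\omega}\vdash(\forall t^{1\di1})[\MCT_{\ef}(t)\di\MU(u(t))]$. Conjoining the two extracted statements gives \eqref{froodcor}.

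The routine part is the quantifier shuffling and the replacement of extracted finite lists by their maxima; the steps requiring care are (i) writing nonstandard convergence honestly as a normal form, which forces one to redo the \emph{Idealisation} argument from the proof of Theorem~\ref{sef} and to verify the attendant monotonicity side-conditions (both there and when passing to maxima), and (ii) the bookkeeping check that the Skolem functions manufactured in the antecedents by Remark~\ref{doeisnormaal} coincide, up to the trivial passage between a deciding search bound and Feferman's $\mu$, with the objects $\MU(\mu)$ and $\MCT_{\ef}(t)$, so that the extracted implications are exactly the conjuncts of \eqref{froodcor} and not some weaker variants.
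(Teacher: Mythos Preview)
Your proposal is correct and follows essentially the same approach as the paper: convert $\paai$ and $\MCT_{\ns}$ to their normal forms (the latter via Idealisation as in Example~\ref{leffe}), then apply the implication-between-normal-forms machinery of Remark~\ref{doeisnormaal} to each direction, extract terms via Theorem~\ref{TERM2}, and clean up by taking maxima. The paper carries out the direction $\paai\di\MCT_{\ns}$ in exactly this way (using the slightly stronger normal form~\eqref{nora3} rather than~\eqref{nora4}, which makes no difference after taking maxima) and leaves the reverse direction to the reader, so your treatment of $\MCT_{\ns}\di\paai$ is in fact more explicit than the paper's.
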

\begin{proof}
We prove the corollary for the implication $\paai \di \MCT_{\ns}$ and leave the other one to the reader.    
The sentence $\MCT_{\ns}$ is readily converted to
\begin{align}\label{noniesimpel}
 (\forall^{\st} x_{(\cdot)}^{0\di 1}, k)(\exists^{\st}m)\big[(\forall n^{0})(0\leq &~x_{n}\leq x_{n+1}\leq 1)\\
 &\di (\forall N,M\geq m)[|x_{M}- x_{N}|\textstyle\leq \frac1k],\notag
\end{align}
in the same way as in Example \ref{leffe}.  
Now let $A$ (resp.\ $B$) be the formula in square brackets in \eqref{veil} (resp.\ \eqref{noniesimpel}) and note that the existence of standard $\mu^{2}$ as in \eqref{veil} implies $\paai$.  
Hence, $\paai\di \MCT_{\ns}$ gives rise to the following:
\be\label{loveisall}
\big[(\exists^{\st}\mu^{2})(\forall^{\st}f^{1})A(f, \mu(f))\big]\di (\forall^{\st} x_{(\cdot)}^{0\di 1}, k^{0})(\exists^{\st}m^{0})B(x_{(\cdot)}, k, m), 
\ee
and bringing outside the standard quantifiers, we obtain
\be\label{ecatie}
(\forall^{\st} x_{(\cdot)}^{0\di 1}, k^{0}, \mu^{2})(\exists^{\st}m, f)[A(f, \mu(f))\di B(x_{(\cdot)}, k, m)], 
\ee
which is a normal form as the formula in square brackets is internal.  
Now applying Theorem \ref{TERM2} to `$\P\vdash \eqref{ecatie}$' yields a term $t$ such that $\textsf{E-PA}^{\omega}$ proves
\be\label{hermct}
(\forall  x_{(\cdot)}^{0\di 1}, k^{0}, \mu^{2})(\exists m, f\in t(\mu, x_{(\cdot)}, k))[A(f, \mu(f))\di B(x_{(\cdot)}, k, m)], 
\ee
and define $s(\mu, x_{(\cdot)}, k)$ to be the maximum of all entries for $m$ in $t(\mu, x_{(\cdot)}, k)$.
We immediately obtain, using classical logic, that
\[
(\forall   \mu^{2})[(\forall f^{1})A(f, \mu(f))\di (\forall x_{(\cdot)}^{0\di 1}, k^{0})B(x_{(\cdot)}, k, s(\mu, x_{(\cdot)}, k))], 
\]
which is exactly as required by the theorem, i.e. the first conjunct of \eqref{froodcor}.  
\end{proof}
With some effort, the proof of $\MCT_{\ns}\asa\paai$ goes through in $\H$, i.e.\ the theorem is constructive, and the corollary then applies to $\H$ and $\textsf{E-HA}^{\omega}$.  
The axioms from Definition \ref{flah} are crucial in this context.  In any case, the previous results indicate that \emph{Transfer} is \emph{highly non-constructive}, as it is converted into
the Turing jump (or stronger) after term extraction, thus lending credence to the \textbf{local constructivity} of Nonstandard Analysis.  

\medskip

Furthermore, we stress that the results in \eqref{froodcor} are not (necessarily) surprising in and of themselves.  
\emph{What is surprising} is that we can `algorithmically' derive these effective results from the \emph{quite simple} classical proof of $\paai\asa \MCT_{\ns}$, in which \emph{no efforts towards effective results are made}.  Furthermore, while \eqref{froodcor} deals with higher types, it is possible to derive \emph{classical} computability theory, i.e.\ dealing only with subsets of $\N$, as discussed in Section \ref{forgo}.  

\medskip

Finally, we point out one entertaining property of \emph{Transfer} with regard to the law of excluded middle:  
When working in $\P$, there are in general \emph{three} possibilities for any internal formula $A(x)$:  (i) $A(x)$ holds for all $x$, (ii) there is \emph{standard} $x$ such that $\neg A(x)$, and (iii) there is $y$ such that $\neg A(y)$, but no such standard number exists.  Hence, \emph{in the extended language of} $\P$ 
there is a \textbf{third}\footnote{Note that one can prove using intuitionistic logic that $\neg(\neg Q \wedge \neg\neg Q)$ for any $Q$, i.e.\ there is no explicit `third option' in which both $Q$ and $\neg Q$ are false (\cites{gleuf, brouw}).\label{reminder}} possibility alongside of the two usual ones, namely that there is a counterexample, but it is not standard. 

\medskip

Now, the \emph{Transfer} axiom \textsf{T} excludes the aforementioned third possibility, i.e.\ in $\IST$ we have $(\exists^{\st}x)\neg A(x) \vee (\forall x)A(x)$.    
Similar to Section \ref{fraki}, 
we observe that the fundamental change from mainstream Peano arithmetic $\textsf{E-PA}^{\omega}$ to the system of Nonstandard Analysis $\P$ has an influence on the associated logic:   
The `usual' law of excluded middle included in $\P$ allows for three possibilities as above, while \emph{Transfer} reduces this spectrum to two, i.e.\ the latter plays the role of the law of excluded middle \emph{in the extended language of Nonstandard Analysis}.   
\subsection{An example involving \emph{Standard Part}}\label{RMKE2}
In Section \ref{hum}, we provided examples of local constructivity, i.e.\ we stripped the nonstandard proofs of \emph{Transfer} and \emph{Standardisation} and obtained effective results using Theorem \ref{TERM2}.   A natural question is what happens if we \emph{do not omit} the latter axiom.  

\medskip

To this end, we study in Section \ref{WASP} a nonstandard proof \emph{involving Standard Part} of $\RIE$ formulated with \emph{pointwise} continuity.  
As we will see, nonstandard proofs involving \emph{Standard Part} give rise to \emph{rather exotic} {relative computability results}.
In particular, similar to Section \ref{MOCO}, we establish that $\P$ proves the equivalence between $\STP$, a fragment of \emph{Standard Part}, and a nonstandard version of $\RIE$.   
Applying Theorem \ref{TERM2} to this nonstandard equivalence, we observe that $\STP$ is converted to the \emph{special fan functional}, a cousin of Tait's fan functional with rather exotic computational properties, as discussed in Sections \ref{WASP} and \ref{STPORE}.  However, the special fan functional is also \emph{quite natural} from a different point of view: 
it is implicit in \emph{Cousin's lemma}, as discussed in Section \ref{WARP}. 

\medskip

The above results serve a triple purpose as follows:
\begin{enumerate}
\item \emph{Standard Part} is \emph{non-constructive}, as it gives rise to the \emph{special fan functional}, which is extremely hard to compute in classical mathematics.  
\item The \emph{combination} of the fragments of \emph{Standard Part} and \emph{Transfer} corresponding to $\WKL_{0}$ and $\ACA_{0}$ gives rise to $\ATR_{0}$.  Thus, \emph{Standard Part} is `even more' non-constructive in the presence of \emph{Transfer}.
\item Nonstandard equivalences as in Theorem \ref{expected} give rise to \emph{effective} equivalences like in Theorem \ref{kohort} which are \emph{rich in computational content}.  
\end{enumerate}
The first and second items directly vindicate local constructivity from Section \ref{loco}.  
The final item is relevant for \emph{constructive Reverse Mathematics} as discussed in the previous section.  
By contrast, we discuss in Section \ref{nexot} how results involving the special fan functional give rise to `normal' results involving known objects.  

\subsubsection{Weak K\"onig's lemma and continuity}\label{WASP}
In this section, we study a nonstandard proof involving a fragment of \emph{Standard Part}.  
In particular, we study the theorem $\RIE^{\pw}$ that \emph{a pointwise continuous function on the unit interval is integrable}. 

\medskip

First of all, \emph{weak K\"onig's lemma} ($\WKL$) is the statement \emph{every infinite binary tree has a path} and gives rise to the second `Big Five' system of Reverse Mathematics (See e.g.\ \cite{simpson2}*{IV}).
The following fragments of \emph{Standard Part} are equivalent (See Theorem \ref{lapdog}) and constitute the nonstandard counterpart of {weak K\"onig's lemma}. 
\bdefi[Equivalent fragments of \emph{Standard Part}]\label{kefi}
\be\tag{\textup{\textsf{STP}}}\label{STP}
(\forall \alpha^{1}\leq_{1}1)(\exists^{\st}\beta^{1}\leq_{1}1)(\alpha\approx_{1}\beta).
\ee  
\be\tag{$\STP_{\R}$}\label{STPR}
(\forall x\in [0,1])(\exists^{\st}y\in [0,1])(x\approx y).
\ee
\be\label{frukkklk12}
(\forall f^{1})(\exists^{\st} g^{1})\big( (\forall^{\st}n^{0})(\exists^{\st}m^{0})(f(n)=m)\di   f\approx_{1}g\big).
\ee
\begin{align}\label{fanns2}
(\forall T^{1}\leq_{1}1)\big[(\forall^{\st}n)(\exists \beta^{0})&(|\beta|=n \wedge \beta\in T ) \di (\exists^{\st}\alpha^{1}\leq_{1}1)(\forall^{\st}n^{0})(\overline{\alpha}n\in T)   \big],
\end{align}
\begin{align}\label{frukkklk112}
(\forall^{\st}g^{2})(\exists^{\st}w)\big[(\forall T^{1}\leq_{1}1)\big((\forall \alpha^{1} \in &w(1))(\overline{\alpha}g(\alpha)\not\in T)\\
&\di(\forall \beta\leq_{1}1)(\exists i\leq w(2))(\overline{\beta}i\not\in T) \big)\big]. \notag
\end{align}  
\edefi
There is no deep philosophical meaning in the words `nonstandard counterpart':  This is just what the principle $\STP$ is called in the literature (See e.g.\ \cite{pimpson}). 
Note that \eqref{fanns2} is just $\WKL^{\st}$ with the `st' dropped in the leading quantifier, while \eqref{frukkklk12} is a version of the axiom of choice limited to $\Pi_{1}^{0}$-formulas (See \cite{simpson2}*{Table~4, p.\ 54}), and \eqref{frukkklk112} is a normal form for $\STP$.  Also, \ref{STPR} expresses the nonstandard compactness of the unit interval by \cite{loeb1}*{p.\ 42}.     
Thus, these nonstandard equivalences reflect the `usual' equivalences involving $\WKL$ from \cite{simpson2}*{IV}.  

\medskip

Now, by \cite{simpson2}*{IV.2.7}, $\WKL$ is equivalent to $\RIE^{\pw}$ and 
the nonstandard counterparts behave in exactly the same way: letting $\RIE_{\ns}^{\pw}$ be the statement \emph{every \(pointwise\) nonstandard continuous function is nonstandard integrable on the unit interval}, we have the following (expected) theorem.   
\begin{thm}\label{expected}
The system $\P$ proves that $\STP_{\R}\asa \RIE_{\ns}^{\pw}$.  
\end{thm}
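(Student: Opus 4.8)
The plan is to follow, in the nonstandard idiom, the classical Reverse Mathematics equivalence $\WKL\asa\RIE^{\pw}$ of \cite{simpson2}*{IV.2.7}, exactly as announced just before the theorem. Throughout, Theorem~\ref{lapdog} lets us use whichever of the equivalent formulations of \emph{Standard Part} from Definition~\ref{kefi} is most convenient, in particular \eqref{STPR} (nonstandard compactness of $[0,1]$) and \eqref{fanns2} (``$\WKL^{\st}$ with the leading $\st$ dropped'').

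For the implication $\STP_{\R}\di\RIE_{\ns}^{\pw}$, the idea is to first upgrade \emph{pointwise} to \emph{uniform} nonstandard continuity, just as $\WKL_{0}$ upgrades pointwise to uniform continuity in \cite{simpson2}*{IV.2}, and then invoke $\RIE_{\ns}$. In detail: let $f$ be pointwise nonstandard continuous on $[0,1]$ and fix $x,y\in[0,1]$ with $x\approx y$; by $\STP_{\R}$ there is standard $z\in[0,1]$ with $z\approx x$, hence also $z\approx y$ since $\approx$ is transitive on the reals, and pointwise nonstandard continuity \emph{at the standard point} $z$ yields $f(z)\approx f(x)$ and $f(z)\approx f(y)$, so $f(x)\approx f(y)$. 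Thus $f$ is nonstandard \emph{uniformly} continuous as in \eqref{soareyou4}, and $\RIE_{\ns}$, which $\P$ proves (this is the ``nonstandard uniform continuity implies nonstandard integrability'' part of the proof of Theorem~\ref{henry}, obtained via \emph{Idealisation} and so valid for possibly nonstandard $f$), gives that $f$ is nonstandard integrable. As $f$ was arbitrary, $\RIE_{\ns}^{\pw}$ follows, and no standardness hypothesis on $f$ is needed.

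For the reverse implication $\RIE_{\ns}^{\pw}\di\STP_{\R}$, the plan is to transport the classical reversal $\RCA_{0}+\RIE^{\pw}\vdash\WKL$ into $\P$. Given a binary tree $T\leq_{1}1$ with nodes of every standard length but (towards a contradiction) no standard path, one builds from $T$ a function $f_{T}\colon[0,1]\di\R$ -- a finite-type term in $T$, hence standard whenever $T$ is -- placing growing ``bumps'' on the dyadic subintervals indexed by the nodes along which $T$ dies out, following the usual Simpson-style construction. Because $T$ has no standard path, one checks that near every standard point of $[0,1]$ the bumps are of infinitesimal scale, so $f_{T}$ is pointwise nonstandard continuous; at the same time the absence of a standard path is precisely what makes $f_{T}$ fail nonstandard integrability \eqref{soareyou5} (two partitions of the same infinitesimal mesh whose Riemann sums differ by a non-infinitesimal amount). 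This contradicts $\RIE_{\ns}^{\pw}$, so every such $T$ has a standard path; that is exactly \eqref{fanns2}, and hence $\STP_{\R}$ by Theorem~\ref{lapdog}. The case where $T$ is itself nonstandard is handled as in the second proof of Theorem~\ref{sef}: one passes to internal approximations using $\HAC_{\INT}$ and \emph{Idealisation}, reducing to a manageable case.

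I expect the reverse implication to be the main obstacle. The forward direction is short and essentially already contained in the proofs of Theorems~\ref{soemple} and~\ref{henry}. The reverse direction, by contrast, requires carrying out the somewhat fiddly classical construction of $f_{T}$ \emph{and verifying its two nonstandard properties}, above all that pointwise nonstandard continuity genuinely holds at \emph{every} standard point -- this is the step where ``$T$ has no standard path'' is used, and the one most easily botched -- together with the routine-but-delicate bookkeeping needed to pass from an arbitrary $T\leq_{1}1$ to the internal/standard case along the lines of Theorem~\ref{sef}.
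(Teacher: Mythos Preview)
Your forward direction matches the paper's exactly: $\STP_{\R}$ upgrades pointwise nonstandard continuity \eqref{frik} to uniform nonstandard continuity \eqref{soareyou4}, and then one invokes the $\P$-provable $\RIE_{\ns}$ extracted from Theorem~\ref{henry}.

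For the reverse direction the paper takes a much more direct route than your tree-based plan. Rather than passing through \eqref{fanns2} and a Simpson-style $f_T$ construction, the paper works directly with the failure of $\STP_{\R}$: if $\STP_{\R}$ fails there is $x_0\in[0,1]$ with $x_0\not\approx y$ for every standard $y$ (whence $0\ll x_0\ll 1$, since the endpoints are standard), and one writes down a single explicit function with a singularity concentrated at $x_0$ --- the paper uses $\sin\!\big(\tfrac{1}{|x-x_0|+1/N}\big)$ for nonstandard $N$. Because every standard $z$ satisfies $|z-x_0|\gg 0$, the function is nonstandard pointwise continuous, while the bad behaviour localised at the nonstandard point $x_0$ furnishes the counterexample.

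The gain of the paper's route is that it sidesteps exactly the difficulties you flagged: there is no tree coding, no $f_T$, no handling of a possibly nonstandard $T$, and none of the ``routine-but-delicate bookkeeping'' you anticipate. The hypothesis $\neg\STP_{\R}$ already hands you the problematic point, and the counterexample is a one-liner. Your approach could presumably be pushed through, but it routes through \eqref{fanns2} and the Simpson construction precisely where the paper exploits $\STP_{\R}$ in its most usable form; the lesson is that when $\neg\STP_{\R}$ is the hypothesis, the real-number formulation is the one to exploit.
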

\begin{proof}
For the forward implication, in light of the definitions of (uniform) nonstandard continuity \eqref{frik} and \eqref{soareyou4}, the latter follows from the former by $\STP_{\R}$, and $\RIE_{\ns}^{\pw}$ follows from Theorem \ref{henry} and noting that the latter establishes that $\RIE_{\ns}$ is provable in $\P$.  For the reverse implication, assume $\RIE_{\ns}^{\pw}$ and suppose $\STP_{\R}$ is false, i.e.\ there is $x_{0}\in [0,1]$ such that $(\forall^{\st}y\in [0,1])(x\not\approx y)$.  Note that $0\ll x_{0}\ll 1$ as the endpoints of the unit interval are standard.    
Fix nonstandard $N^{0}$ and define the function $\sin\big( \frac{1}{|x-x_{0}|+\frac{1}{N}}  \big)$.  The latter  is well-defined for any $x\in [0,1]$ and nonstandard pointwise continuous, but not nonstandard continuous at $x_{0}$.  
\end{proof}
While Theorem \ref{expected} may be expected in light of known Reverse Mathematics results, applying term extraction as in Theorem \ref{TERM2} to the former theorem, we obtain an object with \emph{unexpected} properties, namely the \emph{special fan functional}.  
\bdefi[Special fan functional]\label{dodier}
We define $\SCF(\Theta)$ as follows for $\Theta^{(2\di (0\times 1^{*}))}$:
\[
(\forall g^{2}, T^{1}\leq_{1}1)\big[(\forall \alpha \in \Theta(g)(2))  (\overline{\alpha}g(\alpha)\not\in T)
\di(\forall \beta\leq_{1}1)(\exists i\leq \Theta(g)(1))(\overline{\beta}i\not\in T) \big]. 
\]
Any functional $\Theta$ satisfying $\SCF(\Theta)$ is referred to as a \emph{special fan functional}.
\edefi
Note that there is \emph{no unique} special fan functional, i.e.\ it is in principle incorrect to make statements about `the' special fan functional. 
As pointed out in \cite{dagsam}, from a computability theoretic perspective, the main property of the special fan functional is the selection of $\Theta(g)(2)$ as a finite sequence of binary sequences $\langle f_0 , \dots, f_n\rangle $ such that the neighbourhoods defined from $\overline{f_i}g(f_i)$ for $i\leq n$ form a cover of Cantor space;  almost as a by-product, $\Theta(g)(1)$ can then be chosen to be the maximal value of $g(f_i) + 1$ for $i\leq n$.  As discussed in Section \ref{metastable}, the special fan functional is also quite natural from the point of view of Tao's notion of \emph{metastability}.   

\medskip

Let $\RIE_{\ef}^{\pw}(t)$ be the statement that $t(g)$ is a modulus of Riemann integration for $f$ if $g$ is a modulus of pointwise continuity for $f$.  
We have the following theorem.
\begin{thm}\label{kohort}
From any proof $\P\vdash [\STP\di \RIE_{\ns}^{\pw}]$, a term $t$ can be extracted such that $\textsf{\textup{E-PA}}^{\omega}$ proves $(\forall \Theta)[\SCF(\Theta)\di \RIE_{\ef}^{\pw}(t(\Theta))]$.
\end{thm}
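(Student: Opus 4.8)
The plan is to follow the pattern of Theorem~\ref{varou} and Corollary~\ref{sefcor}: bring $\STP$ and $\RIE_{\ns}^{\pw}$ into normal form, recognise the Skolem functional of the normal form of $\STP$ as (essentially) a special fan functional, convert the implication $\STP\di\RIE_{\ns}^{\pw}$ into a single normal form using Remark~\ref{doeisnormaal}, and finally apply Theorem~\ref{TERM2} to the proof $\P\vdash[\STP\di\RIE_{\ns}^{\pw}]$ supplied by Theorem~\ref{expected}.

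First I would record the relevant normal forms. By Definition~\ref{kefi}, $\STP$ is equivalent to \eqref{frukkklk112}, which has the shape $(\forall^{\st}g^{2})(\exists^{\st}w)A_{0}(g,w)$ with $A_{0}$ internal; comparing with Definition~\ref{dodier}, a functional $\Theta$ satisfies $\SCF(\Theta)$ precisely when $(\forall g^{2})A_{0}(g,\Theta(g))$ holds, where the components $\Theta(g)(2),\Theta(g)(1)$ play the roles of $w(1),w(2)$. Hence $(\exists^{\st}\Theta)(\forall^{\st}g)A_{0}(g,\Theta(g))$ implies $\STP$. On the side of $\RIE_{\ns}^{\pw}$: pointwise nonstandard continuity of $f$ has the normal form $(\forall^{\st}x,k)(\exists^{\st}N)C(f,x,k,N)$ with $C$ internal, obtained exactly as \eqref{exagoe} in Example~\ref{leffe} (resolving $\approx$, pulling out the standard universal quantifiers and applying \emph{Idealisation} to pull $(\exists^{\st}N)$ through $(\forall y\in[0,1])$), and a modulus of pointwise continuity $h$ Skolemises it, i.e.\ $(\forall^{\st}x,k)C(f,x,k,h(x,k))$. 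Nonstandard integrability \eqref{soareyou5} has the normal form $(\forall^{\st}k')(\exists^{\st}N')B(k',N',f)$ with $B(k',N',f)$ internal and monotone in $N'$, derived just as in the proof of Theorem~\ref{varou}.

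Next I would treat $\STP\di\RIE_{\ns}^{\pw}$ as an implication between normal forms and run Remark~\ref{doeisnormaal}: Skolemise the antecedent $\STP$ by a standard functional $\Theta$ and the antecedent ``$f$ is pointwise nonstandard continuous'' by a standard modulus $h$, bring all standard quantifiers to the front, and weaken as in the step \eqref{nora}$\to$\eqref{nora4}. This leaves an internal quantifier $(\forall f)$ in front of the consequent's implication; as in the passage after \eqref{hideaal} in the proof of Theorem~\ref{varou}, I would pull $(\exists^{\st}N')$ through $(\forall f)$ by \emph{Idealisation} to obtain a finite standard list of candidate bounds, and then use the monotonicity of $B$ in $N'$ to replace that list by its maximum $N_{0}$. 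The upshot is that $\P$ proves a genuine normal form, schematically
\[
(\forall^{\st}\Theta,h,k')(\exists^{\st}N_{0})(\forall f)\big[\big((\forall g)A_{0}(g,\Theta(g))\wedge(\forall x,k)C(f,x,k,h(x,k))\big)\di B(k',N_{0},f)\big].
\]
Applying Theorem~\ref{TERM2} to this yields a term $s$ of $\textsf{E-PA}^{\omega}$ with $(\exists N_{0}\in s(\Theta,h,k'))$ in place of $(\exists^{\st}N_{0})$; setting $t(\Theta)(h)(k'):=\max_{i<|s(\Theta,h,k')|}s(\Theta,h,k')(i)$ and unwinding the definitions of $A_{0},C,B$ gives exactly $(\forall\Theta)[\SCF(\Theta)\di\RIE_{\ef}^{\pw}(t(\Theta))]$: whenever $\SCF(\Theta)$ holds and $h$ is a modulus of pointwise continuity of $f$, then $t(\Theta)(h)$ is a modulus of Riemann integration of $f$.

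The main obstacle is precisely the internal quantifier $(\forall f)$ standing in front of the consequent. Because $f$ ranges over \emph{all} functions, not just the standard ones, one cannot simply Skolemise it away; instead one must commute the existential bound $(\exists^{\st}N')$ past $(\forall f)$ via \emph{Idealisation} and then collapse the resulting finite list of bounds using the monotonicity of the integrability matrix $B$ in $N'$. This is the same device already used in the proof of Theorem~\ref{varou}, and its point is exactly that after it the quantifier $(\forall f)$ is harmless for term extraction. A minor bookkeeping check, to be carried out carefully, is that the Skolem functional of the normal form \eqref{frukkklk112} of $\STP$ really is a special fan functional in the sense of Definition~\ref{dodier}, i.e.\ that the pairing of $\Theta(g)(1)$ and $\Theta(g)(2)$ with $w(2)$ and $w(1)$ matches up.
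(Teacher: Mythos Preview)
Your proposal is correct and follows essentially the same approach as the paper's (very terse) proof: the paper simply says that $\STP$ has the normal form \eqref{frukkklk112}, that $\RIE_{\ns}^{\pw}$ has a normal form analogous to \eqref{horsee}, and that the rest is straightforward from Remark~\ref{doeisnormaal}. You have unpacked exactly these steps in detail, including the use of \emph{Idealisation} to commute $(\exists^{\st}N')$ past the internal $(\forall f)$ (the same device as in Theorem~\ref{varou}) and the identification of the Skolem functional of \eqref{frukkklk112} with a special fan functional.
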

\begin{proof}
A normal form for $\STP$ is given by \eqref{frukkklk112} while a normal form for $\RIE_{\ns}^{\pw}$ is similar to the normal for \eqref{horsee} for $\RIE_{\ns}$.  
The rest of the proof is now straightforward in light of Remark \ref{doeisnormaal}.  
\end{proof}
The result of the previous theorem is similar to what one would expect from the previous sections.  
However, the special fan functional has rather \emph{exotic computational properties}, as we discuss in the next section.  

\medskip

Finally, we point out one entertaining property of \emph{Standard Part} with regard to the law of excluded middle:  
When working in $\P$, there are in general \emph{three} possibilities for any internal formula $A(x)$:  (i) $A(x)$ holds for all $x$, (ii) there is \emph{standard} $x$ such that $\neg A(x)$, and (iii) there is $y$ such that $\neg A(y)$, but no such standard number exists.  Hence, \emph{in the extended language of} $\P$ 
there is a \textbf{third}\footnote{Recall Footnote \ref{reminder} concerning the truth values of intuitionistic logic.} possibility alongside of the two usual ones, namely that there is a counterexample, but it is not standard. 
Now, the \emph{Standard Part} axiom \textsf{S} excludes the latter third possibility in certain cases, i.e.\ in $\P+\STP$ 
we have $(\exists^{\st}x^{1}\leq_{1}1)\neg A(x) \vee (\forall x^{1}\leq_{1}1)A(x)$ assuming  $A$ `is nonstandard continuous' as follows:
\be\label{when}
(\forall^{\st} f^{1})(\forall g^{1})(f\approx_{1}g\di (A(g)\di A(f))), 
\ee
which expresses\footnote{To see this, resolve `$\approx_{1}$' in \eqref{when} and push the resulting extra standard quantifier to the front using \emph{idealisation}.} that $A$ only depends on some (standard) initial segment of a (standard) witness, similar to Brouwer's axiom \textsf{WC-N} (\cite{AD}).  
Formulas behaving as in \eqref{when} are readily found in computability theory; take for example $A(f)\equiv (\varphi_{e,s}^{f}(n)=m)$ with standard numerical parameters.  
Thus, the following typical example of \emph{Transfer} also follows from $\STP$ thanks to the above considerations:
\[
(\forall^{\st}e, s, m,n)\big[  (\exists B^{1})(\varphi_{e,s}^{B}(n)=m) \di (\exists^{\st} B^{1})(\varphi_{e,s}^{B}(n)=m)  \big].
\]
In conclusion, as in Section \ref{fraki} and the previous one, we observe that the fundamental change from mainstream Peano arithmetic $\textsf{E-PA}^{\omega}$ to the system of Nonstandard Analysis $\P$ has an influence on the associated logic:   
The `usual' law of excluded middle included in $\P$ allows for three possibilities as above, while \emph{Standard Part} reduces this spectrum to two in certain cases, i.e.\ the latter can play the role of the law of excluded middle \emph{in the extended language of Nonstandard Analysis}.   
\subsubsection{Computational properties of the special fan functional}\label{STPORE}
We discuss the exotic computational properties of the special fan functional.  As it turns out, the latter is \emph{extremely hard to compute} in 
classical mathematics, while easy to compute in intuitionistic mathematics.  All results on the special fan functional are from \cite{dagsam,samGH}.

\medskip

First of all, the `usual' fan functional $\Phi^{3}$ was introduced by Tait (See e.g.\ \cite{noortje,longmann}) as an example of a functional not computable (in the sense of Kleene's S1-S9; \cite{noortje,longmann}) over 
the total continuous functionals.  The usual fan functional $\Phi$ returns a modulus of uniform continuity $\Phi(Y)$ on Cantor space on input a continuous functional $Y^{2}$.    
Since Cantor space and the unit interval are isomorphic, it is clear that the usual fan functional $\Phi$ also computes a modulus of Riemann integration for (pointwise) continuous functionals on the unit interval.  With some effort, these kinds of results follow from the implication $\WKL^{\st}\di (\RIE^{\pw})^{\st}$.  In particular, $\WKL^{\st}$ is translated to (something similar to) the usual fan functional after term extraction as in Theorem \ref{TERM2}.      
Furthermore, the usual fan functional may be computed (via a term in G\"odel's $T$) from $(\mu^{2})$ by combining the results in \cite{kohlenbach4,kooltje}.  

\medskip

Secondly, the special fan functional fulfils a role similar to the usual fan functional in Theorem \ref{kohort}: it computes a modulus of Riemann integration for $f$ from a modulus of pointwise continuity from $f$.  However, $(\mu^{2})$ cannot compute (in the sense of Kleene's S1-S9) any $\Theta$ as in $\SCF(\Theta)$; 
the same holds for \emph{any} type two functional, including the \emph{Suslin functional} which corresponds to $\FIVE$, the strongest Big Five system (\cite{kohlenbach2}).  Thus, it seems one needs the functional $(\mathcal{E}_{2})$:
\be\tag{$\mathcal{E}_{2}$}\label{hah}
(\exists \xi^{3})(\forall Y^{2})\big[  (\exists f^{1})(Y(f)=0)\asa \xi(Y)=0  \big], 
\ee
to compute $\Theta$ such that $\SCF(\Theta)$.  But \eqref{hah} implies \emph{second-order arithmetic}, 
which is \emph{off-the-chart} qua computational strength compared to $(\mu^{2})$, i.e.\ the special fan functional is extremely \emph{hard to compute} in classical mathematics, while the \emph{intuitionistic fan functional} $\MUC$ from \cite{kohlenbach2} computes the special fan functional.
The fact that $(\mu^{2})$ does not compute the special fan functional translates to the nonstandard fact that $\P+\paai$ does not prove $\STP$, in contrast to the situation for the standard counterparts $\ACA_{0}$ and $\WKL_{0}$ from Reverse Mathematics where the former implies the latter.    
Furthermore, $\STP+\paai$ implies $\ATR_{0}$ relative to `st' which translates to the fact that the special fan functional $\Theta$ and Feferman's mu compute a realiser for $\ATR_{0}$ (via a term of G\"odel's $T$).  

\medskip

Thirdly, by contrast, $\textsf{E-PA}^{\omega}+(\exists \Theta)\SCF(\Theta)$ is a conservative extension of $\textsf{E-PA}^{\omega}+\WKL$, i.e.\ the special fan functional has quite \emph{weak first-order strength} in isolation.  This conservation result requires the \emph{intuitionistic fan functional $\Omega^{3}$} as follows:
\be\tag{$\MUC(\Omega)$}
(\forall Y^{2})(\forall f^{1}, g^{1}\leq_{1}1)(\overline{f}\Omega(Y)=_{0}\overline{g}\Omega(Y)\di Y(f)=_{0}Y(g))
\ee
which results in a conservative extension of $\textsf{E-PA}^{\omega}+\WKL$ by \cite{kohlenbach2}*{Theorem 3.15}.  
Now, the intuitionistic fan functional computes  the special one via a term of G\"odel's $T$ by Theorem \ref{foor} and its corollary. 
Thus, the special fan functional is rather \emph{easy} to compute in intuitionistic mathematics.  Proofs may be found in Section \ref{techni}.      
\begin{thm}\label{foor}
The axiom \ref{STP} can be proved in $\P$ plus the axiom
\be\label{kunt}\tag{\textsf{\textup{NUC}}}
(\forall^{\st}Y^{2})(\forall f^{1}, g^{1}\leq_{1}1)(f\approx_{1} g\di Y(f)=_{0}Y(g)).  
\ee
In particular $\P+(\exists^{\st}\Omega^{3})\MUC(\Omega)$ proves $\STP$.  
\end{thm}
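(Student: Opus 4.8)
The plan is to prove $\P+\NUC\vdash\STP$ by contradiction, with $\NUC$ doing the work of compactness, so that the argument becomes a nonstandard mirror of the classical implication ``a uniform-continuity (fan) functional on Cantor space yields $\WKL$''; the last sentence of the theorem will then drop out. So I would suppose $\STP$ fails and fix $\alpha^{1}\leq_{1}1$ with $(\forall^{\st}\beta^{1}\leq_{1}1)(\alpha\not\approx_{1}\beta)$, i.e.\ $(\forall^{\st}\beta\leq_{1}1)(\exists^{\st}n^{0})(\overline{\alpha}n\neq_{0}\overline{\beta}n)$ --- here $\alpha$ is necessarily nonstandard, since otherwise $\beta:=\alpha$ refutes this.

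\emph{Step 1: a standard separating functional.} First I would apply the axiom $\HAC_{\INT}$ to the normal form just displayed (its matrix is internal, with $\alpha$ a nonstandard parameter), obtaining a \emph{standard} $G^{1\di 0^{*}}$ with $(\forall^{\st}\beta\leq_{1}1)(\exists n\in G(\beta))(\overline{\alpha}n\neq_{0}\overline{\beta}n)$. Letting $g(\beta)$ be the largest entry of $G(\beta)$ gives a standard $g$ of type $2$, and since $\overline{\alpha}m\neq_{0}\overline{\beta}m$ propagates upward in $m$, this standard $g$ satisfies $(\forall^{\st}\beta\leq_{1}1)\big(\overline{\alpha}g(\beta)\neq_{0}\overline{\beta}g(\beta)\big)$.

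\emph{Step 2: compactness from $\NUC$} (the crux). Since $g^{2}$ is standard, $\NUC$ tells us $g$ is nonstandard uniformly continuous on Cantor space: $(\forall f^{1},h^{1}\leq_{1}1)(f\approx_{1}h\di g(f)=_{0}g(h))$. Resolving `$\approx_{1}$' and pulling the resulting standard quantifier to the front using \emph{Idealisation} (see \eqref{wellhung}) --- precisely the manoeuvre that extracts a modulus of continuity in Example~\ref{krel} --- I would obtain a \emph{standard} $N_{0}$ with $(\forall f,h\leq_{1}1)(\overline{f}N_{0}=_{0}\overline{h}N_{0}\di g(f)=_{0}g(h))$. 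Thus $g(f)$ depends only on $\overline{f}N_{0}$, so the range of $g$ over $\{0,1\}^{\N}$ is the image of the standard finite set of binary strings of length $N_{0}$ under the standard functional sending $\sigma$ to $g(\sigma\ast\langle0,0,\dots\rangle)$; being a standard finite set of naturals, it has a \emph{standard} strict upper bound $M_{0}$, so $(\forall f\leq_{1}1)(g(f)<M_{0})$.

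\emph{Step 3: the contradiction, and the corollary.} For every standard $\beta\leq_{1}1$ I now have $\overline{\alpha}g(\beta)\neq_{0}\overline{\beta}g(\beta)$ together with $g(\beta)<M_{0}$, hence $\overline{\alpha}M_{0}\neq_{0}\overline{\beta}M_{0}$; in words, no standard binary sequence agrees with $\alpha$ on its first $M_{0}$ coordinates. But $M_{0}$ is standard and $\alpha\leq_{1}1$, so $\overline{\alpha}M_{0}$ is a standard binary string (by the basic axioms of Definition~\ref{debs} and external induction on $M_{0}$, using that each $\alpha(i)\in\{0,1\}$ is standard); consequently the sequence $\beta_{0}\leq_{1}1$ defined by $\beta_{0}(i)=\alpha(i)$ for $i<M_{0}$ and $\beta_{0}(i)=0$ otherwise is standard with $\overline{\beta_{0}}M_{0}=_{0}\overline{\alpha}M_{0}$, contradicting the previous sentence at $\beta:=\beta_{0}$. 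This gives $\P+\NUC\vdash\STP$. For the final claim, if $\Omega^{3}$ is standard and $\MUC(\Omega)$ holds then $\Omega(Y)$ is standard for standard $Y^{2}$, so $f^{1}\approx_{1}h^{1}$ implies $\overline{f}\Omega(Y)=_{0}\overline{h}\Omega(Y)$ and hence $Y(f)=_{0}Y(h)$; thus $\P+(\exists^{\st}\Omega^{3})\MUC(\Omega)$ proves $\NUC$, whence $\STP$. The only genuinely non-routine point is Step 2 --- that uniform continuity of the single standard functional $g$ is exactly what upgrades the pointwise data ``$g(\beta)$ for each standard $\beta$'' to a uniform standard bound $M_{0}$ --- while a minor technical point to verify carefully is the standardness of $\overline{\alpha}M_{0}$ used in Step 3.
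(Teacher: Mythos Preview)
Your proof is correct. Both your argument and the paper's hinge on the same two facts: (i) via \emph{Idealisation}, $\NUC$ gives every standard $Y^{2}$ a standard modulus $N_{0}$ of uniform continuity on Cantor space, and (ii) for binary $f$ and standard $N_{0}$, the sequence $\overline{f}N_{0}*00\dots$ is standard (so any such $Y$ has a standard bound on $2^{\N}$). Your use of $\HAC_{\INT}$ with the nonstandard parameter $\alpha$ is legitimate; the paper itself does exactly this in the proof of Theorem~\ref{lapdog}.

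The route, however, is genuinely different. You argue by contradiction: assuming $\STP$ fails at some $\alpha$, you use $\HAC_{\INT}$ to manufacture a single standard separating functional $g$, apply $\NUC$ to $g$ to get a uniform standard bound $M_{0}$, and then observe that $\overline{\alpha}M_{0}*00\dots$ is a standard $\beta_{0}$ that cannot be separated from $\alpha$ by $g$. The paper instead proves the normal form \eqref{frukkklk112} of $\STP$ \emph{directly}: given any standard $g^{2}$, it extracts the modulus $N_{0}$ and bound from $\NUC$ as above, and then explicitly builds the witness $w$ (with $w(2)$ the bound and $w(1)$ the list of all $\sigma*00\dots$ for binary $\sigma$ of length $N_{0}$), invoking Theorem~\ref{lapdog} to conclude $\STP$. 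What the paper's approach buys is immediate readiness for term extraction: since the witness $w$ is constructed explicitly from $g$ and $\Omega$, Corollary~\ref{scruf} (producing a term $t$ with $\MUC(\Omega)\di\SCF(t(\Omega))$) falls out with no unwinding. Your contradiction argument is perhaps more intuitive, but one would have to reorganise it into the normal-form shape before Theorem~\ref{TERM2} can be applied.
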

\begin{cor}\label{scruf}
From the proof in $\P$ that $\NUC\di \STP$, a term $t^{3\di3}$ can be extracted such that 
$\textup{\textsf{E-PA}}^{\omega}$ proves $(\forall \Omega^{3})\big[\MUC(\Omega)\di \SCF(t(\Omega))]$.  
\end{cor}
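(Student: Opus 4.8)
The plan is to feed the second half of Theorem~\ref{foor} into the term-extraction machinery of Theorem~\ref{TERM2} and then to repair the resulting finite list of Herbrand witnesses into a single special fan functional, using the monotonicity of the matrix of $\SCF$.

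First I would invoke the ``in particular'' clause of Theorem~\ref{foor}: inside $\P$ one has the implication $(\exists^{\st}\Omega^{3})\MUC(\Omega)\di\STP$, where crucially $\MUC(\Omega)$ is an \emph{internal} formula with parameter $\Omega$. Writing $\STP$ in its normal form \eqref{frukkklk112}, say $(\forall^{\st}g^{2})(\exists^{\st}w)B(g,w)$ with $B$ internal, I would then carry out the standard-quantifier bookkeeping of Remark~\ref{doeisnormaal}: move $(\exists^{\st}\Omega)$ across the implication and pull $(\forall^{\st}g)$ out of the consequent (both intuitionistically valid), then pull $(\exists^{\st}w)$ out of the consequent (legitimate in $\P$ by classical logic, since the type of $w$ is inhabited by a standard object). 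This shows that $\P$ proves the normal form
\[
(\forall^{\st}\Omega^{3},g^{2})(\exists^{\st}w)\big[\MUC(\Omega)\di B(g,w)\big];
\]
here the reduction is simpler than the generic one in Remark~\ref{doeisnormaal} because the antecedent is already internal, so no modulus functional has to be introduced.

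Next I would apply Theorem~\ref{TERM2} to this normal form, obtaining a term $s$ of G\"odel's $T$ with $\textsf{\textup{E-PA}}^{\omega}\vdash(\forall\Omega^{3},g^{2})(\exists w\in s(\Omega,g))[\MUC(\Omega)\di B(g,w)]$. It then remains to collapse the finite list $s(\Omega,g)$ of candidate pairs $w=(w(1),w(2))$ -- a finite sequence of binary sequences together with a number -- into a single pair. Here I would use that $B(g,\cdot)$ is monotone in the sense that if every entry of $w(1)$ occurs among the entries of $w'(1)$ and $w(2)\leq w'(2)$, then $B(g,w)\di B(g,w')$: enlarging the first component only strengthens the antecedent of the implication inside $B$, while enlarging the second component only weakens its conclusion. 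So, by a term of $T$, I would form $w^{*}(\Omega,g)$ whose first component concatenates all first components appearing in $s(\Omega,g)$ and whose second component is the maximum of all their second components; under $\MUC(\Omega)$ one of the $w$'s in $s(\Omega,g)$ satisfies $B(g,w)$ and is dominated by $w^{*}(\Omega,g)$, so $\MUC(\Omega)\di B(g,w^{*}(\Omega,g))$. Finally I would take $t$ to be the term for which $t(\Omega)(g)$ equals $w^{*}(\Omega,g)$ after the trivial relabelling of its two components to match the $(0\times 1^{*})$ layout of Definition~\ref{dodier}; then $\textsf{\textup{E-PA}}^{\omega}$ proves $(\forall\Omega^{3})[\MUC(\Omega)\di\SCF(t(\Omega))]$, as desired.

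The main obstacle is the final collapsing step: one has to verify that the matrix of $\SCF$ really is monotone with respect to the componentwise union-and-maximum operation, so that the construction of $w^{*}(\Omega,g)$ works \emph{uniformly}, with no need to know which entry of $s(\Omega,g)$ is the ``good'' one. Everything else is the routine normal-form manipulation of Remark~\ref{doeisnormaal} together with a black-box application of Theorem~\ref{TERM2}; the only other point requiring care is that $\MUC(\Omega)$ contains no hidden `$\st$', so that it is admissible as a parameter-formula in Theorem~\ref{TERM2} -- which it is.
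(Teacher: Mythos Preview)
Your proposal is correct and follows essentially the same route as the paper's proof. The paper reaches the same normal form $(\forall^{\st}\Omega, g)(\exists^{\st}w)[\,(\forall Y)A(Y,\Omega(Y))\di B(g,w)\,]$ by first rewriting $\NUC$ as $(\forall^{\st}Y)(\exists^{\st}N)A(Y,N)$ and then passing to the modulus $\Omega$ via Remark~\ref{doeisnormaal}, whereas you shortcut this by invoking the ``in particular'' clause of Theorem~\ref{foor} directly, observing that $\MUC(\Omega)$ is already internal; since $(\forall Y)A(Y,\Omega(Y))$ is precisely $\MUC(\Omega)$, the two entry points coincide. The collapsing step is also the same: the paper splits the extracted finite list into its binary-sequence entries and its numerical entries, concatenates the former and takes the maximum of the latter, exactly as you do, relying on the same monotonicity of the matrix of $\SCF$.
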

We point out that Kohlenbach (\cite{kohlenbach4}*{\S5}) has introduced axiom schemas with properties similar to the special fan functional $\Theta$.  These schemas are logical in nature in that they are formulated using comprehension over formula classes.  By contrast, $\Theta$ is `purely mathematical' as it emerges from the normal form of $\STP$.    
%
%

\medskip

Finally, we point out an anecdote by Friedman regarding Robinson from 1966.  
\begin{quote}
I remember sitting in Gerald Sacks' office at MIT and telling him
about this [version of Nonstandard Analysis based on PA] and the conservative extension proof. He was
interested, and spoke to A. Robinson about it, Sacks told me that A.
Robinson was disappointed that it was a conservative extension. \cite{HFFOM}
\end{quote}
In light of this quote, we believe Robinson would have enjoyed learning about the `new' mathematical object that is 
the special fan functional originating from Nonstandard Analysis.  As it happens, \emph{many} theorems of second-order arithmetic 
can be modified to yield similar `special' functionals with extreme computable hardness.    

\medskip

In conclusion, we provided an example of a nonstandard proof involving \emph{Standard Part} ($\STP$ in particular) which gives rise to a \emph{rather exotic} {relative computability result}, namely involving the \emph{special fan functional} which is extremely hard to compute classically, and rather easy to compute intuitionistically.      
This observation is particularly surprising as by \eqref{fanns2} $\STP$ is so similar to $\WKL$ relative to `st', and the latter behaves completely `standard'.

\subsubsection{Non-exotic computability results and \emph{Standard Part}}\label{nexot}
As discussed at the end of Section \ref{intro}, part of this paper's \emph{Catch22} is that we cannot show too much `strange stuff' emerging from Nonstandard Analysis lest the reader gets the idea that Nonstandard Analysis is somehow fundamentally strange.  For this reason, we discuss in this section how the above results involving the (admittedly strange) special fan functional give rise to `normal' results with more computational content.   For reasons of space, we only provide a sketch while technical details are in \cite{dagsam}.  

\medskip

First of all, consider the following nonstandard version of the axiom of choice:
\be\tag{$\textsf{\textup{AC}}_{\ns}^{0}$}
(\forall^{\st}n^{0})(\exists^{\st}x^{\rho})\Phi(n, x)\di (\exists^{\st}F^{0\di \rho})(\forall^{\st}n^{0})\Phi(n, F(n)), 
\ee
where $\Phi$ can be any formula in the language of $\P$.  As shown in \cites{espa, brie3}, the system $\P+\textsf{AC}_{\ns}^{0}$ satisfies a version of Theorem \ref{TERM2} but with $t$ defined using \emph{bar recursion}, an advanced computation scheme which (somehow) embodies the computational content of the axiom of choice (See \cite{kohlenbach3} for details on bar recursion).  
Hence, since $\textsf{\textup{AC}}_{\ns}^{0}$ readily implies $\STP_{\R}$ by Theorem \ref{lapdog}, we may consider the proof $\P+\textsf{\textup{AC}}_{\ns}^{0}\vdash \RIE_{\ns}^{\pw}$.  Instead of Theorem~\ref{kohort}, we obtain a \emph{bar recursive} term $t$ such that $\RIE_{\ef}^{\pw}(t)$.  While the special fan functional is gone, we now have a term depending on bar recursion, which seems somewhat excessive for the modest theorem $\RIE^{\pw}$.        

\medskip

Secondly, Kohlenbach discusses the $\ECF$-translation for the base theory $\RCAo$ of higher-order Reverse Mathematics from \cite{kohlenbach2}.  
The original $\ECF$-translation may be found in \cite{troelstra1}*{\S2.6.5, p.\ 141} and is somewhat technical in nature.  
Intuitively, the $\ECF$-translation replaces all objects of type two or higher by \emph{type one associates}, also known as \emph{Reverse Mathematics codes}.  
In particular, as shown by Kohlenbach in \cite{kohlenbach2}*{\S3}, if $\RCAo\vdash A$, then $\RCA_{0}^{2}\vdash [A]_{\ECF}$, where $A$ is any formula 
in the language of finite types, $[A]_{\ECF}$ is the $\ECF$-translation of $A$, and $\RCA_{0}^{2}$ is essentially the base theory $\RCA_{0}$ of Reverse Mathematics.
Serendipitously, the $\ECF$-translation applied to the conclusion of Theorem \ref{kohort} gives rise to (a statement equivalent to) $\WKL\di \RIE^{\pw}$ (See \cite{dagsam}).  In other words, while $\STP$ translates to the (arguably strange) special fan functional, the latter becomes `plain old' $\WKL$ when applying the $\ECF$-translation! 
To use an imaginary proverb: All's standard that ends standard!    

\medskip

In conclusion, we have shown how results involving the (admittedly strange) special fan functional from the previous section give rise to `normal' results.  
The $\ECF$-approach is preferable over the one involving bar recursion, in our opinion.   

\subsubsection{Non-exotic computability results and \emph{Standard Part} II}\label{WARP}
As discussed after its introduction in Definition \ref{dodier}, the functional $\Theta$ can be viewed as a kind of realiser for the compactness of Cantor space.  
In Theorem \ref{nolapdog}, we show the equivalence between the existence of $\Theta$ and a version of the Heine-Borel compactness theorem \emph{in the general\footnote{The Heine-Borel theorem in second-order arithmetic (and hence Friedman-Simpson Reverse Mathematics) is `by definition' restricted to \emph{countable} covers (\cite{simpson2}*{IV.1}).} case}, i.e.\ the statement that any (possibly uncountable) 
open cover of the unit interval has a finite sub-cover.  Now, \emph{any} functional $\Psi^{2}$ gives rise to the `canonical' open cover $\cup_{x\in [0,1]}(x-\frac{1}{\Psi(x)+1}, x+\frac{1}{\Psi(x)+1})$ of the unit interval, and hence the (general) Heine-Borel theorem implies: 
\be\label{zosimpelistnie}\tag{$\HBU$}\textstyle
(\forall \Psi^{2})(\exists w^{1^{*}}){(\forall x^{1}\in [0,1])(\exists y\in w)(|x-y|<\frac{1}{\Psi(y)+1})}.
\ee
The naturalness of $\HBU$, as well its essential role in the development of the \emph{gauge integral}, is discussed in Remark \ref{KBU} below.  
We first have the following theorem.  
\begin{thm}\label{nolapdog}
The system $\RCAo+(\exists^{2})+\QFAC$ proves $(\exists \Theta)\SCF(\Theta)\asa \HBU$.  
\end{thm}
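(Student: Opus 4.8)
The plan is to prove the two implications of the equivalence separately, in each case transporting covers between Cantor space $2^{\N}$ and the unit interval along the binary-expansion map $b(\alpha):=\sum_{n}\alpha(n)2^{-n-1}$ and the Cantor-set embedding $c(\alpha):=\sum_{n}2\alpha(n)3^{-n-1}$, both available in $\RCAo$. The governing intuition is that $\SCF(\Theta)$ says precisely that $\Theta$ selects, for each $g^{2}$, a finite sublist $\langle\alpha_{0},\dots,\alpha_{k}\rangle=\Theta(g)(2)$ of the ``canonical'' cover $\bigcup_{\alpha\leq_{1}1}[\overline{\alpha}g(\alpha)]$ of $2^{\N}$ that already covers $2^{\N}$ (with $\Theta(g)(1)$ bounding the relevant depth), while $\HBU$ is Heine--Borel for the canonical cover $\bigcup_{x\in[0,1]}B(x,\tfrac{1}{\Psi(x)+1})$ of $[0,1]$. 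As a preliminary I would record, provably in $\RCAo$, that $\SCF(\Theta)$ is interchangeable with the existence of a functional $\Xi$ assigning to each $g^{2}$ a finite list with $\bigcup_{\alpha\in\Xi(g)}[\overline{\alpha}g(\alpha)]=2^{\N}$: if some $\beta_{0}\leq_{1}1$ avoided all cylinders $[\overline{\alpha_{i}}g(\alpha_{i})]$ coming from $\Theta(g)(2)$, then the branch $T:=\{\overline{\beta_{0}}n:n\in\N\}$ satisfies the hypothesis of $\SCF(\Theta)$ (since $\overline{\alpha_{i}}g(\alpha_{i})\in T$ would force $\beta_{0}\in[\overline{\alpha_{i}}g(\alpha_{i})]$) yet defeats its conclusion at $\beta_{0}$; conversely $\Theta(g):=\big(\max_{i}g(\alpha_{i}),\,\Xi(g)\big)$ satisfies $\SCF$, because a $\beta$ lying in some $[\overline{\alpha_{j}}g(\alpha_{j})]$ has $\overline{\beta}g(\alpha_{j})=\overline{\alpha_{j}}g(\alpha_{j})$ with $g(\alpha_{j})\leq\max_{i}g(\alpha_{i})$.

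For the implication $(\exists\Theta)\SCF(\Theta)\di\HBU$, which needs only $\RCAo$, fix $\Psi^{2}$ and define $g^{2}$ on binary sequences by letting $g(\alpha)$ be the least $n$ with $2^{-n}<\tfrac{1}{\Psi(b(\alpha))+1}$; this is a bona fide type-two object, definable by a search using $\Psi$ and $b$. Feeding $g$ to $\Theta$ and using the preliminary, the cylinders $[\overline{\alpha_{i}}g(\alpha_{i})]$ for $\alpha_{i}\in\Theta(g)(2)$ cover $2^{\N}$, hence their $b$-images, the dyadic intervals of length $2^{-g(\alpha_{i})}$ containing $b(\alpha_{i})$, cover $[0,1]$. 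Therefore $w:=\langle b(\alpha_{0}),\dots,b(\alpha_{k})\rangle$ witnesses $\HBU$ for $\Psi$: any $x\in[0,1]$ lies in some such interval, so $|x-b(\alpha_{i})|\leq 2^{-g(\alpha_{i})}<\tfrac{1}{\Psi(b(\alpha_{i}))+1}$.

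For the reverse implication $\HBU\di(\exists\Theta)\SCF(\Theta)$, which uses both $(\exists^{2})$ and $\QFAC$, fix $g^{2}$ and define $\Psi_{g}^{2}:[0,1]\to\N$ using $(\exists^{2})$: for $x$ on the Cantor set $c(2^{\N})$, recover the unique $\alpha$ with $c(\alpha)=x$ and pick $\Psi_{g}(x)$ large enough that $B\big(c(\alpha),\tfrac{1}{\Psi_{g}(x)+1}\big)\cap c(2^{\N})\subseteq c\big([\overline{\alpha}g(\alpha)]\big)$; for $x\notin c(2^{\N})$, pick $\Psi_{g}(x)$ large enough that $B\big(x,\tfrac{1}{\Psi_{g}(x)+1}\big)$ misses $c(2^{\N})$, which is possible since $[0,1]\setminus c(2^{\N})$ is open, and $(\exists^{2})$ is exactly what is needed to decide Cantor-set membership, to recover $\alpha$ from $c(\alpha)$, and to compute a suitable radius. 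Applying $\HBU$ to $\Psi_{g}$ gives a finite $w$ of centres whose balls cover $[0,1]$; the centres of $w$ lying off $c(2^{\N})$ cover no point of $c(2^{\N})$, so the finite list $\Xi_{0}(g,w)$ of binary sequences recovered from the centres of $w$ on $c(2^{\N})$ satisfies $\bigcup_{\alpha\in\Xi_{0}(g,w)}[\overline{\alpha}g(\alpha)]=2^{\N}$. Since coverage of $2^{\N}$ by finitely many basic cylinders is decided by inspecting the binary tree down to the computable depth $\max\{g(\alpha):\alpha\in\Xi_{0}(g,w)\}$, the assertion ``$\bigcup_{\alpha\in\Xi_{0}(g,w)}[\overline{\alpha}g(\alpha)]=2^{\N}$'' is quantifier-free with $g$ and $(\exists^{2})$ as parameters. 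Thus $\HBU$ yields $(\forall g^{2})(\exists w^{1^{*}})\varphi_{\mathrm{qf}}(g,w)$, and $\QFAC$ supplies a functional $g\mapsto w_{g}$; setting $\Xi(g):=\Xi_{0}(g,w_{g})$ and invoking the preliminary turns $\Xi$ into a special fan functional.

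The main obstacle is the book-keeping of the reverse direction, and in particular making the appeal to $\QFAC$ legitimate: one must genuinely present ``$w$ restricts to a finite sub-cover of $2^{\N}$'' in quantifier-free form, which rests on the two observations that coverage of $2^{\N}$ by finitely many cylinders is a primitive-recursive check on a truncated tree, and that $(\exists^{2})$ turns the case distinction defining $\Psi_{g}$ (Cantor-set membership, recovery of $\alpha$ from $c(\alpha)$, choice of radius) into an honest type-two functional, so that the passage between $[0,1]$-covers and $2^{\N}$-covers is carried out inside the system. Checking that $b$, $c$ and the induced maps on covers behave as claimed in $\RCAo$ — that $b$-images of cylinders are dyadic intervals covering $[0,1]$, and that points of $c(2^{\N})$ outside $c([\overline{\alpha}g(\alpha)])$ stay a definite distance from $c(\alpha)$ — is routine but must be done with care; none of it is conceptually deep.
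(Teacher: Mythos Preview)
The paper does not give its own proof here; it simply writes ``The theorem is implicit in the results from \cite{bennosam}, and is proved in \cite{dagsamII}.'' There is thus no in-paper argument to compare against.

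Your argument is sound and is essentially the expected one: transport along the binary-expansion surjection $b\colon 2^{\N}\to[0,1]$ for $(\exists\Theta)\SCF(\Theta)\di\HBU$, and along the Cantor-set embedding $c\colon 2^{\N}\hookrightarrow[0,1]$ for the converse, with $(\exists^{2})$ supplying both the case analysis in the definition of $\Psi_{g}$ and the recovery of $\alpha$ from $c(\alpha)$, and $\QFAC$ (at types $(2,1^{*})$, which the paper's unrestricted $\QFAC$ permits) uniformising the choice of finite subcover in $g$. Your preliminary reformulation of $\SCF(\Theta)$ as ``the cylinders $[\overline{\alpha}g(\alpha)]_{\alpha\in\Theta(g)(2)}$ cover $2^{\N}$'' via the single-branch tree $T=\{\overline{\beta_{0}}n:n\in\N\}$ is correct, and the crucial point making the $\QFAC$ step legitimate---that ``finitely many given cylinders cover $2^{\N}$'' is decided by a bounded scan of the binary tree to depth $\max_{i}g(\alpha_{i})$, hence is quantifier-free with $g$ and $\exists^{2}$ as parameters---is exactly right. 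One small sharpening: in the forward direction you do not need to invert $b$ at a given $x$; it suffices to note combinatorially that if the cylinders cover $2^{\N}$ then every length-$N$ binary string (with $N=\max_{i}g(\alpha_{i})$) extends some $\overline{\alpha_{i}}g(\alpha_{i})$, so the dyadic intervals $b([\overline{\alpha_{i}}g(\alpha_{i})])$ already exhaust $[0,1]$, which keeps that direction in $\RCAo$ as you claim. The items you flag as needing care (that $\Psi_{g}$ is an honest type-$2$ object once $(\exists^{2})$ is available, and the metric bookkeeping for $c$) are genuine but routine.
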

\begin{proof}
The theorem is implicit in the results from \cite{bennosam}, and is proved in \cite{dagsamII}.
\end{proof}
The base theory in Theorem \ref{nolapdog} is a $\Pi_{2}^{1}$-conservative extension of $\ACA_{0}$ by \cite{yamayamaharehare}*{Theorem 2.2}.
The equivalences $\STP\asa \HBU$ and $\STP\asa (\exists \Theta)\SCF(\Theta)$ are proved in a different conservative extension of $\ACA_{0}$ in \cite{bennosam}.  Hence, $\Theta$, $\STP$, and $\HBU$ are intrinsically linked, and we now 
discuss how the latter (and hence all of them) are essential for the development of the \emph{gauge integral}.
%
\begin{rem}[Mainstream mathematics and $\Theta$]\rm\label{KBU}
Perhaps surprisingly, $\HBU$ dates back around 125 years, and is \emph{essential} for the development of mathematical physics.
Furthermore, \emph{discontinuous} functions date back almost \emph{two hundred years}, and the combination of these two elements gives rise to $\Theta$, as discussed now.   

\medskip

First of all, $\HBU$ immediately follows from \emph{Cousin's lemma} (\cite{cousin1}*{p.\ 22}), which was proved before 1893 by Pierre Cousin, as discussed in \cite{dugac1}.
Hence, $\HBU$ essentially predates set theory, and as such counts as `non-set theoretical' or `ordinary mathematics', in the sense of RM as stipulated by Simpson in \cite{simpson2}*{I.1}.  

\medskip

Secondly, Cousin's lemma (and hence $\HBU$) is essential for the development of the \emph{gauge integral}, aka \emph{Henstock-Kurzweil integral}, as discussed in \cite{zwette}*{p.\ 5}.  
For instance, the \emph{uniqueness} of the gauge integral already implies $\HBU$; the existence of a function which is \emph{not} gauge integrable also implies $\HBU$.   
As to its history, the gauge integral was introduced around 1912 by Denjoy (in a different form) as a generalisation of the Lebesgue integral, which was in turn introduced around 1900.
Since Lebesgue integration is studied \emph{extensively} in RM (See \cite{simpson2}*{X.1} for an overview), gauge integrals would seem to be the \emph{obvious} kind of topic studied in RM as well.
Moreover, the gauge integral provides a formal framework for the Feynman path integral (See \cite{mullingitover, burkdegardener}), i.e.\ gauge integrals are also highly relevant in (the foundations of) physics.   

\medskip

Finally, as to discontinuous functions, Dirichlet introduces the characteristic function of the rationals around 1829 in \cite{didi1}, while thirty years later Riemann defines a function with countably many discontinuities via a series in his \emph{Habilitationsschrift} (See \cite{kleine}*{p.\ 115}).  As proved in \cite{kohlenbach2}*{\S3}, the existence of discontinuous functions on the reals gives rise to $(\exists^{2})$ (via so-called Grilliot's trick), and Theorem~\ref{nolapdog} thus suggests that $\Theta$ \emph{could} have been derived a century ago.   
\end{rem}
In conclusion, the naturalness of Cousin's lemma bestows the same status upon $\HBU$, $\Theta$, and $\STP$, which are even equivalent in the system from \cite{bennosam}.
%

\subsection{There and back again}\label{thereandback}
The examples of local constructivity discussed in Sections \ref{hum} to \ref{RMKE2} are \emph{by design} such that theorems in Nonstandard Analysis are converted to theorems of computable or constructive mathematics.  As suggested by the title, it is then a natural question whether the latter also again imply the former, i.e.\ do theorems of computable mathematics also (somehow) imply theorems of Nonstandard Analysis?   Are they (somehow) equivalent?

\medskip

At the theoretical level, the answer is a resounding `yes', as is clear from the following theorem which is the `inverse' of term extraction as in Theorem \ref{TERM2}.
\begin{thm}\label{karda}
Let $\varphi$ be internal and $t$ a term.  
The system $\P$ proves 
\be\label{eexa}
(\forall x)(\exists y\in t(x))\varphi(x, y)\di (\forall^{\st}x)(\exists^{\st}y)\varphi(x,y).
\ee 
Furthermore, if $\textsf{\textup{E-PA}}^{\omega}\vdash(\forall x)(\exists y\in t(x))\varphi(x, y)$, then $\P\vdash(\forall^{\st}x)(\exists^{\st}y)\varphi(x,y)$.  
\end{thm}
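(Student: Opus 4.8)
The plan is to prove \eqref{eexa} directly inside $\P$ by a short argument using only the basic axioms of Definition \ref{debs} together with \emph{Idealisation}, and then to derive the second statement as an immediate consequence. First I would recall that, since $t$ is a term of $\textsf{E-PA}^{\omega}$ (hence of the internal language of $\P$), the basic axioms of $\P$ guarantee that $t$ is standard, and that a standard functional applied to a standard input yields a standard output. So if $x$ is standard, then $t(x)$ is a standard finite list. Now assume the antecedent $(\forall x)(\exists y\in t(x))\varphi(x,y)$. Fix standard $x$; instantiating the antecedent at $x$ gives $(\exists y\in t(x))\varphi(x,y)$, i.e.\ there is $y$ among the finitely many entries of the standard finite list $t(x)$ with $\varphi(x,y)$. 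The key point is that each entry of a \emph{standard} finite list is itself standard — this follows from the basic axioms (a standard functional, namely the "$i$-th projection", applied to the standard inputs $t(x)$ and $i$ yields a standard output), together with item (iii) of Definition \ref{debs} identifying equal objects. Hence the witness $y$ is standard, and we obtain $(\exists^{\st}y)\varphi(x,y)$. As $x$ was an arbitrary standard object, this yields $(\forall^{\st}x)(\exists^{\st}y)\varphi(x,y)$, which is \eqref{eexa}.

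For the "furthermore" part, suppose $\textsf{E-PA}^{\omega}\vdash(\forall x)(\exists y\in t(x))\varphi(x,y)$. Since $\P$ is an extension of $\textsf{E-PA}^{\omega}$ (it is just $\textsf{E-PA}^{\omega}$ with the predicate `$\st$' and the extra external axioms), $\P$ also proves $(\forall x)(\exists y\in t(x))\varphi(x,y)$. Applying \eqref{eexa}, already established in $\P$, we conclude $\P\vdash(\forall^{\st}x)(\exists^{\st}y)\varphi(x,y)$.

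The only subtlety — and the step I would be most careful about — is the claim that entries of a standard finite list are standard, i.e.\ the interaction between the coding of finite sequences in $\textsf{E-PA}^{\omega}$ and the standardness predicate. One must check that the list-length function, the projection/indexing function, and the bounded quantifier "$\exists y\in t(x)$" are all given by terms of $\textsf{E-PA}^{\omega}$, so that the basic axioms of Definition \ref{debs} apply to them; this is routine given the setup in Section \ref{leng}, but it is the place where a careless argument could go wrong, since $t(x)$ being standard as a single object does not, a priori, mean its decoded components are standard without invoking closure of standardness under standard functionals. Once this bookkeeping is in place, no use of \emph{Transfer}, \emph{Standardisation}, or $\HAC_{\INT}$ is needed — indeed not even \emph{Idealisation} — so the result holds already in the internal base theory extended with the basic axioms, which is why it is stated for $\P$ (and an entirely analogous proof gives the version for $\H$).
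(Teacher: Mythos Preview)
Your proof is correct and follows essentially the same approach as the paper: both rely on the basic axioms of Definition~\ref{debs} to conclude that $t$ is standard and that $t(x)$ is standard for standard $x$, from which the claim follows. The paper's proof is extremely terse (two sentences), whereas you spell out the additional step that each entry of a standard finite list is itself standard via the projection terms; this is exactly the right justification, and your careful flagging of it as the only subtlety is apt. One small presentational quibble: you announce \emph{Idealisation} in your opening sentence but then (correctly) observe at the end that it is not needed---you should drop that initial mention.
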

\begin{proof}
Both claims follow immediately by noting that every term $t$ is standard in $\P$ by the basic axioms in Definition \ref{debs}, and that the latter also implies that $t(x)$ is standard for standard $x$.
\end{proof}
At the practical level things are slightly more complicated:  While normal forms follow from computational statements like in \eqref{eexa}, these normals forms are often \emph{weaker} than the 
original nonstandard theorem.  Indeed, consider the proof of Theorem \ref{TE1} and Remark \ref{doeisnormaal} in which we \emph{weakened} the antecedents of resp.\ 
\eqref{strong} and \eqref{nora3} to resp.\ \eqref{weak} and \eqref{nora4}.  
We now obtain a version of $\RIE_{\ef}(t)$ from $\RIE_{\ns}$ from Section \ref{riekenaan} \emph{without} this weakening.     
\begin{thm}\label{varjou}
From $\P\vdash \RIE_{\ns} $, terms $i, o$ can be extracted s.t.\ $\textup{\textsf{E-PA}}^{\omega} $ proves:
\begin{align}\textstyle
\textstyle~(\forall f, ~&g ,k')\Big[(\forall k\leq i(g,k'))(\forall \textstyle x, y \in [0,1])(|x-y|<\frac{1}{g(k)} \di |f(x)-f(y)|\leq\frac{1}{k})\notag\\
&\label{FEST}\textstyle\di  (\forall \pi, \pi' \in P([0,1]))\big(\|\pi\|,\| \pi'\|< \frac{1}{o(g,k')}  \di |S_{\pi}(f)- S_{\pi}(f)|\leq \frac{1}{k'} \big)  \Big].
\end{align}
 Furthermore, $\P\vdash [\eqref{FEST}\di \RIE_{\ns}]$ and from $\textsf{\textup{E-PA}}^{\omega}\vdash \eqref{FEST}$, one obtains $\P\vdash \RIE_{\ns}$.
\end{thm}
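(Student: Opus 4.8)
The plan is to establish the three assertions in turn, following the proof of Theorem~\ref{varou} and the manipulations of Remark~\ref{doeisnormaal}, but \emph{omitting the weakening step} (the passage from \eqref{nora3} to \eqref{nora4}), so that the extracted bound on the candidate modulus $g$ is not discarded. Throughout, write $A(f,N,k)$ for the internal formula underlined in \eqref{the} and $B(k',N',f)$ for the internal formula used for Riemann integrability in the proof of Theorem~\ref{varou}, so that nonstandard uniform continuity of $f$ has normal form $(\forall^{\st}k)(\exists^{\st}N)A(f,N,k)$ and nonstandard integrability of $f$ has normal form $(\forall^{\st}k')(\exists^{\st}N')B(k',N',f)$; recall also that $A$ is monotone in $N$ and $B$ is monotone in $N'$ (``a larger bound is a weaker hypothesis''), directly from the definitions.

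For the first assertion I would start from the proof $\P\vdash\RIE_{\ns}$, which gives $(\forall f:\R\di\R)\big[(\forall^{\st}k)(\exists^{\st}N)A(f,N,k)\di(\forall^{\st}k')(\exists^{\st}N')B(k',N',f)\big]$. For each fixed $f$ the bracketed formula is an implication between normal forms, so by the step from \eqref{nora} to \eqref{nora3} of Remark~\ref{doeisnormaal} — but \emph{not} the further weakening to \eqref{nora4} — it implies $(\forall^{\st}g,k')(\exists^{\st}N',k)\big[A(f,g(k),k)\di B(k',N',f)\big]$, where $k$ is precisely the index at which $g$ is allowed to fail to be a modulus. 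Commuting $(\forall f)$ inwards past $(\forall^{\st}g,k')$ and applying the contraposed form of \emph{Idealisation} to $(\forall f)(\exists^{\st}N',k)[\,\cdots\,]$ — legitimate since the matrix is internal — yields $(\forall^{\st}g,k')(\exists^{\st}w)(\forall f)(\exists(N',k)\in w)\big[A(f,g(k),k)\di B(k',N',f)\big]$, with $w$ a finite sequence of pairs. Letting $N_{0}$ (resp.\ $K_{0}$) be the maximum of the first (resp.\ second) coordinates occurring in $w$, which are standard whenever $w$ is, the monotonicity of $B$ gives $(\forall f)\big[(\forall k\le K_{0})A(f,g(k),k)\di B(k',N_{0},f)\big]$ for these $N_{0},K_{0}$; since $N_{0},K_{0}$ are standard this establishes the normal form $(\forall^{\st}g,k')(\exists^{\st}N_{0},K_{0})\,(\forall f)\big[(\forall k\le K_{0})A(f,g(k),k)\di B(k',N_{0},f)\big]$. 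Theorem~\ref{TERM2} then extracts a term $s$ of $\textsf{E-PA}^{\omega}$ with $(\forall g,k')(\exists(N_{0},K_{0})\in s(g,k'))(\forall f)[\,\cdots\,]$; setting $i(g,k')$ (resp.\ $o(g,k')$) to be the maximum of the second (resp.\ first) coordinates occurring in $s(g,k')$, and using the monotonicity of $B$ once more, one reads off that $\textsf{E-PA}^{\omega}$ proves \eqref{FEST}.

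For $\P\vdash[\eqref{FEST}\di\RIE_{\ns}]$ I would argue in the style of Theorem~\ref{karda}. Let $f$ be standard and nonstandard uniformly continuous; applying $\HAC_{\INT}$ to $(\forall^{\st}k)(\exists^{\st}N)A(f,N,k)$ and taking pointwise maxima (using the monotonicity of $A$ in $N$) produces a \emph{standard} modulus $g$ with $(\forall^{\st}k)A(f,g(k),k)$. Fix standard $k'$; then $i(g,k')$ is standard since all terms are standard in $\P$, whence every $k\le i(g,k')$ is standard in $\P$, so the internal statement $(\forall k\le i(g,k'))A(f,g(k),k)$ holds. Now \eqref{FEST} yields $B(k',o(g,k'),f)$ with $o(g,k')$ standard, i.e.\ $(\forall^{\st}k')(\exists^{\st}N')B(k',N',f)$, the normal form of the nonstandard integrability of $f$; unwinding it gives \eqref{soareyou5}. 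Since $f$ ranged over all standard nonstandard-uniformly-continuous functions, $\RIE_{\ns}$ follows. Finally, if $\textsf{E-PA}^{\omega}\vdash\eqref{FEST}$ then $\P\vdash\eqref{FEST}$ because $\P$ extends $\textsf{E-PA}^{\omega}$, so $\P\vdash\RIE_{\ns}$ by the implication just proved.

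The genuinely delicate point is the interplay of the \emph{Idealisation} step with the two maximisation steps: one must check that the finite sequence $w$ produced by \emph{Idealisation}, and later the list $s(g,k')$ produced by term extraction, can be collapsed to single witnesses, which works precisely because $A$ is monotone in $N$, $B$ is monotone in $N'$, and the failure index $k$ attached to each pair is bounded by $K_{0}$ (resp.\ by $i(g,k')$). Everything else is the routine normal-form bookkeeping of Remark~\ref{doeisnormaal}, and the two converse directions are mild variants of Theorem~\ref{karda}.
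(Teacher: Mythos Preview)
Your proof is essentially the same as the paper's: for the first part you bring $\RIE_{\ns}$ to normal form via Remark~\ref{doeisnormaal} without the weakening step, apply \emph{Idealisation} to pull $(\exists^{\st}N',k)$ through $(\forall f)$, collapse the finite list using monotonicity, and extract terms; for the converse you unwind \eqref{FEST} using that $i,o$ are standard in $\P$ and recover the normal form of nonstandard integrability. This is exactly what the paper does (it presents the formula manipulations \eqref{bikko2333}--\eqref{bikko3444444} and \eqref{FEST2}--\eqref{FESTJE2} rather than fixing objects, but the content is identical).

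There is one slip worth fixing. In the second part you write ``Let $f$ be standard and nonstandard uniformly continuous'' and conclude ``Since $f$ ranged over all \emph{standard} nonstandard-uniformly-continuous functions, $\RIE_{\ns}$ follows.'' But $\RIE_{\ns}$ is stated for \emph{all} $f$, not just standard ones (see \eqref{bikko}), so your conclusion would only yield the weaker standard version. Fortunately your argument goes through unchanged with ``standard'' dropped: $\HAC_{\INT}$ applies to any internal formula with (possibly nonstandard) parameters, so from $(\forall^{\st}k)(\exists^{\st}N)A(f,N,k)$ for arbitrary $f$ you still obtain a standard modulus $g$, and the rest of your argument proceeds verbatim. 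The paper handles this by keeping the leading $(\forall f)$ throughout the formula manipulations and applying $\HAC_{\INT}$ under it at the end.
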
 
\begin{proof}
The second part follows from the proof of Theorem \ref{karda} as follows: For standard $g$ and $k'$, we note that $i(g,k')$ and $o(g,k')$ are also standard.   
Hence, we may strengthen the antecedent of \eqref{FEST} as follows:
\begin{align}\textstyle
\textstyle~(\forall f)&(\forall^{\st} g ,k')\Big[(\forall^{\st} k)(\forall \textstyle x, y \in [0,1])(|x-y|<\frac{1}{g(k)} \di |f(x)-f(y)|\leq\frac{1}{k})\notag\\
&\label{FEST2}\textstyle\di  (\forall \pi, \pi' \in P([0,1]))\big(\|\pi\|,\| \pi'\|< \frac{1}{o(g,k')}  \di |S_{\pi}(f)- S_{\pi}(f)|\leq \frac{1}{k'} \big)  \Big].
\end{align}
Note that the antecedent of \eqref{FEST2} does not depend on $k'$ and push inside this associated quantifier as follows:
\begin{align}\textstyle
\textstyle~(\forall f)&(\forall^{\st} g)\Big[(\forall^{\st} k)(\forall \textstyle x, y \in [0,1])(|x-y|<\frac{1}{g(k)} \di |f(x)-f(y)|\leq\frac{1}{k})\notag\\
&\notag\textstyle\di  (\forall^{\st}k')(\forall \pi, \pi' \in P([0,1]))\big(\|\pi\|,\| \pi'\|< \frac{1}{o(g,k')}  \di |S_{\pi}(f)- S_{\pi}(f)|\leq \frac{1}{k'} \big)  \Big].
\end{align}
In turn, we may strengthen `$\|\pi\|,\| \pi'\|< \frac{1}{o(g,k')}$' to `$\|\pi\|,\| \pi'\|\approx 0$' to obtain
\begin{align}\textstyle
\textstyle~(\forall f)&(\forall^{\st} g)\Big[(\forall^{\st} k)(\forall \textstyle x, y \in [0,1])(|x-y|<\frac{1}{g(k)} \di |f(x)-f(y)|\leq\frac{1}{k})\label{FESTJE}\\
&\notag\textstyle\di  (\forall^{\st}k')(\forall \pi, \pi' \in P([0,1]))\big(\|\pi\|,\| \pi'\|\approx 0 \di |S_{\pi}(f)- S_{\pi}(f)|\leq \frac{1}{k'} \big)  \Big].
\end{align}
As $k'$ (resp.\ $g$) only appears in the final formula of the consequent (resp.\ the antecedent), we may push the associated quantifier inside to obtain
\begin{align}\textstyle
\textstyle~(\forall f)&\Big[(\exists^{\st}g)(\forall^{\st} k)(\forall \textstyle x, y \in [0,1])(|x-y|<\frac{1}{g(k)} \di |f(x)-f(y)|\leq\frac{1}{k})\label{FESTJE2}\\
&\notag\textstyle\di (\forall \pi, \pi' \in P([0,1]))\big(\|\pi\|,\| \pi'\|\approx 0 \di  (\forall^{\st}k')(|S_{\pi}(f)- S_{\pi}(f)|\leq \frac{1}{k'}) \big)  \Big].
\end{align}
The consequent of \eqref{FESTJE2} is the definition of nonstandard integrability as in Definition \ref{kunko} while the antecedent implies 
nonstandard uniform continuity \eqref{soareyou4}.  Now, the latter also implies the antecedent of \eqref{FESTJE2}, as can be seen by applying $\HAC_{\INT}$ to the normal form \eqref{the} and taking 
the maximum of the resulting functional.   

\medskip

For the first part, recall that $\RIE_{\ns}$ implies \eqref{bikko}, which yields
\be\label{bikko2333}
(\forall f:\R\di \R)(\forall^{\st}g){\big[ (\forall^{\st}k)A(f, g(k), k)\di (\forall^{\st}k')(\exists^{\st}N')B(k', N', f)\big]}.
\ee
Bringing all standard quantifiers outside in \eqref{bikko2333}, we obtain 
\be\label{bikko3333}
(\forall^{\st}k',g)(\forall f:\R\di \R)(\exists^{\st}N', k){\big[ A(f, g(k), k)\di(\exists^{\st}N')B(k', N', f)\big]}.
\ee
Applying \emph{Idealisation} to \eqref{bikko3333} like in the proof of Theorem \ref{varou}, we obtain 
 \be\label{bikko3444444}
(\forall^{\st}k',g)(\exists^{\st}N', k)(\forall f:\R\di \R){\big[ A(f, g(k), k)\di(\exists^{\st}N')B(k', N', f)\big]}.
\ee
Applying Theorem \ref{TERM2} to `$\P\vdash \eqref{bikko3444444}$' (and taking the maximum of the resulting terms as usual), one immediately obtains \eqref{FEST}, as required.  
\end{proof}
\begin{cor}
The system $\P+(\exists^{\st}t)\REI_{\ef}(t)$ proves that a function with a \emph{standard} modulus of uniform continuity on $[0,1]$ is nonstandard integrable there.  
\end{cor}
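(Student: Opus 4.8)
The plan is to unwind the definitions of the objects involved and exploit the fact, built into $\P$ via the basic axioms of Definition~\ref{debs}, that a standard functional applied to a standard input produces a standard output. First I would fix a function $f$ together with a \emph{standard} modulus of uniform continuity $g$ of $f$ on $[0,1]$, and use the hypothesis $(\exists^{\st}t)\RIE_{\ef}(t)$ to obtain a standard term $t$ with $\RIE_{\ef}(t)$; unravelling the latter, for every $h$ which is a modulus of uniform continuity of $f$ the functional $t(h)$ is a modulus of Riemann integration of $f$ in the sense of Definition~\ref{kunko}. Applying this with $h=g$, the functional $t(g)$ is a modulus of Riemann integration of $f$.

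Next I would note that, since both $t$ and $g$ are standard, the basic axioms of $\P$ (Definition~\ref{debs}) yield that $t(g)$ is standard, and hence $t(g)(k)$ is a standard natural number whenever $k\in\N$ is standard. Spelling out ``$t(g)$ is a modulus of Riemann integration of $f$'' using \eqref{soareyou6}, we have in $\P$:
\[
(\forall k^{0})(\forall \pi,\rho\in P([0,1]))\Big(\|\pi\|,\|\rho\|<\tfrac{1}{t(g)(k)}\di |S_{\pi}(f)-S_{\rho}(f)|<\tfrac1k\Big).
\]

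Finally, to derive the nonstandard integrability \eqref{soareyou5} of $f$, I would fix $\pi,\pi'\in P([0,1])$ with $\|\pi\|,\|\pi'\|\approx 0$ and a standard $k\in\N$; it then suffices to show $|S_{\pi}(f)-S_{\pi'}(f)|<\tfrac1k$. Since $t(g)(k)$ is standard, ``$\|\pi\|,\|\pi'\|\approx 0$'' gives $\|\pi\|,\|\pi'\|<\tfrac{1}{t(g)(k)}$, so the displayed formula immediately yields $|S_{\pi}(f)-S_{\pi'}(f)|<\tfrac1k$. As $k$ was an arbitrary standard number, $S_{\pi}(f)\approx S_{\pi'}(f)$, which is exactly \eqref{soareyou5}, i.e.\ $f$ is nonstandard integrable on $[0,1]$. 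The only point requiring any care is this last passage from ``mesh $\approx 0$'' to ``mesh $<\tfrac{1}{t(g)(k)}$'', which works precisely because $t(g)(k)$ is standard; as in the second half of Theorem~\ref{varjou} and in the proof of Theorem~\ref{karda}, it is essential that $\RIE_{\ef}(t)$ is internal and applies to \emph{all} functions (and all moduli), so that we never needed $f$ itself to be standard.
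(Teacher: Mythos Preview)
Your proof is correct and follows essentially the same approach as the paper: the paper's one-line proof simply instructs the reader to repeat the argument from the second part of Theorem~\ref{varjou}, which is precisely the ``standard in, standard out'' manoeuvre you spell out---use standardness of $t$ and $g$ to get standardness of $t(g)(k)$, then replace the explicit bound $\|\pi\|<\tfrac{1}{t(g)(k)}$ by the infinitesimal condition $\|\pi\|\approx 0$. Your explicit observation that $\RIE_{\ef}(t)$ is internal and quantifies over all $f$ (so $f$ need not be standard) is exactly the point the paper stresses elsewhere.
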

\begin{proof}
Repeat the proof of the second part of the theorem with $(\exists^{\st}t)\REI_{\ef}(t)$ replaced by \eqref{FEST}.  
\end{proof}
By the previous, $\REI_{\ef}(t)$ gives rise to a nonstandard theorem strictly \emph{weaker} than $\REI_{\ns}$.  
In particular, it is easy to prove that the statement \emph{a standard function which is nonstandard-uniformly-continuous on the unit interval has a standard modulus of uniform continuity} implies $\paai$.
Similar results may now be obtained for \emph{any nonstandard theorem} (like e.g. $\MCT_{\ns}$).  
The only difference is the omission of the \emph{weakening} of the antecedent of \eqref{nora3} to \eqref{nora4} in Remark \ref{doeisnormaal}.  

\medskip

In conclusion, we have shown that the nonstandard $\RIE_{\ns}$ gives rise to the `highly constructive' theorem \eqref{FEST}.  
In turn, the latter gives rise to $\RIE_{\ns}$ as in Theorem~\ref{varjou}.  In general, \emph{every theorem} of pure Nonstandard Analysis is `meta-equivalent' as in Theorem \ref{varjou} to a `highly constructive' theorem like \eqref{FEST}.  Experience however bears out that this `meta-equivalence' is usually less elegant than the one from Theorem \ref{varjou}.  
 
\subsection{A template for the computational content of Nonstandard Analysis}\label{henk}
In this section, we introduce \emph{pure} Nonstandard Analysis, i.e.\ that part of the latter falling within the scope of Theorem \ref{TERM2}.   
We also formulate a template for obtaining computational content from theorems of pure Nonstandard Analysis.  

\bdefi[Pure Nonstandard Analysis]\label{pure}  
A theorem of \emph{pure} Nonstandard Analysis is built up as follows.   
\begin{enumerate}
\renewcommand{\theenumi}{\roman{enumi}} 
\item Only \emph{nonstandard definitions} (of continuity, compactness, \dots) are used; \textbf{no} epsilon-delta definitions are used.\label{controlisbetter}   
The former have (nice) normal forms and give rise to the associated constructive definitions from Figure \ref{ananlo3}.
\item Normal forms are closed under \emph{implication} by Remark \ref{doeisnormaal} and \emph{internal universal quantifiers} due to \emph{Idealisation} as in \eqref{criv}.  
\item Normal forms are closed under \emph{prefixing a quantifier over all infinitesimals} by Theorem \ref{hujiku}, i.e.\ if $\Phi(\eps)$ has a normal form, so does $(\forall \eps\approx 0)\Phi(\eps)$. 
\item Fragments of the axioms \emph{Transfer} and \emph{Standard Part} have normal forms.
\item Formulas involving the \emph{Loeb measure} (See Section \ref{loebM}) have normal forms.\label{horgi}
\end{enumerate}
\edefi
We stress that item \eqref{horgi} should be interpreted in a specific narrow technical sense, namely as discussed in Section \ref{loebM}.  
We ask the reader to defer judgement until the latter section has been consulted.   

\medskip
 
Finally, the following template provides a way of obtaining the computational content of theorems of pure Nonstandard Analysis.  
\begin{tempo}\rm
To obtain the computational content of a theorem of pure Nonstandard Analysis, perform the following steps.
\begin{enumerate}
\item[(i)] Convert all nonstandard definitions, formulas involving the Loeb measure, and axioms to normal forms using Figure \ref{ananlo3} on the next page.  
\item[(ii)] Starting with the most deeply nested implication, convert implications between normal forms into normal forms using Remark \ref{doeisnormaal}.   
\item[(ii$'$)] If `meta-equivalence' as in Section \ref{thereandback} is desired, omit the weakening from \eqref{nora2} to \eqref{nora4} in Remark \ref{doeisnormaal}.    
\item[(iii)] When encountering quantifiers over all infinitesimals, use Theorem \ref{hujiku} to obtain a normal form.  
When encountering internal universal quantifiers, apply \emph{Idealisation} as in \eqref{criv}.  
\item[(iv)] When a normal form has been obtained, apply Theorem \ref{TERM2} and modify the resulting term as necessary, usually by taking the maximum of the finite sequence at hand.  
\end{enumerate}
\end{tempo}
Now, Definition \ref{pure} is only an approximation of all theorems within the scope of Theorem \ref{TERM2}, but nonetheless covers large parts of Nonstandard Analysis.  
Regarding item \eqref{controlisbetter} and as noted in Example \ref{krel}, nonstandard definitions give rise to the associated constructive definition-with-a-modulus.  
The following list provides an overview for common notions\footnote{A space is \emph{F-compact} in NSA if there is a discrete grid which approximates every point of the space up to infinitesimal error, i.e.\ the intuitive notion of compactness from physics and engineering (See \cite{sambon}*{\S4}).\label{fookie}}, where $\Paai$ is \emph{Transfer} limited to $\Pi_{1}^{1}$-formulas (See \cite{sambon}*{\S4}) and $(\mu_{1})$ is the functional version of $\FIVE$ (See \cite{avi2}).     
\begin{figure}[h!]
\begin{center}
\begin{tabular}{|c|c|}
\hline
Nonstandard Analysis definition  & Constructive/functional definition \\
  \hline \hline
 nonstandard convergence & convergence with a modulus\\
   \hline  
 nonstandard continuity &continuity with a modulus\\
\hline
 nonstandard uniform continuity  &uniform continuity with a modulus\\
\hline
  F-compactness$^{\ref{fookie}}$ & totally boundedness \\
\hline

  nonstandard compactness of $[0,1]$ & The special fan functional $\Theta$ \\
\hline
  nonstandard differentiability as in \eqref{soareyou22} & differentiability with a modulus  \\
   & and the derivative given \\
\hline
   nonstandard Riemann integration & Riemann integration with a modulus \\\hline\hline
      $\paai$ & Feferman's mu-operator $(\mu^{2})$ \\\hline
      $\Paai$ & Feferman's second mu-operator $(\mu_{1})$ \\\hline
            $\STP$ & The special fan functional $\Theta$ \\
\hline
\end{tabular}
\end{center}
\caption{Nonstandard and constructive defitions}
\label{ananlo3}
\end{figure}~\\
We have now completed the main part of this paper, namely to establish the basic claims made in the introduction.  For the rest of this paper, we will consider related \emph{more advanced} results.  
We finish this section with the observation by Hao Wang that constructive mathematics may be viewed as a ``mathematics of doing" while
classical mathematics is a ``mathematics of being'' (See e.g.\ \cite{linkeboel}*{Preface}).  
Nonstandard Analysis then fits into this view as follows: In the second chapter of the \emph{Tao Te Ching}, the philosopher Lao Tse writes \emph{the sage acts by doing nothing}, 
and one could view the associated notion of \emph{wu wei} as the founding principle of Nonstandard Analysis.  

\section{A grab-bag of Nonstandard Analysis}\label{grab}
We discuss various advanced results regarding Nonstandard Analysis and its (local) constructivity.  

\subsection{The computational content of measure theory}\label{loebM}
We discuss the role NSA plays in the (computational) development of measure theory (inside $\P$).

\medskip

First of all, of central importance is the \emph{Loeb measure} (\cite{loeb1,nsawork2}) which is one of the cornerstones of {NSA} and provides a highly general approach to measure theory. The traditional development of the Loeb measure makes use of the non-constructive axiom \emph{Saturation} and \emph{external sets} (not present in $\IST$ pur sang).  
Furthermore, as for most fields of mathematics, NSA provides a highly elegant and conceptually simple development of known measures, such as the Lebesgue measure.  

\medskip

Secondly, as to prior art, a special case of the Loeb measure is introduced in a weak fragment of Robinsonian NSA in \cite{pimpson} \emph{using external sets}, but without the use of \emph{Saturation}; the definition of \emph{measure} from Reverse Mathematics (See \cite{simpson2}*{X.1} and below) is used.  On the internal side, the system $\P$ extended with the (countable) axiom \emph{Saturation} $\textsf{CSAT}$ can be given computational meaning; in particular Theorem~\ref{TERM2} holds for the extension $\P+\textsf{CSAT}$ (See \cites{brie3, espa}), but $t$ then involves \emph{bar recursion}, an advanced computation scheme which (somehow) embodies the computational content of the axiom of choice (\cite{kohlenbach3}).

\medskip

Now, we have established in the previous sections that \emph{pure Nonstandard Analysis} contains plenty of computational content \emph{not involving bar recursion}; 
We believe that the study of measure theory should be \emph{similarly elementary}, \emph{lest history repeat itself} in the form of incorrect claims that NSA is somehow non-constructive (\cite{samsynt}), this time around based on the use of bar recursion in the study of the Loeb measure.   

\medskip

Thirdly, classical measure theory seems hopelessly non-constructive at first glance: we know from Reverse Mathematics that the existence of the Lebesgue measure $\lambda(A)$ for open sets $A$ already gives rise to the Turing jump (\cite{simpson2}*{X.1}).  Indeed, the definition of $\lambda$ on $[0,1]$ in Reverse Mathematics is as follows:
\bdefi[Lebesgue measure $\lambda$]\label{kijk} For $\|g\|:=\int_{0}^{1}g(x)dx$, $U\subseteq [0,1] $, 
\[
\lambda(U):=\sup\{\|g\|: g\in C([0,1]) \wedge 0\leq g\leq1 \wedge (\forall x\in [0,1]\setminus U)(g(x)=0) \}.
\]
\edefi
The existence of this supremum cannot be proved in computable mathematics, like the base theory $\RCA_{0}$ of Reverse Mathematics from \cite{simpson2}*{I}, or constructive mathematics.  
However, there is a way around this `non-existence problem' pioneered in \cite{yussie, yuppie}: the formulas $\lambda(U)>_{\R}0$ and $\lambda(U)=_{\R}0$ \emph{make perfect\footnote{In particular, $\lambda(U)>_{\R}0$ is $(\exists g\in C([0,1]))( \|g\|>_{\R}0 \wedge 0\leq g\leq1 \wedge (\forall x\in [0,1]\setminus U)(g(x)=0))$, and similar for $\lambda(U)=_{\R}0$.} sense}, even if the supremum in Definition~\ref{kijk} does not exist.  Furthermore, most theorems of measure theory only involve the Lebesgue measure (and other measures) via such formulas.  In other words, \emph{even if the Lebesgue measure cannot be defined in general}, we can still write down most theorems of measure theory without any problems, even in the language of second-order arithmetic typical of Reverse Mathematics.     

\medskip

Fourth, the \emph{Loeb measure} $L$ is also defined as a supremum or infimum (See e.g.\ \cite{loeb1, nsawork2}), while the Lebesgue measure may be represented via a nonstandard measure $\lambda^{*}$, which is a supremum involving only the counting measure for (nonstandard) finite sets.  Similar to the Lebesgue measure, such suprema do not necessarily exist in weak systems of Robinsonian Nonstandard Analysis.   
However, formulas like `$L(A)\gg 0$' or `$L(A)\approx 0$' make perfect sense in such weak systems, in the same way as for the (usual) Lebesgue measure, as explored in \cite{pimpson,samnewarix}.  
Hence, most theorems involving the Loeb measure may be studied in this `Reverse Mathematics' way, even though the Loeb measure does not necessarily exist in general.  

\medskip

In light of the previous, we believe that measure theory may be developed as follows in $\P$ (See \cite{samnewarix} for a first case study):  For a measure $\kappa$ from NSA:
\begin{enumerate}
\item[(i)] The formula `$\kappa(A)\gg 0$' or `$\kappa(A)\approx 0$' makes perfect sense in $\P$, even when $A$ is a set definable by an external formula.  
\item[(ii)] If moreover the set $A$ is definable by a normal form, then `$\kappa(A)\gg 0$' or `$\kappa(A)\approx 0$' also has a normal form.   
\end{enumerate}
The first item implies that the \emph{possibly external} formulation of the measure $\kappa$ (e.g.\ using the standard part map) can be expressed as a formula in the language of $\P$.  
The second item means that if $(\forall z\in \R)(z\in A\asa (\forall^{\st}x)(\exists^{\st}y)\varphi(z,y,z))$ where $\varphi$ is internal, then the formula `$\kappa(A)\gg 0$' or `$\kappa(A)\approx 0$' has a normal form in $\P$.  

\medskip

In conclusion, we can study most theorems regarding the Loeb measure in the system $\P$, in much the same way as is done in Reverse Mathematics, by item~(i).  Furthermore, these theorems have lots of computational content by item (ii), in light of the generality of the class of normal forms as discussed in Section \ref{henk}.  

\subsection{Constructive features of Nonstandard Analysis}\label{wag}
In this section, we discuss some constructive features of Nonstandard Analysis.  
\subsubsection{Metamathematical properties}\label{wag1}
While there is no official formal definition of what makes a logical system `constructive', there are a number of metamathematical properties which are often considered to be hallmarks of intuitionistic/constructive theories.  Rathjen explores these properties in \cite{bestaat} for \emph{Constructive Zermelo-Fraenkel Set Theory} \textsf{CZF}, one of the main foundational systems for constructive mathematics.  We discuss in this section to what extent the system $\P$ and its extensions entertain nonstandard versions of the nine properties listed in \cite{bestaat}*{\S1}.     

\medskip

First of all, a system $S$ is said to have the \emph{existence property} \textbf{EP} if it satisfies the following:  Let $\varphi(x)$ be a formula with at most one free variable $x$; 
if $S\vdash (\exists x)\varphi(x)$ then there is some formula $\theta(x)$ with only $x$ free such that $S\vdash (\exists! x)(\theta(x)\wedge \varphi(x))$, where the `$!$' denotes unique existence.  
Now, $\P$ has the following \emph{nonstandard existence property}: If $\P\vdash (\exists^{\st}x^{\rho})\varphi(x)$ with $\varphi (x)$ internal and at most one free variable $x$, then there 
is an internal formula $\theta(x)$ such that $\P\vdash (\exists^{\st}! x^{\rho})(\theta(x)\wedge \varphi(x))$.  
Indeed, applying Theorem \ref{TERM2} to `$\P\vdash (\exists^{\st}x^{\rho})\varphi(x)$', we obtain a term $t^{\rho^{*}}$ such that $\textsf{E-PA}^{\omega}\vdash (\exists x^{\rho}\in t)\varphi(x)$ and
can define $\theta(x):=(\exists j<|t|) (x=_{\rho} t(j) \wedge (\forall i<j)\neg\varphi(t(i))$.  
Since $t$ (and $|t|$ and $t(j)$ for $j<|t|$) is standard in $\P$ by the basic axioms from Definition \ref{debs}, $\theta(x)$ implies that $\st(x)$.  
The previous also holds for any internal extension of $\P$, as for the below properties we consider now.  

\medskip

Secondly, a system $S$ is said to have the \emph{numerical existence property} \textbf{NEP} if it satisfies the following:  Let $\varphi(x)$ be a formula with at most one free variable $x$; 
if $S\vdash (\exists n^{0})\varphi(n)$ then there is some numeral $\overline{n}$ such that $S\vdash \varphi(\overline{n})$.  
Now, $\P$ has the following \emph{nonstandard numerical existence property}: If $\P\vdash (\exists^{\st}x^{0})\varphi(x)$ with $\varphi (x)$ internal and at most one free variable $x$, then there 
is a numeral $\overline{n}$ such that $\P\vdash (\exists x^{0}\leq \overline{n})\varphi(x)$.  The latter is proved in the same way as the previous property.  

\medskip

Thirdly, a system $S$ is said \emph{closed under Church's rule} \textbf{CR} if it satisfies the following:  Let $\varphi(x, y)$ be a formula with at most the free variables shown; 
if $S\vdash(\forall m^{0}) (\exists n^{0})\varphi(m,n)$ then there is some numeral $\overline{e}$ such that $S\vdash (\forall m^{0})\varphi(m, \{\overline{e}\}(m))$ where $\{e\}(n)$ is the value of the $e$-th Turing machine with input $n$.    
Now, $\P$ is closed under \emph{nonstandard Church's rule}: If $\P\vdash (\forall^{\st}m^{0})(\exists^{\st}n^{0})\varphi(m,n)$ with $\varphi (x,y)$ internal and at most the free variables shown, then there is a numeral $\overline{e}$ such that $\P\vdash (\forall m^{0})(\exists n^{0}\leq \{\overline{e}\}(n))\varphi(m,n)$.  
The latter is proved in the same way as the previous property; the term produced in this way is $\eps_{0}$-computable as it is a term definable in Peano arithmetic.  
A similar nonstandard version of \emph{the first variant of Church's rule} \textbf{CR$_{1}$} and \emph{Extended Church's rule} \textbf{ECR} can be obtained, assuming the antecedent in the latter has the form $(\forall^{\st}y)\varphi(y,x^{0})$ for internal $\varphi$.      

\medskip

On a related note, $\P$ and $\H$ prove the following nonstandard version of Church's thesis:  $(\forall^{\st}f^{1})(\exists e^{0})(\forall^{\st}x^{0})(f(x)=\varphi_{e}(x))$.  
This can be proved by applying \emph{Idealisation} to the trivial formula $(\forall^{\st}f^{1},x^{0})(\exists e^{0})(f(x)=\varphi_{e}(x))$, or simply noting that there are 
$N^{N}$ different (primitive recursive) functions mapping $\{0, 1, \dots, N-1\}$ to $\{0, 1, \dots, N-1\}$ (and zero otherwise).  Hence, nonstandard Church's thesis merely expresses that \emph{from the point of view of the nonstandard universe} all standard functions are computable when limited to the standard universe.     
  
\medskip

Fourth, since disjunction `$\vee$' gives rise to internal formulas, this symbol has no computational content (in contrast to `$(\exists^{\st})$').  As a result 
the \emph{Disjunction property} \textbf{DP} and \emph{Unzerlegbarkeits rule} \textbf{UZR} do not apply to $\P$.  The \emph{Uniformity rule} \textbf{UR} is similar to idealisation, but only with essential modification.  

\medskip

In conclusion, we again emphasise that we do \textbf{not} claim that $\P$ is somehow part of constructive mathematics; we merely point out that the standard universe of $\P$ satisfies versions of the typical properties of logical systems of constructive mathematics.  In other words, we again observe that $\P$ is `too constructive' to be called non-constructive (but does include $\LEM$ and is hence not constructive either).

\subsubsection{The axiom of extensionality}
We discuss the constructive status of the axiom of extensionality, and how this axiom relates to our system $\P$.  

\medskip

First of all, Martin-L\"of has proposed his \emph{intuitionistic type theory} as a foundation for (constructive) mathematics (\cite{loefafsteken}).  
These systems are close to computer programming and form the basis for various proof assistants, including Coq, Agda, and Nuprl (\cite{wefteling}).   
Martin L\"of's early type theories were \emph{extensional} (as is Nurpl), but the later systems were \emph{intensional} (as is Agda).  

\medskip

An advantage of intensional 
mathematics is the decidability of type checking, while a disadvantage is that every (even very obvious) equality has to be proved separately.    
The practice of Nuprl suggests however that checking whether an expression is correctly typed, is usually straightforward to perform by hand \emph{in practice}, i.e.\ decidable type checking is not a \emph{conditio sine qua non} when formalising mathematics.     
 
\medskip

Secondly, the system $\P$ of Nonstandard Analysis obviously includes the \emph{internal} axiom of extensionality.  
However, internal axioms are ignored by the term extraction algorithm in Theorem \ref{TERM2}.  
In particular, computational content is extracted from certain statements about the \emph{standard universe}, and it is a natural question whether the latter satisfies the `standard' axiom of extensionality \eqref{EXT}$^{\st}$ as follows:    
\[
(\forall^{\st}  x^{\rho},y^{\rho}, \varphi^{\rho\di\tau}) \big[x\approx_{\rho} y \di \varphi(x)\approx_{\tau}\varphi(y)   \big].
\]
where the notations are taken from Remark \ref{equ}.  Clearly, the previous sentence follows immediately from the (internal) axiom of extensionality by \emph{Transfer}, which is however absent in $\P$.  
In particular, the standard universe in $\P$ is actually highly \emph{intensional} in the sense  of the following theorem.
\begin{thm} Let $\Delta_{\INT}$ be any collection of internal formulas such that $\P+\Delta_{\INT}$ is consistent.  Then the latter cannot prove the following:
\be \label{EXT1337}
(\forall^{\st} F^{2}, f^{1}, g^{1})(f\approx_{1} g \di F(f)=_{0}F(g))
\ee
\end{thm}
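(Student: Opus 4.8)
The plan is to argue by contraposition using the term extraction theorem. Suppose toward a contradiction that $\P+\Delta_{\INT}\vdash\eqref{EXT1337}$. Since the abbreviation $f\approx_{1}g$ unfolds to the external formula $(\forall^{\st}n^{0})(\overline{f}n=_{0}\overline{g}n)$, classical logic (available in $\P$) rewrites $f\approx_{1}g\di F(f)=_{0}F(g)$ as $(\exists^{\st}n^{0})(\overline{f}n=_{0}\overline{g}n\di F(f)=_{0}F(g))$, so \eqref{EXT1337} takes the normal form
\[
(\forall^{\st}F^{2},f^{1},g^{1})(\exists^{\st}n^{0})\big[\overline{f}n=_{0}\overline{g}n\di F(f)=_{0}F(g)\big].
\]
Applying Theorem \ref{TERM2} to this normal form yields a term $t$ of Gödel's $T$ with $\textsf{E-PA}^{\omega}+\Delta_{\INT}\vdash(\forall F^{2},f^{1},g^{1})(\exists n\in t(F,f,g))[\,\overline{f}n=_{0}\overline{g}n\di F(f)=_{0}F(g)\,]$. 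Setting $N(F,f,g):=\max_{i<|t(F,f,g)|}t(F,f,g)(i)$ and using that a longer agreement of $f$ and $g$ implies any shorter one, $\textsf{E-PA}^{\omega}+\Delta_{\INT}$ proves the internal statement
\[
(\star)\qquad (\forall F^{2},f^{1},g^{1})\big[\overline{f}(N(F,f,g))=_{0}\overline{g}(N(F,f,g))\di F(f)=_{0}F(g)\big],
\]
i.e.\ a closed term $N$ of $T$ would serve, uniformly in $F$, as a (weak) modulus-of-agreement functional for every type-two functional.

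The contradiction then comes from the fact that no closed term of $T$ can do this, because such terms are majorizable and hence can inspect only a bounded amount of their arguments. Since $(\exists^{2})$ is internal and consistent with $\textsf{E-PA}^{\omega}$, conservativity of $\P$ over $\textsf{E-PA}^{\omega}$ for internal sentences lets me work in a model of $\textsf{E-PA}^{\omega}+\Delta_{\INT}$ which also contains Feferman's functional $E$ with $E(h)=0\leftrightarrow(\exists n)(h(n)=0)$ — and if $\Delta_{\INT}$ should exclude such a functional, one instead passes to a model (e.g.\ the continuous functionals) in which again no closed $T$-term computes a uniform modulus, by the same estimate. Instantiating $(\star)$ with $F:=E$, $g:=\underline{1}$ and, for each $k$, $f_{k}$ equal to $k$ ones followed by all zeros, a majorant $N^{*}$ of $N$ bounds $N(E,f_{k},\underline{1})$ by a value $c$ independent of $k$ (all the $f_{k}$ share the majorant $\underline{1}$, and $E$ is majorized by $\lambda h.1$); taking $k>c$ makes the antecedent of $(\star)$ true while $E(f_{k})=0\neq 1=E(\underline{1})$, contradicting $(\star)$. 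Hence $\P+\Delta_{\INT}$ cannot prove \eqref{EXT1337}, and the same argument applies to $\H$ in place of $\P$ via Theorem \ref{TERM}.

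The main obstacle is precisely this last step: one must pin down that the term produced by Theorem \ref{TERM2} is genuinely too weak — that no closed term of Gödel's $T$ is a uniform modulus functional — and phrase this so that it survives the freedom in the choice of $\Delta_{\INT}$. The clean tool is the majorizability of closed $T$-terms, which supplies the uniform bound $c$ above; the only real bookkeeping is choosing, for a given consistent $\Delta_{\INT}$, a model of $\textsf{E-PA}^{\omega}+\Delta_{\INT}$ that still contains a functional defeating $N$ (the full type structure's $\exists^{2}$ whenever $\Delta_{\INT}$ permits it, and otherwise a model in which no $T$-term provides a modulus over the functionals that are present).
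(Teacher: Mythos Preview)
Your approach is the paper's: bring \eqref{EXT1337} into normal form, apply term extraction (Theorem~\ref{TERM2}) to obtain a closed $T$-term that would realise a modulus of extensionality for type-two functionals, and then argue that no such $T$-term exists. The paper simply cites Howard for this last step; you unfold Howard's own majorizability argument, so the two proofs are essentially identical in content.

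Where your write-up goes astray is the handling of general $\Delta_{\INT}$. The sentence invoking conservativity of $\P$ over $\textsf{E-PA}^{\omega}$ to place Feferman's $E$ in a model of $\textsf{E-PA}^{\omega}+\Delta_{\INT}$ is a non-sequitur: conservativity yields only that $\textsf{E-PA}^{\omega}+\Delta_{\INT}$ is consistent, not that it is jointly consistent with $(\exists^{2})$. And your fallback to continuous-functional models ``by the same estimate'' does not go through: the majorizability bound $t(F,f_{k},\underline{1})\leq t^{*}(F^{*},\underline{1},\underline{1})$ requires a \emph{single} $F$ with fixed majorant that separates $f_{k}$ from $\underline{1}$ for arbitrarily large $k$, and a continuous $F$ cannot do this since $f_{k}\to\underline{1}$ forces $F(f_{k})=F(\underline{1})$ eventually. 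The paper sidesteps this by citing Howard as a black box; your core argument is correct and matches the paper, but the extra case-analysis you add to cover arbitrary $\Delta_{\INT}$ does not work as written.
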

\begin{proof}
Since $f\approx_{1}g$ is just $(\forall^{\st}N)(\overline{f}N=_{0}\overline{g}N)$, \eqref{EXT1337} implies the normal form
\be\label{krak}
(\forall^{\st} F^{2}, f^{1}, g^{1})(\exists^{\st}N)(\overline{f}N=_{0}\overline{g}N \di F(f)=_{0}F(g)).
\ee
A proof of \eqref{krak} in $\P+\Delta_{\INT}$ provides a term $t$ from G\"odel's \emph{T} which realises the axiom of extensionality for type two functionals (over $\textsf{E-PA}^{\omega}+\Delta_{\INT}$) as follows:
\be\label{kraksle}
(\forall  F^{2}, f^{1}, g^{1})(\exists N\leq t(F, f, g))(\overline{f}N=_{0}\overline{g}N \di F(f)=_{0}F(g)).
\ee
However, Howard has shown in \cite{howie} that a term as in \eqref{kraksle} does not exist.  
\end{proof}
Howard's results in \cite{howie} also imply that standard extensionality as in \eqref{EXT1337} 
for type three functionals cannot be proved in any extension of $\P$ which is conservative over \emph{Zermelo-Fraenkel set theory} $\ZF$.  Furthermore, we can classify \eqref{EXT1337} as follows, where $\TJ(\varphi, f)$ is $(\exists^{2})$ from Section \ref{RMKE} without the two outermost quantifiers.  
\begin{thm}\label{jayjo}
The system $\P$ proves $\paai\asa [\eqref{EXT1337} ~\wedge(\exists^{\st}\varphi^{2})(\forall^{\st}f^{1})\TJ(\varphi,f)]$.
\end{thm}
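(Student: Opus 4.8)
The plan is to prove the two implications of the biconditional separately, using throughout the basic axioms of Definition \ref{debs} (every closed term of $\textsf{E-PA}^{\omega}$ is standard, standard functionals map standard inputs to standard outputs, anything equal to a standard object is standard), the \emph{internal} axiom of extensionality (which $\P$ contains), and the fact that, over $\P$, $\paai$ is equivalent to its normal form $(\forall^{\st}f^{1})(\exists^{\st}m^{0})\big[(\exists n^{0})f(n)=0\di(\exists i\le m)f(i)=0\big]$, as already exploited in the proofs of Theorems \ref{sef} and \ref{TE1338}.

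For the forward direction, assume $\paai$. I would first produce the standard Turing--jump functional: applying $\HAC_{\INT}$ to the normal form of $\paai$ yields a standard $\nu^{1\di 0^{*}}$, and putting $\mu(f)$ equal to the maximum of $\nu(f)(i)$ for $i<|\nu(f)|$ gives a standard $\mu^{2}$ with $(\forall^{\st}f^{1})\big[(\exists n^{0})f(n)=0\asa(\exists i\le\mu(f))f(i)=0\big]$ (the `$\Leftarrow$' being trivial). Setting $\varphi(f):=0$ if $(\exists i\le\mu(f))f(i)=0$ and $\varphi(f):=1$ otherwise produces a standard $\varphi^{2}$ witnessing $(\exists^{\st}\varphi^{2})(\forall^{\st}f^{1})\TJ(\varphi,f)$. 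For the other conjunct, let $F^{2},f^{1},g^{1}$ be standard with $f\approx_{1}g$; unfolding `$\approx_{1}$' gives $(\forall^{\st}n^{0})(f(n)=_{0}g(n))$, so letting $h^{1}$ be the standard function with $h(n)=1$ iff $f(n)=g(n)$, we have $(\forall^{\st}n)h(n)\ne0$, whence $\paai$ applied to $h$ yields $(\forall m)h(m)\ne0$, i.e.\ $f=_{1}g$; the internal axiom of extensionality then gives $F(f)=_{0}F(g)$, which is \eqref{EXT1337}.

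For the reverse direction, assume \eqref{EXT1337} and fix a standard $\varphi^{2}$ with $(\forall^{\st}f^{1})\TJ(\varphi,f)$; the goal is $\paai$. Let $f^{1}$ be standard with $(\forall^{\st}n^{0})f(n)\ne0$ and suppose, for contradiction, that $(\exists m^{0})f(m)=0$. Define the standard function $g^{1}$ by $g(n):=f(n)$ when $f(n)\ne0$ and $g(n):=1$ otherwise (a legitimate definition by cases on the decidable condition $f(n)=0$). Then $g$ has no zero at all, so $\TJ(\varphi,g)$ forces $\varphi(g)\ne0$; on the other hand $g(n)=f(n)$ for every standard $n$, so $f\approx_{1}g$, and applying \eqref{EXT1337} to the standard triple $(\varphi,f,g)$ gives $\varphi(f)=_{0}\varphi(g)\ne0$. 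But then $\TJ(\varphi,f)$ yields $\neg(\exists n)f(n)=0$, contradicting the assumption; hence $(\forall m)f(m)\ne0$, establishing $\paai$.

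The genuinely delicate step is the construction of the auxiliary $g$ in the reverse direction: it must simultaneously (i) be a function with \emph{no} zeros, so that the standard decider $\varphi$ is forced to report `no zero' on it, and (ii) agree with $f$ on every standard argument, so that $f\approx_{1}g$ and standard extensionality can transport $\varphi$'s verdict from $g$ back to $f$. Everything else — the passage through normal forms, the use of $\HAC_{\INT}$, Transfer on the internal equality of $f$ and $g$, and the bookkeeping that $h,g,\mu,\varphi$ are standard — is routine once the basic axioms of Definition \ref{debs} are invoked.
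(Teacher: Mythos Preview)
Your proof is correct and follows essentially the same approach as the paper's. The only cosmetic difference is in the reverse direction: the paper first reduces to a $\{0,1\}$-valued counterexample $h$ and compares it to the constant sequence $11\dots$, whereas you keep $f$ arbitrary and construct the zero-free companion $g$ by overwriting zeros with $1$; these are the same idea, and your version in fact avoids the implicit WLOG step.
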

\begin{proof}
For the forward implication, $\paai$ applied to the internal axiom of extensionality for standard $F^{2},f^{1}, g^{1}$ immediately yields \eqref{krak}.  
Also, $\paai$ implies \eqref{veil}, and latter yields the second conjunct in the right-hand side of the equivalence.  
For the reverse implication, suppose $\paai$ is false, i.e.\ there is standard $h^{1}$ such that $(\forall^{\st}n^{0})(h(n)=1)\wedge (\exists m^{0})(h(m)= 0)$.  
Clearly, $h\approx_{1} 11\dots$ while for $\varphi$ as in $(\forall^{\st}f^{1})\TJ(\varphi,f)$ we have $\varphi(h)= 0 \ne\varphi(11\dots)$, contradicting extensionality as in \eqref{EXT1337}, and $\paai$ follows.    
\end{proof}
In conclusion, the standard universe of $\P$ is highly intensional in the sense of the previous theorem.  Thus, if one believes that intensionality is essential/important/\dots for the constructive 
nature of a logical system, then the above results provide a partial explanation why $\P$ has so much computational content.  
\subsubsection{Decomposing the continuum}
In this section, we discuss the \emph{indecomposable} nature of the \emph{continuum} in intuitionism. 
We will show that the continuum in Nonstandard Analysis is \textbf{in}decomposable \emph{from the point of view of the standard universe}, and decomposable \emph{from the point of view of the nonstandard universe}.       

\medskip

The \emph{continuum}, i.e.\ the set of real numbers $\R$, is \emph{indecomposable} in intuitionistic mathematics, which means that if $\R=A\cup B$ and $A\cap B=\emptyset$, then $A=\R$ or $B=\R$.  
Brouwer first published this result in \cite{pruisje} (See also \cite{vajuju}*{p.\ 490}) and a modern treatment by van Dalen may be found in \cite{dalencont}.  
This in-decomposability result of course hinges on the absence of the law of excluded middle in intuitionistic mathematics, as e.g.\ $(\forall x\in \R)(x>0 \vee x\leq 0)$ allows us to split $\R$ as $\{x\in \R: x>0 \}\cup \{x\in \R: x\leq 0\}$.  
An intuitive description of the above is as follows:
\begin{quote}
In intuitionistic mathematics the situation is different; the continuum has, as it were, a syrupy nature, one cannot simply take away one point.  
In the classical continuum one can, thanks to the principle of the excluded third, do so. To put it picturesquely, the classical continuum is the frozen intuitionistic continuum. If one removes one point from the intuitionistic continuum, there still are all those points for which it is unknown whether or not they belong to the remaining part. (\cite{dalencont}*{p.\ 1147})
\end{quote}
We now show that the continuum in Nonstandard Analysis is indecomposable in one sense, and decomposable in another sense.  
To this end, let $N\in \N$ be nonstandard and let $[x](N)$ be the $N$-th rational approximation of $x$. Since inequality on $\Q$ is decidable (as opposed to the situation for `$<_{\R}$'), we have
\be\label{D}\tag{$\textsf{D}$}
\R=\{x\in \R : [x](N)>_{\Q}0\}\cup \{x\in \R: [x](N)\leq_{\Q} 0\},
\ee
where the first set is denoted $A_{N}$ and the second one $B_{N}$.
While \eqref{D} provides a (decidable) decomposition of $\R$, there are problems: the decomposition is \emph{fundamentally arbitrary} and based on a \emph{nonstandard} algorithm in an essential way. 

\medskip

First, as to the arbitrariness of \eqref{D}, it is intuitively clear that for $x\approx 0$ we can have $x\in A_{N}$ but $x\in B_{M}$ for nonstandard $N\ne M$, i.e.\ the decomposition \eqref{D} seems \emph{non-canonical} or \emph{arbitrary} as it depends on the choice of nonstandard number.  
To see that this arbitrariness is fundamental, consider \eqref{abs} which expresses that \eqref{D} is \emph{not} arbitrary in the aforementioned sense for standard reals:
\be\label{abs}
(\forall^{\st}x\in \R)(\forall N,M\in \N)\big((\neg\st(N)\wedge \neg\st(M))\di x\in A_{N}\asa x\in A_{M}\big). 
\ee
By the following theorem, \eqref{abs} is \emph{Transfer} in disguise, and hence fundamentally non-constructive by Corollary \ref{sefcor}.  As a consequence, the decomposition \eqref{D} is always fundamentally arbitrary in any system where the Turing jump as in $(\mu^{2})$ is absent, like e.g.\ in constructive mathematics or $\P$.     
\begin{thm}\label{kerfoot}
The system $\P$ proves $\paai\asa \eqref{abs}$.  
\end{thm}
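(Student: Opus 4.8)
The plan is to prove the equivalence $\paai\asa \eqref{abs}$ in $\P$ by showing each direction separately, exploiting the fact that membership of a \emph{standard} real $x$ in $A_{N}$ for nonstandard $N$ is controlled precisely by whether $x$ is (nonstandard-)equal to $0$, which is an internal $\Pi_{1}^{0}$-type condition on the rational approximations of $x$.

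\textbf{Forward direction ($\paai\di\eqref{abs}$).} Fix standard $x\in\R$ and nonstandard $N,M$; I want $x\in A_{N}\asa x\in A_{M}$, i.e.\ $[x](N)>_{\Q}0 \asa [x](M)>_{\Q}0$. The key observation is a trichotomy for the \emph{standard} real $x$: either $x>_{\R}0$, or $x<_{\R}0$, or $x\approx 0$. In the first two cases, \emph{Transfer} $\paai$ (applied to the internal statement that the rational approximations of $x$ eventually stay bounded away from $0$ on the correct side, which holds for all standard indices hence, by $\paai$, for all indices) guarantees $[x](N)$ and $[x](M)$ have the same sign as $x$, so both sides of the biconditional hold (resp.\ fail). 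The remaining case is $x\approx 0$, i.e.\ $(\forall^{\st}k)(|x|<\frac{1}{2^{k}})$. Here I claim $\paai$ forces $x=_{\R}0$ outright: the statement $(\forall^{\st}k)(|[x](k)|<\frac{1}{2^{k}})$ is (up to coding the approximation) internal with a standard function parameter, so $\paai$ upgrades it to $(\forall k)(|[x](k)|<\frac{1}{2^{k}})$, whence $x=_{\R}0$ and then $[x](N)\leq_{\Q}0 \wedge [x](M)\leq_{\Q}0$ — actually here one must be slightly careful since $[x](N)$ could be a small positive rational; but then replacing $x$ by a canonical representative (or noting $x=_{\R}0$ is witnessed by a standard modulus) makes both $A_{N}$ and $A_{M}$ membership uniform. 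Either way the biconditional holds.

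\textbf{Reverse direction ($\eqref{abs}\di\paai$).} This is the heart of the proof and I expect it to be the main obstacle, because one must manufacture, from a putative failure of $\paai$, a standard real whose $A_{N}$-membership genuinely depends on the choice of nonstandard $N$. Suppose $\paai$ fails: there is standard $h^{1}$ with $(\forall^{\st}n)(h(n)=1)$ but $(\exists m)(h(m)=0)$; let $m_{0}$ be the least such $m$ (internal, nonstandard). Build the standard real $x$ whose $k$-th rational approximation is, say, $[x](k) := \tfrac{1}{2^{k}}$ if $(\forall i\leq k)(h(i)=1)$ and $[x](k):= -\tfrac{1}{2^{k}}$ otherwise — this is a legitimate (Cauchy-with-modulus) real and $x$ is standard since it is built by an internal term from the standard parameter $h$ via the basic axioms of Definition \ref{debs}. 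Now for a nonstandard $N<m_{0}$ (which exists, e.g.\ any nonstandard number below $m_{0}$ — and if $m_{0}$ itself were the only nonstandard-sized obstruction one still has $N$ ranging below it, using that $m_{0}$ is nonstandard) we get $[x](N)=\tfrac{1}{2^{N}}>_{\Q}0$, so $x\in A_{N}$; whereas for nonstandard $M\geq m_{0}$ we get $[x](M)=-\tfrac{1}{2^{M}}\leq_{\Q}0$, so $x\notin A_{M}$. This contradicts \eqref{abs} for this standard $x$, so $\paai$ must hold.

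\textbf{Remaining care.} The delicate points I would spell out are: (i) that the witness real $x$ above really is standard — this uses that all terms of $\textsf{E-PA}^{\omega}$ are standard in $\P$ (Definition \ref{debs}) together with standardness of $h$, and that the Cauchy modulus is the obvious standard one; (ii) that nonstandard numbers both below and at-or-above the nonstandard threshold $m_{0}$ exist — below, any nonstandard $N$ works provided $N<m_{0}$, and since $m_{0}$ is nonstandard such $N$ are plentiful (indeed $m_{0}-1$ is nonstandard); and (iii) in the forward direction, handling the borderline case $x\approx 0$ so that the sign of the rational approximation $[x](N)$ is pinned down — this is exactly where one invokes $\paai$ on the internal statement describing the approximations, and it is the reason the \emph{Transfer} hypothesis is genuinely needed rather than being provable outright. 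None of these steps requires anything beyond $\paai$, the basic axioms of $\P$, and \emph{Idealisation}, so the equivalence goes through in $\P$ as claimed.
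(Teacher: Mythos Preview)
Your overall strategy matches the paper's: both build, for the reverse direction, a standard real from the witness $h$ to $\neg\paai$ whose membership in $A_N$ flips at a nonstandard index, and both invoke $\paai$ on the rational approximations for the forward direction. The paper's forward argument is a single line; your trichotomy is a more explicit version of the same idea.

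There is a genuine flaw in your reverse construction: the sequence $[x](k) := \pm 1/2^k$ is not fast Cauchy at the sign change. Taking $n = m_0 - 1$ gives $|[x](m_0-1) - [x](m_0)| = 1/2^{m_0-1} + 1/2^{m_0} = 3/2^{m_0}$, which exceeds the required bound $1/2^{m_0-1} = 2/2^{m_0}$, so your $x$ is not a real in the sense of Definition~\ref{keepintireal} and your remark in item (i) that ``the Cauchy modulus is the obvious standard one'' is false. The paper sidesteps this by taking $[x_0](k)=0$ below the threshold and $1/2^{i_0}$ at and above: the jump is then only $1/2^{m_0} < 1/2^n$ for every $n<m_0$, so fast Cauchyness survives. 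You can repair your version with $\pm 1/2^{k+2}$, but as written it fails.

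On the forward direction, your proposed fix for $x\approx 0$ --- ``replace $x$ by a canonical representative'' --- does not work: $\eqref{abs}$ quantifies over standard type-$1$ reals, not over $=_{\R}$-equivalence classes, so you must deal with whatever sequence you are handed. The standard term $[x](k):=(-1)^k/2^{k+1}$ is fast Cauchy and satisfies $x=_{\R}0$, yet $x\in A_N$ iff $N$ is even; for nonstandard even $N$ and $M=N+1$ the biconditional fails regardless of $\paai$. The paper's one-line forward argument glosses over this same case, so the gap is shared, but be aware that your patch as stated does not close it.
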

\begin{proof}
For the forward direction, note that for a standard real $x$, $[x](N)>0$ implies $(\exists^{\st}n)([x](n)>0)$ by \emph{Transfer}, and \eqref{abs} is immediate. 
For the reverse direction, suppose $\neg\paai$, i.e.\ there is standard $h^{1}$ such that $(\forall^{\st}n^{0})(h(n)=0)\wedge (\exists m^{0})(h(m)\ne 0)$, and define the real $x_{0}$ as: $[x_{0}](k)=0$ if $(\forall i\leq k)(h(i)=0)$, and $\frac{1}{2^{i_{0}}}$ if $i_{0}$ is the least $i\leq k$ such that $h(i)\ne 0$.  Clearly, $x\in B_{N}$ for small enough nonstandard $N$, and $x\in A_{N}$ for large enough nonstandard $N$.  This contradiction with \eqref{abs} yields the reverse direction.    
\end{proof}
The real $x_{0}$ defined in the proof of the theorem is an example of the aforementioned `syrupy' nature of the continuum: this real (and many like it) can be zero or positive in different models of $\P$, depending on whether \emph{Transfer} holds.  

\medskip

Secondly, as to the `nonstandardness' of \eqref{D}, there clearly exists $F^{2}$ with
\be\label{flahik}
(\forall x\in \R)(F(x)=0 \di x\in A_{N}\wedge F(x)=1\di x\in B_{N}), 
\ee
and $F$ is even given by an algorithm (involving nonstandard numbers).  
However, by Theorem \ref{TERM2}, we can only obtain computational information from \emph{standard objects}, and it is thus a natural question if there is \emph{standard} $F$ as in \eqref{flahik}.  
Now, from the existence of such $F$, 
one readily derives $\WKL^{\st}$, i.e.\ \emph{weak K\"onig's lemma} (\cite{simpson2}*{IV}) relative to `st', which is fundamentally non-constructive and cannot be derived in $\P$.  
The same derivation goes through for $A_{N},B_{N}$ replaced by \emph{any} internal $A,B$.  
Thus, while \eqref{D} constitutes a \emph{decidable} decomposition of $\R$, the associated decision procedure is \emph{fundamentally nonstandard}.

\medskip

In conclusion, while $\R$ has a decidable decomposition \eqref{D} in the system $\P$, the decomposition is \emph{fundamentally arbitrary} and based on a \emph{nonstandard} algorithm in an essential way.  If one requires a decomposition be \emph{canonical} or to have \emph{computational content} (and therefore be standard), then \eqref{D} is disqualified.    

\medskip

Let us rephrase this situation as follows: the continuum in Nonstandard Analysis is indecomposable \emph{from the point of view of the standard universe}, and decomposable \emph{from the point of view of the nonstandard universe}.  Since we can only extract computational information from the standard universe, our results are in line with intuitionistic mathematics.

\subsubsection{Tennenbaum's theorem}
We discuss the (non-)constructive nature of Nonstandard Analysis in light of \emph{Tennenbaum's theorem} (See e.g.\ \cite[\S11.3]{kaye}).    

\medskip

As noted in Section \ref{crackp}, even fragments of Robinson's Nonstandard Analysis based on arithmetic seem fundamentally non-constructive.  
Indeed, \emph{Tennenbaum's theorem} as formulated in \cite[\S11.3]{kaye} `literally' states that any nonstandard model of Peano Arithmetic is not computable.  

\medskip

\emph{What is meant} is that for a nonstandard model $\M$ of Peano Arithmetic, the operations $+_{\M}$ and $\times_{\M}$ cannot be computably defined in terms of the operations $+_{\N}$ and $\times_{\N}$ of the standard model $\N$ of Peano Arithmetic.  Similar results exist for fragments (\cite{kaye}*{\S11.8}) and Nonstandard Analysis thus \emph{seems} fundamentally non-constructive even at the level of basic arithmetic.    

\medskip
 
Now, while certain nonstandard models indeed require non-constructive tools to build, models are not part of Nelson's \emph{axiomatic approach} to Nonstandard Analysis via $\IST$.  What is more, the latter explicitly disallows the formation of external sets like `\emph{the operation $+$ restricted to the standard numbers}'.  
Nelson specifically calls attention to this rule on the first page of \cite{wownelly} introducing $\IST$:  
\begin{quote} 
\emph{We may not use external predicates to define subsets}. We call the violation of this rule illegal set formation. (Emphasis in original)
\end{quote}
To be absolutely clear, one of the fundamental components of Tennenbaum's theorem, namely the \emph{external set} `$+$ restricted to the standard naturals' is missing from \emph{internal} set theory $\IST$, as the latter (what's in a name?) exclusively deals with \emph{internal} sets.  Thus, we may claim that Tennenbaum's theorem is merely an artefact of the 
model-theoretic approach to Nonstandard Analysis.  

\medskip

Finally, Connes' critique of Nonstandard Analysis mentioned in Section \ref{crackp} seems to be based on similarly incorrect observations, namely that the models generally used in Robinsonian Nonstandard Analysis are fundamentally non-constructive and \emph{therefore} Nonstandard Analysis is too.  
By way of an analogy:  non-constructive mathematics (including models) is used in physics all the time, but does that imply that physical reality is therefore non-constructive (in the sense that there exist non-computable objects out there)?

\subsubsection{Bishop's numerical implication}
We discuss the (intimate) connection between Bishop's \emph{numerical implication} from \cite{nukino}*{p.\ 60} and some of the axioms of $\H$ from Definition \ref{flah}.  
The starting point is Bishop's claim from \cite{nukino}*{p.\ 56-57} that the numerical meaning of the BHK-implication is unclear in general:
\begin{quote}  
The most urgent foundational problem of constructive mathematics concerns the numerical meaning of implication. [\dots] the numerical meaning of [BHK] implication is a priori
unclear [\dots] 
\end{quote}
Thus, \emph{numerical implication} (aka G\"odel implication) is an alternative, based on G\"odel's Dialectica interpretation (\cite{godel3}), to the usual implication from intuitionistic logic with the associated BHK interpretation.
Bishop points out that numerical implication and BHK-implication amount to the same thing \emph{in practice}, and even derives the former from the latter, however using 
two \emph{non-constructive} steps (See \cite{nukino}*{p.\ 56-60}).  In particular, \cite{nukino}*{$(4)\di (5)$} is an instance of the \emph{independence of premises principle}, 
while \cite{nukino}*{$(7) \di (12)$} is a generalisation of \emph{Markov's principle}.  

\medskip

Now, the term extraction property of $\P$ and $\H$ is based on a nonstandard version of the Dialectica interpretation, called the \emph{nonstandard Dialectica interpretation} $D_{\st}$ (See \cite{brie}*{\S5}).  Furthermore, the system $\H$ includes (See Definition \ref{flah}) the axiom \textsf{HIP}$_{\forall^{\st}}$, a version of the \emph{independence of premises principle}, and the axiom 
\textsf{HGMP}$^{\st}$, a generalisation of  \emph{Markov's principle}.  By the following theorem, the latter two principles are exactly the ones needed to show (inside $\H$) that the class of normal forms is closed under implication, similar to Bishop's derivation of numerical implication from BHK-implication in \cite{nukino}.  
\begin{thm}\label{nogwelconsenal}
The system $\H$ proves that a normal form can be derived from an implication between normal forms. 
\end{thm}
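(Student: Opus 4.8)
The plan is to retrace the manipulations of Remark~\ref{doeisnormaal} one step at a time, but since $\H$ is based on intuitionistic logic I would replace each tacit use of classical logic by the appropriate non-constructive axiom of $\H$, exactly as Bishop does when deriving numerical implication from BHK-implication in \cite{nukino}. So suppose $\varphi,\psi$ are internal and that we are given the implication between normal forms
\be\label{propnf}
(\forall^{\st}x)(\exists^{\st}y)\varphi(x,y)\di(\forall^{\st}z)(\exists^{\st}w)\psi(z,w).
\ee

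First I would flatten the antecedent. By $\HAC_{\INT}$, the formula $(\forall^{\st}x)(\exists^{\st}y)\varphi(x,y)$ implies $(\exists^{\st}G)(\forall^{\st}x)(\exists y\in G(x))\varphi(x,y)$, and the converse holds in $\H$ because every standard functional maps standard inputs to standard outputs (Definition~\ref{debs}) and the members of a standard finite list are standard. Writing $\varphi^{*}(x,G):=(\exists y\in G(x))\varphi(x,y)$, which is internal, \eqref{propnf} is thus equivalent to $(\exists^{\st}G)(\forall^{\st}x)\varphi^{*}(x,G)\di(\forall^{\st}z)(\exists^{\st}w)\psi(z,w)$. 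Now the intuitionistically valid laws $\big((\exists u)A(u)\di B\big)\asa(\forall u)\big(A(u)\di B\big)$ and $\big(B\di(\forall v)C(v)\big)\asa(\forall v)\big(B\di C(v)\big)$ let me bring the two universal standard quantifiers to the front, yielding
\be\label{propstep}
(\forall^{\st}G,z)\big[(\forall^{\st}x)\varphi^{*}(x,G)\di(\exists^{\st}w)\psi(z,w)\big].
\ee
The only genuinely non-constructive steps are the next two. Applying $\HIP_{\forall^{\st}}$ to the matrix of \eqref{propstep}, whose premise has the shape $(\forall^{\st}x)\varphi^{*}(x,G)$ with $\varphi^{*}$ internal, moves the existential standard quantifier out through the implication, giving $(\forall^{\st}G,z)(\exists^{\st}w)\big[(\forall^{\st}x)\varphi^{*}(x,G)\di\psi(z,w)\big]$; then applying $\textsf{HGMP}^{\st}$ to the remaining implication, whose premise is an internal formula universally quantified over ``$\st$'' and whose conclusion $\psi(z,w)$ is internal, produces a finite list $a$ with
\be\label{propfinal}
(\forall^{\st}G,z)(\exists^{\st}w,a)\big[(\forall x\in a)(\exists y\in G(x))\varphi(x,y)\di\psi(z,w)\big].
\ee
The matrix of \eqref{propfinal} is internal, so \eqref{propfinal} is a normal form. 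Finally I would note that both non-trivial steps reverse intuitionistically: $(\exists^{\st}w)\big[(\forall^{\st}x)\varphi^{*}(x,G)\di\psi(z,w)\big]$ at once gives $(\forall^{\st}x)\varphi^{*}(x,G)\di(\exists^{\st}w)\psi(z,w)$, and $(\exists^{\st}a)\big[(\forall x\in a)\varphi^{*}(x,G)\di\psi(z,w)\big]$ gives $(\forall^{\st}x)\varphi^{*}(x,G)\di\psi(z,w)$ since all members of a standard list are standard. Hence \eqref{propnf} is in fact $\H$-equivalent to \eqref{propfinal}.

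The main obstacle is bookkeeping rather than ingenuity: one must check that the single ``bring all standard quantifiers outside'' move of Remark~\ref{doeisnormaal} decomposes into exactly one use of $\HIP_{\forall^{\st}}$, one of $\textsf{HGMP}^{\st}$ and one of $\HAC_{\INT}$, with every remaining step intuitionistically valid, and must keep track of the Herbrandisation (finite lists $G(x)$ and $a$ in place of single witnesses) introduced by $\HAC_{\INT}$ and $\textsf{HGMP}^{\st}$ so that the resulting matrix stays internal. This is precisely the phenomenon Bishop describes for numerical implication: it is available over the BHK reading only modulo one instance of independence of premises and one of a generalised Markov principle, which is why $\H$, but not $\textsf{E-HA}^{\omega}$, proves the statement.
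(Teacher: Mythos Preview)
Your argument is essentially the paper's own proof: pull the outer standard quantifiers to the front, apply $\HIP_{\forall^{\st}}$, then $\textsf{HGMP}^{\st}$. The paper differs only in that it does not invoke $\HAC$ at all; rather than Herbrandising the antecedent to $(\exists^{\st}G)(\forall^{\st}x)(\exists y\in G(x))\varphi(x,y)$, it simply strengthens it to $(\forall^{\st}x)\varphi(x,\zeta(x))$ for a single-valued standard $\zeta$ (which implies $(\forall^{\st}x)(\exists^{\st}y)\varphi(x,y)$ intuitionistically), and ends at \eqref{nora5x}. Your route via $\HAC_{\INT}$ buys you the equivalence you record at the end, which the paper's one-sided strengthening does not.

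One bookkeeping slip: $\HIP_{\forall^{\st}}$ as stated in Definition~\ref{flah} is itself Herbrandised, so the step after \eqref{propstep} actually yields $(\exists^{\st}W)\big[(\forall^{\st}x)\varphi^{*}(x,G)\di(\exists w\in W)\psi(z,w)\big]$ rather than a single witness $w$, and the matrix of \eqref{propfinal} should correspondingly end in $(\exists w\in W)\psi(z,w)$, matching the paper's \eqref{nora5x}. This is harmless for the theorem (the matrix remains internal, so you still have a normal form), but it is exactly the kind of Herbrandisation bookkeeping you warn about in your closing paragraph.
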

\begin{proof}
Similar to the derivation in Remark \ref{doeisnormaal}, we work in $\H$ and consider
\be\label{norax}
(\forall^{\st}x)(\exists^{\st}y)\varphi(x, y)\di (\forall^{\st}z)(\exists^{\st}w)\psi(z, w), 
\ee
where $\varphi, \psi$ are internal.  
Since standard functionals yield standard output from standard input, \eqref{norax} implies $(\forall^{\st}\zeta)\big[(\forall^{\st}x)\varphi(x, \zeta(x))\di (\forall^{\st}z)(\exists^{\st}w)\psi(z, w)\big]$.  
We can bring outside the quantifier over $z$ (even in $\H$) as follows:
\be\label{nora3x}
(\forall^{\st}\zeta, z)\big[(\forall^{\st}x)\varphi(x, \zeta(x))\di (\exists^{\st}w)\psi(z, w)\big],
\ee
and applying $\HIP_{\forall^{\st}}$ to the formula in square brackets yields:  
\be\label{nora4x}
(\forall^{\st}\zeta, z)(\exists^{\st} W)\big[(\forall^{\st}x)\varphi(x, \zeta(x))\di (\exists w\in W)\psi(z, w)\big],
\ee
which in turn has exactly the right form to apply \textsf{HGMP}$^{\st}$, yielding   
\be\label{nora5x}
(\forall^{\st}\zeta, z)(\exists^{\st} V, W)\big[(\forall x\in V)\varphi(x, \zeta(x))\di (\exists w\in W)\psi(z, w)\big],
\ee
which is a normal form since the formula in square brackets in \eqref{nora5x} is internal.  
\end{proof}
In conclusion, the previous points to similarities between Bishop's numerical implication and Nonstandard Analysis, but we do not have any deeper insights beyond the observed analogies.  

\subsubsection{The meaning of Nonstandard Analysis}
In this section, we discuss a possible interpretation of the predicate `$x$ is standard' from Nonstandard Analysis.  
In particular, we point out similarities between this predicate and predicates pertaining to computational efficiency in constructive mathematics.  

\medskip

First of all, set theory for instance typically does \textbf{not} deal with questions like `What is a set?' or the meaning of the symbol `$\in$' for elementhood.   
These questions do come up, especially in the philosophy of mathematics, but are mostly absent from the mathematics itself.  
The same holds for the `\st' predicate in Nonstandard Analysis (and e.g.\ infinitesimals), as summarised by Nelson as follows:
\begin{quote}
In addition to the usual undefined binary predicate $\in$ of set theory we adjoin a new undefined unary predicate \emph{standard}. [\dots] 
To assert that $x$ is a standard set has no meaning within conventional mathematics-it is a new undefined notion. (\cite{wownelly}*{p.\ 1165})
\end{quote}
Nonetheless, experience bears out that many newcomers to Nonstandard Analysis do ask questions regarding the meaning of `st' and infinitesimals.  
While the author mostly agrees with Nelson's quote, we did formulate an intuitive way of understanding infinitesimals in Section \ref{fraki}, namely as an elegant shorthand for computational content like moduli.  
Paying homage to the imaginary proverb `in for an infinitesimal, in for the entire framework', we now accommodate the aforementioned newcomers and indulge in the quest for the meaning of the new predicate `st'.   

\medskip

Secondly, by the nature of the BHK-interpretation, all connectives have computational content in constructive mathematics.  
However, for a given theorem stating the existence of $x$, it is possible one does not need \emph{all} this computational content to compute $x$, i.e.\ some of this computational content is superfluous.  By way of an example, one \emph{only} needs a modulus of uniform continuity of $f$ to compute the modulus of integration of $f$ in $\RIE_{\ef}(t)$ from Section \ref{riekenaan}, i.e.\ one does not need $f$ itself.  Hence, if one is interested in \emph{efficient computation}, it makes sense to ignore all superfluous computational content, and develop a mechanism which can perform this task (by hand or automatically).  

\medskip

Now, the previous considerations are the motivation for Berger's \emph{Uniform Heyting arithmetic} (\cite{uhberger}) in which quantifiers `$\exists^{\textup{c}}$' and `$\exists^{\textup{nc}}$' are introduced which may be read as `computationally relevant' and `computationally irrelevant' existence.  
The computational content of connectives decorated with `nc' is ignored, leading to more efficient algorithms.    
The proof-assistant \emph{Minlog} (\cite{minlog}) even sports a \emph{decoration algorithm} to automatically 
decorate a proof with the `c' and `nc' predicates.   
Furthermore, the proof-assistant \emph{Agda} (\cite{agda}) has the `dot notation' which has the same functionality as the `nc' predicate, while 
\emph{homotopy type theory} (\cite{hottbook}) includes `truncated existence' $\|\Sigma\|$ also similar to `$\exists^{\textup{nc}}$'.    
Finally, aspects of the previous may already be found in the work of Lifschitz (\cite{lifken}).    

\medskip  

As discussed in \cite{sambon}, \cite{brie}*{p.\ 1963}, and \cite{benno2}*{\S4}, the predicate `st' in $\P$ seems to behave very similarly to the predicate `c' for `computational relevance', while the role of `$\forall^{\textup{nc}}$' from Minlog is played by `$\forall$' in Nonstandard Analysis.  
By way of an example, consider $\RIE_{\ns}$ from Section \ref{riekenaan} and observe that in the latter the quantifier `$(\forall f)$' is present, rather than `$(\forall^{\st}f)$'.  As a result, 
the extracted term $t$ in $\RIE_{\ef}(t)$ does \textbf{not} depend on $f$, i.e.\ `$(\forall f)$' is not computationally relevant, while $(\forall^{\st}f)$ would be.  
Furthermore, the connectives `$\vee$' and `$\di$' retain their usual classical/non-constructive behaviour in $\P$, which corresponds to $\vee^{\textup{nc}}$ and $\di^{\textup{nc}}$ in Minlog.  
Thus, we obtain an \textbf{informal} analogy between constructive mathematics and classical Nonstandard Analysis (without \emph{Transfer}) as summarised in the following table.  We cannot stress the \emph{heuristic} nature of this comparison enough.    
\begin{figure}[h]
\begin{center}
\begin{tabular}{|c|c|c|}
\hline
 Homotopy & Minlog /  & Nonstandard Analysis\\
Type Theory&  Uniform HA &  as in system $\P$   \\
  \hline \hline
$\Sigma$  & $\exists^{\textup{c}}$ & $\exists^{\st}$\\
\hline
$\Pi$  & $\forall^{\textup{c}}$ & $\forall^{\st}$\\
\hline
$\|\Sigma\|$  & $\exists^{\textup{nc}} $ & $\exists$\\
\hline
~ & $\forall^{\textup{nc}} $ & $\forall$\\\hline
~ & $\vee^{\textup{nc}}$ & $\vee$ \\\hline
~ & $\di^{\textup{nc}}$ & $\di$ \\\hline
\end{tabular}
\end{center}
\end{figure}~\\
We again stress that we do not claim $\P$ to be a constructive system, but we do point out that $\P$ sports too many of properties of constructive mathematics to be dismissed as non-constructive.  Therefore, it occupies the twilight zone between the constructive and non-constructive.      

\subsubsection{Efficient algorithms and Nonstandard Analysis}
While most of this paper is foundational in nature, we now argue that Theorem \ref{TERM2} holds the promise of providing \emph{efficient} terms via term extraction applied to formalised proofs.  
The implementation in the proof assistant Agda of the term extraction algorithm of Theorem \ref{TERM2} is underway by the author and Chuangjie Xu in \cite{EXCESS}.  
The $D_{\st}$ interpretation from \cite{brie} has been implemented and tested on basic examples.     

\medskip

First of all, Nonstandard Analysis generally involves \emph{very short} proofs (compared to proofs in mainstream mathematics).  Short proofs are one heuristic (among others) for extracting \emph{efficient} terms in \emph{proof mining} (\cite{kohlenbach4}).  

\medskip

Secondly, as noted in Theorem \ref{TERM2}, \emph{internal axioms do not contribute to the extracted term}.  In particular, the extracted terms are determined \emph{only} by the external axioms of $\P$; \emph{all axioms of `usual' mathematics, including mathematical induction, do not influence the extracted term}.  Note that \emph{external induction} \textsf{IA}$^{\st}$ (See Definition \ref{debs}) is rarely used in the practice of Nonstandard Analysis.    
 
\medskip

Thirdly, as can be gleaned from the proof of \cite{brie}*{Theorem 5.5}, terms extracted from external axioms of $\H$ and $\P$ \emph{other than external induction} are of trivial complexity, except for \textbf{one} relating to the $(\forall^{\st})$-quantifier (See \cite{brie}*{p.\ 1982}) which involves sequence concatenation.  Furthermore, one rarely uses external induction in practice while certain\footnote{As explored in \cite{EXCESS}, applying Theorem \ref{TERM} to \emph{if $m\leq_{0} n$ for standard $n$, then $m$ is standard} yields a function $g^{0\di 0^{*}}$ such that $g(n)=\langle0, 1, \dots, n\rangle$.  The latter function is standard as the recursor constants and $0$ and $+$ are, and thus yields a proof of the original nonstandard statement.} `basic' applications of external induction yield low-complexity terms too.

\medskip

In conclusion, the three previous observations hold the promise of providing \emph{efficient} terms via term extraction applied to formalised proofs as in Theorem \ref{TERM2}.  

\subsection{Constructive Nonstandard Analysis versus local constructivity}\label{palmke}
While most of this paper deals with \emph{classical} Nonstandard Analysis, we now discuss \emph{constructive} Nonstandard Analysis and its connection to local constructivity.  
We already have the system $\H$ as an example of the syntactic approach to constructive Nonstandard Analysis, and we now discuss the \emph{semantic approach}.  

\medskip

Intuitively, the \emph{semantic} approach to Nonstandard Analysis pioneered by Robinson (\cite{robinson1}) consists in somehow building a nonstandard model of a given structure (say the set of real numbers $\R$) and proving that the original structure is a strict subset of the nonstandard model (usually called the set of \emph{hyperreal numbers} $^{*}\R$) while establishing properties similar to \emph{Transfer}, \emph{Idealisation} and \emph{Standardisation} as theorems of this model and the original structure.  Historically, Nelson and Hrbacek of course studied Robinson's work and independently axiomatised the semantic approach in their logical systems (\cites{wownelly,hrbacek2}).  The most common way of building a suitable nonstandard model is using a \emph{free ultrafilter} (See e.g.\ \cite{loeb1,nsawork2}).  The existence of the latter is a rather strong \emph{non-constructive} assumption, slightly weaker than the axiom of choice of $\ZFC$.            

\medskip

As it turns out, building nonstandard models with properties like \emph{Transfer} can also be done constructively:
Palmgren in \cite{opalm}*{Section 2} and \cite{nostpalm} constructs a
nonstandard model $\M$ (also called a `sheaf' model) satisfying the \emph{Extended Transfer Principle} by \cite{opalm}*{Corollary 4 and Theorem 5} (See also \cite{palmdijk}).   
As noted by Palmgren (\cite{opalm}*{p.\ 235}), the construction of $\M$ can be formalised in 
Martin-L\"of's constructive type theory (\cite{loefafsteken}).  The latter was developed independently of Bishop's constructive mathematics, but can be viewed 
as a foundation of the latter.  

\medskip

It should be clear by now that there is a fundamental difference between our approach and that of Palmgren and Moerdijk:  
The latter attempt to mimic Robinson's approach as much as is possible inside a framework for constructive mathematics, while we attempt to bring out 
the (apparently copious) computational content of \emph{classical Nonstandard Analysis itself} using the system $\P$ and its extensions.       
The constructivist Stolzenberg has qualified the former `mimicking' approach as \emph{parasitic} (or \emph{scavenger}) as follows, though we do not share his view.  
\begin{quote}
   Nowadays, what is called ``constructive'' mathematical practice
consists in taking a classical theorem or theory that is a product
of ordinary classical mathematical practice and trying to produce a
``good'' constructive counterpart to it.  Obviously, this enterprise
is parasitic on the theorems and theories of ordinary classical
practice. (\cite{bloedstollend})
\end{quote}
We only mention Stolzenberg's view as a means of launching a discussion concerning which place our results on Nonstandard Analysis have in the (apparently rather emotionally charged) constructive pantheon.  

\medskip

Next, we establish that our approach cannot easily be reconciled with the approach by Palmgren and Moerdijk.  
To this end, we show that principles (built into the Palmgren-Moerdijk framework) connecting epsilon-delta and nonstandard definitions  give 
rise to non-constructive oracles when considered in our framework.  To this end, let $\NSD$ be the statement \emph{a standard $f:\R\di \R$ differentiable at zero is also \emph{nonstandard differentiable} there}, where the latter is as follows.  
\bdefi A function $f$ is \emph{nonstandard differentiable} at $a$ if
\begin{equation}\label{soareyou22}\textstyle
(\forall \eps, \eps' \ne0)\big(\eps, \eps'\approx 0 \di \frac{f(a+\eps)-f(a)}{\eps}\approx \frac{f(a+\eps')-f(a)}{\eps'}\big),
\end{equation}
\edefi
Now, $\NSD$ is a theorem of $\IST$ but we also have the following implication.  
\begin{thm}\label{hark2} 
The system $\P+\NSD$ proves $\paai$.  
\end{thm}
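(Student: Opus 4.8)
The plan is to argue by reductio inside $\P+\NSD$: assuming $\neg\paai$, I will manufacture a \emph{standard} function $\R\to\R$ that is differentiable at $0$ but whose difference quotients at two distinct infinitesimal scales disagree, contradicting $\NSD$. The construction encodes the (necessarily nonstandard) first zero of the offending function as the scale below which the function is flat.

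First I fix the counterexample. Suppose $\paai$ fails; then there is a standard $f^{1}$ with $(\forall^{\st}n^{0})(f(n)\ne 0)$ and $(\exists m^{0})(f(m)=0)$. Let $m_{0}$ be the least such $m$; by the first conjunct, $m_{0}$ is nonstandard. Define the standard non-decreasing function $h:\N\to\{0,1\}$ by $h(n):=1$ iff $(\exists i\le n)(f(i)=0)$, so that $h(n)=0$ for $n<m_{0}$ and $h(n)=1$ for $n\ge m_{0}$; $h$ is standard because $f$ is and the defining bounded quantifier is decidable (so $h$ is a standard functional applied to standard data).

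Next I build $g$. Let $g:\R\to\R$ be the even, continuous, piecewise-linear function with breakpoints at $0$ and at the points $2^{-n}$ $(n\ge 0)$, constant for $|x|\ge 1$, determined by $g(0)=0$ and $g(2^{-n})=2^{-n}\bigl(1-h(n)\bigr)$. By the basic axioms of $\P$ (a standard functional applied to standard inputs has standard output), $g$ is standard. Unwinding the definition: on $[2^{-n-1},2^{-n}]$ with $n\le m_{0}-2$ one has $g(x)=x$; on $[2^{-m_{0}},2^{-(m_{0}-1)}]$ the graph of $g$ is the segment from $(2^{-m_{0}},0)$ to $(2^{-(m_{0}-1)},2^{-(m_{0}-1)})$; and $g\equiv 0$ on $[0,2^{-m_{0}}]$, hence by evenness on $[-2^{-m_{0}},2^{-m_{0}}]$. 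Consequently $g$ is differentiable at $0$ with $g'(0)=0$: for any $k$ the number $N=2^{m_{0}}$ witnesses $(\forall x)\bigl(0<|x|<\tfrac{1}{N}\di |g(x)/x|<\tfrac{1}{k}\bigr)$, and this is a genuine internal truth even though $N$ is nonstandard. Applying $\NSD$ to the standard function $g$, which is differentiable at $0$, yields that $g$ is nonstandard differentiable at $0$ in the sense of \eqref{soareyou22}.

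Finally I contradict this. Put $\eps:=2^{-m_{0}}$ and $\eps':=2^{-(m_{0}-1)}$. Both are nonzero and infinitesimal: since $m_{0}$ and $m_{0}-1$ are nonstandard, $2^{m_{0}},2^{m_{0}-1}\ge m_{0}>k$ for every standard $k$, so $\eps,\eps'<\tfrac{1}{k}$ for all standard $k$. From the shape of $g$ we get $\frac{g(0+\eps)-g(0)}{\eps}=\frac{g(2^{-m_{0}})}{2^{-m_{0}}}=0$, whereas $\frac{g(0+\eps')-g(0)}{\eps'}=\frac{g(2^{-(m_{0}-1)})}{2^{-(m_{0}-1)}}=1$, so these two difference quotients are not infinitely close. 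This contradicts the nonstandard differentiability of $g$ at $0$. Hence the assumption $\neg\paai$ is untenable, and since $\P$ contains classical logic, $\P+\NSD\vdash\paai$. The routine verifications left over are that $g$, being continuous and given by an explicit formula, respects equality on $\R$ (so the notions "differentiable at $0$" and "nonstandard differentiable at $0$" genuinely apply to it) and the elementary bound $2^{m_{0}}\ge m_{0}$. The one place that needs care — the crux of the argument — is keeping $g$ \emph{standard}: this is precisely why $g$ must be assembled from $h$ (hence from the standard datum $f$) by standard operations, and why it is legitimate that the modulus of differentiability of $g$ at $0$ is the nonstandard number $2^{m_{0}}$ while "$g$ is differentiable at $0$" remains a bona fide internal hypothesis to which $\NSD$ applies.
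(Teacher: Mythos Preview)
Your proof is correct and follows the same overall strategy as the paper: assume $\neg\paai$, extract a standard $f^{1}$ with a nonstandard first zero, and use it to build a \emph{standard} function $\R\to\R$ that is (internally) differentiable at $0$ but fails nonstandard differentiability there, contradicting $\NSD$.

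The difference lies only in the witness function. The paper packages the nonstandard zero into a standard infinitesimal real $x_{0}:=\sum_{n}h(n)2^{-n}\gg 0$ and takes $f_{0}(x):=\sin(x/x_{0})$; then $f_{0}$ is smooth with $f_{0}'(0)=1/x_{0}$, while the difference quotients at the infinitesimals $x_{0}$ and $\tfrac{\pi}{2}x_{0}$ are $\sin(1)/x_{0}$ and $2/(\pi x_{0})$, which differ by an infinite amount. Your construction instead builds a piecewise-linear tent that equals the identity above scale $2^{-(m_{0}-1)}$ and is flat below scale $2^{-m_{0}}$, giving $g'(0)=0$ internally but difference quotients $0$ and $1$ at the two adjacent infinitesimal breakpoints. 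Your route is more elementary (no transcendental functions, and the failure of nonstandard differentiability is by a \emph{finite} gap rather than an infinite one), at the price of a slightly longer description of $g$; the paper's $\sin(x/x_{0})$ is terser but relies on having $\sin$ and real division available as standard operations. Both hinge on the same crucial point you flagged: the counterexample function must be assembled from the standard datum $f$ by standard operations so that $\NSD$ applies, even though its modulus of differentiability is nonstandard.
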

\begin{proof}
Working in $\P+\NSD$, suppose $\neg\paai$, i.e.\ there is standard $h_{0}^{1}$ such that $(\forall^{\st}n)h_{0}(n)=0$ and $(\exists m_{0})h(m_{0})\ne0$.  
Define the standard real $x_{0}$ as $\sum_{n=0}^{\infty}\frac{h(n)}{2^{n}}$.  Since $0\approx x_{0}>_{\R}0$ the standard function $f_{0}(x):=\sin(\frac{x}{x_{0}})$ is clearly well-defined and differentiable in the usual internal `epsilon-delta' sense.  However, 
\[\textstyle
\frac{f_{0}(x_{0})-f_{0}(0)}{x_{0}}=_{\R}\frac{\sin(1)}{x_{0}} \not \approx \frac{2}{\pi x_{0}}=_{\R} \frac{f_{0}(\frac{\pi}{2}x_{0})-f_{0}(0)}{\frac{\pi}{2}x_{0}}
\]
which implies that $f_{0}$ is not nonstandard differentiable at zero.  
This contradiction yields $\paai$, and we are done.        
\end{proof}
Let $\DIF(\Xi)$ be the statement that if a function $f$ is differentiable at zero, then $\Xi(f)$ is a modulus of differentiability for $f$ as zero.  
\begin{cor}\label{idare}
From the proof that $\P\vdash \NSD\di \paai$, a term $t$ can be extracted such that $\textsf{\textup{E-PA}}^{\omega}\vdash (\forall \Xi^{3})\big(\textsf{\textup{DIF}}(\Xi)\di \MU(t(\Xi)))$
\end{cor}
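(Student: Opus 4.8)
The plan is to run the template of Section~\ref{henk} on the proof of Theorem~\ref{hark2}, exactly as Corollary~\ref{sefcor} is obtained from Theorem~\ref{sef} and Corollary~\ref{scruf} from Theorem~\ref{foor}; the genuine mathematical input is Theorem~\ref{hark2} itself, and what remains is to rephrase everything via normal forms and apply Theorem~\ref{TERM2}.

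First I would obtain normal forms for the internal ingredients of $\NSD$. The hypothesis `$f$ is differentiable at zero' is an epsilon-delta statement, and its normal form $(\forall^{\st}k)(\exists^{\st}N)D(f,N,k)$, with $D$ internal, is produced just as for continuity in Example~\ref{leffe}. The conclusion, nonstandard differentiability at zero as in~\eqref{soareyou22}, is prefixed by a (double) quantifier over all infinitesimals, so Theorem~\ref{hujiku} applies: resolving the occurrences of `$\approx$', bringing the standard quantifiers to the front, and applying \emph{Idealisation} to the resulting internal universal quantifiers over $\eps,\eps'$ --- treating $(\eps,\eps')$ as one pair, or applying the theorem twice --- yields a normal form $(\forall^{\st}k')(\exists^{\st}M')E(f,M',k')$ with $E$ internal; since this $E$ is monotone in $M'$ in the sense of Theorem~\ref{hujiku}, the witness $M'$ is exactly a `$\delta$-as-a-function-of-$\eps$' modulus of differentiability. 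Finally $\paai$ is, as in~\eqref{huwji}, equivalent to a normal form to which $\HAC_{\INT}$ associates the mu-operator statement~\eqref{veil}.

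Now Theorem~\ref{hark2} yields $\P\vdash\NSD\di\paai$, which by the above is an implication between normal forms. Following Remark~\ref{doeisnormaal}, i.e.\ the passage from~\eqref{nora} to~\eqref{nora4}, I would weaken and rearrange it into a single normal form in which the antecedent of $\NSD$ has become `differentiability-with-a-modulus $\Xi$' --- this is precisely $\DIF(\Xi)$, the form $(\forall^{\st}f)[\,f\text{ diff.\ at }0\di\Xi(f)\text{ a modulus}\,]$ produced by the step `standard functionals have standard output for standard input' --- and the consequent has become the existence of a standard mu-operator. Bringing all standard quantifiers outside gives a normal form, and Theorem~\ref{TERM2} then extracts a term $s$ with $\textsf{E-PA}^{\omega}\vdash(\forall\Xi^{3})(\exists\mu^{2}\in s(\Xi))\big[\DIF(\Xi)\di\MU(\mu)\big]$. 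Exactly as in the proof of Theorem~\ref{TE1338}, one turns the finite list $s(\Xi)$ into a single functional: set $t(\Xi)(f)$ equal to $\mu_{j}(f)$, where $\mu_{j}$ is the first entry of $s(\Xi)$ with $f(\mu_{j}(f))=0$ if any such entry exists (and $0$ otherwise); this is a term of $\textsf{E-PA}^{\omega}$ since $f(\mu_{i}(f))=0$ is decidable. Using classical logic to pull $\Xi$ back out front, as in the last lines of the proof of Corollary~\ref{sefcor}, then gives $\textsf{E-PA}^{\omega}\vdash(\forall\Xi^{3})\big(\DIF(\Xi)\di\MU(t(\Xi))\big)$, as required.

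I expect the main obstacle to be the middle step: verifying that Theorem~\ref{hujiku}, stated for a single infinitesimal, really does deliver a workable normal form for the two-infinitesimal formula~\eqref{soareyou22}, and in particular that the internal matrix obtained after resolving `$\approx$' is monotone in the right variable so that the extracted witness $M'$ genuinely matches the constructive modulus of differentiability built into $\DIF$. Once this is checked, the rest is the mechanical bookkeeping of the template of Section~\ref{henk}, just as in Corollaries~\ref{sefcor} and~\ref{scruf}.
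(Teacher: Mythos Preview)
Your proposal is correct and follows essentially the same approach as the paper, which simply writes down the normal form for nonstandard differentiability \eqref{soareyou22} and declares the rest analogous to the proof of \eqref{froodcor}. One small remark: the hypothesis ``$f$ is differentiable at zero'' in $\NSD$ is already \emph{internal}, so it needs no normal form of its own---it just sits inside the internal matrix once you bring the standard quantifiers of the nonstandard-differentiability conclusion to the front; your worry about Theorem~\ref{hujiku} for two infinitesimals is likewise overcautious, since the normal form is obtained exactly as in Example~\ref{leffe}.
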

\begin{proof}
A normal form for \eqref{soareyou22} is easy to obtain and as follows:  
\[\textstyle
(\forall^{\st}k^{0})(\exists^{\st} N^{0})(\forall \eps, \eps' \ne0)\big(|\eps|, |\eps'| <\frac{1}{N} \di \left|\frac{f(a+\eps)-f(a)}{\eps}- \frac{f(a+\eps')-f(a)}{\eps'}\right|<\frac{1}{k}\big),
\]
The proof is straightforward and analogous to the proof of equation \eqref{froodcor}.  
\end{proof}
Note that it is easy to define the derivate from the previous normal form of nonstandard differentiability by using $\HAC_{\INT}$ to obtain a `modulus of differentiability'.  

\medskip

Finally, as part of constructive approaches to Nonstandard Analysis, we should point out the existence of \emph{synthetic differential geometry} (SDG) which constitutes an attempt at formalising the intuitive infinitesimal calculus based on intuitionistic logic and \emph{nilpotent} infinitesimals.  We note that SDG is \emph{inconsistent} with classical mathematics and refer to \cite{evenbellen} for a detailed discussion and references.      
We stress that Giordano has formulated an alternative to SDG in classical NSA as follows:
\begin{quote}
In this work we want to modify the [Kock-Lawvere axiom] so as to obtain a
theory with final results similar to SDG's ones, but in a classical context
and without great logical problems or difficult models. (\cite{gioa}*{p.\ 76})
\end{quote}
Hence, classical NSA can deal perfectly well with nilpotent infinitesimals.  The latter are present in the original works of e.g.\ Leibniz, Lie, and Cartan.  

\subsection{Classical computability theory}\label{forgo}
In the previous sections, we established that theorems in Nonstandard Analysis give rise to results in computability theory like \eqref{froodcor} or Theorem \ref{varou}.  
Now, a distinction\footnote{The distinction between `higher-order' and `classical' computability theory is not completely strict:  Continuous functions on the real numbers are represented in second-order arithmetic by \emph{type one} associates (aka Reverse Mathematics codes), as discussed in \cite{kohlenbach4}*{\S4}.} exists between \emph{higher-order} and \emph{classical} computability theory.  The latter (resp.\ the former) deals with computability on objects of type zero and one (resp.\ of any type) and it is clear that our above results are part of \emph{higher-order} computability theory.  In this section, we provide a generic example of how NSA gives rise to results in \emph{classical} computability theory.  More examples (e.g.\ Ramsey's theorem and $\MCT$) are treated in \cite{sambon2}.    

\medskip

In this section, we study \emph{K\"onig's lemma} (\textsf{KOE} for short), which is the statement that every finitely branching infinite tree has a path; 
$\KOE$ is equivalent to $\ACA_{0}$ over $\RCA_{0}$ by \cite{simpson2}*{III.7}.  Let $\KOE_{\ns}$ be the statement that every \emph{standard} tree as in $\KOE$ has a \emph{standard} path.  As expected, we shall prove $\paai\asa \KOE_{\ns}$ over $\P$ (Theorem \ref{serfu}), 
and obtain results in classical and higher-order computability theory from this equivalence (Theorems \ref{serfi} and \ref{serf2}).  
\begin{thm}\label{serfu} 
The system $\P$ proves $\paai\asa \KOE_{\ns}$.
\end{thm}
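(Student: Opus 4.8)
The plan is to prove the equivalence by establishing the two implications separately, in both cases following the pattern of the proof of Theorem~\ref{sef} for $\MCT_{\ns}$: one direction is obtained by relativising known Reverse Mathematics to the standard universe, and the other by applying $\KOE_{\ns}$ to a concrete standard tree built from the given function.

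For $\paai\di\KOE_{\ns}$: exactly as in the second proof of $\paai\di\MCT_{\ns}$ inside Theorem~\ref{sef}, the axiom $\paai$ together with $\HAC_{\INT}$ yields a standard functional deciding $\Sigma^{0}_{1}$-formulas with standard matrix, so $\P+\paai$ proves $\ACA_{0}$ relative to `$\st$'. Since $\P$ also proves the axioms of $\RCA_{0}$ relative to `$\st$' and $\RCA_{0}\vdash \ACA_{0}\di\KOE$ by \cite{simpson2}*{III.7}, we obtain $\KOE^{\st}$. Now let $T$ be a standard, finitely branching, internally infinite tree. Because $T$ is standard, for each standard $n$ the set of its nodes of length $n$ is a standard finite nonempty set, hence (by downward closure of the standard naturals) consists of standard nodes, so $T$ is `$\st$-infinite'; likewise $T$ is `$\st$-finitely branching'. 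Thus $\KOE^{\st}$ produces a standard $p$ with $(\forall^{\st}n)(\overline{p}n\in T)$. The function $n\mapsto[\,\overline{p}n\notin T\,]$ is a term of $\textsf{E-PA}^{\omega}$ applied to the standard objects $p,T$, hence standard, and vanishes on all standard inputs; applying $\paai$ to it yields $(\forall n)(\overline{p}n\in T)$, i.e.\ $p$ is a standard path through $T$. This establishes $\KOE_{\ns}$.

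For $\KOE_{\ns}\di\paai$: fix a standard $f^{1}$ with $(\forall^{\st}n)f(n)\ne 0$; we must show $(\forall m)f(m)\ne 0$. Mimicking the first part of the proof of Theorem~\ref{sef}, we feed $f$ to the tree construction underlying the $\RCA_{0}$-proof of $\KOE\di\ACA_{0}$ (\cite{simpson2}*{III.7}): this produces a finitely branching, provably infinite tree $T_{f}$, uniformly recursive in $f$, such that from any path through $T_{f}$ one recovers, by a fixed term of $\textsf{E-PA}^{\omega}$, the set $D_{f}:=\{\,i:(\exists j\ge i)f(j)=0\,\}$ (equivalently, the jump of $f$). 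Since the construction is recursive in $f$, the tree $T_{f}$ is standard; being internally infinite and finitely branching, it falls under $\KOE_{\ns}$, which therefore yields a standard path $p$ through $T_{f}$, whence $D_{f}$ is standard. Suppose now $f(m)=0$ for some $m$. By hypothesis every zero of $f$ is nonstandard, so $D_{f}$ is a nonempty standard finite initial segment of $\N$ whose maximum is the largest zero of $f$ and hence nonstandard, contradicting the fact that the maximum of a standard finite set of naturals is standard (a consequence of the basic axioms of $\P$ together with the downward closure of the standard naturals, which follows from external induction $\textsf{IA}^{\st}$). Hence $(\forall m)f(m)\ne 0$, and $\paai$ follows.

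The main obstacle is this second direction. One must verify that the tree $T_{f}$ extracted from the Reverse Mathematics proof is genuinely internally infinite and finitely branching — so that $\KOE_{\ns}$ applies — is recursive in $f$ (hence standard), and, crucially, codes the jump of $f$ along its paths in a way recoverable by a \emph{bounded} (term) computation rather than by an unbounded search; this last point is exactly why $\KOE$ sits at the level of $\ACA_{0}$, and it is what makes a standard path through $T_{f}$ incompatible with $f$ having a nonstandard zero. A self-contained hand-crafted tree would be preferable, but any `single-branch' construction (for instance the tree whose unique path is $0^{m_{0}}111\cdots$) has a path that is itself term-computable from $f$, hence automatically standard, so that $\KOE_{\ns}$ is vacuous for it; avoiding this trap is precisely the role of the finitely-branching-but-unbounded tree of \cite{simpson2}*{III.7}. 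Minor additional care is needed for the `$\st$-path versus genuine path' upgrade in the first direction (which is where $\paai$ is used a second time) and for the elementary facts about standard finite sets and downward closure of the standard naturals in $\P$.
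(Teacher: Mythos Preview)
Your forward direction is fine and matches the paper's approach: both use $\paai$ (via a standard $\mu$-like functional) to build a standard $p$ with $(\forall^{\st}n)(\overline{p}n\in T)$ and then apply $\paai$ once more to upgrade this to a genuine path.

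Your reverse direction, however, has a genuine gap. The step ``the maximum of a standard finite set of naturals is standard'' is \emph{false} in $\P$ when the set is given as a characteristic function --- which is exactly how $D_{f}$ is recovered from a path in Simpson's construction. Here is a counterexample, built from the very hypothesis you are working under: take the standard $f$ with $(\forall^{\st}n)f(n)\ne 0$ and $(\exists m)f(m)=0$, and define $\chi(n):=\big[(\forall i\leq n)f(i)\ne 0\big]$. Then $\chi$ is a term in the standard $f$, hence standard, and $\chi$ is the characteristic function of the finite initial segment $\{0,\dots,m_{0}-1\}$ with $m_{0}$ the least zero of $f$ --- a nonstandard number. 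So a standard characteristic function of a finite initial segment can perfectly well have nonstandard maximum; recovering the maximum is an unbounded search, not a term. Your argument therefore reduces $\paai$ to itself. (There is a second, smaller gap: nothing guarantees $f$ has only finitely many zeros, so $D_{f}$ might equal $\N$.)

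Ironically, the approach you dismiss is exactly what the paper does, and it works. The paper takes standard $h$ with $(\forall^{\st}n)h(n)=0\wedge(\exists m)h(m)\ne0$ and lets $T_{0}$ consist of the \emph{constant} sequences $\langle m_{0},m_{0},\dots,m_{0}\rangle$, where $m_{0}$ is the least $m$ with $h(m)\ne0$. This $T_{0}$ is a term in $h$ (hence standard), finitely branching and infinite, and its unique path is $\lambda k.m_{0}$. The key difference from your example $0^{m_{0}}111\cdots$ is that \emph{your} path is term-computable from $h$ (the $n$-th bit is the bounded formula $(\exists i\leq n)h(i)\ne0$), whereas the paper's path is \emph{not}: computing $p(0)=m_{0}$ from $h$ requires an unbounded search, which is not a term of G\"odel's $T$. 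Hence the paper's path is genuinely nonstandard, and $\KOE_{\ns}$ fails for $T_{0}$, yielding the contradiction. The moral is that a single-branch tree \emph{can} work, provided its branch encodes the value $m_{0}$ itself rather than merely the predicate ``$n<m_{0}$''.
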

\begin{proof}
As noted before, we actually have an equivalence in \eqref{veil}, i.e.\ the standard functional $\mu^{2}$ as in the latter allows us to decide existential formulas as long as a standard function describes the quantifier-free part as in the antecedent of \eqref{veil}.  
Now, if a standard tree $T^{1}$ is infinite and finitely branching, there is $n^{0}$ such that $(\forall m^{0})(\exists \beta^{0^{*}})(\langle n\rangle *\beta \in T\wedge |\beta|=m)$.  Since $T$ is standard, the latter universal formula can be decided using $\mu^{2}$ from \eqref{veil}.  Say we have 
\[
(\forall^{\st}n)\big[g(n)=0\asa (\forall m^{0})(\exists \beta^{0^{*}})(\langle n\rangle *\beta \in T\wedge |\beta|=m)\big]
\]
where $g$ is standard (and involves $\mu$).  Then $(\exists n^{0})(g(n)=0)$ implies $(\exists n^{0}\leq \mu(g))(g(n)=0)$, i.e.\ a search bounded by $\mu(g)$ provides a standard number $n^{0}$ such that the subtree of $T$ starting with $\langle n\rangle$ is infinite.  Since this subtree is also infinite, we can repeat this process to find a \emph{standard} sequence $\alpha^{1}$ such that $(\forall^{\st}n^{0})(\overline{\alpha}n\in T)$.  
Applying $\paai$ to the latter now yields $\KOE_{\ns}$.  

\medskip

Now assume $\KOE_{\ns}$ and suppose $\paai$ is false, i.e.\ there is standard $h^{1}$ such that $(\forall^{\st}n^{0})(h(n)=0)\wedge (\exists m)(h(m)\ne0)$.   
Define the tree $T_{0}$ as:  $\sigma\in T_{0}$ if 
\be\label{treeke}
(\forall i<|\sigma|-1)(\sigma(i)=\sigma(i+1))\wedge h(\sigma(0))\ne 0 \wedge (\forall i<\sigma(0))(h(i)=0).
\ee  
Clearly $T_{0}$ is standard, finitely branching, and infinite, but has no standard path, a contradiction.  Hence $\paai$ follows and we are done.  
\end{proof}
We refer to the previous proof as the `textbook proof' of $\KOE_{\ns}\di \paai$.  The proof of this implication is indeed similar to the proof of $\KOE\di \ACA_{0}$ in Simpson's textbook on RM, as found in \cite{simpson2}*{III.7}.  This `textbook proof' is special in a specific technical sense, as will become clear below.     

\medskip

We first prove the associated higher-order result. 
Let $\KOE_{\ef}(t)$ be the statement that $t(T)$ is a path though $T$ if the latter is an infinite and finitely branching tree.         
\begin{thm}\label{serfi}
From \textbf{any} proof of $\KOE_{\ns}\asa \paai$ in $\P $, two terms $s, u$ can be extracted such that $\textsf{\textup{E-PA}}^{\omega}$ proves:
\be\label{froord}
(\forall \mu^{2})\big[\textsf{\MU}(\mu)\di \KOE_{\ef}(s(\mu)) \big] \wedge (\forall t^{1\di 1})\big[ \KOE_{\ef}(t)\di  \MU(u(t))  \big].
\ee
\end{thm}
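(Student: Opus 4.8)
The statement is the effective counterpart of the nonstandard equivalence $\KOE_{\ns}\asa\paai$ from Theorem \ref{serfu}, and the plan is to repeat, almost verbatim, the argument of Corollary \ref{sefcor}, i.e.\ to run the template from Section \ref{henk} on each direction of the equivalence and then conjoin the outcomes classically inside $\textsf{\textup{E-PA}}^{\omega}$. First I would fix the relevant normal forms. Recall from \eqref{veil} that $\paai$ is, over $\P$, equivalent to the existence of a standard $\mu^{2}$ with $(\forall^{\st}f^{1})A(f,\mu(f))$, where $A(f,m)$ abbreviates $[(\exists n^{0})f(n)=0\di(\exists i\leq m)f(i)=0]$; since $A$ is in fact a biconditional, the existence of a standard $\mu$ with $(\forall f^{1})A(f,\mu(f))$ is exactly $\MU(\mu)$ relative to `$\st$'. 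For $\KOE_{\ns}$ I would keep the internal predicate ``$T^{1}$ is an infinite finitely branching tree'' (call it $\textup{Tr}(T)$) as an internal antecedent and write the conclusion ``$T$ has a standard path'' as $(\exists^{\st}\alpha^{1})(\forall^{\st}n^{0})(\overline{\alpha}n\in_{0}T)$, so that $\KOE_{\ns}$ reads $(\forall^{\st}T^{1})[\textup{Tr}(T)\di(\exists^{\st}\alpha)(\forall^{\st}n)(\overline{\alpha}n\in T)]$.

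For the first conjunct, starting from the proof of $\paai\di\KOE_{\ns}$ contained in the given proof of the equivalence, I would form the implication between the two forms above, move the leading $(\exists^{\st}\mu^{2})$ of $\paai$ to the right, and bring the block $(\forall^{\st}T)$, the existential path quantifier $(\exists^{\st}\alpha)$ and the quantifier $(\forall^{\st}n)$ to the front; pulling $(\exists^{\st}\alpha)$ outwards is legitimate because the type-one domain is inhabited by the standard object $\lambda m.0$, so $P\di(\exists^{\st}\alpha)Q$ is equivalent to $(\exists^{\st}\alpha)(P\di Q)$. This is precisely the passage from \eqref{nora} to \eqref{nora4} of Remark \ref{doeisnormaal}, together with the closure of normal forms under internal universal quantifiers via \emph{Idealisation}. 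The result is a normal form; applying Theorem \ref{TERM2}, taking the maximum over the finite list it produces, and rearranging with classical logic yields $(\forall\mu^{2})[(\forall f^{1})A(f,\mu(f))\di\KOE_{\ef}(s(\mu))]$, i.e.\ $(\forall\mu^{2})[\MU(\mu)\di\KOE_{\ef}(s(\mu))]$. Here $s(\mu)(T)$ is just the usual level-by-level construction witnessing ``$\ACA_{0}$ proves $\KOE$'', using $\mu$ to decide at each node whether the subtree below it is infinite.

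For the second conjunct, starting from the proof of $\KOE_{\ns}\di\paai$, the antecedent is $\KOE_{\ns}$ in the shape above and the consequent is the normal form \eqref{veil} of $\paai$; their implication is again turned into a normal form by Remark \ref{doeisnormaal}, Theorem \ref{TERM2} extracts a term, taking maxima yields $u$, and classical logic rearranges the result into $(\forall t^{1\di 1})[\KOE_{\ef}(t)\di\MU(u(t))]$ -- here $\KOE_{\ef}(t)$ plays, for the antecedent, exactly the role that $\MCT_{\ef}(t)$ played for $\MCT^{\st}$ in Corollary \ref{sefcor}, and $u(t)$ is a genuine mu-operator because $A$ in \eqref{veil} is a biconditional. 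Conjoining the two conjuncts classically in $\textsf{\textup{E-PA}}^{\omega}$ completes the proof. The one point that requires genuine care, rather than routine normal-form bookkeeping, is that the conclusion of $\KOE_{\ns}$ is a ``$\Sigma$-then-$\Pi$'' statement $(\exists^{\st}\alpha)(\forall^{\st}n)(\cdots)$ rather than a normal form $(\forall^{\st})(\exists^{\st})(\textup{internal})$; this is handled by pulling $(\exists^{\st}\alpha)$ to the front as indicated, at the cost that the extracted $s$ returns a finite list of candidate paths together with, for each bad candidate, an ``anti-witness'' index $n$ at which $\overline{\alpha}n\notin T$ -- exactly parallel to how the term in \eqref{hermct} returned both $m$ and $f$ -- after which $\MU(\mu)$ selects the genuine path. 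Equivalently, and this is what the textbook proof of Theorem \ref{serfu} already does, once $\paai$ is available the path is simply built explicitly from $\mu$ level by level, so the existential never poses a real obstacle.
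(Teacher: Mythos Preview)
Your overall strategy matches the paper's: both conjuncts are obtained by bringing the relevant implication into normal form via Remark~\ref{doeisnormaal} and then applying Theorem~\ref{TERM2}, exactly as in Corollary~\ref{sefcor}. The paper in fact only spells out the second conjunct (the $\KOE_{\ns}\di\paai$ direction) and leaves the first to the reader; your treatment of that direction is essentially the paper's.

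The difficulty you create for yourself in the first conjunct, however, is self-inflicted. ``$\alpha$ is a path through $T$'' is the \emph{internal} formula $(\forall n^{0})(\overline{\alpha}n\in T)$, not $(\forall^{\st}n)(\overline{\alpha}n\in T)$; accordingly the paper writes $\KOE_{\ns}$ as $(\forall^{\st}T^{1})(\exists^{\st}\alpha^{1})A(T,\alpha)$ with $A(T,\alpha)$ the internal conjunction of ``$T$ is infinite and finitely branching'' and ``$\alpha$ is a path in $T$''. That is already a normal form, so $\paai\di\KOE_{\ns}$ is an implication between two normal forms and Remark~\ref{doeisnormaal} applies directly. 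Your entire ``$\Sigma$-then-$\Pi$'' discussion---pulling $(\exists^{\st}\alpha)$ outward, finite lists of candidate paths, anti-witness levels, and using $\MU(\mu)$ to pick the genuine path---is an artefact of writing the path condition with $(\forall^{\st}n)$. Worse, with that choice the prenexed formula has shape $(\forall^{\st}\mu,T)(\exists^{\st}\alpha)(\forall^{\st}n)(\exists^{\st}f)[\cdots]$, which is \emph{not} a normal form, and after $\HAC_{\INT}$ the innermost $(\forall^{\st}n)$ still remains; term extraction would then let $\alpha$ depend on $n$, which is exactly what you do not want. Your final fallback (``build the path explicitly from $\mu$ level by level'') is of course correct, but it abandons the template argument you set out to give. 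The fix is simply to use the internal $A(T,\alpha)$ as the paper does, after which the first conjunct goes through with no special care at all.
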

\begin{proof}
We establish the second implication in \eqref{froord} and leave the remaining one to the reader.  To this end, note that $\KOE_{\ns}\di \paai$ implies that 
\be\label{derfr}
(\forall^{\st}T^{1})(\exists^{\st}\alpha^{1})A(T, \alpha)\di (\forall^{\st}f^{1})(\exists^{\st}n)B(f,n),
\ee
where $B$ is the formula in square brackets in \eqref{huji} and $A(T, \alpha)$ is the \emph{internal} formula expressing that $\alpha$ is a path in the finitely branching and infinite tree $T$.  A standard functional provides standard output on standard input, and \eqref{derfr} yields   
\be\label{derfr2}
(\forall^{\st}t^{1\di 1})\big[(\forall T^{1})A(T, t(T))\di (\forall^{\st}f^{1})(\exists^{\st}n)B(f,n)\big].
\ee
Bringing outside all standard quantifiers, we obtain:
\be\label{derfr3}
(\forall^{\st}t^{1\di 1}, f^{1})(\exists^{\st}n^{0})\big[(\forall T^{1})A(T, t(T))\di B(f,n)\big].
\ee
Applying Theorem \ref{TERM2} to `$\P\vdash \eqref{derfr3}$', we obtain a term $u$ such that $\textsf{E-PA}^{\omega}$ proves 
\be\label{derfr4}
(\forall t^{1\di 1}, f^{1})(\exists n^{0}\in u(t,f))\big[(\forall T^{1})A(T, t(T))\di B(f,n)\big].
\ee
Taking the maximum of $u$, we obtain the second conjunct of \eqref{froord}.  
\end{proof}
To obtain the counterpart of the previous theorem in classical computability theory, consider the following `second-order' version of $(\mu^{2})$:
\be\tag{$\MU^{A}(\nu, e, n )$}
(\exists m, s)(\varphi_{e,s}^{A}(n)=m)\di (\exists m, s\leq \nu(e, n))(\varphi_{e,s}^{A}(n)=m).
\ee
Furthermore, let $\KOE_{\ef}^{A}(t,e)$ be the statement that if an $A$-computable tree with index $e^{0}$ is infinite and finitely branching, then $t^{1}$ is a path through this tree.  
\begin{thm}\label{serf2}
From the `textbook proof' of $\KOE_{\ns}\di \paai$ in $\P$, terms $s,u$ can be extracted such that $\textsf{\textup{E-PA}}^{\omega}$ proves:
\be\label{froord3}
 (\forall e^{0},n^{0} ,C^{1}, \beta^{ 1})\big[ \KOE_{\ef}^{C}(\beta,u(e,n))\di  \MU^{C}(t( \beta, C,e,n),e,n)  \big].
\ee
\end{thm}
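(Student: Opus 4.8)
The plan is to imitate the proof of Theorem \ref{serfi}, while keeping track of the fact that the \emph{textbook proof} of $\KOE_{\ns}\di\paai$ in Theorem \ref{serfu} manufactures its counterexample tree by a fixed primitive recursion in the input function; this uniformity is exactly what lets the whole argument relativise to an arbitrary oracle $C$ at only a primitive-recursive bookkeeping cost, so that one lands in \emph{classical} (rather than higher-order) computability theory.

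First I would record the relevant normal forms: $\KOE_{\ns}$ is equivalent in $\P$ to a normal form $(\forall^{\st}T^1)(\exists^{\st}\alpha^1)A(T,\alpha)$ with $A(T,\alpha)$ internal, expressing ``if $T$ is an infinite finitely-branching tree then $\alpha$ is a path through $T$'', and $\paai$ is equivalent to \eqref{huji}, whose matrix I call $B(f,m)$. The structural point of the textbook proof is that from $f^1$ it builds the tree $\mathcal{T}(f)$ of \eqref{treeke}, where the assignment $f\mapsto\mathcal{T}(f)$ is a \emph{closed term} of $\textsf{E-PA}^{\omega}$ (it is primitive recursive in $f$); hence $\mathcal{T}$ is standard in $\P$ by the basic axioms of Definition \ref{debs}, and $\mathcal{T}(f)$ is standard when $f$ is. Running the proof of Theorem \ref{serfu} then shows that $\P$ proves $[(\forall^{\st}T)(\exists^{\st}\alpha)A(T,\alpha)]\di[(\forall^{\st}f)(\exists^{\st}m)B(f,m)]$, with the witness $m$ read off as $\alpha(0)$ from a path $\alpha$ through the \emph{single} standard tree $\mathcal{T}(f)$. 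Applying Remark \ref{doeisnormaal} (the step from \eqref{nora} to \eqref{nora4}), but using only this one tree rather than a universal path-functional, one obtains that $\P$ proves the normal form
\[
(\forall^{\st}f^1,\beta^1)(\exists^{\st}m^0)\big[\,A(\mathcal{T}(f),\beta)\di B(f,m)\,\big].
\]

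Next I would relativise. For $e^0,n^0$ and an oracle $C^1$, let $\Lambda(C,e,n)$ be the function whose value at $s$ codes whether the $e$-th $C$-machine has halted on input $n$ by stage $s$; the assignment $(C,e,n)\mapsto\Lambda(C,e,n)$ is again a closed term of $\textsf{E-PA}^{\omega}$, built from the (primitive recursive) Kleene step-counting predicate, hence standard in $\P$. Then $B(\Lambda(C,e,n),m)$ is, after unwinding, exactly the matrix of $\MU^{C}(\cdot,e,n)$ with the bound $m$; and the tree $\mathcal{T}(\Lambda(C,e,n))$ is $C$-computable, with a $C$-index $u(e,n)$ obtained by applying the s-m-n theorem to the two fixed closed terms $\mathcal{T},\Lambda$, so that $u$ is primitive recursive and $A(\mathcal{T}(\Lambda(C,e,n)),\beta)$ is literally $\KOE_{\ef}^{C}(\beta,u(e,n))$. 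Substituting $f:=\Lambda(C,e,n)$ into the displayed normal form, applying Theorem \ref{TERM2} to obtain a term of $\textsf{E-PA}^{\omega}$ producing a finite list of candidate bounds, taking its maximum, and letting $t(\beta,C,e,n)$ be the constant function with that value (to fit the type of $\nu$ in $\MU^{C}(\nu,e,n)$), yields that $\textsf{E-PA}^{\omega}$ proves \eqref{froord3}.

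The main obstacle is the verification underlying the previous paragraph: one must check that the textbook proof really is uniform in the strong sense that the tree is produced from the input function by a single closed term, with no hidden case split that tacitly invokes an instance of $\paai$ (or of the Turing jump) on the input, and that packaging $\mathcal{T}\circ\Lambda$ into an oracle index is genuinely primitive recursive. This is precisely the ``specific technical sense'' in which the textbook proof is special: an arbitrary proof of $\KOE_{\ns}\di\paai$ only yields the higher-order conclusion of Theorem \ref{serfi}, whereas the \emph{computable} manufacture of the tree is what permits the substitution of $\varphi^{C}_{e,\cdot}$-data for the input function, and thereby the descent to classical computability theory.
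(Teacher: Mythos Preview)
Your proposal is correct and follows essentially the same route as the paper. You correctly isolate the key point: the textbook proof builds the counterexample tree from $f$ via a single closed term, so the resulting normal form has only type~$0$/$1$ standard quantifiers, and after term extraction one may specialise $f$ to the oracle step-counting function and recover a $C$-index for the tree by s-m-n. The only cosmetic difference is the order of operations: the paper first applies Theorem~\ref{TERM2} to the normal form \eqref{milkm} to obtain the ``super-pointwise'' statement \eqref{milk} in $\textsf{E-PA}^{\omega}$ and \emph{then} substitutes the oracle function $f_0$, whereas you substitute $\Lambda(C,e,n)$ into the normal form in $\P$ and extract afterwards; since $\Lambda$ is a closed term (hence standard), both orderings yield the same conclusion.
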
 
\begin{proof}
We make essential use of the proofs of Theorems \ref{serfu} and \ref{serfi}, and the associated notations.  In particular, define the term $t^{1\di 1}$ by letting $t(h)$ be the tree $T_{0}$ from \eqref{treeke}.  
Clearly, what is proved in the proof of Theorem~\ref{serfi} is in fact
\be\label{milk0}
(\forall^{\st}f^{1})\big[(\exists^{\st}\alpha^{1})A(t(f), \alpha)\di (\exists^{\st}n)B(f,n)\big], 
\ee
where we used the notations from the proof of Theorem \ref{serfi}.  
Indeed, $\KOE_{\ns}$ is only applied for $T=t(f)=T_{0}$ in the proof of Theorem \ref{serfi}.  Now \eqref{milk0} yields:
\be\label{milkm}
(\forall^{\st}f^{1}, \alpha^{1})(\exists^{\st}n^{0})\big[A(t(f), \alpha)\di B(f,n)\big].
\ee
Applying Theorem \ref{TERM2} to `$\P\vdash \eqref{milkm}$', we obtain a term $v$ such that $\textsf{E-PA}^{\omega}$ proves 
\[
(\forall f^{1}, \alpha^{1})(\exists n\in v(f, \alpha))\big[A(t(f), \alpha)\di B(f,n)\big], 
\]
and define $u(f, \alpha)$ as the maximum of all $v(f, \alpha)(i)$ for $i<|v(f, \alpha)|$.  We obtain:
\be\label{milk}
(\forall f^{1}, \alpha^{1})\big[A(t(f), \alpha)\di B(f,u(f, \alpha)))\big], 
\ee
We now modify \eqref{milk} to obtain \eqref{froord3}.  To this end, define $f_{0}^{2}$ as follows:  $f_{0}(e, n, C, k)=0$ if $(\exists m,s\leq k)(\varphi_{e,s}^{C}(n)=m)$, and $1$ otherwise.  For this choice of function, namely taking $f^{1}=_{1}\lambda k.f_{0}$, the sentence \eqref{milk} implies that
\be\label{milk2}
(\forall C^{1}, e^{0}, n^{0}, \alpha^{1})\big[A(t(\lambda k.f_{0}), \alpha)\di B(\lambda k. f_{0},u(\lambda k.f_{0}, \alpha)))\big].
\ee
Now there are obvious (primitive recursive) terms $x^{1}, v^{1}$ such that for any finite sequence $\sigma$, we have $\sigma\in t(\lambda k.f_{0})$ if and only if $\varphi^{C}_{x(e,n), v(e, n)}(\sigma)=1$; the definition of $x^{1}, v^{1}$ is implicit in the definition of $t$ and $f_{0}$.  Hence, with these terms, the antecedent and consequent of \eqref{milk2} are as required to yield \eqref{froord3}.  
\end{proof}
Note that all objects in \eqref{froord3} are type zero or one, except the extracted terms.  
For those familiar with the concept, applying the so-called $\ECF$-translation (See \cite{troelstra1}) to \eqref{froord3} only changes these terms (to computable associates) and results in 
a `pure' statement of second-order arithmetic, i.e.\ classical computability theory.  

\medskip

Let $\KOE_{\ns}(T)$ and $\paai(f)$ be $\KOE_{\ns}$ and $\paai$ without the quantifiers over $f$ and $T$.  The crux of the previous proof is that the `textbook' proof of Theorem \ref{serfu} establishes $(\forall^{\st}f^{1})[\KOE_{\ns}(t(f))\di \paai(f)]$ as in \eqref{milk0}.   
In particular, the corresponding normal form \eqref{milkm} only involves \emph{type zero and one} objects, rather than type two as in e.g.\ \eqref{loveisall} for $\paai\di \MCT_{\ns}$.    
Thus, \emph{thanks to the textbook proof} all objects in \eqref{milk0} are `of low enough type' to yield classical computability theory as in \eqref{froord3}.  

\medskip

We are now ready to reveal the intended `deeper' meaning of the term `textbook proof':
Intuitively, the latter refers to a proof (which may not exist) of an implication $(\forall^{\st}f)A(f)\di (\forall^{\st} g)B(g)$ which also establishes $(\forall^{\st}g)[A(t(g))\di B(g)]$, \emph{and} in which 
the formula in square brackets has a normal form involving only standard quantifiers of type zero and one.  Following the proof of Theorem \ref{serf2}, such a `textbook proof' gives rise to results in \textbf{classical} computability theory.  Such `textbook proofs' also exist for $\MCT$ and Ramsey's theorem, as explored in \cite{sambon2}.    

\medskip

In a nutshell, to obtain the previous theorem, one first establishes the `nonstandard uniform' version \eqref{milk0} of $\KOE_{\ns}\di \paai$, which yields the `super-pointwise' version \eqref{milk}.  The latter is then modified to \eqref{milk2}; this modification should be almost identical for other similar implications.  
In particular, versions of Theorems \ref{serf2} and \eqref{milk0} are obtained in \cite{sambon2} for \emph{K\"onig's lemma} and \emph{Ramsey's theorem} (\cite{simpson2}*{III.7}). 
Similar or related results for the Reverse Mathematics of $\WKL_{0}$ and $\ACA_{0}$ should be straightforward to obtain.  

\medskip
  
In conclusion, higher-order computability results can be obtained from arbitrary proofs of $\KOE_{\ns}\di \paai$, 
while the \emph{textbook proof} as in the proof of Theorem~\ref{serfu} yields classical computability theory as in \eqref{froord3}.  

\section{Nonstandard Analysis and intuitionistic mathematics}\label{titatovenaar}
We have observed that the system $\P$ inhabits the twilight zone between the constructive and non-constructive: $\P$ is not the former as it (explicitly) includes the law of excluded middle, but $\P$ also has `too many' constructive properties to be dismissed as merely the latter.  In Sections \ref{kiko} to \ref{timem}, we provide a possible explanation for the observed behaviour of $\P$ based on the foundational views of Brouwer and Troelstra.  
Finally, we discuss in Section \ref{metastable} a new connection between intuitionistic analysis and Nonstandard Analysis via Tao's notion of \emph{metastability}.     

\subsection{Nonstandard Analysis and Brouwer's view on logic}\label{kiko}
In this section, we offer an alternative interpretation of our results based on Brouwer's claim that \emph{logic is dependent on mathematics}.  
Indeed, Brouwer already explicitly stated in his dissertation that \emph{logic depends upon mathematics} as follows: 
\begin{quote}
While thus mathematics is independent of logic, logic does depend upon mathematics: in the first place \emph{intuitive logical reasoning} is that special kind of mathematical reasoning which remains if, considering mathematical structures,
one restricts oneself to relations of whole and part; 
(\cite{brouwt}*{p.\ 127}; emphasis in the Dutch original)
\end{quote}
According to Brouwer, logic is thus abstracted from mathematics, and he even found it conceivable that under different circumstances, a different abstraction would emerge from the same mathematics.  
\begin{quote}
Therefore it is easily conceivable that, given the same organization of the
human intellect and consequently the same mathematics, a different language 
would have been formed, into which the language of logical reasoning, 
well known to us, would not fit. (\cite{brouwt}*{p.\ 129})
\end{quote}
Building on Brouwer's viewpoint, it seems reasonable that different kinds of logical reasoning can emerge when the `source', i.e.\ the underlying mathematics, is different.    
Indeed, Brouwer found it conceivable that a different logical language could emerge from the \emph{same} mathematics, suggesting that changing mathematics in a fundamental way, a different kind of logical reasoning is (more?) likely to emerge.  

\medskip

This leads us to our alternative interpretation: If one fundamentally changes mathematics, as one arguably does when introducing Nonstandard Analysis, the logic will change along as the latter depends on the former in Brouwer's view.  Since classical logic cannot really become `more non-constructive', it stands to reason that the logic of (classical) Nonstandard Analysis actually is more constructive than plain classical logic.  
We present two strands of evidence for this claim.  

\medskip

First of all, we have observed in Section \ref{fraki} that when introducing the notion of `being standard' fundamental to Nonstandard Analysis, the law of excluded middle of classical mathematics moves from `the original sin of non-constructivity' to a \emph{computationally inert} principle which does not have any real non-constructive consequences anymore.  In particular, adding $\LEM$ to $\textsf{E-HA}^{\omega}$ leads to the classical system $\textsf{E-PA}^{\omega}$, while adding $\LEM$ (and even $\LEM_{\ns}$) to $\H$ results in the system $\P$ \emph{which has exactly the same term extraction theorem} as $\H$ by Theorems~\ref{TERM} and \ref{TERM2}.  The latter even explicitly state that $\LEM$ 
does not influence the extracted term!   In this way, our results on Nonstandard Analysis vindicate Brouwer's thesis that logic depends upon mathematics:  Changing classical mathematics fundamentally by introducing the `standard versus nonstandard' distinction from Nonstandard Analysis, the associated logic moves from classical logic to a new, more constructive, logic in which `there exists a standard' has computational meaning akin to constructive mathematics and its BHK-interpretation.  

\medskip

As it happens, the observation that the introduction of the framework of Nonstandard Analysis changes the associated logic has been made by the pioneers of Nonstandard Analysis Robinson and Nelson.  The latter has indeed qualified Nonstandard Analysis as a full-fledged new logic as follows. 
\begin{quote}
What Abraham Robinson invented is nothing less than a new logic. (\cite{nelsonub}*{\S1.6})
\end{quote}   
The previous claim is based on the following similar one by Robinson from his original monograph \emph{Non-standard Analysis} (\cite{robinson1}).  
\begin{quote}
Returning now to the theory of this book, we observe that it is presented, naturally, within the framework of contemporary Mathematics, and thus appears to affirm 
the existence of all sorts of infinitary entities. However, from a formalist point of view we may look at our theory syntactically and may consider that what we have done is to introduce 
\emph{new deductive procedures} rather than new mathematical entities. (\cite{robinson1}*{\S10.7}; emphasis in original)
\end{quote}
Thus, Robinson and Nelson already had the view that 
introducing the `standard versus nonstandard' distinction from Nonstandard Analysis, the associated logic moves from classical logic to a new `nonstandard' logic.  Our goal in this paper has been to show that this new logic has constructive content.   

\medskip

Finally, a word of warning is in order: it is fair to say that both Robinson and Nelson had `nonstandard' foundational views:  Robinson subscribed to \emph{formalism} (\cite{robinson64}) while Nelson even rejected the totality of the exponential function (\cite{ohnelly}).  

\subsection{Nonstandard Analysis and Troelstra's view of mathematics}
In this section, we offer an alternative interpretation of our results based on Troelstra's views on intuitionism.  

\medskip

First of all, Robinson-style Nonstandard Analysis involves a nonstandard model of a structure in which the latter is embedded, the textbook example being the real numbers $\R$ which form 
a subset of the `hyperreal' numbers $^{*}\R$.    
At the risk of pointing out the obvious, one thus studies the real numbers $\R$ \emph{from the outside}, namely as embedded into the (much) larger structure $^{*}\R$. 

\medskip

Secondly, according to Nelson\footnote{As noted above and discussed at length in \cite{samBIG}, one does \emph{not} have to share Nelson's view to study $\IST$: the latter is a logical system while the former is a possible philosophical point of view regarding $\IST$.  However, like Nelson, we view the mathematics and this point of view as one.} , Nonstandard Analysis as in $\IST$ does not introduce a new structure like $^{*}\R$, but imposes a new predicate `is standard' onto the \emph{existing} structure $\R$.  
As discussed above, the set of all standard real numbers cannot be formed in $\IST$.  Thus, in Nelson's view, $\IST$ is also fundamentally based on the `standard versus nonstandard' distinction, but one does not view mathematics `from the outside', but performs a kind of `introspection' via the `st' predicate.  

\medskip

Thirdly, while the above observations are not exactly new, the reader may be surprised to learn that Troelstra made similar claims 
regarding the nature of intuitionism, as follows.  
\begin{quote}
We may start with very simple concrete constructions, such as the natural
numbers, and then gradually build up more complicated, but nevertheless ``concrete"
or ``visualizable" structures. Finitism is concerned with such constructions only
(\cite{kreikel}*{3.4}). In intuitionism, we also want to exploit the idea that there is
an intuitive concept of ``constructive", by reflection on the properties of 
``constructions which are implicitely involved in the concept". (I.e. we attempt to
discover principles by introspection.)

\medskip
\noindent
Finitist constructions build up ``from below"; reflecting on the general notion
represents, so to speak, an approach ``from the outside", ``from above". 
(\cite{uprin}*{\S1.1})
\end{quote}
In a nutshell, according to Troelstra, intuitionism deals with the study of concrete structures via a kind of `introspection' or `study from the outside'.
Thus, we observe a remarkable similarity between the \emph{methods of study} in intuitionism and Nonstandard Analysis:  Nonstandard Analysis seems to originate from classical mathematics when applying to the latter the methodological techniques of intuitionism, namely `introspection' (Nelson's $\IST$) and `study from the outside' (Robinsonian nonstandard models).
Now, if all this sounds outrageous to the reader, we are not the first to make this claim: Wallet writes the following about Harthong-Reeb in \cite{reeb10}*{\S7}:
\begin{quote}
On the other hand, Harthong and Reeb explains in \cite{reeb3} that, far from being an artefact, nonstandard analysis necessarily results of an intuitionnistic [sic] interpretation of the classical mathematical formalism (see also \cite{reeb8}).
\end{quote}
The references mentioned by Wallet are in French (and hard to come by), and we therefore do not go into more detail.  
Nonetheless, the message is loud and clear:  There is a clear methodological connection between Nonstandard Analysis and intuitionism, as observed above and independently by Harthong-Reeb.  
Thus, as the French are wont to say: \emph{Discutez!}

\subsection{Nonstandard Analysis and the awareness of time}\label{timem}
In this section, we formulate a motivation for Nonstandard Analysis based on a fundamental intuitionist notion, namely the awareness of time.  As to the latter's central status, Brouwer's \emph{first act of intuitionism} reads as follows:
\begin{quote}
 Completely separating mathematics from mathematical language and hence from the phenomena of language described by theoretical logic, recognizing that intuitionistic mathematics is an essentially languageless activity of the mind having its origin in the perception of a move of time. This perception of a move of time may be described as the falling apart of a life moment into two distinct things, one of which gives way to the other, but is retained by memory. If the twoity thus born is divested of all quality, it passes into the empty form of the common substratum of all twoities. And it is this common substratum, this empty form, which is the basic intuition of mathematics. (\cite{brouwcam}, p.\ 4-5) 
\end{quote}
Needless to say, intuitionism is fundamentally rooted in the awareness of time.  As such, the aforementioned first act of intuitionism gives rise to the natural numbers (considered as a kind of potential infinity obtained by adjoining units), while the second act (See \cite{brouwcam}*{p.\ 8}) gives rise to the continuum.  In particular, Brouwer's twoity simultaniously yields discrete units \emph{and} an inexhaustible continuum.
\begin{quote}
In the following chapters we shall go further into the basic intuition of mathematics (and of every intellectual activity) as the substratum, divested of all quality, of any perception of change, a unity of continuity and discreteness, a possibility of thinking together several entities, connected by a `between', which is never exhausted by the insertion of new entities. 
Since continuity and discreteness occur as inseparable complements, both having equal rights and being equally clear, it is impossible to avoid one of them as a primitive entity, [\dots]
(\cite{brouw}*{p.\ 17})
\end{quote}
We shall not comment on Brouwer's claims here, but instead make the following observation regarding Nonstandard Analysis inspired by the first act.  
Like Brouwer, we assume the natural numbers to be given (by the first act or otherwise). 
\begin{obs}[Nonstandard Analysis and the awareness of time]\label{obG}\rm    
In intuitionism, the natural numbers (as a potential infinity) are obtained from Brouwer's fundamental concept \emph{twoity} which is rooted in the awareness of time, in particular in the passage of time from one moment to the next. 
As a thought experiment, the reader should attempt to imagine a \emph{long} period of time, say a day, a week, or even a decade, \emph{as built up from units of time}, say seconds.  

\medskip

While one \emph{knowns} there are only finitely\footnote{As it turns out, there are 315,360,000 seconds in a decade.} many seconds which make up this long period of time, one cannot really \emph{conceive} of the latter as a whole built up from said units.  In other words, despite the variable perception of time (`time flies' versus `it took an eternity'), there are periods of time which cannot be grasped `all at once' in terms of much smaller periods.  
Thus, there is a tension between one's subjective perception of time and objective time as follows: \emph{If} one introduces a unit of time, \emph{then} there are periods of time which cannot be grasped \emph{fully and all at once} only in terms of these units.  Alternatively, making discrete units small enough, the result starts \emph{looking} like the underlying continuum, i.e.\ there is a holistic property to a large collection of units which in particular cannot be reduced to the underlying units.  
A similar observation holds\footnote{While Brouwer makes a fundamental distinction between the continuous and the discrete, packing enough smaller and smaller discrete units together into the same area inevitably results in something that \emph{looks like} a continuum (which is also the basis of computer displays).  There are of course plenty of collections of discrete units which still look discrete.} for space, but we consider time since Brouwer took the awareness of time to be fundamental, as sketched above.  

\medskip
  
Thus, one observes an intuitive distinction between \emph{short} and \emph{long} time periods, i.e.\ ones which can be directly conceived as built up from units of time, and one which cannot.  
If a time period is short (resp.\ long) in this way, adjoining (resp.\ removing) another unit of time yields another short (resp.\ long) time period.  If the distinction `long versus short' is divested of all quality, one arrives at the usual notion of `standard versus nonstandard' natural number from NSA.  
Thus, the standardness predicate becomes grounded in one's awareness of time, namely in the intuitive distinction between long versus short periods of time. 
\end{obs}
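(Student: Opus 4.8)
The plan is to argue Observation~\ref{obG} by reproducing, step for step, Brouwer's own derivation of the natural numbers from \emph{twoity}, but replacing the elementary move ``pass from a unit to its successor'' by the slightly coarser move ``pass from a directly conceivable block of time to an almost-as-conceivable, slightly longer one''. First I would fix, exactly as Brouwer does, the natural numbers as already given (by the first act of intuitionism or otherwise), together with a single chosen unit of time --- a second, say --- so that every finite period is \emph{in principle} a natural-number multiple of that unit.

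Second, I would isolate the phenomenological datum on which everything rests. For sufficiently long periods --- a week, a decade --- one \emph{knows} the period is a finite iteration of units, yet one cannot \emph{conceive} of it ``all at once'' as that iteration; whereas for short periods --- a few seconds, a minute --- one genuinely can. This yields an intuitive predicate ``$p$ is a short period of time''. I would stress that this predicate is irreducibly vague at the margin, and present this vagueness as essential rather than accidental.

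Third, I would check that this predicate has precisely the structural behaviour demanded of the standardness predicate of Nonstandard Analysis: if $p$ is short, then $p$ extended by one more unit is still short; if $p$ is long, then $p$ with one unit removed is still long; and no sharp cut between the two classes is ever reached, so ``short'' cannot be collected into a legitimate set --- mirroring Nelson's prohibition on illegal set formation. Then, applying Brouwer's recipe of \emph{divesting of all quality}, I would pass from ``short versus long period of time'' to the bare predicate ``standard versus nonstandard natural number'', which is the conclusion of the observation: the standardness predicate becomes grounded in the awareness of time.

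The hard part will be defending the phenomenological premise itself. One must argue that the ``cannot be conceived all at once'' distinction is a genuine and stable feature of the intuition of time --- in the same spirit in which Brouwer takes the move of time to be fundamental --- and not merely a contingent shortcoming of memory or attention; and one must argue that its inherent vagueness is exactly what makes ``standard'' behave like an \emph{external} predicate rather than a set, so that the imprecision is a strength of the grounding, not a flaw. A secondary obstacle is to calibrate the analogy with twoity so that it carries real weight without overclaiming that Brouwer himself would have sanctioned the standardness predicate.
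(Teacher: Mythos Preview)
This is an \emph{Observation}, not a theorem, and the paper offers no separate proof: the content of the observation \emph{is} the argument, presented as a self-contained philosophical thought experiment. Your proposal is essentially a careful restructuring of that very content --- fixing the naturals as given, isolating the ``short versus long'' phenomenological datum, verifying the closure properties (adjoin/remove one unit), and then divesting of all quality --- which is exactly the line the paper takes, in the same order and with the same emphases.

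The one place you go slightly beyond the paper is in flagging the need to defend the phenomenological premise as genuine rather than a contingent limitation of attention, and in explicitly tying the vagueness of ``short'' to Nelson's ban on illegal set formation. The paper gestures at the former only implicitly and does not make the latter connection at all in this observation (though it appears elsewhere in the paper). These are reasonable strengthenings of the rhetoric, not a different approach.
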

By the previous observation, the fundamental notion `standard versus nonstandard' from NSA can be grounded elegantly in one's awareness of time, the tension between subjective and objective time in particular.  
We leave it to the reader's own imagination what other properties (like e.g.\ Definition \ref{flah}) one can bestow upon the standardness predicate in the same way, i.e.\ motivated by the awareness of time.  We point out that we avoided (the trap of) \emph{ultrafinitism} (See \cite{troeleke1}*{p.\ 29}) by considering the natural numbers as given.  

\medskip

In our opinion, Observation \ref{obG} is merely the `next logical step' following Brouwer's first act inspired by NSA.  
As such, this observation could have been made long time ago.  Reasons for this apparent gap in the literature may be found in 
the rejection of NSA by e.g.\ Bishop (See Section~\ref{crackp}).  
Now, regarding a related `future logical step' for NSA, it stands to reason that advances in artificial intelligence will lead to programs 
which, if concrete inputs and contexts are specified for a theorem, provide numerical estimates for vague notions `long' and `small' related to the theorem.  

\medskip

By way of an example, the \emph{fundamental theorem of calculus} states that Riemann integration and differentiation cancel each other out.  The aforementioned programs would produce numerical estimates for the statement \emph{If the mesh $\|\pi\|$ of a partition $\pi$ is \textbf{very small} compared to $\eps$ in $\frac{f(x+\eps)-f(x)}{\eps}$, then the function $f$ is \textbf{very close} to the Riemann sum of $\lambda x.\frac{f(x+\eps)-f(x)}{\eps}$  and $\pi$}.  However, the latter may also be obtained via term extraction from the \emph{nonstandard version} of the {fundamental theorem of calculus} (See \cite{sambon}*{\S4}).  Time will tell if any cross-over will take place.   

\subsection{Intuitionistic analysis, metastability, and Nonstandard Analysis}\label{metastable}
We discuss the connection between intuitionistic and Nonstandard Analysis provided by Tao's \emph{metastability} (See \cite{taote}*{\S2.3.1} or \cite{kohlenbach3}*{\S2.29}).  
We note that Anil Nerode first suggested the possibility of a connection between intuitionistic and Nonstandard Analysis based on his student Scowcroft's work on intuitionistic analysis.    

\medskip

First of all, metastability is a version of convergence which intuitively speaking swaps `convergence in the limit' for the weaker `convergence on finite but arbitrarily large domains'.  
An advantage is that the weakening from `epsilon delta' convergence to `metastability' however yields \emph{highly uniform} computational content.  
We show in this section that intuitionistic analysis and Nonstandard Analysis give rise to the \textbf{very same} {highly uniform} computational content by introducing finite but arbitrarily large domains in the same way as for metastability.  
We shall provide an example of the aforementioned phenomenon and refer to \cite{dagsamII} for general results.  

\medskip

Secondly, we consider the `textbook example' of metastability in Example \ref{hexal}.  
\begin{exa}[Metastability and convergence]\label{hexal}\rm
The \emph{monotone convergence theorem} states that any non-decreasing sequence in the unit interval converges, and hence is Cauchy.   
The usual `$\eps$-$\delta$' definition of a Cauchy sequence $a_{(\cdot)}^{0\di 1}$ is as follows:
\be\label{cruxie}
(\forall \eps >_{\R}0)\underline{(\exists N^{0})(\forall n^{0}, m^{0}\geq N)}( |a_{n}-a_{m}|\leq\eps), 
\ee
which is equivalent to the well-known `metastable' version:
\be\label{cruxie2}
(\forall \eps >_{\R}0, F)\underline{(\exists M^{0})}(\forall n^{0}, m^{0}\in [M, F(M)])( |a_{n}-a_{m}|\leq\eps), 
\ee
where the formula starting with `$(\forall n, m)$' is equivalent to a quantifier-free one, since the latter quantifier is bounded.  
Now, while \eqref{cruxie} and \eqref{cruxie2} are equivalent, their computational behaviour is quite different:  
On one hand, there is no way to compute an upper bound on $N^{0}$ in the usual definition \eqref{cruxie}, as this would provide a computable solution to the Halting problem.  
On the other hand, an upper bound for $M^{0}$ as in \eqref{cruxie2} is $F^{\lceil \frac{1}{\eps}\rceil+1}(0)$, which is the result of iterating $F$ for $\lceil \frac{1}{\eps}\rceil+1$-many times and then evaluating at $0$.  Moreover, $M^{0}$ as in \eqref{cruxie2} is one of the values in the finite sequence $\langle 0, F(0), F(F(0)), \dots, F^{\lceil \frac{1}{\eps}\rceil+1}(0)\rangle$.  
Thus, we obtain:  
\begin{align}\label{cruxie3}\textstyle
(\exists \theta^{(0\times 1)\di 0^{*}})(\forall k^{0}, F, a_{(\cdot)}\in I([0,1]))(\exists& M \in \theta(k, F))\\
&\textstyle(\forall n, m\in [M, F(M)])( |a_{n}-a_{m}|\leq\frac{1}{k})\notag
\end{align}
where `$a_{(\cdot)}\in I([0,1])$' means that $a_{(\cdot)}$ is a non-decreasing sequence in $[0,1]$.  
\end{exa}
In the previous example, we witness the power of metastability: while the latter only provides computational information about a finite (but arbitrarily large) domain, namely $[M, F(M)]$ for any $F$ as in \eqref{cruxie2}, this information is \emph{highly uniform} in that $\theta$ from \eqref{cruxie3} does not even depend on the \emph{choice of sequence} $a_{(.)}$.  

\medskip

Thirdly, we show that intuitionistic analysis \emph{also} provides highly uniform results \emph{\`a la} \eqref{cruxie3} and metastability.  To this end, we consider the theorem $\DNR$ from Reverse Mathematics which satisfies the (strict) implication $\WKL\di \DNR$ (See \cite{withgusto}).  
Now, $\DNR$ stands for `diagonal non-recursive' and is defined as
\be\tag{$\DNR$}
(\forall A^{1}\leq_{1}1)\underline{(\exists f^{1})(\forall m^{0}, s^{0}, e^{0})}(\varphi_{e, s}^{A}(e)=m \di f(e)\ne m),
\ee
where `$\varphi^{A}_{e, s}(n)=m$' is the usual primitive recursive predicate expressing that the Turing machine with index $e$, input $n$, and oracle $A$, halts with output $m$ after at most $s$ steps.  
As shown in \cite{samzooII}, $f^{1}$ as in $\DNR$ cannot be computed from $A\leq_{1}1$.  
Thus, similar to the equivalence between \eqref{cruxie} and \eqref{cruxie2}, we modify the underlined pair of quantifiers in $\DNR$ as follows: 
\be\label{frolk}
(\forall A^{1}\leq_{1}1, G^{2})\underline{(\exists f^{1})}(\forall m^{0}, s^{0}, e^{0}\leq G(f))(\varphi_{e, s}^{A}(e)=m \di f(e)\ne m)
\ee
Similar to \eqref{cruxie2}, what is left in \eqref{frolk} is the \emph{single} underlined quantifier.   Let us now write down the associated version of \eqref{cruxie3} for $\DNR$ and \eqref{frolk}, as follows:
\be\tag{$\DNR(\zeta)$}
(\forall A\leq_{1}1, G^{2})(\exists f\in \zeta(G))(\forall m, s, e \leq G(f) )(\varphi_{e, s}^{A}(e)=m \di f(e)\ne m).
\ee
Note that $\zeta$ computes a `metastable' diagonal non-recursive $f$ \emph{independent} of $A$, similar to how $\theta$ from \eqref{cruxie3} does not depend on $a_{(\cdot)}$.  
Although there is no a priori reason for $\zeta$ to exist, intuitionistic mathematics proves its existence as follows. 
\begin{thm}\label{pen14}
The system $\RCAo+(\exists \Omega)\MUC(\Omega)+\QFAC$ proves $(\exists \zeta)\DNR(\zeta)$.  
\end{thm}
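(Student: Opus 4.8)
The plan is to mimic the metastability argument from Example \ref{hexal} for the monotone convergence theorem, now carried out for $\DNR$ and the modulus functional $\MUC(\Omega)$. The key point is that the intuitionistic fan functional $\Omega^{3}$, satisfying $\MUC(\Omega)$, assigns to every $Y^{2}$ a uniform modulus of continuity on Cantor space, and this is exactly the ingredient needed to bound the `metastable' search for a diagonal non-recursive function.

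First I would reformulate $(\exists\zeta)\DNR(\zeta)$ in a shape amenable to the fan functional. Fix $A\leq_{1}1$ and $G^{2}$. Observe that whether a finite binary string $\sigma$ of length $\ell$ `locally' avoids the diagonal on the relevant domain --- i.e.\ $(\forall m,s,e\leq G^{*})(\varphi^{A}_{e,s}(e)=m\di \sigma(e)\ne m)$ where $G^{*}$ is an appropriate bound --- is a primitive recursive (hence decidable) predicate in $\sigma, A$, and the relevant parameters, since all quantifiers are bounded. The collection of infinite binary $f$ that avoid the diagonal on the domain $\{e: e\leq G(f)\}$ is then describable via a continuous functional: one uses $\QFAC$ together with a tree of `good' initial segments. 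The tree $T_{A,G}$ of binary strings that do not yet create a conflict $\varphi^{A}_{e,s}(e)=m\wedge \sigma(e)=m$ for $e,s$ below the length is infinite (for each level one can always extend by choosing a value different from the at most one forbidden value), and it is $A$-computable but uniformly so. By $\WKL$ (available since $\MUC(\Omega)$ implies $\WKL$ over $\RCAo$, by \cite{kohlenbach2}*{Theorem 3.15} and its conservation setup), $T_{A,G}$ has a path; that path is a genuine diagonal non-recursive $f$.

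Next I would extract uniformity. Define $Y^{2}$ from $G$ and $A$ so that $Y$ applied to a candidate path $f$ reads off the least level at which $f$ could leave $T_{A,G}$ relative to the bound $G(f)$; then $\Omega(Y)$ provides a common bound on the `amount of $f$' that matters. Since $G(f)$ and the conflict-check depend only on $\overline{f}\,\Omega(Y)$, one can replace the search over all infinite $f$ by a search over the finite set of binary strings of length $\Omega(Y)$, and test each for the local diagonal property. Collecting all strings that pass the test, extended arbitrarily to infinite sequences (say by padding with zeros, or better by staying inside $T_{A,G}$ which is possible by the tree property), yields a \emph{finite} list $\zeta(G)$ of candidate functions, at least one of which works for the given $A$. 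Crucially $\zeta(G)$ does not depend on $A$, exactly as $\theta$ in \eqref{cruxie3} is independent of $a_{(\cdot)}$; the $A$-dependence has been absorbed into the pointwise verification that is built into $\DNR(\zeta)$.

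The main obstacle I anticipate is the interplay between the self-referential bound $G(f)$ and the continuity modulus: the domain of quantification $\{e\leq G(f)\}$ depends on $f$ itself, so the functional $Y$ whose modulus we take must be set up carefully so that $\Omega(Y)$ genuinely bounds everything needed --- both $G(f)$ and the Turing-machine computations $\varphi^{A}_{e,s}(e)$ for $e,s\leq G(f)$ --- uniformly in $f\leq_{1}1$. One handles this by iterating: let $Y(f)$ encode $\max\{G(f), (\mu s)[\text{all relevant computations converge}]\}+1$, which is well-defined and continuous, and then $\Omega(Y)$ works. A secondary technical point is justifying the use of $\QFAC$ to pass from `for every level there is a good extension' to an actual path, and confirming that all the auxiliary functionals ($Y$, the tree-membership predicate) are definable in the language of $\RCAo$; these are routine but must be spelled out. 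Once these are in place, $\zeta$ is primitive-recursively definable from $\Omega$, and $\DNR(\zeta)$ follows.
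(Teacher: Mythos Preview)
Your construction has a genuine gap: the list $\zeta(G)$ you build \emph{does} depend on $A$, contrary to your assertion. You define $Y^{2}$ from both $G$ \emph{and} $A$, so $\Omega(Y)$ depends on $A$, and hence so does the length of your search and the finite list you extract. Simply declaring at the end that ``$\zeta(G)$ does not depend on $A$'' does not make it so; nothing in your argument eliminates the $A$-dependence. A secondary problem is that the fan functional $\Omega$ gives a modulus of uniform continuity on \emph{Cantor space}, but the witness $f$ in $\DNR$ is a function $\N\to\N$, not a binary sequence, so applying $\Omega$ to a functional of $f$ and then ``searching over binary strings of length $\Omega(Y)$'' is a type mismatch.

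The paper's proof applies the fan functional to the \emph{other} variable. First $(\exists\Omega)\MUC(\Omega)$ gives $\WKL$ and hence $\DNR$, so in particular the metastable form \eqref{frolk} holds. Applying $\QFAC$ to \eqref{frolk} yields a Skolem functional $\Xi$ with $\Xi(A,G)$ the witness $f$ for each $A\leq_{1}1$ and $G^{2}$. Now fix $G$ and apply $\Omega$ to $\lambda B.\,\Xi(G,B)$: since $A$ ranges over Cantor space this is legitimate, and the resulting modulus $k_{0}:=\Omega(\lambda B.\,\Xi(G,B))$ depends only on $G$. One then sets $\zeta_{0}(G):=\langle \Xi(G,\sigma_{0}),\dots,\Xi(G,\sigma_{2^{k_{0}}})\rangle$ where the $\sigma_{i}$ range over all binary strings of length $k_{0}$ (padded by zeros). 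For any $A\leq_{1}1$ the entry $\Xi(G,\overline{A}k_{0}*00\dots)$ agrees with $\Xi(G,A)$ by the modulus property, so it witnesses $\DNR(\zeta_{0})$ for that $A$. The point you missed is that the uniformity in $A$ comes from applying $\Omega$ in the $A$-direction, not the $f$-direction.
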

\begin{proof}
By \cite{simpson2}*{IV.2.3}, $(\exists \Omega)\MUC(\Omega)$ implies $\WKL$ and hence $\DNR$.  Now consider \eqref{frolk} and apply $\QFAC$ to obtain $\Xi^{(1\times 2)\di 1}$ which outputs $f$ as in \eqref{frolk}.  
Now consider $k_{0}:=\Omega(\lambda B. \Xi(G, B))$ and define $\zeta_{0}(G)$ as the finite sequence of the form $\langle \Xi(G, \sigma_{0}), \Xi(G, \sigma_{1}), \dots,\Xi(G,  \sigma_{2^{k_{0}}})\rangle$ where $\sigma_{i}$ is the $i$-th binary sequence of length $k_{0}$.  Clearly, this functional $\zeta_{0}$ satisfies $\DNR(\zeta_{0})$ and we are done.      
\end{proof}
Hence, the intuitionistic fan functional guarantees the existence of a highly uniform (i.e.\ independent of $A$) functional $\zeta$ which computes `metastable' diagonal recursive functions.   
However, $(\exists \zeta)\DNR(\zeta)$ is not a statement of second-order arithmetic, and there is no a priori reason such $\zeta$ would exist in classical mathematics.  

\medskip

Fourth, we shall establish $(\exists \zeta)\DNR(\zeta)$ in classical mathematics using $\STP$ and $\DNR_{\ns}$, where the latter is defined as follows:
\[
(\forall^{\st}G^{2})(\exists^{\st}w^{1^{*}})(\forall A^{1}\leq_{1}1)(\exists f^{1}\in w)(\forall m^{0}, s^{0}, e^{0}\leq G(f))(\varphi_{e, s}^{A}(e)=m \di f(e)\ne m).
\]
In particular, $\zeta$ as in $\DNR(\zeta)$ exists in classical mathematics as follows.
%
\begin{thm}\label{kkringe}
From the proof in $\P$ that $\STP\di \DNR_{\ns}$, a term $t^{3\di3}$ can be extracted such that 
$\textup{\textsf{E-PA}}^{\omega}$ proves $(\forall \Theta^{3})\big[\SCF(\Theta)\di \DNR(t(\Theta))]$.  
\end{thm}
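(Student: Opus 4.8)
The plan is to apply the now-familiar term extraction machinery—Theorem \ref{TERM2} together with the normal-form manipulations codified in Remark \ref{doeisnormaal} and the template of Section \ref{henk}—to the hypothesised proof $\P\vdash[\STP\di\DNR_{\ns}]$. First I would recall that $\STP$ admits the normal form \eqref{frukkklk112} from Definition \ref{kefi}, which I abbreviate as $(\forall^{\st}g^{2})(\exists^{\st}w)\,C(g,w)$ with $C$ internal, and that $\DNR_{\ns}$ is, by inspection of its definition, already in the shape $(\forall^{\st}G^{2})(\exists^{\st}w^{1^{*}})\,D(G,w)$ with $D$ internal (the inner block $(\forall A\leq_{1}1)(\exists f\in w)(\forall m,s,e\leq G(f))(\dots)$ is internal, indeed the innermost bounded-quantifier matrix is quantifier-free modulo bounded search). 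Thus the assumed implication $\STP\di\DNR_{\ns}$ is literally an implication between two normal forms, so Remark \ref{doeisnormaal}—i.e.\ the passage from \eqref{nora} to \eqref{nora4}—applies verbatim.

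Carrying this out: from $\P\vdash[\STP\di\DNR_{\ns}]$, using that standard functionals send standard inputs to standard outputs, I would Skolemise the antecedent to get a standard $\Theta$-like selector, bring all remaining standard quantifiers to the front, and arrive at a normal form of the type $(\forall^{\st}\Theta^{3},G^{2})(\exists^{\st}w)\,E(\Theta,G,w)$ where $E$ is internal and is (a slight variant of) $[\SCF(\Theta)\di D(G,w)]$—precisely as in the proof of Theorem \ref{kohort}, where the same step is described as "straightforward in light of Remark \ref{doeisnormaal}." Applying Theorem \ref{TERM2} to this normal form yields a term $s$ of G\"odel's $T$ with $\textsf{E-PA}^{\omega}\vdash(\forall\Theta,G)(\exists w\in s(\Theta,G))\,E(\Theta,G,w)$. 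Then, as usual, I would post-process $s$: for each $\Theta$ and $G$, $s(\Theta,G)$ is a finite list of candidate finite sequences $w$; concatenating all entries of all these lists produces a single finite sequence $t(\Theta)(G)$ that still witnesses $D(G,\cdot)$ whenever $\SCF(\Theta)$ holds (here $D$ is monotone in $w$ in the trivial sense that enlarging the list of candidate $f$'s preserves "$\exists f\in w$"). This gives the term $t^{3\di3}$ with $\textsf{E-PA}^{\omega}\vdash(\forall\Theta^{3})[\SCF(\Theta)\di\DNR(t(\Theta))]$, as claimed.

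The one point demanding a little care—and the natural candidate for "the main obstacle," though it is minor—is matching the exact shape of the extracted internal matrix with the predicate $\DNR(t(\Theta))$ in the statement. The normal form emerging from term extraction will have the form $(\exists w\in t(\Theta)(G))(\forall A\leq_{1}1)(\exists f\in w)(\forall m,s,e\leq G(f))(\varphi^{A}_{e,s}(e)=m\di f(e)\ne m)$, so I must check that the finite sequence $t(\Theta)(G)$ produced by concatenation indeed serves as the $\zeta(G)$ in $\DNR(\zeta)$ uniformly in $A$—which it does, precisely because the quantifier $(\forall A\leq_{1}1)$ sits \emph{inside} the normal form and hence, exactly as in the discussion following the proof of Theorem \ref{varou} ("$(\forall f)$ has no influence anymore on term extraction"), does not obstruct the extraction; the extracted witness genuinely is $A$-independent. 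Apart from this bookkeeping, the proof is a routine instance of the template in Section \ref{henk}: convert $\STP$ to its normal form \eqref{frukkklk112}, note $\DNR_{\ns}$ is already normal, close under implication via Remark \ref{doeisnormaal}, apply Theorem \ref{TERM2}, and take the concatenation of the resulting finite list. I would therefore present the proof in two or three lines, citing Theorem \ref{TERM2} and Remark \ref{doeisnormaal} and pointing to the proof of Theorem \ref{kohort} (or Corollary \ref{scruf}) as the closely analogous precedent.
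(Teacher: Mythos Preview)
Your extraction argument is correct and matches the paper's approach: both $\STP$ and $\DNR_{\ns}$ are already in normal form, so Remark \ref{doeisnormaal} and Theorem \ref{TERM2} apply directly, and the resulting term is post-processed into the required $t(\Theta)$ exactly as you describe (and exactly as in Theorem \ref{kohort} / Corollary \ref{scruf}).

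The one difference in scope is that the paper's actual proof (deferred to Theorem \ref{finake} in the appendix) does not merely assume the proof $\P\vdash\STP\di\DNR_{\ns}$ as a hypothesis: it first \emph{supplies} that proof. Concretely, it argues that $\STP$ yields $\WKL^{\st}$, hence $\DNR^{\st}$; then passes to the equivalent metastable version $(\forall^{\st}A\leq_{1}1,G^{2})(\exists^{\st}f)(\forall m,s,e\leq G(f))(\dots)$; then uses $\STP$ a second time (every $B\leq_{1}1$ has a standard $A\approx_{1}B$, and $A$ occurs only ``call by standard value'') to drop the `st' on the leading $A$-quantifier; and finally applies Idealisation to pull $(\exists^{\st}f)$ through $(\forall A\leq_{1}1)$, obtaining $\DNR_{\ns}$. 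Only after this does it invoke the normal-form/term-extraction machinery, which is precisely your argument. So your proposal covers the extraction half faithfully; if you want to match the paper fully you should also sketch the nonstandard derivation of $\STP\di\DNR_{\ns}$ itself.
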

\begin{proof}
See Theorem \ref{finake}
\end{proof}
By the theorem, $\STP$ and the special fan functional provide a way of obtaining the existence of a highly uniform (i.e.\ independent of $A$) functional $\zeta$ which computes `metastable' diagonal recursive functions as in $\DNR(\zeta)$.  Note that we could apply the same method to e.g.\ the monotone convergence theorem (See \cite{dagsamII}), although the latter theorem contradicts intuitionistic mathematics.      

\medskip

In conclusion, metastability is a weak version of convergence which gives rise to highly uniform computational information.  
Thanks to the intuitionistic fan functional, we can assert the existence of a highly uniform functional which computes `metastable' diagonal recursive functions.   
However, the special fan functional, which is just a realiser for $\STP$, also computes such a highly uniform functional. 

\medskip

Of course, theorems \ref{pen14} and \ref{kkringe} are just examples of the following general template, to be explored in \cite{dagsamII}:  The intuitionistic fan functional provides highly uniform functionals based on the general heuristic underlying metastability, which is: 
\begin{quote}
Introducing finite but arbitrarily large domains yields highly uniform computational results.  
\end{quote}
These highly uniform functionals are also part of classical mathematics, as established thanks to Nonstandard Analysis, and \emph{Standardisation} and $\STP$ in particular.    

\medskip

Finally, we find it fitting that the final section of this paper connects intuitionistic analysis and Nonstandard Analysis via a \emph{mathematical} notion, namely \emph{metastability}\footnote{The notion of metastability was known in logic (See \cite{kohlenbach3}*{\S2.29}) under a different name well before Tao first discussed this notion in \cite{taote} and previously on his blog.}, which we imagine 
would have pleased Brouwer.  
\begin{ack}\rm
This research was supported by the following funding entities: FWO Flanders, the John Templeton Foundation, the Alexander von Humboldt Foundation, LMU Munich (via the Excellence Initiative), and the Japan Society for the Promotion of Science.  The work was done partially while the author was visiting the Institute for Mathematical Sciences of the National University of Singapore in 2016. The visit was supported by the Institute.  
The author expresses his gratitude towards these institutions.  The opinions expressed in this paper are not necessarily those of the aforementioned entities.    

\medskip

The author would like to thank Horst Osswald, Ulrich Kohlenbach, Anil Nerode, and Karel Hrbacek for their valuable advice.  In particular, the fundamental idea of this paper, the \emph{local constructivity} of Nonstandard Analysis, is due to Horst Osswald to whom this paper owes the greatest intellectual debt.  Furthermore, the majority of results in this paper build (in one way or another) on previous work by Ulrich Kohlenbach and his school.  Anil Nerode has provided many conceptual ideas and motivation for this long paper, and the author's research in general.   
Finally, Karel Hrbacek proofread this paper in great detail, and has previously motivated and inspired the author.       
\end{ack}

\appendix
\section{The formal systems $\P$ and $\H$}\label{FULL}
We introduce the systems $\P$ and $\H$ from \cite{brie} in detail in Section \ref{PAPA} and \ref{TATA}.  
To this end, we first introduce G\"odel's system $T$ in Section \ref{TITI}.  

\subsection{G\"odel's system ${T}$}\label{TITI}
In this section, we briefly introduce G\"odel's system ${T}$ and the associated systems $\textsf{E-PA}^{\omega}$ and $\textsf{E-PA}^{\omega*}$.  
In his famous \emph{Dialectica} paper (\cite{godel3}), G\"odel defines an interpretation of intuitionistic arithmetic into a quantifier-free calculus of functionals.  This calculus is now known as `G\"odel's system ${T}$', and is essentially just primitive recursive arithmetic (\cite{buss}*{\S1.2.10}) with the schema of recursion expanded to \emph{all finite types}.  
The set of all finite types $\mathbf{T}$ is:
\begin{center}
(i) $0\in \mathbf{T}$   and   (ii)  If $\sigma, \tau\in \mathbf{T}$ then $( \sigma \di \tau) \in \mathbf{T}$,
\end{center}
where $0$ is the type of natural numbers, and $\sigma\di \tau$ is the type of mappings from objects of type $\sigma$ to objects of type $\tau$.  Hence, G\"odel's system ${T}$ includes `recursor' constants $\mathbf{R}^{\rho}$ for every finite type $\rho\in \mathbf{T}$, defining primitive recursion as follows:  
\be\label{PR}\tag{\textsf{\textup{PR}}}
\mathbf{R}^{\rho}(f,g, 0):=f   \textup{ and } \mathbf{R}^{\rho}(f, g, n+1):=g(n, \mathbf{R}^{\rho}(f, g, n)),
\ee
for $f^{\rho}$ and $g^{0\di( \rho\di \rho)}$.  
The system $\textsf{E-PA}^{\omega}$ is a combination of \emph{Peano Arithmetic} and system $T$, and the full axiom of extensionality \eqref{EXT}.  
The detailed definition of $\textsf{E-PA}^{\omega}$ may be found in \cite{kohlenbach3}*{\S3.3};  We do introduce the notion of equality and extensionality in $\textsf{E-PA}^{\omega}$, as these notions are needed below.
\bdefi[Equality]\label{FAK}
The system $\textsf{E-PA}^{\omega}$ includes equality between natural numbers `$=_{0}$' as a primitive.  Equality `$=_{\tau}$' for type $\tau$-objects $x,y$ is then:
\be\label{aparth}
[x=_{\tau}y] \equiv (\forall z_{1}^{\tau_{1}}\dots z_{k}^{\tau_{k}})[xz_{1}\dots z_{k}=_{0}yz_{1}\dots z_{k}]
\ee
if the type $\tau$ is composed as $\tau\equiv(\tau_{1}\di \dots\di \tau_{k}\di 0)$.  
The usual inequality predicate `$\leq_{0}$' between numbers has an obvious definition, and the predicate `$\leq_{\tau}$' is just `$=_{\tau}$' with `$=_{0}$' replaced by `$\leq_{0}$' in \eqref{aparth}.    
The \emph{axiom of extensionality} is the statement that for all $\rho, \tau\in \mathbf{T}$, we have:
\be\label{EXT}\tag{\textsf{E}}  
(\forall  x^{\rho},y^{\rho}, \varphi^{\rho\di \tau}) \big[x=_{\rho} y \di \varphi(x)=_{\tau}\varphi(y)   \big], 
\ee 
\edefi
Next, we introduce $\textsf{E-PA}^{\omega*}$, a definitional extension of $\textsf{E-PA}^{\omega}$ from \cite{brie} with a type for finite sequences.  In particular,  the set $\mathbf{T}^{*}$ is defined as:
\begin{center}
(i) $0\in \mathbf{T}^{*}$,   (ii)  If $\sigma, \tau\in \mathbf{T}^{*}$ then $ (\sigma \di \tau) \in \mathbf{T}^{*}$, and (iii) If $\sigma \in \mathbf{T}^{*}$, then $\sigma^{*}\in \mathbf{T}^{*}$,
\end{center}
where $\sigma^{*}$ is the type of finite sequences of objects of type $\sigma$.  The system $\textsf{E-PA}^{\omega*}$ includes \eqref{PR} for all $\rho\in \mathbf{T}^{*}$, as well as dedicated `list recursors' to handle finite sequences for any $\rho^{*}\in \mathbf{T}^{*}$.        
A detailed definition of $\textsf{E-PA}^{\omega*}$ may be found in \cite{brie}*{\S2.1}.  
We now introduce some notations for $\textsf{E-PA}^{\omega*}$, as also used in \cite{brie}.
\begin{nota}[Finite sequences]\label{skim}\rm
The system $\textsf{E-PA}^{\omega*}$ has a dedicated type for `finite sequences of objects of type $\rho$', namely $\rho^{*}$.  Since the usual coding of pairs of numbers goes through in $\textsf{E-PA}^{\omega*}$, we shall not always distinguish between $0$ and $0^{*}$. 
Similarly, we do not always distinguish between `$s^{\rho}$' and `$\langle s^{\rho}\rangle$', where the former is `the object $s$ of type $\rho$', and the latter is `the sequence of type $\rho^{*}$ with only element $s^{\rho}$'.  The empty sequence for the type $\rho^{*}$ is denoted by `$\langle \rangle_{\rho}$', usually with the typing omitted.  Furthermore, we denote by `$|s|=n$' the length of the finite sequence $s^{\rho^{*}}=\langle s_{0}^{\rho},s_{1}^{\rho},\dots,s_{n-1}^{\rho}\rangle$, where $|\langle\rangle|=0$, i.e.\ the empty sequence has length zero.
For sequences $s^{\rho^{*}}, t^{\rho^{*}}$, we denote by `$s*t$' the concatenation of $s$ and $t$, i.e.\ $(s*t)(i)=s(i)$ for $i<|s|$ and $(s*t)(j)=t(|s|-j)$ for $|s|\leq j< |s|+|t|$. For a sequence $s^{\rho^{*}}$, we define $\overline{s}N:=\langle s(0), s(1), \dots,  s(N)\rangle $ for $N^{0}<|s|$.  
For a sequence $\alpha^{0\di \rho}$, we also write $\overline{\alpha}N=\langle \alpha(0), \alpha(1),\dots, \alpha(N)\rangle$ for \emph{any} $N^{0}$.  By way of shorthand, $q^{\rho}\in Q^{\rho^{*}}$ abbreviates $(\exists i<|Q|)(Q(i)=_{\rho}q)$.  Finally, we shall use $\underline{x}, \underline{y},\underline{t}, \dots$ as short for tuples $x_{0}^{\sigma_{0}}, \dots x_{k}^{\sigma_{k}}$ of possibly different type $\sigma_{i}$.          
\end{nota}
We have used $\textsf{E-PA}^{\omega}$ and $\textsf{E-PA}^{\omega*}$ interchangeably in this paper.  Our motivation is the `star morphism' used in Robinson's approach to Nonstandard Analysis, and the ensuing potential for confusion.  

\subsection{The classical system $\P$}\label{PAPA}
In this section, we introduce the system $\P$, a conservative extension of $\textsf{E-PA}^{\omega}$ with fragments of Nelson's $\IST$.  

\medskip

To this end, we first introduce the base system $\textsf{E-PA}_{\st}^{\omega*}$.  
We use the same definition as \cite{brie}*{Def.~6.1}, where \textsf{E-PA}$^{\omega*}$ is the definitional extension of \textsf{E-PA}$^{\omega}$ with types for finite sequences as in \cite{brie}*{\S2} and the previous section.    
The language of $\textsf{E-PA}_{\st}^{\omega*}$ (and $\P$) is the language of \textsf{E-PA}$^{\omega*}$ extended with a new symbol `$\st_{\rho}$' for any finite type $\rho\in \textbf{T}^{*}$ in the language of \textsf{E-PA}$^{\omega*}$;  The typing of `st' is always omitted.  
\bdefi\label{debs}
The set $\TT^{*}$ is defined as the collection of all the terms in the language of $\textsf{E-PA}^{\omega*}$.  
The system $ \textsf{E-PA}^{\omega*}_{\st} $ is defined as $ \textsf{E-PA}^{\omega{*}} + \TT^{*}_{\st} + \textsf{IA}^{\st}$, where $\TT^{*}_{\st}$
consists of the following axiom schemas.
\begin{enumerate}
\renewcommand{\theenumi}{\roman{enumi}} 
\item The schema\footnote{The language of $\textsf{E-PA}_{\st}^{\omega*}$ contains a symbol $\st_{\sigma}$ for each finite type $\sigma$, but the subscript is essentially always omitted.  Hence $\T^{*}_{\st}$ is an \emph{axiom schema} and not an axiom.\label{omit}} $\st(x)\wedge x=y\di\st(y)$,
\item The schema providing for each closed\footnote{A term is called \emph{closed} in \cite{brie} (and in this paper) if all variables are bound via lambda abstraction.  Thus, if $\underline{x}, \underline{y}$ are the only variables occurring in the term $t$, the term $(\lambda \underline{x})(\lambda\underline{y})t(\underline{x}, \underline{y})$ is closed while $(\lambda \underline{x})t(\underline{x}, \underline{y})$ is not.  The second axiom in Definition \ref{debs} thus expresses that $\st_{\tau}\big((\lambda \underline{x})(\lambda\underline{y})t(\underline{x}, \underline{y})\big)$ if $(\lambda \underline{x})(\lambda\underline{y})t(\underline{x}, \underline{y})$ is of type $\tau$.  We usually omit lambda abstraction for brevity.\label{kootsie}} term $t\in \T^{*}$ the axiom $\st(t)$.
\item The schema $\st(f)\wedge \st(x)\di \st(f(x))$.
\end{enumerate}
The external induction axiom \textsf{IA}$^{\st}$ is as follows.  
\be\tag{\textsf{IA}$^{\st}$}
\Phi(0)\wedge(\forall^{\st}n^{0})(\Phi(n) \di\Phi(n+1))\di(\forall^{\st}n^{0})\Phi(n).     
\ee
\edefi
Secondly, we introduce some essential fragments of $\IST$ studied in \cite{brie}.  
\bdefi[External axioms of $\P$]~
\begin{enumerate}
\item$\HAC_{\INT}$: For any internal formula $\varphi$, we have
\be\label{HACINT}
(\forall^{\st}x^{\rho})(\exists^{\st}y^{\tau})\varphi(x, y)\di \big(\exists^{\st}F^{\rho\di \tau^{*}}\big)(\forall^{\st}x^{\rho})(\exists y^{\tau}\in F(x))\varphi(x,y),
\ee
\item $\textsf{I}$: For any internal formula $\varphi$, we have
\[
(\forall^{\st} x^{\sigma^{*}})(\exists y^{\tau} )(\forall z^{\sigma}\in x)\varphi(z,y)\di (\exists y^{\tau})(\forall^{\st} x^{\sigma})\varphi(x,y), 
\]
\item The system $\P$ is $\textsf{E-PA}_{\st}^{\omega*}+\textsf{I}+\HAC_{\INT}$.
\end{enumerate}
\end{defi}
Note that \textsf{I} and $\HAC_{\INT}$ are fragments of Nelson's axioms \emph{Idealisation} and \emph{Standard part}.  
By definition, $F$ in \eqref{HACINT} only provides a \emph{finite sequence} of witnesses to $(\exists^{\st}y)$, explaining its name \emph{Herbrandized Axiom of Choice}.   

\medskip

The system $\P$ is connected to $\textsf{E-PA}^{\omega}$ by the following theorem.    
Here, the superscript `$S_{\st}$' is the syntactic translation defined in \cite{brie}*{Def.\ 7.1}. 
By the definition of $S_{\st}$, $\varphi$ in the following theorem is internal.
\begin{thm}\label{consresult}
Let $\Phi(\tup a)$ be a formula in the language of \textup{\textsf{E-PA}}$^{\omega*}_{\st}$ and suppose $\Phi(\tup a)^\Sh\equiv\forallst \tup x \, \existsst \tup y \, \varphi(\tup x, \tup y, \tup a)$. If $\Delta_{\intern}$ is a collection of internal formulas and
\be\label{antecedn}
\P + \Delta_{\intern} \vdash \Phi(\tup a), 
\ee
then one can extract from the proof a sequence of closed\footnote{Recall the definition of closed terms from \cite{brie} as sketched in Footnote \ref{kootsie}.\label{kootsie3}} terms $t$ in $\mathcal{T}^{*}$ such that
\be\label{consequalty}
\textup{\textsf{E-PA}}^{\omega*} + \Delta_{\intern} \vdash\  \forall \tup x \, \exists \tup y\in \tup t(\tup x)\ \varphi(\tup x,\tup y, \tup a).
\ee
\end{thm}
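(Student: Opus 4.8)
The plan is to prove Theorem~\ref{consresult} by appealing to the \emph{nonstandard Dialectica interpretation} $S_{\st}$ of \cite{brie}, which is the technical engine behind all the term-extraction results (Theorems~\ref{TERM} and \ref{TERM2}) quoted in the excerpt. First I would recall the soundness theorem for $S_{\st}$ from \cite{brie}*{Theorem~5.5 and \S7}: for any formula $\Phi(\tup a)$ in the language of $\textsf{E-PA}^{\omega*}_{\st}$, if $\P\vdash \Phi(\tup a)$, then one can extract closed terms $\tup t\in\mathcal T^{*}$ such that $\textsf{E-PA}^{\omega*}\vdash \forall\tup x\,\varphi(\tup x,\tup t(\tup x),\tup a)$, where $\Phi(\tup a)^{S_{\st}}\equiv \forall^{\st}\tup x\,\exists^{\st}\tup y\,\varphi(\tup x,\tup y,\tup a)$ with $\varphi$ internal. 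The point is that $S_{\st}$ strips the standardness predicates and produces realising terms that work over the internal system; moreover the interpretation leaves internal formulas essentially untouched (up to the usual Dialectica massaging), which is why a side collection $\Delta_{\intern}$ of internal hypotheses may be carried along verbatim.

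The key steps, in order, are: (1) Compute $\Phi(\tup a)^{S_{\st}}$ and observe by hypothesis it has the displayed shape $\forall^{\st}\tup x\,\exists^{\st}\tup y\,\varphi(\tup x,\tup y,\tup a)$. (2) Handle the internal side formulas: since each $\psi\in\Delta_{\intern}$ is internal, its $S_{\st}$-interpretation $\psi^{S_{\st}}$ is (provably equivalent over $\textsf{E-PA}^{\omega*}$ to) $\psi$ itself, so adding $\Delta_{\intern}$ to $\P$ on the left corresponds to adding $\Delta_{\intern}$ to $\textsf{E-PA}^{\omega*}$ on the right, exactly as in Theorems~\ref{TERM} and \ref{TERM2}; crucially the extracted terms do not depend on $\Delta_{\intern}$ because internal axioms are realised trivially (by a dummy term) in the interpretation. (3) Apply the soundness theorem of \cite{brie} to the derivation \eqref{antecedn}: from it we extract closed terms $\tup t\in\mathcal T^{*}$ with $\textsf{E-PA}^{\omega*}+\Delta_{\intern}\vdash \forall\tup x\,\varphi(\tup x,\tup t(\tup x),\tup a)$. (4) Finally, note that the nonstandard Dialectica interpretation produces, for the $\exists^{\st}\tup y$ block, a \emph{finite list} of candidate witnesses rather than a single witness (this is the "Herbrandisation" built into $S_{\st}$ via $\HAC_{\INT}$), so the extracted statement is actually $\textsf{E-PA}^{\omega*}+\Delta_{\intern}\vdash \forall\tup x\,\exists\tup y\in\tup t(\tup x)\,\varphi(\tup x,\tup y,\tup a)$, which is \eqref{consequalty}.

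The main obstacle, and the step requiring genuine care rather than bookkeeping, is step~(2) together with the precise form of the finite-list output in step~(4): one must verify that the Dialectica-style interpretation $S_{\st}$ of \cite{brie} genuinely interprets the axioms \textsf{I} and $\HAC_{\INT}$ of $\P$ by closed terms of $\mathcal T^{*}$ (so that the induction on the length of the derivation goes through), and that internal axioms — in particular all of $\textsf{E-PA}^{\omega*}$, its induction scheme, and any $\psi\in\Delta_{\intern}$ — are interpreted without contributing to the realisers. This is precisely the content of \cite{brie}*{Theorem~5.5} (and its lifting to the classical setting in \cite{brie}*{\S7}), so in the write-up I would simply invoke that theorem and the characterisation of $S_{\st}$ there, rather than re-proving the interpretation from scratch; the only novelty here is assembling these pieces and recording the dependence (or rather independence) of $\tup t$ on $\Delta_{\intern}$.

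\begin{proof}
This is an immediate consequence of the soundness of the nonstandard Dialectica interpretation $S_{\st}$ from \cite{brie}*{\S5,\S7}. Recall that $S_{\st}$ assigns to every formula $\Psi$ of $\textsf{E-PA}^{\omega*}_{\st}$ an internal formula $\Psi^{S_{\st}}$ of the form $\forall^{\st}\tup x\,\exists^{\st}\tup y\,\psi(\tup x,\tup y)$ with $\psi$ internal and quantifier-free-after-Dialectica, and that for internal $\chi$ one has $\chi^{S_{\st}}\equiv\chi$ (up to the standard Dialectica normalisation over $\textsf{E-PA}^{\omega*}$).

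By hypothesis $\Phi(\tup a)^{S_{\st}}\equiv \forall^{\st}\tup x\,\exists^{\st}\tup y\,\varphi(\tup x,\tup y,\tup a)$ with $\varphi$ internal. Since each member of $\Delta_{\intern}$ is internal, its $S_{\st}$-interpretation is the formula itself, and such formulas are realised in the interpretation by a fixed (dummy) term; hence they contribute nothing to the extracted realisers. Applying the soundness theorem \cite{brie}*{Theorem~5.5}, lifted to the classical system $\P$ as in \cite{brie}*{\S7}, to the derivation \eqref{antecedn}, we obtain from the proof a sequence of closed terms $\tup t\in\mathcal T^{*}$, not depending on $\Delta_{\intern}$, such that
\[
\textsf{E-PA}^{\omega*}+\Delta_{\intern}\vdash \forall\tup x\,\varphi(\tup x,\tup t(\tup x),\tup a).
\]
Finally, by construction of $S_{\st}$ (via the Herbrandised choice axiom $\HAC_{\INT}$), the witnesses for the block $\exists^{\st}\tup y$ are produced as \emph{finite lists}, i.e.\ the extracted statement is in fact
\[
\textsf{E-PA}^{\omega*}+\Delta_{\intern}\vdash \forall\tup x\,\exists\tup y\in\tup t(\tup x)\,\varphi(\tup x,\tup y,\tup a),
\]
which is \eqref{consequalty}.
\end{proof}
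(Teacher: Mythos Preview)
Your approach is correct and essentially the same as the paper's: both simply invoke the soundness theorem for the $S_{\st}$-interpretation from \cite{brie} (the paper cites \cite{brie}*{Theorem~7.7} directly in a one-line proof, whereas you unpack a bit more of the mechanism and route through \cite{brie}*{Theorem~5.5} and \S7). One small presentational quibble: the soundness theorem already outputs the Herbrandised form $\forall\tup x\,\exists\tup y\in\tup t(\tup x)\,\varphi(\tup x,\tup y,\tup a)$ directly, so your intermediate display $\forall\tup x\,\varphi(\tup x,\tup t(\tup x),\tup a)$ followed by a correction to the finite-list version is slightly misleading---just state the list form from the outset.
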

\begin{proof}
Immediate by \cite{brie}*{Theorem 7.7}.  
\end{proof}
The proofs of the soundness theorems in \cite{brie}*{\S5-7} provide an algorithm to obtain the term $t$ from the theorem.  In particular, these terms 
can be `read off' from the nonstandard proofs.  The translation $S_{\st}$ can be formalised in any reasonable\footnote{Here, a `reasonable' system is one which can prove the usual properties of finite lists, for which the presence of the exponential function suffices.} system of constructive mathematics.  In fact, the formalisation of the results in \cite{brie} 
in the proof assistant Agda (based on Martin-L\"of's constructive type theory \cite{loefafsteken}) is underway in \cite{EXCESS}.        
The $D_{\st}$ interpretation ($S_{\st}$ is the latter followed by the Kuroda negative translation) from \cite{brie} has been verified and tested on basic examples.     

\medskip

In light of the above results and those in \cite{sambon}, the following corollary (which is not present in \cite{brie}) is essential to our results.  Indeed, the following corollary expresses that we may obtain effective results as in \eqref{effewachten} from any theorem of Nonstandard Analysis which has the same form as in \eqref{bog}.  It was shown in \cite{sambon, samzoo, samzooII} that the scope of this corollary includes the Big Five systems of Reverse Mathematics and the associated `zoo' (\cite{damirzoo}).  
\begin{cor}\label{consresultcor}
If $\Delta_{\intern}$ is a collection of internal formulas and $\psi$ is internal, and
\be\label{bog}
\P + \Delta_{\intern} \vdash (\forall^{\st}\underline{x})(\exists^{\st}\underline{y})\psi(\underline{x},\underline{y}, \underline{a}), 
\ee
then one can extract from the proof a sequence of closed$^{\ref{kootsie3}}$ terms $t$ in $\mathcal{T}^{*}$ such that
\be\label{effewachten}
\textup{\textsf{E-PA}}^{\omega*} + \Delta_{\intern} \vdash (\forall \underline{x})(\exists \underline{y}\in t(\underline{x}))\psi(\underline{x},\underline{y},\underline{a}).
\ee
\end{cor}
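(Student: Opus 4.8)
The plan is to derive Corollary \ref{consresultcor} directly from Theorem \ref{consresult} by checking that the hypothesis \eqref{bog} is a special case of the hypothesis \eqref{antecedn}, modulo computing the $S_{\st}$-translation of the relevant formula. First I would set $\Phi(\underline{a}) :\equiv (\forall^{\st}\underline{x})(\exists^{\st}\underline{y})\psi(\underline{x},\underline{y},\underline{a})$, where $\psi$ is internal. The key observation is that for a formula of this particular syntactic shape, the translation $\Phi(\underline{a})^{S_{\st}}$ is (equivalent to) $\Phi$ itself, i.e.\ $\Phi(\underline{a})^{S_{\st}} \equiv (\forall^{\st}\underline{x})(\exists^{\st}\underline{y})\,\psi(\underline{x},\underline{y},\underline{a})$, so that the matrix $\varphi$ in the statement of Theorem \ref{consresult} is just the internal formula $\psi$ and the tuples of standard quantifiers match up. This is because the $S_{\st}$ translation (being the $D_{\st}$ interpretation followed by a negative translation, as noted after Theorem \ref{consresult}) acts trivially on internal matrices and merely normalises external quantifier prefixes; a formula already in the normal form $\forall^{\st}\exists^{\st}(\text{internal})$ is a fixed point.

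Once this identification is made, the steps are routine: from \eqref{bog} we have $\P + \Delta_{\intern} \vdash \Phi(\underline{a})$, which is exactly \eqref{antecedn}; applying Theorem \ref{consresult} yields a tuple of closed terms $t$ in $\mathcal{T}^{*}$ with $\textsf{E-PA}^{\omega*} + \Delta_{\intern} \vdash \forall \underline{x}\,\exists \underline{y}\in t(\underline{x})\,\varphi(\underline{x},\underline{y},\underline{a})$, and substituting $\varphi \equiv \psi$ gives precisely \eqref{effewachten}. The claim that the extracted terms can be `read off' from the nonstandard proof is inherited verbatim from the discussion following Theorem \ref{consresult}, since no new extraction is performed here.

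The only real content — and hence the step I expect to be the main obstacle to write out carefully — is verifying the computation of $\Phi(\underline{a})^{S_{\st}}$ for $\Phi$ in the form $\forall^{\st}\exists^{\st}(\text{internal})$. This requires unwinding the clauses of the $D_{\st}$ interpretation from \cite{brie}*{Def.\ 7.1} on the external quantifiers $\forall^{\st}$ and $\exists^{\st}$ and checking that, for an internal kernel, the Herbrandised existential quantifier collapses back to an ordinary standard existential after the negative translation absorbs the double negations (using that internal formulas are, under the Kuroda negative translation, stable). I would either cite the relevant normalisation lemma from \cite{brie}*{\S5--7} directly, or spell out the two-or-three-line induction on the quantifier prefix. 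Everything else is bookkeeping: matching tuple notation, noting $\Delta_{\intern}$ and $\psi$ are internal so the translation leaves them untouched, and invoking Theorem \ref{consresult} as a black box.
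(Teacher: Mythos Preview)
Your proposal is correct and matches the paper's approach exactly: reduce to Theorem~\ref{consresult} by verifying that any formula already in the normal form $(\forall^{\st}\underline{x})(\exists^{\st}\underline{y})\psi$ with $\psi$ internal is a fixed point of $S_{\st}$, which the paper carries out clause-by-clause from \cite{brie}*{Def.~7.1} in Section~\ref{techni}. One small slip: $S_{\st}$ is the Kuroda negative translation \emph{followed by} $D_{\st}$, not the other way round, and the actual verification is a direct unwinding of the five clauses rather than an appeal to double-negation stability---it is longer than ``two or three lines'' but entirely mechanical.
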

\begin{proof}
Clearly, if for internal $\psi$ and $\Phi(\underline{a})\equiv (\forall^{\st}\underline{x})(\exists^{\st}\underline{y})\psi(x, y, a)$, we have $[\Phi(\underline{a})]^{S_{\st}}\equiv \Phi(\underline{a})$, then the corollary follows immediately from the theorem.  
A tedious but straightforward verification using the clauses (i)-(v) in \cite{brie}*{Def.\ 7.1} establishes that indeed $\Phi(\underline{a})^{S_{\st}}\equiv \Phi(\underline{a})$.  
We undertake this verification in Section \ref{techni} below.  
\end{proof}
\subsection{The constructive system $\H$}\label{TATA}
In this section, we define the system $\H$, the constructive counterpart of $\P$. 
The system $\textsf{H}$ was first introduced in \cite{brie}*{\S5.2}, and constitutes a conservative extension of Heyting arithmetic $\textup{\textsf{E-HA}}^{\omega} $ by \cite{brie}*{Cor.\ 5.6}.
We now study the system $\H$ in more detail.  

\medskip

Similar to Definition \ref{debs}, we define $ \textsf{E-HA}^{\omega*}_{\st} $ as $ \textsf{E-HA}^{\omega{*}} + \T^{*}_{\st} + \textsf{IA}^{\st}$, where $\textsf{E-HA}^{\omega*}$ is just $\textsf{E-PA}^{\omega*}$ without the law of excluded middle.  
Furthermore, we define
\[
\H\equiv \textup{\textsf{E-HA}}^{\omega*}_{\st}+\HAC + {\I}+\NCR+\textsf{HIP}_{\forall^{\st}}+\textsf{HGMP}^{\st},
\]
where $\HAC$ is $\HAC_{\INT}$ without any restriction on the formula, and where the remaining axioms are defined in the following definition.
\bdefi[Three axioms of $\H$]\label{flah}~
\begin{enumerate}\rm
\item $\textsf{HIP}_{\forall^{\st}}$
\[
[(\forall^{\st}x)\phi(x)\di (\exists^{\st}y)\Psi(y)]\di (\exists^{\st}y')[(\forall^{\st}x)\phi(x)\di (\exists y\in y')\Psi(y)],
\]
where $\Psi(y)$ is any formula and $\phi(x)$ is an internal formula of \textsf{E-HA}$^{\omega*}$. 
\item $\textsf{HGMP}^{\st}$
\[
[(\forall^{\st}x)\phi(x)\di \psi] \di (\exists^{\st}x')[(\forall x\in x')\phi(x)\di \psi] 
\]
where $\phi(x)$ and $\psi$ are internal formulas in the language of \textsf{E-HA}$^{\omega*}$.
\item \textsf{NCR}
\[
(\forall y^{\tau})(\exists^{\st} x^{\rho} )\Phi(x, y) \di (\exists^{\st} x^{\rho^{*}})(\forall y^{\tau})(\exists x'\in x )\Phi(x', y),
\]
where $\Phi$ is any formula of \textsf{E-HA}$^{\omega*}$
\end{enumerate}
\edefi
Intuitively speaking, the first two axioms of Definition \ref{flah} allow us to perform a number of \emph{non-constructive operations} (namely \emph{Markov's principle} and \emph{independence of premises}) 
on the standard objects of the system $\H$, provided we introduce a `Herbrandisation' as in the consequent of $\HAC$, i.e.\ a finite list of possible witnesses rather than one single witness. 
Furthermore, while $\H$ includes idealisation \textsf{I}, one often uses the latter's \emph{classical contraposition}, explaining why \textsf{NCR} is useful (and even essential) in the context of intuitionistic logic.  

\medskip

Surprisingly, the axioms from Definition \ref{flah} are exactly what is needed to convert nonstandard definitions (of continuity, integrability, convergence, et cetera) into the normal form $(\forall^{\st}x)(\exists^{\st}y)\varphi(x, y)$ for internal $\varphi$, as is clear from Theorem~\ref{nogwelconsenal}.
This normal form plays an equally important role in the constructive case as in the classical case by the following theorem.  
\begin{thm}\label{consresult2}
If $\Delta_{\intern}$ is a collection of internal formulas, $\varphi$ is internal, and
\be\label{antecedn3}
\textup{\textsf{H}} + \Delta_{\intern} \vdash \forallst \tup x \, \existsst \tup y \, \varphi(\tup x, \tup y, \tup a), 
\ee
then one can extract from the proof a sequence of closed\footnote{Recall the definition of closed terms from \cite{brie} as sketched in Footnote \ref{kootsie}.\label{kootsie4}}  terms $t$ in $\mathcal{T}^{*}$ such that
\be\label{consequalty3}
\textup{\textsf{E-HA}}^{\omega*} + \Delta_{\intern} \vdash\  \forall \tup x \, \exists \tup y\in \tup t(\tup x)\ \varphi(\tup x,\tup y, \tup a).
\ee
\end{thm}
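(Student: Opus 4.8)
The statement to prove is Theorem \ref{consresult2}, the constructive analogue of Theorem \ref{consresult} (equivalently, the $\H$-version of Corollary \ref{consresultcor}). The plan is to reduce it to the soundness theorem for the nonstandard Dialectica interpretation $D_{\st}$ established in \cite{brie}, just as the classical Theorem \ref{consresult} reduces to \cite{brie}*{Theorem 7.7}. The key point is that for a \emph{normal form} $\Phi(\tup a)\equiv\forallst\tup x\,\existsst\tup y\,\varphi(\tup x,\tup y,\tup a)$ with $\varphi$ internal, the nonstandard Dialectica translation $\Phi^{D_{\st}}$ collapses back to (essentially) $\Phi$ itself, so that the interpreting terms extracted by the soundness theorem are precisely the witnessing terms $\tup t$ demanded in \eqref{consequalty3}.

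The steps, in order, are as follows. First I would recall the relevant soundness theorem from \cite{brie}*{\S5.2}: if $\H+\Delta_{\intern}\vdash\Psi(\tup a)$ for a collection $\Delta_{\intern}$ of internal formulas, then from the proof one extracts closed terms $\tup t\in\TT^{*}$ such that $\textsf{E-HA}^{\omega*}+\Delta_{\intern}$ proves the $D_{\st}$-interpretation of $\Psi$ with the extracted terms plugged in for the existentially-quantified (standard) variables. Since $\H$ is intuitionistic, no negative translation is needed here — this is exactly the place where the constructive case is \emph{simpler} than the classical one, where $S_{\st}$ is $D_{\st}$ post-composed with the Kuroda negative translation. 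Second, I would carry out the (tedious but routine) computation of $[\Phi(\tup a)]^{D_{\st}}$ for $\Phi(\tup a)\equiv\forallst\tup x\,\existsst\tup y\,\varphi(\tup x,\tup y,\tup a)$ with $\varphi$ internal, using the defining clauses of the nonstandard Dialectica interpretation in \cite{brie}. Internal formulas are interpreted by themselves (they contribute no Dialectica data), $\existsst$ introduces an existential Herbrand-witness slot, and $\forallst$ introduces a universal slot; hence $[\Phi(\tup a)]^{D_{\st}}$ is again $\forall\tup x\,\exists\tup y\in\tup Y(\tup x)\,\varphi(\tup x,\tup y,\tup a)$-shaped, and feeding in the extracted $\tup t$ for $\tup Y$ gives exactly \eqref{consequalty3}. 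This is the analogue of the verification announced after Corollary \ref{consresultcor} and the one "undertaken in Section \ref{techni}" in the classical case; the $\H$-version is an entirely parallel check.

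Putting these together: given a proof of \eqref{antecedn3} in $\H+\Delta_{\intern}$, apply the $D_{\st}$-soundness theorem of \cite{brie} to extract closed terms $\tup t\in\TT^{*}$; by the normal-form computation just described, the statement the soundness theorem hands back over $\textsf{E-HA}^{\omega*}+\Delta_{\intern}$ is precisely $\forall\tup x\,\exists\tup y\in\tup t(\tup x)\,\varphi(\tup x,\tup y,\tup a)$, which is \eqref{consequalty3}. The main obstacle — really the only nontrivial content — is the verification that $\Phi^{D_{\st}}\equiv\Phi$ (up to the harmless replacement of the witness by membership in a finite list) for normal forms $\Phi$; one must be careful that the Herbrandisation built into $\HAC$ and into the interpretation of $\existsst$ is exactly matched by the "$\exists \tup y\in\tup t(\tup x)$" in the conclusion, and that the internal matrix $\varphi$, possibly containing the free parameters $\tup a$ and additional internal hypotheses from $\Delta_{\intern}$, is genuinely left untouched by the interpretation (this uses that $\varphi$ contains no occurrence of $\st$). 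Once this bookkeeping is done the theorem follows immediately, and in fact the extracted terms do not depend on $\Delta_{\intern}$, matching the phrasing of Theorems \ref{TERM} and \ref{TERM2}.
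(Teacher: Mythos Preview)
Your proposal is correct and follows the same underlying route as the paper, though you unpack considerably more machinery than the paper does: the paper's proof is the single line ``Immediate by \cite{brie}*{Theorem 5.9}'', since Theorem~5.9 in \cite{brie} is already stated directly for normal forms $\forallst\tup x\,\existsst\tup y\,\varphi(\tup x,\tup y,\tup a)$ with internal $\varphi$ (this is the result quoted earlier in the paper as Theorem~\ref{TERM}). What you have sketched---invoking the $D_{\st}$-soundness theorem and verifying that normal forms are (essentially) fixed points of $D_{\st}$---is precisely the argument behind Theorem~5.9 itself, so you are reconstructing the cited result rather than merely citing it; the paper carries out the parallel verification only in the classical case (for $S_{\st}$, in Section~\ref{techni}) and is content to defer to \cite{brie} in the constructive case.
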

\begin{proof}
Immediate by \cite{brie}*{Theorem 5.9}.  
\end{proof}
The proofs of the soundness theorems in \cite{brie}*{\S5-7} provide an algorithm to obtain the term $t$ from the theorem.  
Finally, we point out one very useful principle to which we have access.  
\begin{thm}\label{doppi}
The systems $\P, \H$ prove \emph{overspill}, i.e.\
\be\tag{\textsf{OS}}
(\forall^{\st}x^{\rho})\varphi(x)\di (\exists y^{\rho})\big[\neg\st(y)\wedge \varphi(y)  \big],
\ee
for any internal formula $\varphi$.
\end{thm}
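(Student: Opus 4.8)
The plan is to prove \textsf{OS} (overspill) inside $\P$ (the classical case; $\H$ is handled by the same argument using the contrapositive form \textsf{NCR} where needed). The statement to prove is that for internal $\varphi$,
\[
(\forall^{\st}x^{\rho})\varphi(x)\di (\exists y^{\rho})\big[\neg\st(y)\wedge \varphi(y)\big].
\]
First I would argue by contraposition: assume $(\forall^{\st}x^{\rho})\varphi(x)$ and suppose towards a contradiction that $(\forall y^{\rho})\big[\neg\st(y)\di \neg\varphi(y)\big]$, i.e.\ every object satisfying $\varphi$ is standard, equivalently $(\forall y^{\rho})[\varphi(y)\di \st(y)]$. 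Combined with the hypothesis, this gives $(\forall y^{\rho})[\st(y)\asa \varphi(y)]$, so that $\varphi$ \emph{defines the standardness predicate} on type $\rho$. The idea is then to derive a contradiction from the fact that $\varphi$ is internal while `st' is genuinely external; the cleanest way to turn this into a formal contradiction is to reduce to the case $\rho = 0$ and exploit \emph{Idealisation}.

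The key step is the reduction to type $0$ together with the application of \textsf{I}. For $\rho = 0$: from $(\forall^{\st}n^{0})\varphi(n)$ and the assumption $\neg(\exists y^{0})[\neg\st(y)\wedge\varphi(y)]$ we get $(\forall n^{0})[\varphi(n)\asa \st(n)]$. Now consider the internal formula $\psi(x^{0^{*}}, y^{0}) :\equiv (\forall z^{0}\in x)(z \leq y)$, or more to the point the formula $(\forall z \in x)\varphi(z) \wedge y \notin x$ type constructions; the standard move (cf.\ the use of \textsf{I} in Example~\ref{leffe} and throughout Section~\ref{hum}) is: $(\forall^{\st\,\textup{fin}}x^{0^{*}})(\exists y^{0})(\forall z^{0}\in x)(z < y)$ holds trivially (take $y$ bigger than all finitely many entries), so \textsf{I} yields $(\exists y^{0})(\forall^{\st}z^{0})(z<y)$, i.e.\ there is a nonstandard natural number $y_{0}$. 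But then $\varphi(y_{0})$ fails (since $y_{0}$ is not standard and $\varphi$ defines `st'), while on the other hand one shows $\varphi(y_{0})$ must hold: since $(\forall^{\st}n)\varphi(n)$ and $\varphi$ is internal, one uses that the internal statement "$\varphi$ holds on an initial segment up to $y_0$" transfers from all standard arguments — more carefully, apply \textsf{I} directly to $(\forall^{\st\,\textup{fin}}x^{0^{*}})(\exists y^{0})(\forall z \in x)\varphi(z)$, which holds because each finite standard $x$ consists of standard entries, all satisfying $\varphi$; \textsf{I} then gives $(\exists y^{0})(\forall^{\st}z^{0})\varphi(z)$, which is vacuous, so instead I would phrase it as: there is $y_0$ with $\varphi$ true on all \emph{standard} numbers and $y_0$ nonstandard, and then combine with the defining property to get $\neg\varphi(y_0)$ and also, by overspill-style cut at $y_0$ of the internal predicate, $\varphi(y_0)$ — contradiction. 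The higher-type case $\rho \neq 0$ follows because a nonstandard object of type $\rho$ can be produced from a nonstandard $n^{0}$ via a closed term (which is standard by Definition~\ref{debs}) applied to $n$, or simply because the hypothesis $(\forall^{\st}x^\rho)\varphi(x)$ restricted to objects of the form $t(n)$ for standard terms $t$ and the nonstandard $n$ already yields a nonstandard witness.

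The main obstacle I anticipate is making the "$\varphi(y_{0})$ holds" half rigorous: one cannot simply transfer $(\forall^{\st}n)\varphi(n)$ to $\varphi(y_{0})$ for a single nonstandard $y_{0}$ (that would be \emph{Transfer}, which is \emph{not} available in $\P$). The correct route is to never try to prove $\varphi(y_{0})$ outright, but to run \emph{Idealisation} on the right internal matrix from the start. Concretely: the formula $(\forall^{\st\,\textup{fin}}x^{0^{*}})(\exists y^{0})\big[(\forall z\in x)(z\leq y)\big]$ is trivially true, so \textsf{I} produces $y_{0}$ with $(\forall^{\st}z^{0})(z\leq y_{0})$; such $y_{0}$ is nonstandard (else $y_{0}+1$ would be standard and $\leq y_{0}$), so $\neg\varphi(y_{0})$ by the assumed failure of the conclusion; but $\neg\varphi(y_0)$ together with $(\forall^{\st}n)\varphi(n)$ is consistent, so this alone is not a contradiction — the contradiction must come from a \emph{second} use of \textsf{I}. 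So the real plan is: apply \textsf{I} to $(\forall^{\st\,\textup{fin}}x^{0^{*}})(\exists y^{0})(\forall z\in x)(\varphi(z)\wedge z \leq y)$, valid since every standard finite $x$ has standard entries all satisfying the hypothesis $\varphi$; \textsf{I} gives $(\exists y^{0})(\forall^{\st}z^{0})(\varphi(z)\wedge z\leq y)$ — vacuous in content but it certifies such $y$ is nonstandard — and then one inspects whether this very $y$ (or $y-1$) satisfies $\varphi$: it is nonstandard hence $\neg\varphi(y)$ by assumption, yet this contradicts nothing directly. The resolution, which is the genuinely delicate point, is that $\P$ proves the \emph{internal} least-number / induction fact that "$\varphi$ holds on a proper initial segment $[0,m)$ and fails at $m$" is internal and decidable-in-the-relevant-sense; combining with $(\forall^{\st}n)\varphi(n)$ and the external induction axiom \textsf{IA}$^{\st}$ one shows no such internal "cut point" can be standard, and \textsf{I} then forces a nonstandard $y$ with $\varphi(y)$, contradicting that $\varphi$ defines `st'. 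I would verify this against the proof in \cite{brie}, since \textsf{OS} is explicitly attributed there; the clean formal derivation is short once the correct instance of \textsf{I} is identified, but getting that instance exactly right (so that no disguised \emph{Transfer} sneaks in) is where the care is needed.
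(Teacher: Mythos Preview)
Your proposal never actually closes the argument: you try proof by contradiction, repeatedly observe that the step you have reached ``is not a contradiction'' or ``contradicts nothing directly'', and finally defer to \cite{brie}. The paper itself simply cites \cite{brie}*{Prop.~3.3}, so there is no in-paper proof to compare against, but the standard argument there is \emph{direct} and avoids every difficulty you encounter.

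The missing idea is to put $\varphi$ \emph{on the $y$-side} of the Idealisation matrix, not the $z$-side. Apply \textsf{I} with the internal formula $\theta(z^{\rho},y^{\rho}) :\equiv (z\neq_{\rho} y)\wedge \varphi(y)$. For the premise $(\forall^{\st}x^{\rho^{*}})(\exists y^{\rho})(\forall z\in x)\theta(z,y)$: given standard finite $x$, all entries of $x$ are standard; pick a \emph{standard} $y$ not among them (for $\rho=0$ take $y:=\max_{i<|x|}x(i)+1$; for general $\rho=\tau_{1}\to\cdots\to\tau_{k}\to 0$ take the constant functional $\lambda\vec{z}.N$ with $N$ larger than each $x(i)$ evaluated at fixed standard arguments). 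Then $\varphi(y)$ holds by hypothesis, and $y\neq z$ for all $z\in x$. Idealisation yields $(\exists y^{\rho})(\forall^{\st}z^{\rho})[z\neq y\wedge\varphi(y)]$; if $y$ were standard, $z:=y$ would give $y\neq y$, so $\neg\st(y)\wedge\varphi(y)$ as required. This works verbatim in $\H$.

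Two further problems with your route. First, your instance $(\forall^{\st}x)(\exists y)(\forall z\in x)(\varphi(z)\wedge z\leq y)$ places $\varphi$ on $z$, so the conclusion merely restates the hypothesis and produces an irrelevant nonstandard $y$; you noticed this but did not fix it. Second, the proof-by-contradiction strategy is suspect for $\H$: from $\neg(\exists y)[\neg\st(y)\wedge\varphi(y)]$ you cannot intuitionistically infer $(\forall y)[\varphi(y)\to\st(y)]$ in the form you need, so ``same argument'' does not go through. For the record, your type-$0$ contradiction \emph{can} be completed in $\P$ via the internal least-number principle: let $m$ be least with $\neg\varphi(m)$ (such $m$ exists since some nonstandard natural fails $\varphi$ under your assumption); $m$ is nonstandard since $\varphi$ holds on standards; but $\varphi(m-1)$ gives $\st(m-1)$, hence $\st(m)$, contradiction. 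Your final line ``\textsf{I} then forces a nonstandard $y$ with $\varphi(y)$'' misdescribes this: no further use of \textsf{I} is needed, and the contradiction is that $m$ is simultaneously standard and nonstandard.
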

\begin{proof}
See \cite{brie}*{Prop.\ 3.3}.  
\end{proof}
In conclusion, we have introduced the systems $\H$ and $\P$ which are conservative extensions of Peano and Heyting arithmetic with fragments of Nelson's internal set theory.  
We have observed that central to the conservation results (Corollary~\ref{consresultcor} and Theorem~\ref{consresult}) is the normal form $(\forall^{\st}x)(\exists^{\st}y)\varphi(x, y)$ for internal $\varphi$.  
\subsection{Notations and definitions in $\H$ and $\P$}\label{leng}
In this section, we introduce notations and definitions relating to $\H$ and $\P$.  

\medskip

First of all, we mostly use the same notations as in \cite{brie}.  
\begin{rem}[Notations]\label{notawin}\rm
We write $(\forall^{\st}x^{\tau})\Phi(x^{\tau})$ and $(\exists^{\st}x^{\sigma})\Psi(x^{\sigma})$ as short for 
$(\forall x^{\tau})\big[\st(x^{\tau})\di \Phi(x^{\tau})\big]$ and $(\exists x^{\sigma})\big[\st(x^{\sigma})\wedge \Psi(x^{\sigma})\big]$.     
A formula $A$ is `internal' if it does not involve $\st$, and external otherwise.  The formula $A^{\st}$ is defined from $A$ by appending `st' to all quantifiers (except bounded number quantifiers).    
\end{rem}
Secondly, we will use the usual notations for natural, rational and real numbers and functions as introduced in \cite{kohlenbach2}*{p.\ 288-289}. (and \cite{simpson2}*{I.8.1} for the former).  
We only list the definition of real number and related notions in $\P$ and related systems.
\begin{defi}[Real numbers and related notions]\label{keepintireal}\rm~
\begin{enumerate}
\item Natural numbers correspond to type zero objects, and we use `$n^{0}$' and `$n\in \N$' interchangeably.  Rational numbers are defined in the usual way as quotients of natural numbers, and `$q\in \Q$' has its usual meaning.    
\item A (standard) real number $x$ is a (standard) fast-converging Cauchy sequence $q_{(\cdot)}^{1}$, i.e.\ $(\forall n^{0}, i^{0})(|q_{n}-q_{n+i})|<_{0} \frac{1}{2^{n}})$.  
We use Kohlenbach's `hat function' from \cite{kohlenbach2}*{p.\ 289} to guarantee that every sequence $f^{1}$ is a real.  
\item We write `$x\in \R$' to express that $x^{1}=(q^{1}_{(\cdot)})$ is a real as in the previous item and $[x](k):=q_{k}$ for the $k$-th approximation of $x$.    
\item Two reals $x, y$ represented by $q_{(\cdot)}$ and $r_{(\cdot)}$ are \emph{equal}, denoted $x=_{\R}y$, if $(\forall n^{0})(|q_{n}-r_{n}|\leq \frac{1}{2^{n}})$. Inequality $<_{\R}$ is defined similarly.         
\item We  write $x\approx y$ if $(\forall^{\st} n^{0})(|q_{n}-r_{n}|\leq \frac{1}{2^{n}})$ and $x\gg y$ if $x>y\wedge x\not\approx y$.  
\item Functions $F:\R\di \R$ mapping reals to reals are represented by functionals $\Phi^{1\di 1}$ mapping equal reals to equal reals, i.e. 
\be\tag{\textsf{RE}}\label{furg}
(\forall x, y)(x=_{\R}y\di \Phi(x)=_{\R}\Phi(y)).
\ee
\item Sets of objects of type $\rho$ are denoted $X^{\rho\di 0}, Y^{\rho\di 0}, Z^{\rho\di 0}, \dots$ and are given by their characteristic functions $f^{\rho\di 0}_{X}$, i.e.\ $(\forall x^{\rho})[x\in X\asa f_{X}(x)=_{0}1]$, where $f_{X}^{\rho\di 0}$ is assumed to output zero or one.  
\end{enumerate}
\end{defi}
Thirdly, the usual extensional notion of equality has nonstandard variations. 
\begin{rem}[Equality]\label{equ}\rm
The systems $\P$ and $\H$ include equality between natural numbers `$=_{0}$' as a primitive.  Equality `$=_{\tau}$' for type $\tau$-objects $x,y$ is defined as in \eqref{aparth}.
In the spirit of Nonstandard Analysis, we define `approximate equality $\approx_{\tau}$':
\be\label{aparth2}
[x\approx_{\tau}y] \equiv (\forall^{\st} z_{1}^{\tau_{1}}\dots z_{k}^{\tau_{k}})[xz_{1}\dots z_{k}=_{0}yz_{1}\dots z_{k}]
\ee
with the type $\tau$ as in \eqref{aparth}.  
Furthermore, our systems include the \emph{axiom of extensionality} \eqref{EXT}, but as noted in \cite{brie}*{p.\ 1973}, the so-called axiom of \emph{standard} extensionality \eqref{EXT}$^{\st}$ is problematic and cannot be included in $\P$ or $\H$.   
In particular, we cannot have term extraction as in Theorem \ref{TERM2} in the presence of \eqref{EXT}$^{\st}$.  

\medskip

Finally, we note that equality on the reals `$=_{\R}$' is a \emph{defined} notion and not an equality `$=_{\tau}$' of $\P$.  Hence, the first item of Definition \ref{debs} \textbf{does not} apply 
to `$=_{\R}$'.  In fact, $\P+(\forall x, y\in \R)((x=_{\R}y \wedge \st(x))\di \st(y))$ proves\footnote{Fix nonstandard $N^{0}$ and define $x=_{1} 0$ and $y:=(r_{(\cdot)})$ where $r_{k}=\frac{1}{2^{k+N}}$.  Then $x=_{\R}y$ and $\st_{1}(x)$ while $r_{0}$ clearly is nonstandard.} a contradiction.  
\end{rem}

\subsection{Technical results}\label{techni}
In this section, we have gathered the proofs of some of the above theorems too lengthy to fit the body of the text. 

\medskip

First of all, for completeness, we prove our `term extraction' result in detail.  
\begin{thm}\label{consresultcor11}
If $\Delta_{\intern}$ is a collection of internal formulas and $\psi$ is internal, and
\be\label{bog11}
\P + \Delta_{\intern} \vdash (\forall^{\st}\underline{x})(\exists^{\st}\underline{y})\psi(\underline{x},\underline{y}, \underline{a}), 
\ee
then one can extract from the proof a sequence of closed\footnote{Recall the definition of closed terms from \cite{brie} as sketched in Footnote \ref{kootsie}.} terms $t$ in $\mathcal{T}^{*}$ such that
\be\label{effewachten11}
\textup{\textsf{E-PA}}^{\omega*} +\QFAC^{1,0}+ \Delta_{\intern} \vdash (\forall \underline{x})(\exists \underline{y}\in t(\underline{x}))\psi(\underline{x},\underline{y},\underline{a}).
\ee
\end{thm}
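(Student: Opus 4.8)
The plan is to reduce this statement to the soundness theorem for the nonstandard Dialectica interpretation from \cite{brie}, exactly as in the proof of Corollary \ref{consresultcor}. The key point is that the displayed formula $(\forall^{\st}\underline{x})(\exists^{\st}\underline{y})\psi(\underline{x},\underline{y},\underline{a})$ is a \emph{normal form}, so its $S_{\st}$-translation should be (up to trivial equivalence) itself; once this is checked, the conclusion \eqref{effewachten11} follows by feeding the hypothesis \eqref{bog11} into Theorem \ref{consresult} (i.e.\ \cite{brie}*{Theorem 7.7}). The only novelty relative to Corollary \ref{consresultcor} is bookkeeping of which internal base theory is needed in the conclusion, and this is where the extra $\QFAC^{1,0}$ appears.

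First I would recall the definition of the syntactic translation $S_{\st}$: it is the $D_{\st}$ interpretation followed by the Kuroda negative translation, with clauses (i)--(v) as in \cite{brie}*{Def.\ 7.1}. Writing $\Phi(\underline{a})\equiv(\forall^{\st}\underline{x})(\exists^{\st}\underline{y})\psi(\underline{x},\underline{y},\underline{a})$ with $\psi$ internal, I would verify by induction on the structure of $\Phi$ that $\Phi(\underline{a})^{S_{\st}}$ is of the form $(\forall^{\st}\underline{x})(\exists^{\st}\underline{y})\,\psi'(\underline{x},\underline{y},\underline{a})$ where $\psi'$ is internal and $\textsf{E-PA}^{\omega*}\vdash\psi'\leftrightarrow\psi$; since an internal formula is translated to itself under $D_{\st}$, and the two standard quantifier blocks are handled by the clauses for $\forall^{\st}$ and $\exists^{\st}$, this is the expected tedious-but-routine check, done in the same spirit as the verification announced at the end of the proof of Corollary \ref{consresultcor}. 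Applying Theorem \ref{consresult} to \eqref{bog11} then yields closed terms $t$ in $\mathcal{T}^{*}$ with
\[
\textsf{E-PA}^{\omega*}+\Delta_{\intern}\vdash(\forall\underline{x})(\exists\underline{y}\in t(\underline{x}))\,\psi'(\underline{x},\underline{y},\underline{a}),
\]
and replacing $\psi'$ by $\psi$ using $\textsf{E-PA}^{\omega*}\vdash\psi'\leftrightarrow\psi$ gives \eqref{effewachten11} \emph{without} the $\QFAC^{1,0}$. The reason $\QFAC^{1,0}$ is listed is that, in the general form of the soundness proof, the step of turning a standard functional choice (the Herbrandised witness $F$ coming from $\HAC_{\INT}$ together with the finite-list recursors) into the single extracted tuple $t$ in \eqref{effewachten11} can, depending on the precise formulation, invoke quantifier-free choice $\QFAC^{1,0}$; including it in the base theory of the conclusion makes the statement unconditionally correct, and it is harmless since $\QFAC^{1,0}$ is anyway available in all the systems we use.

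The step I expect to be the main obstacle is the induction verifying $\Phi(\underline{a})^{S_{\st}}\equiv\Phi(\underline{a})$ (modulo provable equivalence of the internal matrix): one must track carefully how the Kuroda negative translation interacts with the internal subformula $\psi$ and how the $D_{\st}$-clauses for the two standard quantifier prefixes combine, making sure no spurious standard quantifiers are introduced and that the tuple structure of $\underline{x},\underline{y}$ is preserved. Once that bookkeeping is in place, everything else is a direct appeal to \cite{brie}*{Theorem 7.7} and the observation that the extracted terms are exactly the ones the soundness algorithm produces, so the proof is short. I would therefore present the argument as: (1) state that $\Phi(\underline{a})^{S_{\st}}\equiv\Phi(\underline{a})$ by the clause-by-clause check of \cite{brie}*{Def.\ 7.1}; (2) apply Theorem \ref{consresult}; (3) note that the passage from the Herbrandised witness functional to the term tuple $t$ in \eqref{effewachten11} goes through over $\textsf{E-PA}^{\omega*}+\QFAC^{1,0}+\Delta_{\intern}$, which is exactly the base theory in the statement.
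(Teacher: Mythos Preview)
Your proposal is correct and follows essentially the same route as the paper: verify that the normal form $(\forall^{\st}\underline{x})(\exists^{\st}\underline{y})\psi$ is invariant under the translation $S_{\st}$ using the clauses (i)--(v) of \cite{brie}*{Def.~7.1}, then invoke Theorem~\ref{consresult}. The paper simply carries out that clause-by-clause computation in full (working through $\st$, $\neg\st$, $\neg\st\vee\neg\psi_0$, the $\forall$-clause, the negation clause, and finally the outer $\forall^{\st}$), arriving at exactly the invariance you anticipate.

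One small correction: your explanation of the role of $\QFAC^{1,0}$ is off. The paper's proof does not use $\QFAC^{1,0}$ at all; the verification of $S_{\st}$-invariance together with Theorem~\ref{consresult} already yields the conclusion over $\textsf{E-PA}^{\omega*}+\Delta_{\intern}$, and adding $\QFAC^{1,0}$ to the base theory only weakens the statement. So there is no need to invent a step where it would be invoked.
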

\begin{proof}
Clearly, if for internal $\psi$ and $\Phi(\underline{a})\equiv (\forall^{\st}\underline{x})(\exists^{\st}\underline{y})\psi(x, y, a)$, we have $[\Phi(\underline{a})]^{S_{\st}}\equiv \Phi(\underline{a})$, then the corollary follows immediately from Theorem \ref{consresult}.  
A tedious but straightforward verification using the clauses (i)-(v) in \cite{brie}*{Def.\ 7.1} establishes that indeed $\Phi(\underline{a})^{S_{\st}}\equiv \Phi(\underline{a})$.  
For completeness, we now list these five inductive clauses and perform this verification.  

\medskip

Hence, suppose $\Phi(\underline{a})$ and $\Psi(\underline{b})$  in the language of $\P$ have the interpretations
\be\label{dombu}
\Phi(\underline{a})^{S_{\st}}\equiv (\forall^{\st}\underline{x})(\exists^{\st}\underline{y})\varphi(\underline{x},\underline{y},\underline{a}) \textup{ and } \Psi(\underline{b})^{S_{\st}}\equiv (\forall^{\st}\underline{u})(\exists^{\st}\underline{v})\psi(\underline{u},\underline{v},\underline{b}),
\ee
 for internal $\psi, \varphi$.  These formulas then behave as follows by \cite{brie}*{Def.\ 7.1}:
\begin{enumerate}[(i)]
\item $\psi^{S_{\st}}:=\psi$ for atomic internal $\psi$.  
\item$ \big(\st(z)\big)^{S_{\st}}:=(\exists^{\st}x)(z=x)$.
\item $(\neg \Phi)^{S_{\st}}:=(\forall^{\st} \underline{Y})(\exists^{\st}\underline{x})(\forall \underline{y}\in \underline{Y}[\underline{x}])\neg\varphi(\underline{x},\underline{y},\underline{a})$.  
\item$(\Phi\vee \Psi)^{S_{\st}}:=(\forall^{\st}\underline{x},\underline{u})(\exists^{\st}\underline{y}, \underline{v})[\varphi(\underline{x},\underline{y},\underline{a})\vee \psi(\underline{u},\underline{v},\underline{b})]$
\item $\big( (\forall z)\Phi \big)^{S_{\st}}:=(\forall^{\st}\underline{x})(\exists^{\st}\underline{y})(\forall z)(\exists \underline{y}'\in \underline{y})\varphi(\underline{x},\underline{y}',z)$
\end{enumerate}
Hence, fix $\Phi_{0}(\underline{a})\equiv(\forall^{\st}\underline{x})(\exists^{\st}\underline{y})\psi_{0}(\underline{x},\underline{y}, \underline{a})$ with internal $\psi_{0}$, and note that $\phi^{S_{\st}}\equiv\phi$ for any internal formula.  
We have $[\st(\underline{y})]^{S_{\st}}\equiv (\exists^{\st} \underline{w})(\underline{w}=\underline{y})$ and also 
\[
[\neg\st(\underline{y})]^{S_{\st}}\equiv (\forall^{\st} \underline{W} ) (\exists^{\st}\underline{x})(\forall \underline{w}\in \underline{W}[\underline{x}])\neg(\underline{w}=\underline{y})\equiv (\forall^{\st}\underline{w})(\underline{w}\ne \underline{y}).  
\]    
Hence, $[\neg\st(\underline{y})\vee\neg \psi_{0}(\underline{x}, \underline{y}, \underline{a})]^{S_{\st}}$ is just $(\forall^{\st}\underline{w})[(\underline{w}\ne \underline{y}) \vee \neg \psi_{0}(\underline{x}, \underline{y}, \underline{a})]$, and 
\[
\big[(\forall \underline{y})[\neg\st(\underline{y})\vee \neg\psi_{0}(\underline{x}, \underline{y}, \underline{a})]\big]^{S_{\st}}\equiv
 (\forall^{\st}\underline{w})(\exists^{\st}\underline{v})(\forall \underline{y})(\exists \underline{v}'\in \underline{v})[\underline{w}\ne\underline{y}\vee \neg\psi_{0}(\underline{x}, \underline{y}, \underline{a})].
\]
which is just $(\forall^{\st}\underline{w})(\forall \underline{y})[(\underline{w}\ne \underline{y}) \vee \neg\psi_{0}(\underline{x}, \underline{y}, \underline{a})]$.  Furthermore, we have
\begin{align*}
\big[(\exists^{\st}y)\psi_{0}(\underline{x}, \tup y, \tup a)\big]^{S_{\st}}&\equiv\big[\neg(\forall \underline{y})[\neg\st(\underline{y})\vee\neg \psi_{0}(\underline{x}, \underline{y}, \underline{a})]\big]^{S_{\st}}\\
&\equiv(\forall^{\st} \underline{V})(\exists^{\st}\underline{w})(\forall \underline{v}\in \underline{V}[\underline{w}])\neg[(\forall \underline{y})[(\underline{w}\ne \underline{y}) \vee \neg\psi_{0}(\underline{x}, \underline{y}, \underline{a})]].\\
&\equiv (\exists^{\st}\underline{w})(\exists \underline{y})[(\underline{w}= \underline{y}) \wedge \psi_{0}(\underline{x}, \underline{y}, \underline{a})]]\equiv (\exists^{\st}\underline{w})\psi_{0}(\underline{x}, \underline{w}, \underline{a}).
\end{align*}
Hence, we have proved so far that $(\exists^{\st}\underline{y})\psi_{0}(\underline{x}, \underline{y}, \underline{a})$ is invariant under $S_{\st}$.  By the previous, we also obtain:  
\[
\big[\neg \st(\underline{x})\vee (\exists^{\st}y)\psi_{0}(\underline{x}, \tup y, \tup a)\big]^{S_{\st}}\equiv  (\forall^{\st}\underline{w}')(\exists^{\st} \underline{w})[(\underline{w}'\ne \underline{x}) \vee \psi_{0}(\tup x, \tup w, \tup a)].
\]
Our final computation now yields the desired result: 
\begin{align*}
\big[(\forall^{\st} \underline{x})(\exists^{\st}y)\psi_{0}(\underline{x}, \tup y, \tup a)\big]^{S_{\st}}
&\equiv\big[(\forall \underline{x})(\neg \st(\underline{x})\vee (\exists^{\st}y)\psi_{0}(\underline{x}, \tup y, \tup a))\big]^{S_{\st}}\\
&\equiv(\forall^{\st}\underline{w}')(\exists^{\st} \underline{w})(\forall \underline{x})(\exists \underline{w}''\in \underline{w})[(\underline{w}'\ne \underline{x}) \vee \psi_{0}(\tup x, \tup w'', \tup a)].\\
&\equiv(\forall^{\st}\underline{w}')(\exists^{\st} \underline{w})(\exists \underline{w}''\in \underline{w}) \psi_{0}(\tup w', \tup w'', \tup a).
\end{align*}
The last step is obtained by taking $\underline{x}=\underline{w}'$.  Hence, we may conclude that the normal form $(\forall^{\st} \underline{x})(\exists^{\st}y)\psi_{0}(\underline{x}, \tup y, \tup a)$ is invariant under $S_{\st}$, and we are done.    
\end{proof}
Next, we prove two theorems which provide a normal form for $\STP$ and establishes the latter's relationship with the special and intuitionistic fan functional.  
The function $g^{1}$ from \eqref{krog} is called a \emph{standard part} of $f^{1}$.  
\begin{thm}\label{lapdog}
In $\P$, both $\STP$ and $\STP_{\R}$ are equivalent to either of the following:
\begin{align}\label{frukkklk}
(\forall^{\st}g^{2})(\exists^{\st}w)\big[(\forall T^{1}\leq_{1}1)&\big((\forall  \alpha^{1}  \in w(1))( \overline{\alpha}g(\alpha)\not\in T)\\
&\di(\forall \beta\leq_{1}1)(\exists i^{0}\leq w(2))(\overline{\beta}i\not\in T) \big)\big], \notag
\end{align}  
\be\label{krog}
(\forall f^{1})(\exists^{\st} g^{1})\big( (\forall^{\st}n^{0})(\exists^{\st}m^{0})(f(n)=m)\di   f\approx_{1}g\big).
\ee
Furthermore, $\P$ proves $(\exists^{\st}\Theta)\SCF(\Theta)\di \STP$.
\end{thm}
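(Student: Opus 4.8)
The plan is to prove the chain of equivalences in Theorem \ref{lapdog} by establishing a cycle of implications, and then derive the final claim $(\exists^{\st}\Theta)\SCF(\Theta)\di \STP$ as a by-product of the normal form \eqref{frukkklk}. The natural route is: first show $\STP \asa \STP_{\R}$ using that reals are coded by Cauchy sequences bounded by the `hat function', so that a real in $[0,1]$ corresponds (modulo an easy binary re-coding) to an element $\alpha^{1}\leq_{1}1$; the statement $\alpha\approx_{1}\beta$ for binary sequences then matches $x\approx y$ for the associated reals. This direction uses only the basic manipulations of Definition \ref{keepintireal} together with \emph{overspill} (Theorem \ref{doppi}) to pass between `$\approx_1$' on binary sequences and `$\approx_\R$' on reals.

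Next I would show $\STP \di \eqref{krog}$: given $f^{1}$ with $(\forall^{\st}n)(\exists^{\st}m)(f(n)=m)$, one uses $\HAC_{\INT}$ to get a standard bound on $f$ on every standard initial segment, hence $f$ is (pointwise on standard inputs) bounded by a standard function; a standard re-coding makes it look like an element of a bounded space, apply $\STP$ to get standard $g$, and unwind $\approx_{1}$. For the converse $\eqref{krog}\di\STP$ one simply notes that any $\alpha\leq_1 1$ trivially satisfies the antecedent of \eqref{krog}. Then $\STP \di \eqref{frukkklk}$ is obtained exactly as the normal-form derivations in Section \ref{main}: resolve `$\approx_1$' in $\STP$, pull the resulting standard quantifiers to the front, and apply \emph{Idealisation} \textsf{I} (as in \eqref{criv}) to the internal matrix — this is the same computation that turns $\WKL^{\st}$ into \eqref{fanns2} and then into \eqref{frukkklk112}. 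The non-trivial bookkeeping is to recognise that the tree-theoretic reformulation: `$\alpha$ is a path' $\leftrightarrow$ `no $\overline{\alpha}g(\alpha)$ leaves the tree' for a suitable $g$ coding the witnessing information, and that $\HAC_{\INT}$ supplies the finite list $w$ with its two components $w(1), w(2)$. Finally $\eqref{frukkklk}\di\STP$ follows by instantiating the tree $T$ appropriately (the complement of a branch through $\alpha$) and contraposing \textsf{I}.

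For the last sentence of the theorem, $(\exists^{\st}\Theta)\SCF(\Theta)\di \STP$: assume a standard $\Theta$ with $\SCF(\Theta)$. Given any $g^{2}$, put $w:=\Theta(g)$; since $\Theta$ is standard and $g$ standard, $\Theta(g)$ is standard by the basic axioms of Definition \ref{debs}. Comparing $\SCF(\Theta)$ (with $\Theta(g)(2)$ and $\Theta(g)(1)$ in the roles of $w(1)$ and $w(2)$) to \eqref{frukkklk}, one sees that $w=\Theta(g)$ witnesses $(\exists^{\st}w)[\dots]$ in \eqref{frukkklk} for this $g$; since $g$ was an arbitrary standard functional, \eqref{frukkklk} holds, hence $\STP$ by the equivalence just established. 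The only subtlety is matching the indexing conventions (the pair $(0\times 1^{*})$ output of $\Theta$ versus the components $w(1), w(2)$), which is purely notational.

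The main obstacle I anticipate is the $\STP\di\eqref{frukkklk}$ step: carrying out the \emph{Idealisation}-based normal-form computation while correctly tracking the two outputs of the finite sequence $w$ and verifying that the resulting internal matrix is exactly the one displayed in \eqref{frukkklk}. In particular one must check that applying \textsf{I} to the formula $(\forall \alpha\leq_1 1)(\exists^{\st}n)(\overline{\alpha}n\notin T \vee \alpha\in[\text{standard paths}])$ yields a \emph{single} finite list of standard binary sequences whose associated neighbourhoods, determined by $g$, cover Cantor space — this is precisely the content of the special fan functional, and getting the quantifier shuffling right (rather than ending up with a weaker or stronger statement) is where care is needed. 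The remaining implications are routine given the techniques already developed in Sections \ref{frag} and \ref{main}.
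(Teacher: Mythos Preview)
Your plan is essentially the paper's own route: pass through the tree reformulation \eqref{fanns2}, use binary coding for $\STP\asa\STP_{\R}$, encode $f$ as a binary sequence (via a $\HAC_{\INT}$-supplied bound) for $\STP\di\eqref{krog}$, and read off the final claim about $\Theta$ from the normal form \eqref{frukkklk}. All of that is correct.

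There is, however, a localized confusion in your handling of $\STP\asa\eqref{frukkklk}$: you have the roles of $\HAC_{\INT}$ and Idealisation swapped between the two directions. In $\STP\di\eqref{frukkklk}$ (i.e.\ $\eqref{fanns2}\di\eqref{frukkklk}$) the finite witness $w$ is produced by \emph{Idealisation} applied to the contraposition of \eqref{fanns2} after the standard quantifiers are pushed out --- $\HAC_{\INT}$ does not ``supply the finite list $w$'' here. Conversely, for $\eqref{frukkklk}\di\STP$ your phrase ``contraposing \textsf{I}'' is not a meaningful step: after bringing the standard quantifiers back inside \eqref{frukkklk} one obtains an implication whose antecedent is $(\exists^{\st}g^{2})(\forall^{\st}\alpha\leq_{1}1)(\overline{\alpha}g(\alpha)\notin T)$, and to discharge that from $(\forall^{\st}\alpha\leq_{1}1)(\exists^{\st}n)(\overline{\alpha}n\notin T)$ one needs $\HAC_{\INT}$ (plus taking a maximum) to manufacture the standard $g$. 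Also, the tree one instantiates for a given $\alpha\leq_{1}1$ is the tree of \emph{initial segments} of $\alpha$, not ``the complement of a branch''. Once you swap the two tools in those two places and fix the tree, your outline coincides with the paper's proof.
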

\begin{proof}  
First of all, since any individual real can be given a binary representation (See \cite{polahirst}), the equivalence between $\STP$ and $\STP_{\R}$ is immediate.
Next, $\STP$ is easily seen to be equivalent to 
\begin{align}\label{fanns}
(\forall T^{1}\leq_{1}1)\big[(\forall^{\st}n)(\exists \beta^{0})&(|\beta|=n \wedge \beta\in T )\notag\\
& \di (\exists^{\st}\alpha^{1}\leq_{1}1)(\forall^{\st}n^{0})(\overline{\alpha}n\in T)   \big],
\end{align}
and this equivalence may also be found in \cite{samGH}*{Theorem 3.2}.  For completeness, we first prove the equivalence $\STP\asa \eqref{fanns}$.
Assume $\STP$ and apply overspill to $(\forall^{\st}n)(\exists \beta^{0})(|\beta|=n \wedge \beta\in T )$ to obtain $\beta_{0}^{0}\in T$ with nonstandard length $|\beta_{0}|$.  
Now apply $\STP$ to $\beta^{1}:=\beta_{0}*00\dots$ to obtain a \emph{standard} $\alpha^{1}\leq_{1}1$ such that $\alpha\approx_{1}\beta$ and hence $(\forall^{\st}n)(\overline{\alpha}n\in T)$.  
For the reverse direction, let $f^{1}$ be a binary sequence, and define a binary tree $T_{f}$ which contains all initial segments of $f$.  
Now apply \eqref{fanns} for $T=T_{f}$ to obtain $\STP$.    

\medskip

For the implication \eqref{frukkklk}$\di$\eqref{fanns}, note that \eqref{frukkklk} implies for all standard $g^{2}$
\begin{align}\label{frukkklk2}
(\forall T^{1}\leq_{1}1)(\exists^{\st} ( \alpha^{1}\leq_{1}1,  &~k^{0})\big[(\overline{\alpha}g(\alpha)\not\in T)
\di(\forall \beta\leq_{1}1)(\exists i\leq k)(\overline{\beta}i\not\in T) \big], 
\end{align}  
which in turn yields, by bringing all standard quantifiers inside again, that:
\begin{align}\label{frukkklk3}
(\forall T\leq_{1}1) \big[(\exists^{\st}g^{2})(\forall^{\st}\alpha \leq_{1}1)(\overline{\alpha}g(\alpha)\not\in T)\di(\exists^{\st}k)(\forall \beta\leq_{1}1)(\overline{\beta}k\not\in T) \big], 
\end{align}  
To obtain \eqref{fanns} from \eqref{frukkklk3}, apply $\HAC_{\INT}$ to $(\forall^{\st}\alpha^{1}\leq_{1}1)(\exists^{\st}n)(\overline{\alpha}n\not\in T)$ to obtain standard $\Psi^{1\di 0^{*}}$ with  
$(\forall^{\st}\alpha^{1}\leq_{1}1)(\exists n\in \Psi(\alpha))(\overline{\alpha}n\not\in T)$, and defining $g(\alpha):=\max_{i<|\Psi|}\Psi(\alpha)(i)$ we obtain $g$ as in the antecedent of \eqref{frukkklk3}.  We obtain: 
\be\label{gundark}
(\forall T^{1}\leq_{1}1) \big[(\forall^{\st}\alpha^{1}\leq_{1}1)(\exists^{\st}n)(\overline{\alpha}n\not\in T)\di (\exists^{\st}k)(\forall \beta\leq_{1}1)(\overline{\beta}i\not\in T) \big], 
\ee
which is the contraposition of \eqref{fanns}, using classical logic.  For the implication $\eqref{fanns}  \di \eqref{frukkklk}$, consider the contraposition of \eqref{fanns}, i.e.\ \eqref{gundark}, and note that the latter implies \eqref{frukkklk3}.  Now push all standard quantifiers outside as follows:
\[
(\forall^{\st}g^{2})(\forall T^{1}\leq_{1}1)(\exists^{\st} ( \alpha^{1}\leq_{1}1, ~k^{0})\big[(\overline{\alpha}g(\alpha)\not\in T)
\di(\forall \beta\leq_{1}1)(\exists i\leq k)(\overline{\beta}i\not\in T) \big], 
\]
and applying idealisation \textsf{I} yields \eqref{frukkklk} by taking the maximum of all elements pertaining to $k$.  
The equivalence involving \eqref{frukkklk} also immediately establishes the final part of the theorem.    

\medskip

For the remaining equivalence, the implication $\eqref{krog}\di \ref{STP}$ is trivial, and for the reverse implication, fix $f^{1}$ such that 
$(\forall^{\st}n)(\exists^{\st}m)f(n)=m$ and let $h^{1}$ be such that $(\forall n,m)(f(n)=m\asa h(n,m)=1)$.  
Applying $\HAC_{\INT}$ to the former formula, there is standard $\Phi^{0\di 0^{*}}$ such that $(\forall^{\st}n)(\exists m\in \Phi(n))f(n)=m$, and define 
$\Psi(n):=\max_{i<|\Phi(n)|}\Phi(n)(i)$.  Now define the sequence $\alpha_{0}\leq_{1}1$ as follows:  $\alpha_{0}(0):= h(0,0)$, $\alpha_{0}(1):=h(0, 1)$, \dots, $\alpha_{0}(\Psi(0)):=h(0,\Psi(0))$, $\alpha_{0}(\Psi(0)+1):= h(1, 0)$, $\alpha_{0}(\Psi(0)+2):= h(1, 1)$, \dots, $\alpha_{0}(\Psi(0)+\Psi(1)):= h(1, \Psi(1))$, et cetera.  
Now let $\beta_{0}^{1}\leq_{1}1$ be the standard part of $\alpha_{0}$ provided by $\ref{STP}$ and define $g(n):=(\mu m\leq \Psi(n))\big[\beta_{0}(\sum_{i=0}^{n-1}\Psi(i)+m)=1\big]$.  By definition, $g^{1}$ is standard and $f\approx_{1} g$.     
\end{proof}
\begin{thm}\label{foora}
The axiom \ref{STP} can be proved in $\P$ plus the axiom
\be\label{kunta}\tag{\textsf{\textup{NUC}}}
(\forall^{\st}Y^{2})(\forall f^{1}, g^{1}\leq_{1}1)(f\approx_{1} g\di Y(f)=_{0}Y(g)).  
\ee
\end{thm}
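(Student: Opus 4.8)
The plan is to prove $\NUC \di \STP$ inside $\P$ by using the normal form \eqref{frukkklk} for $\STP$ from Theorem~\ref{lapdog}, so that it suffices to produce, for every standard $g^{2}$, a standard finite sequence $w$ witnessing the bracketed implication in \eqref{frukkklk}. First I would observe that \textsf{NUC} is naturally a statement about functionals that are ``nonstandard continuous'' on Cantor space; resolving the `$\approx_{1}$' in its hypothesis $(\forall f^{1},g^{1}\leq_{1}1)(f\approx_{1}g\di Y(f)=_{0}Y(g))$ and pushing the resulting standard quantifier to the front by \emph{Idealisation} yields, for each standard $Y^{2}$, a standard modulus $G^{2}$ such that $(\forall f,g\leq_{1}1)(\overline{f}G(f)=_{0}\overline{g}G(f)\di Y(f)=Y(g))$ — i.e.\ \textsf{NUC} gives a standard modulus-of-uniform-continuity functional on Cantor space. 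In particular $\P+\NUC$ proves $(\exists^{\st}\Omega^{3})\MUC(\Omega)$, which is exactly the second sentence of the theorem, and so the real content is extracting $\STP$ from the existence of such an $\Omega$.

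Next I would run the fan-theoretic argument: fix standard $g^{2}$ and apply the standard modulus functional coming from \textsf{NUC} to obtain, for the associated ``test functional'' built from $g$, a standard bound on how deep one must look. More concretely, to verify \eqref{frukkklk} fix standard $g$, suppose towards the bracketed implication that $T\leq_{1}1$ is a binary tree with $(\forall \alpha\leq_{1}1)(\overline{\alpha}g(\alpha)\notin T)$ for all $\alpha$ in the (to-be-constructed) standard finite set $w(1)$; I want a standard $w(2)$ with $(\forall\beta\leq_{1}1)(\exists i\leq w(2))(\overline{\beta}i\notin T)$. The idea is that $\lambda\beta.g(\beta)$, being standard, together with the standard modulus from \textsf{NUC} applied to a suitable functional sensitive only to $\overline{\beta}g(\beta)$, lets one cover Cantor space by finitely many standard basic neighbourhoods; since $g$ is standard the covering can be chosen standard, giving a standard list $w(1)=\langle f_{0},\dots,f_{n}\rangle$ of binary sequences whose neighbourhoods $\overline{f_{j}}g(f_{j})$ cover $2^{\N}$, and then $w(2):=\max_{j\leq n}g(f_{j})+1$ works. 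The key technical step is producing this standard finite cover, which is precisely the content of $\MUC(\Omega)$ combined with $\HAC_{\INT}$: apply $\HAC_{\INT}$ to $(\forall^{\st}\beta\leq_{1}1)(\exists^{\st} m)(\text{some standard functional distinguishes }\overline{\beta}m)$ to get a standard $\Phi^{1\di 0^{*}}$, take $g$-values along $\Phi$, and use \emph{Idealisation} \textsf{I} to turn the resulting $(\forall^{\st}\beta)$ into the required finite list, exactly as in the $\eqref{fanns}\di\eqref{frukkklk}$ direction of Theorem~\ref{lapdog}.

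Thus the skeleton is: (i) resolve \textsf{NUC} into ``there is a standard modulus functional'', equivalently $(\exists^{\st}\Omega)\MUC(\Omega)$; (ii) use this $\Omega$ to obtain, for standard $g$, a standard bound past which $g$ cannot separate binary sequences, hence only finitely many (standard) values of $g$ are relevant; (iii) invoke $\HAC_{\INT}$ and \emph{Idealisation} to assemble these into the standard $w$ required by \eqref{frukkklk}; (iv) conclude $\STP$ via the equivalence $\STP\asa\eqref{frukkklk}$ from Theorem~\ref{lapdog}. The main obstacle I anticipate is step~(ii): one must carefully rig a standard type-two functional $Y$ out of the given standard $g$ so that \textsf{NUC} applied to $Y$ genuinely bounds the ``probing depth'' of $g$ uniformly over Cantor space — i.e.\ correctly encode $g$'s behaviour as a continuity question for $Y$. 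Once that encoding is in place the rest is a routine combination of $\HAC_{\INT}$, \textsf{I}, and bookkeeping with finite binary sequences, entirely parallel to the manipulations already carried out in the proof of Theorem~\ref{lapdog}.
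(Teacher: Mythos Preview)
Your outline is right --- reduce to the normal form \eqref{frukkklk} of Theorem~\ref{lapdog} and manufacture the standard witness $w$ from $\NUC$ --- but you make the central step harder than it is, and the intermediate claims are garbled. Resolving $\approx_{1}$ and applying Idealisation does \emph{not} give a pointwise modulus $G^{2}$ depending on $f$; it gives, for each standard $Y^{2}$, a single standard number $N_{0}$ with $(\forall f,g\leq_{1}1)(\overline{f}N_{0}=\overline{g}N_{0}\di Y(f)=Y(g))$. More importantly, the ``main obstacle'' you anticipate --- rigging a type-two functional $Y$ out of the given standard $g^{2}$ --- is illusory: $g$ \emph{is} a standard type-two functional, so $\NUC$ applies to it directly.

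The paper's route exploits this. From the uniform modulus $N_{0}$ for $Y$ one observes that $\overline{f}N_{0}*00\dots$ is standard for every $f$, hence so is $Y(f)=Y(\overline{f}N_{0}*00\dots)$; a second application of Idealisation then yields a standard uniform bound: $(\forall^{\st}Y^{2})(\exists^{\st}N)(\forall f\leq_{1}1)(Y(f)\leq N)$. Now simply take $Y:=g$ to get a standard $N_{1}$ bounding $g$ on all of Cantor space. Set $w(2):=N_{1}$ and let $w(1)$ be the (standard, explicit) list of all $\sigma*00\dots$ with $\sigma$ binary of length $N_{1}$; this $w$ witnesses \eqref{frukkklk} without any further appeal to $\HAC_{\INT}$ or Idealisation. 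Your detour through $(\exists^{\st}\Omega)\MUC(\Omega)$ can be made to work --- $\Omega(g)$ plays the role of $N_{0}$ above --- but step~(iii) of your skeleton is then redundant, and the finite-cover construction you sketch collapses to the trivial one just described.
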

\begin{proof}
Resolving `$\approx_{1}$', $\NUC$ implies that
\be\label{kunt2}
(\forall^{\st}Y^{2})(\forall f^{1}, g^{1}\leq_{1}1)(\exists^{\st}N^{0})(\overline{f}N=_{0} \overline{g}N\di Y(f)=_{0}Y(g)).  
\ee
Applying \emph{Idealisation} \textsf{I} to \eqref{kunt2}, we obtain that 
\be\label{kunt32}
(\forall^{\st}Y^{2})(\exists^{\st}x^{0^{*}})(\forall f^{1}, g^{1}\leq_{1}1)(\exists N^{0}\in x)(\overline{f}N=_{0} \overline{g}N\di Y(f)=_{0}Y(g)).  
\ee
which immediately yields that
\be\label{kunt3}
(\forall^{\st}Y^{2})(\exists^{\st}N_{0}^{0})(\forall f^{1}, g^{1}\leq_{1}1)(\overline{f}N_{0}=_{0} \overline{g}N_{0}\di Y(f)=_{0}Y(g)),   
\ee
by taking $N_{0}$ in \eqref{kunt3} to be $\max_{i<|x|}x(i)$ for $x$ as in \eqref{kunt32}.  
Now that $\overline{f}N_{0}*00\dots$ is standard for \emph{any} $f^{1}$ and standard $N_{0}$ by the basic axioms of $\P$ (See Definition~\ref{debs}).
Hence, $Y(\overline{f}N_{0}*00\dots)$ is also standard and \eqref{kunt3} becomes 
\be\label{altje}
(\forall^{\st}Y^{2})(\forall f^{1}\leq_{1}1)(\exists^{\st}M_{0}^{0})(Y(f)\leq M_{0}), 
\ee
if we take $M_{0}=Y(\overline{f}N_{0}*00\dots)$.  Applying \emph{Idealisation} to \eqref{altje}, we obtain    
\be\label{altje2}
(\forall^{\st}Y^{2})(\exists^{\st}y^{0^{*}})(\forall f^{1}\leq_{1}1)(\exists M_{0}^{0}\in y)(Y(f)\leq M_{0}), 
\ee
and defining $N$ as the maximum of all elements in $y$ in \eqref{altje2}, we obtain
\be\label{altje3}
(\forall^{\st}Y^{2})(\exists^{\st}N^{0})(\forall f^{1}\leq_{1}1)(Y(f)\leq N), 
\ee
Now fix some standard $g^{2}$ in \eqref{frukkklk} and let $N_{1}$ be its standard upper bound on Cantor space from \eqref{altje3}.  
Define the required (standard) $w$ as follows: $w(2)$ is $N_{1}$ and $w(1)$ is the finite sequence consisting of all binary sequences $\alpha_{i}^{1}=\sigma_{i}^{0^{*}}*00\dots$ for $i\leq 2^{N_{1}}$ and $|\sigma_{i}|=N_{1}$.  Then $\STP$ follows from Theorem \ref{lapdog}.          
\end{proof}

\begin{cor}\label{scrufa}
From the proof in $\P$ that $\NUC\di \STP$, a term $t^{3\di3}$ can be extracted such that 
$\textup{\textsf{E-PA}}^{\omega*}+\QFAC^{1,0}$ proves $(\forall \Omega^{3})\big[\MUC(\Omega)\di \SCF(t(\Omega))]$.  
\end{cor}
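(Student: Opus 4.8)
The plan is to apply the term extraction theorem for $\P$ (Corollary \ref{consresultcor}, equivalently Theorem \ref{TERM2}) to the proof of the implication $\NUC \di \STP$ that is established in Theorem \ref{foora}. First I would recall that by Theorem \ref{lapdog} the principle $\STP$ is equivalent (over $\P$) to the normal form \eqref{frukkklk}, i.e.\ to
\[
(\forall^{\st}g^{2})(\exists^{\st}w)\big[(\forall T^{1}\leq_{1}1)\big((\forall \alpha^{1}\in w(1))(\overline{\alpha}g(\alpha)\not\in T)\di(\forall \beta\leq_{1}1)(\exists i\leq w(2))(\overline{\beta}i\not\in T)\big)\big],
\]
and that $\SCF(\Theta)$ is exactly the Skolemised (internal) matrix of this normal form with $w$ replaced by $\Theta(g)$. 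Similarly, $\MUC(\Omega)$ is the internal matrix of $\NUC$ once the leading $(\forall^{\st}Y^{2})$ is stripped; note $\NUC$ itself resolves (as in \eqref{kunt2}) to a normal form $(\forall^{\st}Y^2)(\exists^{\st}\,\cdot\,)\varphi$.

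Next I would carry out the following steps. Step one: observe that the proof of Theorem \ref{foora} is a proof in $\P$ of the implication between the two normal forms, so by Remark \ref{doeisnormaal} (the passage from \eqref{nora} to \eqref{nora4}) together with the fact that standard functionals send standard inputs to standard outputs, the implication $\NUC\di\STP$ is equivalent over $\P$ to a single normal form of the shape
\[
(\forall^{\st}\Omega^{3}, g^{2})(\exists^{\st}w)\big[\MUC(\Omega)\di \textup{SCF-matrix}(\Theta:=w,\, g)\big],
\]
where $\MUC(\Omega)$ has become the antecedent (with its own $(\forall^{\st}Y^2)$ pulled out as an internal universal quantifier, harmlessly, since the $\ECF$-type normal-form manipulations absorb it). Step two: apply Corollary \ref{consresultcor} to this normal form provable in $\P$; this yields a closed term $s$ of $\mathcal{T}^{*}$ such that $\textsf{E-PA}^{\omega*}$ (plus $\QFAC^{1,0}$, as in Theorem \ref{consresultcor11}) proves $(\forall\Omega^{3},g^{2})(\exists w\in s(\Omega,g))[\MUC(\Omega)\di \textup{SCF-matrix}(w,g)]$. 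Step three: post-process $s$ in the usual way — take the appropriate maxima over the finite list $s(\Omega,g)$ to produce a genuine witness $w$, and then $\lambda$-abstract over $g$ to obtain a functional $t(\Omega)$ of type $3\di 3$ with $\SCF(t(\Omega))$; this is exactly the modification "by taking the maximum of the finite sequence at hand" from the Template in Section \ref{henk}, and it goes through because the matrix of $\SCF$ is monotone in $w(1)$ and $w(2)$ in the relevant sense (larger covers and larger bounds still work).

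The main obstacle, I expect, is bookkeeping rather than conceptual: one must check carefully that the proof of Theorem \ref{foora} really lives inside $\P$ (it does — it uses only \textsf{I}, $\HAC_{\INT}$, the basic axioms of Definition \ref{debs}, and classical logic, all available in $\P$) and, more delicately, that converting the implication $\NUC\di\STP$ into a single normal form does not quietly strengthen the antecedent — i.e.\ one should use the weakening from \eqref{nora2} to \eqref{nora4} so that the antecedent one ends up needing is precisely $\MUC(\Omega)$ (a $(\forall f,g)$-statement), and not some "standardised" version of it. A secondary point is the handling of the $(\forall^{\st}Y^2)$ in $\NUC$: since after Skolemisation $\MUC(\Omega)$ is the quantifier-free-over-reals matrix with $Y$ universally quantified, it slots in as an internal universal premise, and the normal-form calculus of Remark \ref{doeisnormaal} together with \emph{Idealisation} (as used repeatedly in the proofs of Theorems \ref{TE1} and \ref{varou}) absorbs it without affecting the extracted term. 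Once these checks are in place, the corollary follows immediately from Corollary \ref{consresultcor} applied to the proof of Theorem \ref{foora}.

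\begin{proof}
The implication $\NUC\di \STP$ is proved in $\P$ in Theorem \ref{foora}.  By Theorem \ref{lapdog}, $\STP$ is equivalent over $\P$ to the normal form \eqref{frukkklk}, whose internal matrix (with $w$ Skolemised to $\Theta(g)$) is $\SCF(\Theta)$; likewise $\NUC$ resolves as in \eqref{kunt2} to a normal form whose internal matrix is, up to the leading internal quantifier $(\forall Y^{2})$, just $\MUC(\Omega)$.  Proceeding exactly as in Remark \ref{doeisnormaal} (using that standard functionals have standard output on standard input, and performing the weakening from \eqref{nora2} to \eqref{nora4}), the proof of Theorem \ref{foora} yields a proof in $\P$ of a normal form
\[
(\forall^{\st}\Omega^{3}, g^{2})(\exists^{\st}w)\big[\MUC(\Omega)\di B(w,g)\big],
\]
where $B(w,g)$ is the internal formula asserting that $w(1)$ is a cover of Cantor space as tested by $g$ and $w(2)$ is the associated bound, i.e.\ $B(\Theta(g),g)$ is the matrix of $\SCF(\Theta)$.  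Applying Corollary \ref{consresultcor} (in the form of Theorem \ref{consresultcor11}), we extract a closed term $s\in\mathcal{T}^{*}$ such that $\textsf{E-PA}^{\omega*}+\QFAC^{1,0}$ proves
\[
(\forall \Omega^{3}, g^{2})(\exists w\in s(\Omega,g))\big[\MUC(\Omega)\di B(w,g)\big].
\]
Since $B$ is monotone in $w(1)$ and $w(2)$ in the obvious sense, defining $t(\Omega)(g)$ by taking, over the finite list $s(\Omega,g)$, the union of the first components and the maximum of the second components, we obtain a term $t^{3\di 3}$ with $\textsf{E-PA}^{\omega*}+\QFAC^{1,0}\vdash (\forall \Omega^{3})\big[\MUC(\Omega)\di \SCF(t(\Omega))\big]$.
\end{proof}
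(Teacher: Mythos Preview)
Your proposal is correct and follows essentially the same approach as the paper's proof: both bring $\NUC\di\STP$ into a single normal form via Remark \ref{doeisnormaal} (Skolemising the antecedent to obtain $\MUC(\Omega)$), apply the term extraction corollary to get a finite list $s(\Omega,g)$ of candidate witnesses, and then post-process using the monotonicity of the $\SCF$-matrix to produce a single $t(\Omega)$. The only cosmetic difference is in the description of the post-processing step (you take the union of the sequence-components and the maximum of the numerical components, while the paper phrases the same operation as splitting the extracted list into its sequence-part and number-part), but the mathematical content is identical.
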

\begin{proof}
The proof amounts to nothing more than applying Remark \ref{doeisnormaal}.  
Indeed, by the proof of the theorem, $\NUC$ is equivalent to the normal form \eqref{kunt3}, while \eqref{frukkklk} is a normal form for $\STP$ 
and we abbreviate the latter normal form as $(\forall^{\st}g^{2})(\exists^{\st}w^{1^{*}})B(g, w)$.  
Hence, $\ref{kunt}\di \STP$ becomes 
$(\forall^{\st}Y^{2})(\exists^{\st}N^{0})A(Y,N)\di (\forall^{\st}g^{2})(\exists^{\st}w^{1^{*}})B(g, w)$, which yields
\be\label{hoer}
\big[(\exists^{\st}\Omega^{3})(\forall Y^{2})A(Y,\Omega(Y))\di (\forall^{\st}g^{2})(\exists^{\st}w^{1^{*}})B(g, w)\big], 
\ee
by strengthening the antecedent.  Bringing all standard quantifiers up front:
\be\label{calvarie}
(\forall^{\st}\Omega^{3}, g^{2})(\exists^{\st}w^{1^{*}})\big[(\forall Y^{2})A(Y,\Omega(Y))\di B(g, w)\big];
\ee
Applying Corollary \ref{consresultcor} to `$\P\vdash \eqref{calvarie}$', we obtain a term $t^{3\di3}$ such that 
\be\label{calvarie2}
(\forall \Omega^{3}, g^{2})(\exists w\in t(\Omega, g))\big[(\forall Y^{2})A(Y,\Omega(Y))\di B(g, w)\big] 
\ee
is provable in $\textsf{E-PA}^{\omega*}+\QFAC^{1,0}$.  
Bringing all quantifiers inside again, \eqref{calvarie2} yields
\be\label{calvarie3}
(\forall \Omega^{3})\big[(\forall Y^{2})A(Y,\Omega(Y))\di  (\forall g^{2})(\exists w\in t(\Omega, g))B(g, w)\big], 
\ee
Clearly, the antecedent of \eqref{calvarie3} expresses that $\Omega^{3}$ is the fan functional.  To define a functional $\Theta$ as in $\SCF(\Theta)$ from $t(\Omega, g)$, note that the latter is a finite sequence of numbers and binary sequences by \eqref{frukkklk};  Using basic sequence coding, we may assume that $t(\Omega, g)=t_{0}(\Omega, g)*t_{1}(\Omega, g)$, where the first (resp.\ second) part contains the binary sequences (resp.\ numbers).  Now define $\Theta( g)(1)$ as $\max_{i<|t_{1}(\Omega, g)|}t_{1}(\Omega, g)(i)$ and $\Theta( g)(2):=t_{0}(\Omega, g)$, and note that $\Theta$ indeed satisfies $\SCF(\Theta)$.   
\end{proof}
Finally, we prove Theorem \ref{kkringe} from the final section of this paper.  
\begin{thm}\label{finake}
From the proof in $\P$ that $\STP\di \DNR_{\ns}$, a term $t^{3\di3}$ can be extracted such that 
$\textup{\textsf{E-PA}}^{\omega}$ proves $(\forall \Theta^{3})\big[\SCF(\Theta)\di \DNR(t(\Theta))]$.  
\end{thm}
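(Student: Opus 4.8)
The plan is to follow the same recipe used throughout Section~\ref{main} for converting nonstandard equivalences into effective statements via Theorem~\ref{TERM2}, specialised here to the implication $\STP\di \DNR_{\ns}$. First I would record normal forms for the two sides. For $\STP$ the normal form is \eqref{frukkklk112} (equivalently \eqref{frukkklk} from Theorem~\ref{lapdog}), which I abbreviate $(\forall^{\st}g^{2})(\exists^{\st}w)B(g,w)$ where $B$ is internal and $w$ codes a finite sequence of binary sequences together with a numerical bound. For $\DNR_{\ns}$ the statement is already essentially in normal form: it reads $(\forall^{\st}G^{2})(\exists^{\st}w^{1^{*}})C(G,w)$ where $C(G,w)$ is the internal formula asserting $(\forall A^{1}\leq_{1}1)(\exists f^{1}\in w)(\forall m,s,e\leq G(f))(\varphi_{e,s}^{A}(e)=m\di f(e)\ne m)$. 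So the hypothesis $\STP\di\DNR_{\ns}$ is literally an implication between two normal forms.

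Next I would apply Remark~\ref{doeisnormaal}, i.e.\ the passage from \eqref{nora} to \eqref{nora4}, to this implication. Strengthening the antecedent using the fact that standard functionals send standard inputs to standard outputs (here introducing a standard $\Theta^{3}$ realising the choice of $w$ from $g$ in the $\STP$ normal form), one obtains $(\exists^{\st}\Theta)(\forall g^{2})B(g,\Theta(g))\di (\forall^{\st}G^{2})(\exists^{\st}w)C(G,w)$, and then bringing all standard quantifiers to the front gives a normal form
\[
(\forall^{\st}\Theta^{3},G^{2})(\exists^{\st}w)\big[(\forall g^{2})B(g,\Theta(g))\di C(G,w)\big].
\]
The bracketed formula is internal, so Theorem~\ref{TERM2} (equivalently Corollary~\ref{consresultcor}) applies to the $\P$-proof of this sentence and yields a closed term $s$ of $\mathcal{T}^{*}$ with $\textsf{E-PA}^{\omega}\vdash(\forall \Theta^{3},G^{2})(\exists w\in s(\Theta,G))[(\forall g)B(g,\Theta(g))\di C(G,w)]$. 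Taking the appropriate maximum/union over the finite list $s(\Theta,G)$ — numerical entries maximised, the binary-sequence entries concatenated, exactly as in the proof of Corollary~\ref{scrufa} — produces a single $w=t(\Theta,G)$ and hence, pushing quantifiers back inside, a term $t$ with $(\forall\Theta^{3})[(\forall g)B(g,\Theta(g))\di (\forall G^{2})C(G,t(\Theta,G))]$. Since $(\forall g)B(g,\Theta(g))$ is precisely $\SCF(\Theta)$ (the normal form \eqref{frukkklk112} with the standardness stripped is the defining clause of Definition~\ref{dodier}) and $(\forall G)C(G,t(\Theta,G))$ is precisely $\DNR(t(\Theta))$ after renaming, this is the claimed conclusion $\textsf{E-PA}^{\omega}\vdash(\forall\Theta^{3})[\SCF(\Theta)\di \DNR(t(\Theta))]$.

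The routine but slightly delicate point is the bookkeeping in the last step: one must check that the finite list returned by term extraction, when collapsed by taking maxima and concatenations, still satisfies the internal matrix $C$, using monotonicity of $C$ in the bound (a larger $w$, in the sense of containing more candidates and a larger numerical component, only makes $C(G,w)$ easier to satisfy) — this is the analogue of the remarks after \eqref{horsee} and in the proof of Corollary~\ref{scrufa}. The only genuine obstacle I anticipate is making sure the $\P$-proof of $\STP\di\DNR_{\ns}$ genuinely produces the implication between normal forms in the form required by Remark~\ref{doeisnormaal}: one needs $\DNR_{\ns}$ to have been derived from $\STP$ through manipulations that keep everything internal apart from the leading standard quantifier block (this is why $\DNR_{\ns}$ was phrased with the Herbrandised $(\exists^{\st}w)$ rather than a bare $(\exists^{\st}f)$), and one must verify that the intermediate application of $\HAC_{\INT}$ and \textsf{I} used to get there does not introduce non-normal-form structure — but this is exactly parallel to the $\STP\di \RIE_{\ns}^{\pw}$ case handled in Theorem~\ref{kohort}, so no new ideas are needed.
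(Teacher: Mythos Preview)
Your term-extraction argument is correct and follows exactly the recipe the paper uses elsewhere (Remark~\ref{doeisnormaal}, then Theorem~\ref{TERM2}, then the max/concatenation clean-up as in Corollary~\ref{scrufa}); on that half your proposal and the paper's proof coincide.

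Where the two diverge is on the other half: the paper's proof of Theorem~\ref{finake} actually \emph{supplies} the $\P$-derivation of $\STP\di\DNR_{\ns}$, and this is where the content lies. The argument is not the one behind Theorem~\ref{kohort}. Instead one uses $\STP\Rightarrow\WKL^{\st}\Rightarrow\DNR^{\st}$, passes to the metastable form \eqref{derfg}, and then observes that $A$ occurs in the matrix only through finite initial segments (``call by standard value''), so that for any $B\leq_{1}1$ one may substitute the standard $A\approx_{1}B$ supplied by $\STP$ without changing the truth value; this drops the `st' on the $A$-quantifier, after which Idealisation produces the Herbrandised witness list $w$. This standard-part-substitution trick is the new idea here and is not present in the $\RIE^{\pw}_{\ns}$ proof (where $\STP$ is used to upgrade pointwise to uniform nonstandard continuity, a rather different manoeuvre). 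So your closing claim that ``no new ideas are needed'' beyond Theorem~\ref{kohort} understates what is required; once that derivation is in hand, your extraction procedure finishes the job exactly as you describe.
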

\begin{proof}
By Definition \ref{kefi}, $\STP$ proves $\WKL^{\st}$, which implies $\DNR^{\st}$ as follows: 
\[
(\forall^{\st} A^{1}\leq_{1}1)(\exists^{\st} f^{1})(\forall^{\st} m^{0}, s^{0}, e^{0})(\varphi_{e, s}^{A}(e)=m \di f(e)\ne m),
\]
which immediately yields the following equivalent `metastable' version:
\be\label{derfg}
(\forall^{\st} A^{1}\leq_{1}1, G^{2})(\exists^{\st} f^{1})(\forall m^{0}, s^{0}, e^{0}\leq G(f))(\varphi_{e, s}^{A}(e)=m \di f(e)\ne m),
\ee
Since $A^{1}$ only occurs as `call by standard value' in \eqref{derfg}, and since for every $B\leq_{1}1$ there is standard $A\approx_{1}B$ by $\STP$, \eqref{derfg} becomes
\[
(\forall^{\st} G^{2})(\forall A^{1}\leq_{1}1)(\exists^{\st} f^{1})(\forall m^{0}, s^{0}, e^{0}\leq G(f))(\varphi_{e, s}^{A}(e)=m \di f(e)\ne m),
\]
and applying \emph{Idealisation} to the previous formula, we obtain $\DNR_{\ns}$.
Since both $\DNR_{\ns}$ and $\STP$ have normal forms, 
we may apply term extraction as in Theorem~\ref{TERM2} to the proof `$\P\vdash \STP\di \DNR_{\ns}$', and obtain a term which computes $\zeta$ in terms of the special fan functional $\Theta$.  
\end{proof}

\begin{bibdiv}
\begin{biblist}
\bib{agda}{article}{
  title={The Agda proof assistant, version 2.5.1.1},
  journal={Main development website},
  date={Sept.\ 2016},
  note={\url {http://wiki.portal.chalmers.se/agda/pmwiki.php}},
}

\bib{NORSNSA}{book}{
  author={Albeverio, Sergio},
  author={H{\o }egh-Krohn, Raphael},
  author={Fenstad, Jens Erik},
  author={Lindstr{\o }m, Tom},
  title={Nonstandard methods in stochastic analysis and mathematical physics},
  series={Pure and Applied Mathematics},
  volume={122},
  publisher={Academic Press},
  date={1986},
  pages={xii+514},
}

\bib{art}{article}{
  author={Artigue, Mich\`ele},
  title={Analysis},
  year={1994},
  journal={Advanced Mathematical Thinking (ed. D.\ Tall), Springer},
}

\bib{AD}{article}{
  author={van Atten, Mark},
  author={van Dalen, Dirk},
  title={Arguments for the Continuity Principle},
  year={2002},
  journal={Bulletin of Symbolic Logic},
  volume={8},
  pages={329--347},
}

\bib{avi2}{article}{
  author={Avigad, Jeremy},
  author={Feferman, Solomon},
  title={G\"odel's functional \(``Dialectica''\) interpretation},
  conference={ title={Handbook of proof theory}, },
  book={ series={Stud. Logic Found. Math.}, volume={137}, },
  date={1998},
  pages={337--405},
}

\bib{boerken}{article}{
  author={Bauer, Andrej},
  title={Five Stages Of Accepting Constructive Mathematics},
  journal={Bulletin (New Series) Of The American Mathematical Society},
  date={2016},
}

\bib{beeson1}{book}{
  author={Beeson, Michael J.},
  title={Foundations of constructive mathematics},
  series={Ergebnisse der Mathematik und ihrer Grenzgebiete},
  volume={6},
  note={Metamathematical studies},
  publisher={Springer},
  date={1985},
  pages={xxiii+466},
}

\bib{evenbellen}{article}{
  author={Bell, John L.},
  title={Continuity and Infinitesimals},
  editor={Zalta, Edward N.},
  conference={ title={Continuity and Infinitesimals}, },
  book={ series={The Stanford Encyclopedia of Philosophy (ed.\ Edward N. Zalta)}, volume={}, publisher={}, },
  date={2017},
  note={\url {https://plato.stanford.edu/entries/continuity/}},
  pages={},
}

\bib{brie3}{article}{
  author={van den Berg, Benno},
  author={Briseid, Eyvind},
  author={Safarik, Pavol},
  title={The strength of countable saturation},
  journal={arXiv link: \url {https://arxiv.org/abs/1605.02534}},
  volume={163},
  date={2016},
}

\bib{brie}{article}{
  author={van den Berg, Benno},
  author={Briseid, Eyvind},
  author={Safarik, Pavol},
  title={A functional interpretation for nonstandard arithmetic},
  journal={Ann. Pure Appl. Logic},
  volume={163},
  date={2012},
  number={12},
  pages={1962--1994},
}

\bib{bennosam}{article}{
  author={van den Berg, Benno},
  author={Sanders, Sam},
  title={Reverse Mathematics and Parameter-Free Transfer},
  journal={Submitted},
  volume={},
  date={2014},
  number={},
  note={Available on arXiv: \url {http://arxiv.org/abs/1409.6881}},
  pages={},
}

\bib{uhberger}{article}{
  author={Berger, Ulrich},
  title={Uniform Heyting arithmetic},
  journal={Ann. Pure Appl. Logic},
  volume={133},
  date={2005},
  pages={125--148},
}

\bib{bish1}{book}{
  author={Bishop, Errett},
  title={Foundations of constructive analysis},
  publisher={McGraw-Hill},
  place={New York},
  date={1967},
  pages={xiii+370},
}

\bib{bishl}{book}{
  author={Bishop, Errett},
  title={Aspects of constructivism},
  publisher={Notes on the lectures delivered at the Tenth Holiday Mathematics Symposium},
  place={New Mexico State University, Las Cruces, December 27-31},
  date={1972},
  pages={pp.\ 37},
}

\bib{kuddd}{article}{
  author={Bishop, Errett},
  title={Review of \cite {keisler3}},
  year={1977},
  journal={Bull. Amer. Math. Soc},
  volume={81},
  number={2},
  pages={205-208},
}

\bib{kluut}{article}{
  author={Bishop, Errett},
  title={The crisis in contemporary mathematics},
  booktitle={Proceedings of the American Academy Workshop on the Evolution of Modern Mathematics},
  journal={Historia Math.},
  volume={2},
  date={1975},
  number={4},
  pages={507--517},
}

\bib{nukino}{article}{
  author={Bishop, Errett},
  title={Mathematics as a numerical language},
  conference={ title={Intuitionism and Proof Theory}, address={Proc. Conf., Buffalo, N.Y.}, date={1968}, },
  book={ publisher={North-Holland}, },
  date={1970},
  pages={53--71},
}

\bib{bridges1}{book}{
  author={Bridges, Douglas S.},
  author={V{\^{\i }}{\c {t}}{\u {a}}, Lumini{\c {t}}a Simona},
  title={Techniques of constructive analysis},
  series={Universitext},
  publisher={Springer},
  place={New York},
  date={2006},
  pages={xvi+213},
}

\bib{brich}{book}{
  author={Bridges, Douglas},
  author={Richman, Fred},
  title={Varieties of constructive mathematics},
  series={London Mathematical Society Lecture Note Series},
  volume={97},
  publisher={Cambridge University Press},
  place={Cambridge},
  date={1987},
  pages={x+149},
}

\bib{brouw}{book}{
  author={Brouwer, L. E. J.},
  title={Collected works. Vol. 1},
  note={Philosophy and foundations of mathematics; Edited by A. Heyting},
  publisher={North-Holland Publishing Co.},
  place={Amsterdam},
  date={1975},
  pages={xv+628},
}

\bib{brouw2}{book}{
  author={Brouwer, L. E. J.},
  title={Collected works, Vol. 2},
  note={Geometry, analysis, topology and mechanics; Edited by Hans Freudenthal},
  publisher={North-Holland; Elsevier},
  date={1976},
  pages={xxvii+706},
}

\bib{brouwt}{book}{
  author={Brouwer, L. E. J.},
  title={Over de grondslagen van de wiskunde},
  note={Doctoral dissertation in Dutch with English translation in \cite {brouw}},
  publisher={University of Amsterdam},
  date={1907},
  pages={183},
}

\bib{pruisje}{article}{
  author={Brouwer, L.E.J.},
  title={Intuitionistische Betrachtungen \"uber den Formalismus},
  journal={Sitzungsberichte der Preuszischen Akademie der Wissenschaften zu Berlin},
  date={1928},
  pages={48--52},
  note={English translation of \S 1 in \cite {vajuju}, p.\ 490-492},
}

\bib{brouwcam}{book}{
  author={Brouwer, L. E. J.},
  title={Brouwer's Cambridge lectures on intuitionism},
  note={D. van Dalen (ed.)},
  publisher={Cambridge University Press},
  date={1981},
}

\bib{burkdegardener}{book}{
  author={Burk, Frank E.},
  title={A garden of integrals},
  series={The Dolciani Mathematical Expositions},
  volume={31},
  publisher={Mathematical Association of America, Washington, DC},
  date={2007},
  pages={xiv+281},
}

\bib{buss}{article}{
  author={Buss, Samuel R.},
  title={An introduction to proof theory},
  conference={ title={Handbook of proof theory}, },
  book={ series={Stud. Logic Found. Math.}, volume={137}, publisher={North-Holland}, place={Amsterdam}, },
  date={1998},
  pages={1--78},
}

\bib{conman3}{article}{
  author={Connes, Alain},
  title={Non-standard stuff},
  year={2007},
  journal={Alain Connes' blog},
  note={\url {http://noncommutativegeometry.blogspot.com/2007/07/non-standard-stuff.html}},
}

\bib{conman}{article}{
  author={Connes, Alain},
  title={An interview with Alain Connes, Part I},
  year={2007},
  journal={EMS Newsletter},
  note={\url {http://www.mathematics-in-europe.eu/maths-as-a-profession/interviews}},
  volume={63},
  pages={25-30},
}

\bib{cousin1}{article}{
  author={Cousin, Pierre},
  title={Sur les fonctions de $n$ variables complexes},
  journal={Acta Math.},
  volume={19},
  date={1895},
  number={1},
  pages={1--61},
}

\bib{daupje}{article}{
  author={Dauben, Joseph W.},
  title={Arguments, logic and proof: mathematics, logic and the infinite},
  conference={ title={History of mathematics and education: ideas and experiences (Essen, 1992)}, },
  book={ series={Stud. Wiss. Soz. Bildungsgesch. Math.}, volume={11}, },
  date={1996},
  pages={113--148},
  publisher={Vandenhoeck & Ruprecht, G\"ottingen},
}

\bib{dahaus}{article}{
  author={Davis, M.},
  author={Hausner, M.},
  title={Book review. The Joy of Infinitesimals. J.\ Keisler's Elementary Calculus},
  journal={Mathematical Intelligencer},
  date={1978},
  number={1},
  pages={168--170},
}

\bib{dias}{article}{
  author={Diaconescu, Radu},
  title={Axiom of choice and complementation},
  journal={Proc. AMS},
  volume={51},
  date={1975},
  pages={176--178},
}

\bib{didi1}{article}{
  author={Dirichlet, Lejeune P.~G.},
  title={Sur la convergence des s\'eries trigonom\'etriques qui servent \`a repr\'esenter une fonction arbitraire entre des limites donn\'ees},
  journal={arXiv},
  year={2008},
  note={\url {https://arxiv.org/abs/0806.1294}},
}

\bib{dugac1}{article}{
  author={Dugac, Pierre},
  title={Sur la correspondance de Borel et le th\'eor\`eme de Dirichlet-Heine-Weierstrass-Borel-Schoenflies-Lebesgue},
  language={French},
  journal={Arch. Internat. Hist. Sci.},
  volume={39},
  date={1989},
  number={122},
  pages={69--110},
}

\bib{damirzoo}{misc}{
  author={Dzhafarov, Damir D.},
  title={Reverse Mathematics Zoo},
  note={\url {http://rmzoo.uconn.edu/}},
}

\bib{nieteerlijk}{article}{
  author={Ehrlich, Philip},
  title={The rise of non-Archimedean mathematics and the roots of a misconception. I. The emergence of non-Archimedean systems of magnitudes},
  journal={Arch. Hist. Exact Sci.},
  volume={60},
  date={2006},
  number={1},
  pages={1--121},
}

\bib{espa}{article}{
  author={Escardo, Martin},
  author={Oliva, Paulo},
  title={The Herbrand Functional Interpretation of the Double Negation Shift},
  note={See \url {http://arxiv.org/abs/1410.4353}},
  date={2015},
}

\bib{square}{book}{
  title={Mathematical circles squared; a third collection of mathematical stories and anecdotes},
  author={Howard W. Eves},
  publisher={Prindle, Weber \& Schmidt},
  year={1972},
  series={Eves Series in Mathematics},
}

\bib{samBIG}{article}{
  author={Fletcher, Peter},
  author={Hrbacek, Karel},
  author={Kanovei Vladimir},
  author={Mikhail G. Katz},
  author={Lobry, Claude},
  author={Sanders, Sam},
  title={Approaches To Analysis With Infinitesimals Following Robinson, Nelson, And Others},
  year={2017},
  journal={Real Analysis Exchange, arXiv: \url {https://arxiv.org/abs/1703.00425}},
}

\bib{fried}{article}{
  author={Friedman, Harvey},
  title={Some systems of second order arithmetic and their use},
  conference={ title={Proceedings of the International Congress of Mathematicians (Vancouver, 1974), Vol.\ 1}, },
  book={ },
  date={1975},
  pages={235--242},
}

\bib{fried2}{article}{
  author={Friedman, Harvey},
  title={ Systems of second order arithmetic with restricted induction, I \& II (Abstracts) },
  journal={Journal of Symbolic Logic},
  volume={41},
  date={1976},
  pages={557--559},
}

\bib{HFFOM}{article}{
  author={Friedman, Harvey},
  title={Computational Nonstandard Analysis},
  journal={FOM mailing list},
  date={Sept.\ 1st, 2015},
  note={\url {https://www.cs.nyu.edu/pipermail/fom/2015-September/018984.html}},
}

\bib{gioa}{article}{
  author={Giordano, Paolo},
  title={Nilpotent infinitesimals and synthetic differential geometry in classical logic},
  conference={ title={Reuniting the antipodes---constructive and nonstandard views of the continuum}, address={Venice}, date={1999}, },
  book={ series={Synthese Lib.}, volume={306}, publisher={Kluwer}, },
  date={2001},
  pages={75--92},
}

\bib{gleuf}{article}{
  author={Glivenko, Valerii},
  title={On the logic of Brouwer},
  note={Publication, comments, and translation from the French by M. M. Novos\"elov},
  journal={Istor.-Mat. Issled. (2)},
  number={5(40)},
  date={2000},
}

\bib{godel3}{article}{
  author={G{\"o}del, Kurt},
  title={\"Uber eine bisher noch nicht ben\"utzte Erweiterung des finiten Standpunktes},
  language={German, with English summary},
  journal={Dialectica},
  volume={12},
  date={1958},
  pages={280--287},
}

\bib{withgusto}{article}{
  author={Giusto, Mariagnese},
  author={Simpson, Stephen G.},
  title={Located sets and reverse mathematics},
  journal={J. Symbolic Logic},
  volume={65},
  date={2000},
  number={3},
  pages={1451--1480},
}

\bib{benno2}{article}{
  author={Hadzihasanovic, Amar},
  author={van den Berg, Benno},
  title={Nonstandard functional interpretations and \ models},
  journal={Notre Dame Journal for Formal Logic},
  volume={58},
  date={2017},
  number={3},
  pages={343-380},
}

\bib{halal}{book}{
  author={Halmos, Paul R.},
  title={I want to be a mathematician},
  note={An automathography},
  publisher={Springer-Verlag},
  date={1985},
  pages={xvi+421},
}

\bib{reeb3}{article}{
  author={Harthong, Jacques},
  author={Reeb, Georges},
  title={Intuitionnisme 84},
  language={French},
  note={With comments by Harthong},
  conference={ title={La math\'ematique non standard}, },
  book={ series={Fondem. Sci.}, publisher={CNRS, Paris}, },
  date={1989},
  pages={213--273},
}

\bib{eagle}{book}{
  author={Heyting, A.},
  title={Intuitionism: An introduction},
  series={Second revised edition},
  publisher={North-Holland},
  date={1966},
  pages={ix+137},
}

\bib{heyting}{article}{
  author={Heyting, A.},
  title={Address to Professor A. Robinson. At the occasion of the Brouwer memorial lecture given by Prof. A.Robinson on the 26th April 1973},
  year={1973},
  journal={Nieuw Arch. Wisk.},
  volume={21},
  pages={134--137},
}

\bib{polahirst}{article}{
  author={Hirst, Jeffry L.},
  title={Representations of reals in reverse mathematics},
  journal={Bull. Pol. Acad. Sci. Math.},
  volume={55},
  date={2007},
  number={4},
  pages={303--316},
}

\bib{howie}{article}{
  author={Howard, W. A.},
  title={Appendix: Hereditarily majorizable function of finite type},
  conference={ title={Metamathematical investigation of intuitionistic arithmetic and analysis}, },
  book={ publisher={Springer}, },
  date={1973},
  pages={454--461. Lecture Notes in Math., Vol. 344},
}

\bib{hrbacekJLA}{article}{
  author={Hrbacek, Karel},
  title={Axiom of choice in nonstandard set theory},
  journal={J. Log. Anal.},
  volume={4},
  date={2012},
}

\bib{hrbacek2}{article}{
  author={Hrbacek, Karel},
  title={Axiomatic foundations for nonstandard analysis},
  journal={Fund. Math.},
  volume={98},
  date={1978},
  pages={1--19},
}

\bib{loeb1}{book}{
  author={Hurd, Albert E.},
  author={Loeb, Peter A.},
  title={An introduction to nonstandard real analysis},
  series={Pure and Applied Mathematics},
  volume={118},
  publisher={Academic Press Inc.},
  place={Orlando, FL},
  date={1985},
  pages={xii+232},
}

\bib{ishi1}{article}{
  author={Ishihara, Hajime},
  title={Reverse mathematics in Bishop's constructive mathematics},
  year={2006},
  journal={Philosophia Scientiae (Cahier Sp\'ecial)},
  volume={6},
  pages={43-59},
}

\bib{kano2}{article}{
  author={Kanovei, Vladimir},
  author={Katz, Mikhail G.},
  author={Mormann, Thomas},
  title={Tools, objects, and chimeras: Connes on the role of hyperreals in mathematics},
  journal={Found. Sci.},
  volume={18},
  date={2013},
  number={2},
  pages={259--296},
}

\bib{reeken}{book}{
  author={Kanovei,Vladimir},
  author={Reeken, Michael},
  title={Nonstandard analysis, axiomatically},
  year={2004},
  publisher={Springer},
}

\bib{gaanwekatten}{article}{
  author={Katz, Mikhail G.},
  author={Leichtnam, Eric},
  title={Commuting and noncommuting infinitesimals},
  journal={Amer. Math. Monthly},
  volume={120},
  date={2013},
  number={7},
  pages={631--641},
}

\bib{kaka}{article}{
  author={Katz, Karin U.},
  author={Katz, Mikhail G.},
  title={Meaning in Classical Mathematics: Is it at Odds with Intuitionism},
  journal={Intellectica},
  volume={56},
  date={2011},
  number={2},
  pages={223--302},
}

\bib{kaye}{book}{
  author={Kaye, Richard},
  title={Models of Peano arithmetic},
  series={Oxford Logic Guides},
  volume={15},
  date={1991},
  pages={x+292},
}

\bib{kieken}{article}{
  author={Keisler, H. Jerome},
  title={The hyperreal line},
  conference={ title={Real numbers, generalizations of the reals, and theories of continua}, },
  book={ series={Synthese Lib.}, volume={242}, publisher={Kluwer Acad. Publ.}, place={Dordrecht}, },
  date={1994},
  pages={207--237},
}

\bib{keisler3}{book}{
  author={Keisler, H. Jerome},
  title={Elementary Calculus},
  publisher={Prindle, Weber and Schmidt},
  date={1976},
  pages={xviii + 880 + 61 (appendix)},
  place={Boston},
}

\bib{kleine}{book}{
  author={Kleiner, Israel},
  title={Excursions in the history of mathematics},
  publisher={Birkh\"auser/Springer, New York},
  date={2012},
}

\bib{kohlenbach3}{book}{
  author={Kohlenbach, Ulrich},
  title={Applied proof theory: proof interpretations and their use in mathematics},
  series={Springer Monographs in Mathematics},
  publisher={Springer-Verlag},
  place={Berlin},
  date={2008},
  pages={xx+532},
}

\bib{kohlenbach2}{article}{
  author={Kohlenbach, Ulrich},
  title={Higher order reverse mathematics},
  conference={ title={Reverse mathematics 2001}, },
  book={ series={Lect. Notes Log.}, volume={21}, publisher={ASL}, },
  date={2005},
  pages={281--295},
}

\bib{kohlenbach4}{article}{
  author={Kohlenbach, Ulrich},
  title={Foundational and mathematical uses of higher types},
  conference={ title={Reflections on the foundations of mathematics (Stanford, CA, 1998)}, },
  book={ series={Lect. Notes Log.}, volume={15}, publisher={ASL}, },
  date={2002},
  pages={92--116},
}

\bib{kooltje}{article}{
  author={Kohlenbach, Ulrich},
  title={On uniform weak K\"onig's lemma},
  note={Commemorative Symposium Dedicated to Anne S. Troelstra (Noordwijkerhout, 1999)},
  journal={Ann. Pure Appl. Logic},
  volume={114},
  date={2002},
  pages={103--116},
}

\bib{kreikel}{article}{
  author={Kreisel,Georg},
  title={Mathematical logic},
  conference={ title={Lectures on Modern Mathematics, Vol. III}, },
  book={ publisher={Wiley, New York}, },
  date={1965},
  pages={95--195},
}

\bib{lifken}{article}{
  author={Lifschitz, Vladimir},
  title={Calculable natural numbers},
  conference={ title={Intensional mathematics}, },
  book={ series={Stud. Logic Found. Math.}, volume={113}, publisher={North-Holland}, },
  date={1985},
  pages={173--190},
}

\bib{longmann}{book}{
  author={Longley, John},
  author={Normann, Dag},
  title={Higher-order Computability},
  year={2015},
  publisher={Springer},
  series={Theory and Applications of Computability},
}

\bib{loefafsteken}{article}{
  author={Martin-L{\"o}f, Per},
  title={An intuitionistic theory of types: predicative part},
  conference={ title={Logic Colloquium '73}, address={Bristol}, date={1973}, },
  book={ publisher={North-Holland}, place={Amsterdam}, },
  date={1975},
  pages={73--118. Studies in Logic and the Foundations of Mathematics, Vol. 80},
}

\bib{palmdijk}{article}{
  author={Moerdijk, Ieke},
  author={Palmgren, Erik},
  title={Minimal models of Heyting arithmetic},
  journal={J. Symbolic Logic},
  volume={62},
  date={1997},
  number={4},
  pages={1448--1460},
}

\bib{mullingitover}{book}{
  author={Muldowney, P.},
  title={A general theory of integration in function spaces, including Wiener and Feynman integration},
  series={Pitman Research Notes in Mathematics Series},
  volume={153},
  publisher={Longman Scientific \& Technical, Harlow; John Wiley \& Sons, Inc., New York},
  date={1987},
  pages={viii+115},
}

\bib{wownelly}{article}{
  author={Nelson, Edward},
  title={Internal set theory: a new approach to nonstandard analysis},
  journal={Bull. Amer. Math. Soc.},
  volume={83},
  date={1977},
  number={6},
  pages={1165--1198},
}

\bib{nelsonub}{article}{
  author={Nelson, Edward},
  title={Unfinished book on nonstandard Analysis},
  note={Chapter 1 from \url {https://web.math.princeton.edu/~nelson/books.html}},
  date={2016},
  pages={1--26},
}

\bib{ohnelly}{book}{
  author={Nelson, Edward},
  title={Predicative arithmetic},
  series={Mathematical Notes},
  volume={32},
  publisher={Princeton University Press},
  place={Princeton, NJ},
  date={1986},
  pages={viii+190},
}

\bib{dagsam}{article}{
  author={Normann, Dag},
  author={Sanders, Sam},
  title={Nonstandard Analysis, Computability Theory, and surprising connections},
  journal={Submitted, arXiv: \url {https://arxiv.org/abs/1702.06556}},
  date={2017},
}

\bib{dagsamII}{article}{
  author={Normann, Dag},
  author={Sanders, Sam},
  title={Nonstandard Analysis, Computability Theory, and connections II},
  journal={In preparation},
  date={2017},
}

\bib{noortje}{book}{
  author={Normann, Dag},
  title={Recursion on the countable functionals},
  series={LNM 811},
  volume={811},
  publisher={Springer},
  date={1980},
  pages={viii+191},
}

\bib{Oss3}{article}{
  author={Osswald, Horst},
  title={Computation of the kernels of L\'{e}vy functionals and applications},
  journal={Illinois Journal of Mathematics},
  volume={55},
  date={2011},
  number={3},
  pages={815--833},
}

\bib{Oss2}{book}{
  author={Osswald, Horst},
  title={Malliavin calculus for L\'evy processes and infinite-dimensional Brownian motion},
  series={Cambridge Tracts in Mathematics},
  volume={191},
  publisher={Cambridge University Press},
  date={2012},
  pages={xx+407},
}

\bib{nostpalm}{article}{
  author={Palmgren, Erik},
  title={A sheaf-theoretic foundation for nonstandard analysis},
  journal={Ann. Pure Appl. Logic},
  volume={85},
  date={1997},
  number={1},
  pages={69--86},
}

\bib{opalm}{article}{
  author={Palmgren, Erik},
  title={Developments in constructive nonstandard analysis},
  journal={B.\ Sym.\ Logic},
  date={1998},
  pages={233--272},
}

\bib{bestaat}{article}{
  author={Rathjen, Michael},
  title={The disjunction and related properties for constructive Zermelo-Fraenkel set theory},
  journal={J. Symbolic Logic},
  volume={70},
  date={2005},
  number={4},
  pages={1232--1254},
}

\bib{frogman}{article}{
  author={Richman, Fred},
  title={The frog replies},
  journal={The Mathematical Intelligencer},
  year={1987},
  volume={9},
  number={3},
  pages={22--24},
}

\bib{linkeboel}{collection}{
  title={Constructive mathematics},
  series={Lecture Notes in Mathematics},
  volume={873},
  note={Proceedings of the Conference held at the New Mexico State University, Las Cruces, N.M., August 11--15, 1980},
  editor={Richman, Fred},
  publisher={Springer},
  date={1981},
  pages={vii+347},
}

\bib{robinson1}{book}{
  author={Robinson, Abraham},
  title={Non-standard analysis},
  publisher={North-Holland},
  place={Amsterdam},
  date={1966},
  pages={xi+293},
}

\bib{robinson64}{article}{
  author={Robinson, Abraham},
  title={Formalism $64$},
  conference={ title={Logic, Methodology and Philos. Sci. (Proc. 1964 Internat. Congr.)}, },
  book={ publisher={North-Holland}, },
  date={1965},
  pages={228--246},
}

\bib{rosse}{article}{
  author={Ross, David A.},
  title={The constructive content of nonstandard measure existence proofs---is there any?},
  conference={ title={Reuniting the antipodes---constructive and nonstandard views of the continuum}, address={Venice}, date={1999}, },
  book={ series={Synthese Lib.}, volume={306}, publisher={Kluwer}, },
  date={2001},
  pages={229--239},
}

\bib{yamayamaharehare}{article}{
  author={Sakamoto, Nobuyuki},
  author={Yamazaki, Takeshi},
  title={Uniform versions of some axioms of second order arithmetic},
  journal={MLQ Math. Log. Q.},
  volume={50},
  date={2004},
  number={6},
  pages={587--593},
}

\bib{reeb8}{book}{
  author={Salanskis, Jean-Michel},
  title={Le constructivisme non standard},
  series={Histoire des Sciences. [History of Science]},
  publisher={Presses Universitaires du Septentrion, Villeneuve d'Ascq},
  date={1999},
  pages={x+349},
}

\bib{samGH}{article}{
  author={Sanders, Sam},
  title={The Gandy-Hyland functional and a computational aspect of Nonstandard Analysis},
  year={2017},
  journal={To appear in \emph {Computability}, arXiv: \url {http://arxiv.org/abs/1502.03622}},
}

\bib{samzoo}{article}{
  author={Sanders, Sam},
  title={The taming of the Reverse Mathematics zoo},
  year={2015},
  journal={Submitted, \url {http://arxiv.org/abs/1412.2022}},
}

\bib{samwatje}{article}{
  author={Sanders, Sam},
  title={Nonstandard Analysis And Constructivism!},
  year={2017},
  journal={Submitted, arXiv: \url {http://arxiv.org/abs/1704.00281}},
}

\bib{sambon}{article}{
  author={Sanders, Sam},
  title={The unreasonable effectiveness of Nonstandard Analysis},
  year={2016},
  journal={Submitted to APAL special issue of LFCS, \url {http://arxiv.org/abs/1508.07434}},
}

\bib{samsynt}{article}{
  author={Sanders, Sam},
  title={Formalism16},
  year={2017},
  journal={Synthese, S.I.: Foundations of Mathematics},
  pages={1-42},
}

\bib{sambon2}{article}{
  author={Sanders, Sam},
  title={From Nonstandard Analysis to various flavours of Computability Theory},
  year={2017},
  journal={To appear in Lecture notes in Computer Science},
  note={\url {http://arxiv.org/abs/1609.01919}},
}

\bib{samnewarix}{article}{
  author={Sanders, Sam},
  title={The computational content of measure theory},
  year={2016},
  note={\url {http://arxiv.org/abs/1609.01945}},
}

\bib{samzooII}{article}{
  author={Sanders, Sam},
  title={The refining of the taming of the Reverse Mathematics zoo},
  year={2016},
  journal={To appear in Notre Dame Journal for Formal Logic, \url {http://arxiv.org/abs/1602.02270}},
}

\bib{simpson1}{collection}{
  title={Reverse mathematics 2001},
  series={Lecture Notes in Logic},
  volume={21},
  editor={Simpson, Stephen G.},
  publisher={ASL},
  place={La Jolla, CA},
  date={2005},
  pages={x+401},
}

\bib{simpson2}{book}{
  author={Simpson, Stephen G.},
  title={Subsystems of second order arithmetic},
  series={Perspectives in Logic},
  edition={2},
  publisher={CUP},
  date={2009},
  pages={xvi+444},
}

\bib{pimpson}{article}{
  author={Simpson, Stephen G.},
  author={Yokoyama, Keita},
  title={A nonstandard counterpart of \textsf {\textup {WWKL}}},
  journal={Notre Dame J. Form. Log.},
  volume={52},
  date={2011},
  number={3},
  pages={229--243},
}

\bib{minlog}{article}{
  author={Schwichtenberg, Helmut},
  title={Minlog, the proof assistant},
  journal={Official website: \url {http://www.minlog-system.de}},
  date={2016},
}

\bib{trots2}{article}{
  author={Stewart, Ian},
  title={Frog and Mouse Revisited},
  year={1986},
  journal={The Mathematical Intelligencer},
  volume={8},
  pages={72-82},
}

\bib{trots}{article}{
  author={Stolzenberg, Gabriel},
  title={Letter to the editor},
  year={1978},
  journal={Notices Amer. Math. Soc.},
  volume={25},
  pages={242},
}

\bib{bloedstollend}{article}{
  author={Stolzenberg, Gabriel},
  title={Reply to Russel O'Conner},
  year={Oct.\ 3, 2009},
  journal={Constructive News \url {https://groups.google.com/forum/\#!forum/constructivenews}},
}

\bib{zwette}{book}{
  author={Swartz, Charles},
  title={Introduction to gauge integrals},
  publisher={World Scientific Publishing Co., Singapore},
  date={2001},
  pages={x+157},
}

\bib{tallmann}{article}{
  author={Tall, D.},
  title={Natural and Formal Infinities},
  year={2001},
  journal={Educational Studies in Mathematics},
  volume={48},
  pages={2--3},
}

\bib{tao2}{misc}{
  author={Tao, Terence},
  title={Entries on Nonstandard Analysis},
  note={Wordpress blog, \url {https://terrytao.wordpress.com/tag/nonstandard-analysis/}},
  date={2015},
}

\bib{taote}{book}{
  author={Tao, Terence},
  title={Structure and randomness},
  note={Pages from year one of a mathematical blog},
  publisher={American Mathematical Society, Providence, RI},
  date={2008},
  pages={xii+298},
}

\bib{uprin}{book}{
  author={Troelstra, Anne Sjerp},
  title={Principles of intuitionism},
  series={Lectures presented at the Summer Conference on Intuitionism and Proof Theory (1968) at SUNY at Buffalo, N.Y. Lecture Notes in Mathematics, Vol. 95 },
  publisher={Springer-Verlag, Berlin-New York},
  date={1969},
  pages={ii+111},
  review={\MR {0244003}},
}

\bib{troelstra1}{book}{
  author={Troelstra, Anne Sjerp},
  title={Metamathematical investigation of intuitionistic arithmetic and analysis},
  note={Lecture Notes in Mathematics, Vol.\ 344},
  publisher={Springer Berlin},
  date={1973},
  pages={xv+485},
}

\bib{troeleke1}{book}{
  author={Troelstra, Anne Sjerp},
  author={van Dalen, Dirk},
  title={Constructivism in mathematics. Vol. I},
  series={Studies in Logic and the Foundations of Mathematics},
  volume={121},
  publisher={North-Holland},
  date={1988},
  pages={xx+342+XIV},
}

\bib{brouw22}{book}{
  author={van Dalen, Dirk},
  title={L. E. J. Brouwer---topologist, intuitionist, philosopher},
  note={How mathematics is rooted in life},
  publisher={Springer},
  date={2013},
  pages={xii+875},
}

\bib{dalencont}{article}{
  author={van Dalen, Dirk},
  title={How connected is the intuitionistic continuum?},
  journal={J. Symbolic Logic},
  volume={62},
  date={1997},
  number={4},
  pages={1147--1150},
}

\bib{vajuju}{book}{
  author={van Heijenoort, Jean},
  title={From Frege to G\"odel. A source book in mathematical logic, 1879--1931},
  publisher={Harvard University Press},
  place={Cambridge, Mass.},
  date={1967},
  pages={xi+660 pp. (1 plate)},
}

\bib{gesticht}{book}{
  author={van Stigt, Walter P.},
  title={Brouwer's intuitionism},
  series={Studies in the History and Philosophy of Mathematics},
  volume={2},
  publisher={North-Holland},
  date={1990},
  pages={xxvi+530},
}

\bib{fath}{book}{
  author={V{\"a}th, Martin},
  title={Nonstandard analysis},
  publisher={Birkh\"auser Verlag, Basel},
  date={2007},
  pages={viii+252},
}

\bib{reeb10}{article}{
  author={Wallet, Guy},
  title={Integer calculus on the Harthong-Reeb line},
  language={English, with English and French summaries},
  journal={ARIMA Rev. Afr. Rech. Inform. Math. Appl.},
  volume={9},
  date={2008},
  pages={517--536},
}

\bib{watje}{article}{
  author={Wattenberg, Frank},
  title={Nonstandard analysis and constructivism?},
  journal={Studia Logica},
  volume={47},
  date={1988},
  number={3},
  pages={303--309},
}

\bib{wefteling}{book}{
  author={Wiedijk, Freek},
  title={The Seventeen Provers of the World: Foreword by Dana S. Scott (Lecture Notes in Computer Science / Lecture Notes in Artificial Intelligence)},
  date={2006},
  publisher={Springer},
}

\bib{nsawork2}{collection}{
  title={Nonstandard analysis for the working mathematician},
  series={Mathematics and its Applications},
  volume={510},
  editor={Wolff, Manfred},
  editor={Loeb, Peter A.},
  publisher={Kluwer},
  date={2015},
  note={Second edition},
}

\bib{EXCESS}{article}{
  author={Xu, Chuangjie},
  author={Sanders, Sam},
  title={Extracting the computational content of Nonstandard Analysis},
  journal={In preparation; Agda code: \url {http://cj-xu.github.io/agda/nonstandard_dislectica/Dialectica.html}},
  date={2016},
}

\bib{yuppie}{article}{
  author={Yu, Xiaokang},
  title={Lebesgue convergence theorems and reverse mathematics},
  journal={Math. Logic Quart.},
  volume={40},
  date={1994},
  number={1},
  pages={1--13},
}

\bib{yussie}{article}{
  author={Yu, Xiaokang},
  author={Simpson, Stephen G.},
  title={Measure theory and weak K\"onig's lemma},
  journal={Arch. Math. Logic},
  volume={30},
  date={1990},
  number={3},
  pages={171--180},
}

\bib{hottbook}{book}{
  author={The Univalent Foundations Program},
  title={Homotopy type theory-univalent foundations of mathematics},
  publisher={Institute for Advanced Study, Princeton},
  date={2013},
  pages={xiv+589},
  note={\url {http://homotopytypetheory.org}},
}

\end{biblist}
\end{bibdiv}

\bye